\documentclass[a4paper,reqno,11pt]{amsart}

\usepackage[margin=2.5cm]{geometry}
\usepackage[foot]{amsaddr}
\usepackage{amsfonts}      
\usepackage{amsmath}       
\usepackage{amssymb}       
\usepackage{amsthm}        
\usepackage{bm}            
\usepackage{comment}
\usepackage{dsfont}
\usepackage{enumitem}
\usepackage{graphicx}      
\usepackage{mathtools}     
\usepackage{makecell}      
\usepackage{xcolor}        
\usepackage[hidelinks]{hyperref} 
\hypersetup{
	colorlinks,
	linkcolor={red!50!black},
	citecolor={blue!50!black},
	urlcolor={blue!80!black}
}
\usepackage{cleveref}      
\usepackage[colorinlistoftodos]{todonotes} 
\usepackage{subcaption}
\usepackage{tikz}
\usetikzlibrary{positioning,arrows.meta,calc}
\usepackage{upgreek}
\usepackage{xpatch} 
\xpatchcmd{\proof} 
  {\itshape}
  {\bfseries}
  {}
  {}

\usepackage{glossaries}
\expandafter\def\csname ver@etex.sty\endcsname{3000/12/31}

\usepackage{autonum}

\usepackage[square,numbers]{natbib}

\numberwithin{equation}{section}
\newtheorem{theorem}{Theorem}[section]
\newtheorem{lemma}[theorem]{Lemma}
\newtheorem{corollary}[theorem]{Corollary}
\newtheorem{proposition}[theorem]{Proposition}

\theoremstyle{definition}
\newtheorem{definition}[theorem]{Definition}

\theoremstyle{definition}
\newtheorem{remark}[theorem]{Remark}

\newcommand*{\N}{\mathbb{N}}
\newcommand*{\E}{\mathbb{E}}
\newcommand*{\R}{\mathbb{R}}

\newcommand*{\Ascr}{\mathcal A}
\newcommand*{\Bscr}{\mathcal B}
\newcommand*{\Cscr}{\mathcal C}
\newcommand*{\Dscr}{\mathcal D}

\newcommand*{\Fscr}{\mathcal F}
\newcommand*{\Gscr}{\mathcal G}

\newcommand*{\Nscr}{\mathcal N}

\newcommand*{\Pscr}{\mathcal P}

\renewcommand{\d}{{\mathrm{\,d}}}
\DeclareMathAlphabet{\mathbbmsl}{U}{bbm}{m}{sl}

\makeatletter
\renewcommand\subsubsection{%
  \@startsection{subsubsection}{3}{\z@}%
    {.5\linespacing\@plus.7\linespacing}%
    {-.5em}%
    {\normalfont\bfseries}%
}
\newcommand{\subsectionnotoc}[1]{%
  \refstepcounter{subsection}%
  \bigskip
  \noindent{\normalfont\bfseries \thesubsection.\ #1.\ }\nobreak
}

\makeatother

\title[{Non-uniqueness of nonlinear Markov processes associated with PDEs}]{{Non-uniqueness of nonlinear Markov processes in the sense of McKean associated with parabolic PDEs
}}

\author{Ehsan Abedi$^\dagger$}
\author{Florian Bechtold$^\ddag$}
\author{Marco Rehmeier$^\dagger$}

\address{$^\dagger$Institute of Mathematics\\
         TU Berlin\\
         10623 Berlin\\
         Germany.}
\address{$^\ddag$Faculty of Mathematics \\
         Bielefeld University \\
         33501 Bielefeld \\
         Germany.}

\email{ehsan.abedi@tu-berlin.de}
\email{fbechtold@math.uni-bielefeld.de}
\email{rehmeier@tu-berlin.de}

\thanks{}

\keywords{Nonlinear Fokker--Planck--Kolmogorov equations, nonlinear PDEs, McKean--Vlasov stochastic differential equations, probabilistic representations, nonlinear Markov processes, $p$-Laplace equation, Porous medium equation,  Barenblatt solutions, Dean--Kawasaki equations}

\subjclass[2020]{35K55, 35Q84, 60J25, 35C06, 60H30}
\begin{document}

\begin{abstract}
We derive a general scheme to construct infinitely many probabilistic counterparts for solutions to nonlinear PDEs by recasting the latter as different nonlinear Fokker--Planck equations and by constructing, for each of these equations, a solution to the associated McKean--Vlasov SDE with one-dimensional time marginal densities given by the PDE solution. We utilize this scheme to prove that nonlinear Markov processes in the sense of McKean as introduced by Rehmeier--R\"ockner (J.\,Theor.\,Probab. 38, 60 (2025)) are \emph{not} uniquely determined by their one-dimensional time marginals. This is in sharp contrast to the case of classical Markov processes, which \emph{are} uniquely determined by their one-dimensional time marginals. We demonstrate our results by constructing a continuum of nonlinear Markov processes with one-dimensional time marginal densities given by the Barenblatt solutions to the porous medium and $p$-Laplace equations, as well as by the fundamental solution to the heat equation. This includes a novel martingale representation for the $p$-Laplace Barenblatt solutions. We also prove that a nonlinear Markov process is uniquely determined by its \emph{two-}dimensional time marginals. Moreover, for the porous medium equation, we show that the different McKean--Vlasov SDEs we investigate are consistent with corresponding gradient flow interpretations of the equation in the sense of Otto calculus. 
\end{abstract}

\maketitle

\tableofcontents

\section{Introduction}\label{sec:introduction}
To explain our motivation for this paper, we first recall the following relation between linear PDEs, stochastic differential equations (SDEs), and Markov processes. Let $\{u^z\}_{z\in \R^d}$ be a family of probability density-valued solutions to a linear PDE of type
\begin{equation}\label{intro:lin-PDE}
    \partial_t u (t,x) = \Delta \big( a(x) u (t,x) \big) -  \nabla \cdot\big( b(x) u (t,x)\big),\qquad (t,x) \in (0,\infty)\times \R^d,
\end{equation}
where $a:  \R^d \to [0,\infty)$ and $b: \R^d \to \R^d$, with initial datum $u^z(0,\cdot) = \delta_z$. Then, under broad assumptions, there exists a natural probabilistic counterpart for $\{u^z\}_{z\in \R^d}$, namely a family of stochastic processes $\{X^z\}_{z\in \R^d}$ consisting of the unique solutions to the  SDE
$$dX_t = b(X_t) \, dt + \sqrt{2a(X_t)} \, dW_t,\qquad t >0,$$
where $W = (W_t)_{t \geq 0}$ is a standard $d$-dimensional Brownian motion, with $X^z_0 = z$, and the law of $X^z_t$ is given by $u^z(t,x)dx$ for all $t>0$. Moreover, the path laws of these processes form a Markov process, which is uniquely determined by its one-dimensional time marginals $\{u^z(t,x)dx\}_{t>0, z \in \R^d}$.
The central example is the heat equation ($a \equiv \frac 1 2, b \equiv 0$) and its fundamental solutions, i.e.,  $u^z$ is the classical heat kernel centered at $z$. In this case, $X^z = W  +z$, and the corresponding Markov process consists of Wiener measures translated by $z$. 
These relations, which establish a deep connection between linear PDEs and stochastic analysis by allowing to study PDE problems via probabilistic methods and vice versa, have been fundamental for at least 70 years, for instance in potential theory, see \cite{BliedtnerHansen1986,
      BlumenthalGetoor1968,
      Doob2001,
      Dynkin1965,
      Freidlin1996,
      FukushimaOshimaTakeda2011,
      Liggett2010,
      RogersWilliams2000,
      Sharpe1988,
      Stroock2014,
      StroockVaradh2007}.

\subsubsection*{Nonlinear PDEs and nonlinear Markov processes.} The situation changes drastically when the PDE \eqref{intro:lin-PDE} is replaced by a \emph{nonlinear} PDE of type
\begin{equation}\label{eq:general-PDE_intro1}
    \partial_t u =  L u,
\end{equation}
for a nonlinear differential operator $L$, for example, $Lu = \Delta u^m$ (porous medium equation), or $Lu = \nabla \cdot (|\nabla u|^{p-2}\nabla u)$ ($p$-Laplace equation).
Suppose \eqref{eq:general-PDE_intro1} admits a probability density-valued solution, which also solves a nonlinear Fokker--Planck equation (FPE) of type
\begin{equation}\label{intro:nl-PDE}
     \partial_t u = \Delta \big( a(x,u) u \big) -  \nabla \cdot\big( b(x,u) u \big),\qquad (t,x) \in (0,\infty)\times \R^d,
\end{equation}
where $a: \R^d \times \Pscr_* \to [0,\infty) $ and $b: \R^d \times \Pscr_* \to \R^d $ for some $\Pscr_* \subseteq \Pscr_{\textup{ac}}$ (the latter denotes the set of absolutely continuous probability measures on $\R^d$).
Then, for $u$, under a mild integrability condition  \cite{Trevisan16,BogachevRoecknerShaposhnikov2021,BarbuRoeckner2020nonlinearsuperpositionprinciple}, there still exists a weak solution $X$ to the corresponding SDE, which in this case is a McKean--Vlasov SDE (MV-SDE)
\begin{equation}\label{eq:main_MV-SDE_intro}
        \begin{dcases}
            dX_t = b(X_t,u(t,\cdot)) \, dt +  \sqrt{2a(X_t,u(t,\cdot))}  \, dW_t, \\
            \mathcal{L}(X_t) = u(t,x)dx,\quad t>0
        \end{dcases}
    \end{equation}
(here and throughout, $\mathcal{L}(Y)$ denotes the distribution of a random variable $Y$). But even if \eqref{intro:nl-PDE} has a \emph{unique} solution $u^z$ with initial datum $\delta_z$ for each $z\in \R^d$ and if \eqref{eq:main_MV-SDE_intro} has \emph{unique} solutions $X^z$ with marginal densities $u^z$, the path laws of these $X^z$ do \emph{not} form a Markov process.
This deficiency was resolved in \cite{RehmeierRoeckner2025NonlinearMarkov}, where the authors, guided by ideas from McKean \cite{McKean1966}, introduced the more general notion of \emph{nonlinear Markov processes} (see Definition \ref{def:NMC}). They proved that in the aforementioned well-posed situation, the path laws indeed form a nonlinear Markov process, and that this is also true under substantially more general assumptions (crucially, no uniqueness for \eqref{intro:nl-PDE} is required). Based on \cite{RehmeierRoeckner2025NonlinearMarkov}, nonlinear Markovian probabilistic counterparts were constructed for the explicit Barenblatt solutions of the porous medium, $p$-Laplace, and Leibenson equations, as well as for solutions to Burgers, $2D$ vorticity Navier--Stokes and Euler equations, see \cite{RehmeierRoeckner2025NonlinearMarkov, RehmeierRomito2025,BarbuRehmeierRockner2024,BarbuRocknerZhang2025, R.BGR25}.

However, a natural question remained open, namely, whether, in analogy with the linear theory, these nonlinear Markov processes are uniquely determined by their prescribed PDE solutions, i.e., by their one-dimensional time marginals. This was our initial motivation for the present paper, and as one of our main results, we prove---in contrast to the linear case---the answer to be resoundingly negative.
This sparked a surprising (at least to us) number of further problems and results on probabilistic counterparts of nonlinear PDEs, as well as on MV-SDEs and nonlinear Markov processes. Some of these results are presented in the present paper.

\subsubsection*{List of main results} Let us present the central contributions of our paper. More detailed discussions are given afterwards. 

\begin{enumerate}[label=\textbf{(\roman*)}, font=\normalfont, itemsep=1em ,leftmargin=*]
    \item (\Cref{sec:scheme}) We derive a scheme to construct infinitely many nonlinear Markov processes with one-dimensional time marginal densities given by a prescribed family of solutions to a nonlinear PDE \eqref{intro:nl-PDE} (see \Cref{cor:new}, \Cref{thm:Markov-construction}, and \Cref{crl:nl_markov}). Applying our scheme to some classical nonlinear PDEs yields
    \begin{theorem}\label{intro:thm1}
        Nonlinear Markov processes are not uniquely determined by their one-dimensional time marginals.
    \end{theorem}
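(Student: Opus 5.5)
The plan is to exhibit, for one concrete family of PDE solutions, two (indeed a continuum of) nonlinear Markov processes in the sense of Definition \ref{def:NMC} that share all one-dimensional time marginals but have different path laws. The simplest test case is the heat equation with its fundamental solutions $u^z(t,x) = (4\pi t)^{-d/2}e^{-|x-z|^2/(4t)}$. The key observation is that $u^z$ solves not only $\partial_t u = \Delta u$, but also infinitely many nonlinear Fokker--Planck equations of type \eqref{intro:nl-PDE}. To see this, fix $\alpha\in[0,1]$ and write
\[
\Delta u = \Delta(\alpha u) - \nabla\cdot\bigl(b_\alpha(x,u)\,u\bigr), \qquad b_\alpha(x,u) := -(1-\alpha)\nabla \log u(x),
\]
so that $a\equiv\alpha$ and the remaining diffusion is converted into a density-dependent drift. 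For $\alpha=1$ this gives Brownian motion, and for $\alpha=0$ it gives the deterministic "Lagrangian" flow $\dot X_t = -\nabla\log u(t,X_t)$. For Gaussian $u^z$ this drift is explicit and Lipschitz in $x$ for $t>0$, namely $b_\alpha = (1-\alpha)(x-z)/(2t)$, which makes everything computable.

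The steps, in order, are as follows.

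\textbf{(1) Solve the MV-SDEs.} For each $\alpha$ and $z$, solve the McKean--Vlasov SDE \eqref{eq:main_MV-SDE_intro} with these coefficients and marginals $u^z$. With the marginal frozen the equation is linear, and explicitly $X_t = z + \sqrt{2\alpha}\,\sqrt{t}^{\,1-\alpha}\int_0^t s^{-(1-\alpha)/2}\,dW_s$ or similar. I would check that $X_t$ is Gaussian with variance $2t$, either via It\^o isometry or via the superposition principle together with uniqueness for the linear FPE with frozen coefficients. Doing this for every initial law (not only Dirac masses) is what the scheme (\Cref{cor:new}, \Cref{thm:Markov-construction}) requires.

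\textbf{(2) Verify the nonlinear Markov property.} Invoke \Cref{thm:Markov-construction} and \Cref{crl:nl_markov} to conclude that, for each $\alpha$, the family of path laws $\{P^{\alpha}_{s,\zeta}\}$ is a nonlinear Markov process. The input is a flow property: restarting the construction at time $s$ from the marginal $u(s)$ reproduces the same solution. This is the step I expect to be the main obstacle, because the definition requires consistency for restarts from arbitrary admissible marginals, and hence a well-behaved solution map on a suitable class $\Pscr_*$. For the heat equation, taking $\Pscr_*$ to be the Gaussian family, or the set of heat-flow-generated marginals, with uniqueness of the linear heat equation should suffice. I would restrict the class of initial data accordingly.

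\textbf{(3) Show that the path laws differ.} The path laws for distinct $\alpha$ are different, for example because their two-time covariances differ: $\mathrm{Cov}(X_s,X_t)$ depends on $\alpha$. In the extreme cases, $\alpha=0$ gives $X_t = z+\sqrt{t}\,\xi$ with a single random vector $\xi$ (random only through the initial layer $t\downarrow0$, which needs care), whereas $\alpha=1$ gives Brownian motion. To avoid the degeneracy at $t=0$, I would simply compare two values, say $\alpha=1$ and $\alpha=1/2$. Since the one-dimensional marginals coincide and the two-dimensional ones do not, the statement follows.
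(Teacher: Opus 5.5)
Your overall route is the paper's. The $\alpha$-family is the $\beta$-interpolation of \Cref{subsec:heat_interpolation}, up to the factor $\tfrac12$ in front of $\Delta$. Your explicit Gaussian solution is \eqref{Xbeta}, and you conclude via \Cref{thm:Markov-construction}, as in \Cref{prop:HE-infin-many}. Distinguishing the path laws by the two-time covariance $\mathrm{Cov}(X_s,X_t)=2\sqrt{st}\,(s/t)^{\alpha/2}$ (per coordinate, $s\le t$) is a valid, weaker substitute for the paper's mutual-singularity argument via quadratic variation in \Cref{lem:singularlaws}.

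Step (2), however, misidentifies what has to be verified.

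\textbf{The flow property is not the issue.} In the core framework of \Cref{def:NMC} with $\Pscr_0=\{\mu^z_t\}_{t\geq 0,\,z\in\R^d}$, the flow property is automatic from injectivity of $(t,z)\mapsto\mu^z_t$, which holds here since the heat kernel has mean $z$ and covariance $2t\,1_d$. Besides $\delta_z$, only the restarts $\mu^z_s$ enter as initial laws, so you need not treat arbitrary initial data. Also, $\Pscr_*$ is the domain of the coefficients, not the set of initial laws.

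\textbf{The real hypothesis is linearized uniqueness.} What \Cref{thm:Markov-construction} actually requires is hypothesis (ii). By \Cref{lem:extrem-unique-equiv}, this means that for every $s>0$, the curve $u^z(s+\cdot,\cdot)$ is the unique solution, among $v\le C\,u^z(s+\cdot,\cdot)$, of the \emph{linearized} equation with frozen coefficients
\[
\partial_t v=\alpha\Delta v-\nabla\cdot\Big(\frac{(1-\alpha)(x-z)}{2(s+t)}\,v\Big),\qquad v(0,\cdot)=u^z(s,\cdot).
\]
This is not the heat equation. Uniqueness for the heat equation only controls the flow of marginals; it does not yield the path-level nonlinear Markov property or uniqueness of the solution path law.

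\textbf{The repair is easy.} For $s>0$, the drift is Lipschitz in $x$ with constant $|1-\alpha|/(2s)$, uniformly in $t$, and the diffusion is the constant $\alpha>0$. Uniqueness then follows from classical theory, exactly as in the proof of \Cref{prop:HE-infin-many}.

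Alternatively, the statement does not require uniqueness of the MV-SDE, so you could bypass \Cref{thm:Markov-construction} entirely. You would then verify \Cref{def:NL-MP} directly from the classical Markov property of the explicit time-inhomogeneous Gaussian process $X^{\alpha,z}$, in the spirit of \Cref{lemaux} and \Cref{prop:Mc-heateq}.
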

   \noindent  More precisely, in our scheme, we construct unique solutions to entirely different, often degenerate or singular MV-SDEs with common prescribed one-dimensional time marginals, and verify their nonlinear Markov property.

   \item This naturally raises the question whether nonlinear Markov processes are uniquely determined by their higher-dimensional time marginals. We provide the following optimal (with regard to \Cref{intro:thm1}) answer:
    \begin{theorem}[see \Cref{thm:NMP_2D}]\label{intro:thm-2D}
        Nonlinear Markov processes are uniquely determined by their two-dimensional time marginals.
    \end{theorem}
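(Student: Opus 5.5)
The argument should rest only on the defining properties of a nonlinear Markov process in the sense of Rehmeier--Röckner (Definition \ref{def:NMC}). Such a process is a family $\{P_{s,\zeta}\}_{(s,\zeta)\in\R_+\times\Pscr_0}$ of path measures on $C([s,\infty),\R^d)$, with $P_{s,\zeta}\circ\pi_s^{-1}=\zeta$. It satisfies the flow property $P_{s,\zeta}\circ\pi_t^{-1}=\mu^{s,\zeta}_t$ together with $\mu^{s,\zeta}_t\in\Pscr_0$. Its key feature is the Markov property: for $s\le r\le t$, the regular conditional distribution of $P_{s,\zeta}$ given $\Fscr_{s,r}$, evaluated at $\pi_t$, equals $p_{(s,\zeta),(r,y)}$ for $P_{s,\zeta}\circ\pi_r^{-1}$-a.e.\ $y$. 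Here $p_{(s,\zeta),(r,y)}$ is the one-dimensional marginal at time $t$ of the conditioned law $P_{(s,\zeta),(r,y)}:=P_{r,\mu^{s,\zeta}_r}(\,\cdot\,|\pi_r=y)$.

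The plan is to show that two such processes $\{P_{s,\zeta}\}$ and $\{Q_{s,\zeta}\}$ with identical two-dimensional marginals $P_{s,\zeta}\circ(\pi_r,\pi_t)^{-1}=Q_{s,\zeta}\circ(\pi_r,\pi_t)^{-1}$, for all $s\le r\le t$, have identical finite-dimensional distributions. Since path measures on $C$ are determined by their finite-dimensional distributions, this would give $P_{s,\zeta}=Q_{s,\zeta}$.

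The proof proceeds by induction on the number $n$ of time points $s\le t_1<\dots<t_n$. The cases $n=1,2$ hold by hypothesis. For the inductive step, apply the Markov property at time $t_{n-1}$:
\[
P_{s,\zeta}(\pi_{t_1}\in A_1,\dots,\pi_{t_n}\in A_n)=\E_{P_{s,\zeta}}\Big[\mathds 1_{A_1}(\pi_{t_1})\cdots\mathds 1_{A_{n-1}}(\pi_{t_{n-1}})\,p_{(s,\zeta),(t_{n-1},\pi_{t_{n-1}})}(A_n)\Big].
\]
The integrand is a function of $(\pi_{t_1},\dots,\pi_{t_{n-1}})$ only. By the inductive hypothesis it is therefore enough to check that the kernel $y\mapsto p_{(s,\zeta),(t_{n-1},y)}(A_n)$ coincides for $P$ and $Q$ up to a $\mu^{s,\zeta}_{t_{n-1}}$-null set. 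Note that $\mu^{s,\zeta}_{t_{n-1}}$ is the same for $P$ and $Q$ by the $n=1$ case.

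Write $r=t_{n-1}$, $t=t_n$ and $\nu=\mu^{s,\zeta}_r$. By construction, $p_{(s,\zeta),(r,y)}(A)$ is the conditional probability of $\pi_t\in A$ given $\pi_r=y$ under the measure $P_{r,\nu}$. This quantity is determined, $\nu$-a.e.\ in $y$, by the two-dimensional marginal $P_{r,\nu}\circ(\pi_r,\pi_t)^{-1}$, namely by disintegrating it with respect to its first marginal $\nu$. The same holds for $Q$. Since $\nu\in\Pscr_0$ is an admissible starting measure, the hypothesis at the starting pair $(r,\nu)$ gives $P_{r,\nu}\circ(\pi_r,\pi_t)^{-1}=Q_{r,\nu}\circ(\pi_r,\pi_t)^{-1}$. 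Uniqueness of disintegrations then yields equality of the kernels $\nu$-a.e., and the induction closes.

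The main obstacle, and the place that needs care, is precisely this use of the hypothesis at the new starting pair $(r,\mu^{s,\zeta}_r)$ rather than at $(s,\zeta)$. It is this restarting, built into the nonlinear Markov property, that makes two-dimensional marginals suffice, whereas one-dimensional marginals do not (cf.\ \Cref{intro:thm1}). One must verify that the hypothesis really covers all starting pairs $(r,\nu)$ with $\nu$ in the flow. One must also handle null-set issues: the kernel is defined only a.e., and the Markov identity must hold jointly with the earlier indicators, which follows since conditioning on $\Fscr_{s,r}$ contains $\sigma(\pi_{t_1},\dots,\pi_{t_{n-1}})$. Finally, a monotone class argument extends the equality from rectangles to all Borel sets in $(\R^d)^n$.
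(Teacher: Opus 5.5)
Your argument is correct and is essentially the paper's proof. The kernels $p^{\mu^\zeta_r}_t(y,\cdot)$ are the $\mu^\zeta_r$-a.e.\ unique disintegration of the two-dimensional marginal of the restarted process $\mathbb{P}^{\mu^\zeta_r}$ at times $(0,t)$, which is admissible because $\mu^\zeta_r\in\Pscr_0$ by flow invariance; the nonlinear Markov property then builds all finite-dimensional distributions from these kernels. The paper works in the time-homogeneous setting of \Cref{def:NL-MP} (not \Cref{def:NMC}, which you cite) and replaces your induction by invoking the finite-dimensional formula of \Cref{lem:basic-props-NLMP}~(iii); like yours, its argument only uses marginals taken from the initial time.
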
 
    \end{enumerate}

    We investigate these results specifically for the Barenblatt solutions to the porous medium (PME) and $p$-Laplace equation, and show that our scheme also gives rise to a rich structure even for the heat equation. In this regard, our main results are the following.

  \begin{enumerate}[label=\textbf{(\roman*)}, font=\normalfont, itemsep=1em ,leftmargin=*]
  \setcounter{enumi}{2}
  \item (\Cref{sec:porous_medium}). We construct infinitely many nonlinear Markov processes with one-dimensional time marginal densities given by the fundamental Barenblatt solutions to the PME 
    \begin{equation}\label{intro:PME}
        \partial_t u = \Delta u^m, \quad m>1,
    \end{equation}
    each consisting of solutions to a respective MV-SDE. Among the latter is a zero noise-equation \eqref{eq:PMEdrift-DDSDE}, an additive noise SDE \eqref{eq:PME-addnoise-MVSDE} and a drift-free Stratonovich SDE \eqref{eq:PME-pure-diff-Stratono} (the latter is technically delicate and only treated formally). 

    \item (\Cref{sec:pLaplace}). Similarly, we solve distinct MV-SDEs and construct corresponding nonlinear Markov processes with one-dimensional time marginal densities given by the fundamental Barenblatt solutions to  the $p$-Laplace equation 
    \begin{equation}\label{intro:pL}
         \partial_t u = \nabla \cdot \big( |\nabla u|^{p-2}\nabla u\big),\quad p >2.
    \end{equation}
    \begin{itemize}[leftmargin=*]
         \item[$\circ$] Notably, one of these nonlinear Markov processes consists of \emph{martingales}:
        \begin{proposition}[see Proposition \ref{proposition:pure_diffusion_plaplace}]\label{prop:intro-martingale}
            There exists a family of martingales with $p$-Laplace Barenblatt one-dimensional time marginals that forms a nonlinear Markov process and is given by the unique solutions to the drift-free MV-SDE \eqref{p_lap_pure_diff_sde}.
        \end{proposition}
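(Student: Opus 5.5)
We plan to recast the $p$-Laplace equation as a drift-free nonlinear Fokker--Planck equation, solve the resulting MV-SDE along the Barenblatt marginals, and then check the martingale and nonlinear Markov properties. Write the Barenblatt solution centred at $z$ as $u^z(t,x)=t^{-\alpha}F(|x-z|t^{-\beta})$, with $F(r)=(C-k r^{p/(p-1)})_+^{(p-1)/(p-2)}$ and the usual exponents $\alpha,\beta$. Since it is radial, $|\nabla u|^{p-2}\nabla u$ is radial. We look for a scalar $a(x,u)\ge 0$ with
\[
\Delta\big(a u\big)=\nabla\cdot\big(|\nabla u|^{p-2}\nabla u\big),
\]
that is, $\nabla(au)=|\nabla u|^{p-2}\nabla u$ up to a divergence-free field. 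For radial profiles we can integrate this directly: $a(x,u)u(x)=\Phi(x)$, where $\Phi$ is the radial primitive of $|\nabla u|^{p-2}\partial_r u$ normalized to vanish at the boundary of the support. On the support $\partial_r u\le 0$, so $\Phi\ge 0$.

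An explicit computation with the Barenblatt profile should give $a(t,x)=c\,t^{-\gamma}\big(C-k|x-z|^{p/(p-1)}t^{-\beta p/(p-1)}\big)_+$ for suitable constants. This expression is Lipschitz in $x$ away from the free boundary and vanishes there. To make $a$ depend only on $u$ (and not on $z$), we express it through $u$ itself, e.g.\ $a(x,u)=\kappa\,u(x)^{(p-2)/(p-1)}$, and verify that this indeed matches. This yields the drift-free MV-SDE $dX_t=\sqrt{2a(X_t,u(t))}\,dW_t$ with $X_0=z$.

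The key steps are the following.
\begin{enumerate}
\item Verify that $u^z$ solves the nonlinear FPE $\partial_t u=\Delta(a(\cdot,u)u)$ in the distributional sense, using the weak formulation and the integration by parts justified above.
\item Obtain a weak solution with marginals $u^z(t)$ via the superposition principle, using the integrability $\int_0^T\!\int a u\,dx\,dt<\infty$, which is clear since $a$ is bounded on compacts of $(0,\infty)$. The singularity at $t=0$ needs a limiting argument, since $a\sim t^{-\gamma}$; we expect it to be integrable because the second moment $\int |x|^2u\,dx\sim t^{2\beta}$ is finite.
\item Since there is no drift and the diffusion coefficient is bounded on $[\varepsilon,T]$, $X$ is a local martingale with bounded quadratic variation, hence a martingale.
\item For uniqueness, the frozen linear SDE has diffusion coefficient $\sigma(t,x)=\sqrt{2a}$, which behaves like the square root of a Lipschitz function. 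This is the main obstacle, because $\sigma$ is only H\"older-$\tfrac12$ at the free boundary and blows up as $t\to 0$. In dimension one Yamada--Watanabe handles this. In general $d$, however, the noise is $\sigma(t,x)\,dW$ with scalar $\sigma$, which is not directly covered. I would first try to show that the process never leaves the support and to use the radial structure: apply It\^o's formula to $R_t=|X_t-z|^2$ to get a one-dimensional SDE with $\frac12$-H\"older coefficient, apply Yamada--Watanabe to $R$, and then reconstruct uniqueness of $X$, given $R$, by conditioning on the angular part, where the angular part is a time-changed spherical Brownian motion. Alternatively, I would use uniqueness of the linearized FPE in the class of bounded-density solutions, following \cite{RehmeierRoeckner2025NonlinearMarkov}.
\item Uniqueness of the time-shifted problems started from $u^z(s)$, for all $s>0$, together with the flow property $u^z(s+t)$ being the solution started at $u^z(s)$, lets us invoke \Cref{thm:Markov-construction} and \Cref{crl:nl_markov} to conclude the nonlinear Markov property.
\end{enumerate}
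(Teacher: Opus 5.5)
Your overall architecture matches the paper's: the radial primitive as diffusion coefficient, the superposition principle, and restricted linearized uniqueness for $s>0$ fed into \Cref{thm:Markov-construction} and \Cref{crl:nl_markov}. However, the step where you make $a$ explicit and then local would fail.

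Write $u^z(t,x)=t^{-k}g(t^{-k/d}|x-z|)$ with $g(\xi)=(C-q\xi^{p/(p-1)})_+^{(p-1)/(p-2)}$, supported in $[0,R(1)]$. Then $g'=-(\frac{k}{d}\xi g)^{1/(p-1)}$, so $|\nabla u^z|^{p-2}\nabla u^z=-\frac{k}{d}\frac{x-z}{t}u^z$. Your radial primitive therefore gives
\[
a(x,u^z(t,\cdot))=t^{-\alpha}h\big(t^{-k/d}|x-z|\big),\qquad h(\xi)=\frac{k}{d}\,\frac{1}{g(\xi)}\int_\xi^{R(1)}\rho\,g(\rho)\,d\rho,\qquad \alpha=(p-2)\Big(k+\frac{k}{d}\Big).
\]
Because $g$ involves $\xi^{p/(p-1)}$ rather than $\xi^2$, this integral is an incomplete Beta function. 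Hence $h$ is not of the form $c\,(C-q\xi^{p/(p-1)})_+$. For the PME the analogous integral is exactly proportional to $g^m$, which is why $a=u^{m-1}$ works there.

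More importantly, no Nemytskii coefficient $A(u)$ can reproduce $h$. Evaluating $A(t^{-k}g(\xi))=t^{-\alpha}h(\xi)$ at $\xi=0$ forces $A(v)=h(0)\big(v/g(0)\big)^{\alpha/k}$. But then $A(g)$ vanishes like $(R(1)-\xi)^{(p-1)(1+1/d)}$ at the free boundary, whereas $h$ vanishes linearly. Your particular choice $\kappa u^{(p-2)/(p-1)}$ even has the wrong time exponent, $k\frac{p-2}{p-1}\neq\alpha$. So with a local $a$, $u^z$ does not solve $\partial_t u=\Delta(au)$, and you would be treating an SDE different from \eqref{p_lap_pure_diff_sde}.

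The worry behind this step is unfounded. On the domain $\Pscr_*$ of radially symmetric densities, the centre $z$ is determined by $u$. So \eqref{eq:a_pure_Ito_diffu_pLaplace_nl} is already a nonlocal map $a:\R^d\times\Pscr_*\to\R$, which is all the framework requires. Keep that coefficient and prove for it what \Cref{lem:pLaplace-pure-diffusion_PDE} proves:
\begin{itemize}
\item $a\ge 0$;
\item $a(\cdot,u^z(t,\cdot))u^z(t,\cdot)\in W^{1,1}_{\textup{loc}}(\R^d)$ with gradient $|\nabla u^z|^{p-2}\nabla u^z$;
\item $\int_{\R^d}a(x,u^z(t,\cdot))u^z(t,x)\,dx=c\,t^{-\alpha}$ with $\alpha<1$, so \eqref{eq:integrability_our_paper} holds outright and \Cref{thm:SP-pr} needs no limiting argument at $t=0$;
\item continuity of $h$ on $\R_+$, including at $R(1)$.
\end{itemize}

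For uniqueness, the radial reduction you try first does not work as stated. By It\^{o}'s formula the drift of $|X_t-z|^2$ is $2d\,a(X_t,u^z(t,\cdot))$. Since $h(\xi)-h(0)\sim c\,\xi^{p/(p-1)}$ as $\xi\to 0$, this drift is only H\"{o}lder of order $\frac{p}{2(p-1)}<1$ in the variable $|X-z|^2$ at the centre, so Yamada--Watanabe does not apply. Recovering uniqueness of $X$ from its radius through a time-changed angular motion would be a further nontrivial step.

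Your fallback is the correct route, and it is what the paper does. By \Cref{thm:Markov-construction} and \Cref{lem:extrem-unique-equiv}, you only need, for each $s>0$, uniqueness of $\partial_t v=\Delta\big(a(x,u^z(s+t,\cdot))v\big)$, $v(0)=u^z(s,\cdot)$, among $0\le v\le C u^z(s+t,\cdot)$. Such $v$ are bounded and vanish outside a fixed ball on $[0,T]$. Moreover, $(t,x)\mapsto a(x,u^z(s+t,\cdot))$ is bounded and continuous on $[0,T]\times\R^d$; this is where continuity of $h$ at the free boundary is used. These facts let the paper invoke \cite[Theorem 3.1]{Belaribi2012}. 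This bypasses any pathwise analysis of the degenerate, only $\frac12$-H\"{o}lder diffusion coefficient, and never encounters the singularity at $t=0$.

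Finally, for the martingale property, boundedness of $a$ on $[\varepsilon,T]$ alone does not suffice. Use $a(x,u^z(t,\cdot))\le \|h\|_\infty t^{-\alpha}$ with $\alpha<1$, which bounds each $[X^i]_T$ by $2\|h\|_\infty T^{1-\alpha}/(1-\alpha)$.
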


        \item[$\circ$] We also show that a previously constructed such process in \cite{BarbuRehmeierRockner2024}, called $p$-Brownian motion, can alternatively be viewed (at least formally) as solutions to a drift-free SDE with a ``fully anticipating'' stochastic integral (see Section \ref{subsec:pLaplace_gen_diffusion}).
    \end{itemize}

    \item (\Cref{sec:heat_equation}). We construct infinitely many nonlinear Markov processes with
    heat kernel one-dimensional marginals by interpolating between standard Brownian motion and a pure-drift process solving an ODE (\Cref{prop:HE-infin-many}). We show that these processes depend continuously on the interpolation parameter $\beta \in [0,\infty)$, the diffusion coefficient of the respective MV-SDE (\Cref{lemlem}).
\end{enumerate}
    
Our results raise the problem of identifying a \emph{canonical} nonlinear Markov process associated with given one-dimensional time marginals.
While it is possible to single out the pure-drift processes constructed in this paper for the PME and $p$-Laplace equation from an optimization perspective---as ODE solutions with minimal vector fields \cite[Chapter 8]{AGS2008GFs} or as minimizers of an energy functional among all processes with the same one-dimensional marginals \cite{Abedi2025paths,Abedi2025processes}---and while such processes can be alternatively constructed via optimal transport techniques \cite{Lisini2007,Taghvaei2016}, it remains unclear how to identify a process from a probabilistic perspective that plays a role analogous to Brownian motion.
We suggest one such approach for the PME:

\begin{enumerate}[label=\textbf{(\roman*)}, font=\normalfont, itemsep=1em,leftmargin=*]
  \setcounter{enumi}{5}
  \item (\Cref{sec:SPDE}). We show that all McKean--Vlasov interpretations for the PME provided in Section \ref{sec:porous_medium} can be identified with a corresponding formal gradient flow structure---and, thus, a geometry. We do so by means of associated generalized Dean--Kawaski equations.
\end{enumerate}

\subsubsection*{Details on main results} We now elaborate on our results in more detail.
\begin{enumerate}[label=\textbf{(\roman*)}, font=\normalfont, leftmargin=15pt, itemindent=25pt,itemsep=1em]
\item \textbf{(\Cref{sec:scheme}; Scheme).} The scheme we derive starts from a family of probability density-valued solutions $\{u^z\}_{z\in \R^d}$ to \eqref{intro:nl-PDE} with $u^z(0,x)dx = \delta_z$ and consists of three steps:
\vspace{2pt}
\begin{enumerate}[label=(\arabic*), itemsep=2pt, leftmargin=45pt]
    \item Identify coefficients $(a,b) : \R^d \times \Pscr_* \to \R \times \R^d$ with a domain $\Pscr_* \subseteq \Pscr_{\textup{ac}}$ such that each $u^z$ solves the FPE \eqref{intro:nl-PDE} (in the sense of \Cref{def:FPE-sol}).
    
    \item Construct solutions $X^z$ to the corresponding MV-SDE \eqref{eq:main_MV-SDE_intro} with $\mathcal{L}(X^z_t) = u^z(t,x)dx$.
    
    \item Show that $X^z$ is the unique solution to this MV-SDE with $\mathcal{L}(X^z_t) = u^z(t,x)dx$, and that the family of solution path laws $P^z $ of $X^z$ is a nonlinear Markov process (more precisely, a nonlinear Markov core; see \Cref{def:NMC}).
\end{enumerate}
\vspace{2pt}
Regarding Step (1), such an associated FPE need not always exist, but it does for the examples studied here (and, in fact, for large classes of PDEs, including $2D$ vorticity Navier--Stokes and Euler equations, Burgers equation, porous medium equations with nonlinear transport-type drift, the Leibenson equation, and even PDEs involving a fractional Laplace operator, see \cite{RehmeierRoeckner2025NonlinearMarkov, RehmeierRomito2025,BarbuRocknerZhang2025, R.BGR25}). The key insight here is that \emph{if} such an associated FPE can be derived, then there typically exist infinitely many other FPEs also solved by each $u^z$. In the proposition below, we present a procedure to pass from one Fokker--Planck interpretation to another.
Hence, the nonuniqueness expressed by \Cref{intro:thm1} originates from this first step. 

In Step (2), having solutions $u^z$ to the given FPE \eqref{intro:nl-PDE}, we construct via the celebrated Ambrosio--Figalli--Trevisan superposition principle (see \Cref{thm:SP-pr}), weak solutions $X^z$ to the associated MV-SDE \eqref{eq:main_MV-SDE_intro} 
such that $X^z_t$ has distribution $u^z(t,x)dx$. No further degree of nonuniqueness is induced by Step (2), since this MV-SDE is, at least in our framework, uniquely determined from \eqref{intro:nl-PDE}. Therefore, combining this with our procedure yields the following result, which follows from \Cref{lem:many-PDEs_new} and \Cref{cor:new}: 
    \begin{proposition}\label{prop:many-PDEs_new_intro_summ}
        Let $(u(t,\cdot))_{t>0} \subseteq  \Pscr_* $ be a solution to the FPE \eqref{intro:nl-PDE} (in the sense of \Cref{def:FPE-sol}) with initial condition $\zeta \in \Pscr$ and coefficients $(a, b): \R^d \times \Pscr_* \to \R \times \R^d$ for some $\Pscr_* \subseteq \Pscr_{\textup{ac}} $ satisfying the integrability condition \eqref{eq:integrability_our_paper}. Let $f: \R^d \times \Dscr(f) \to \R $ with $\Dscr(f) \subseteq \Pscr_{\textup{ac}}$, be such that $(u(t,\cdot))_{t >0} \subseteq\Dscr(f)$, and 
        $f(x,u(t,\cdot)) + a(x,u(t,\cdot))u(t,x) \geq 0 \,  u(t,x)dxdt\text{-a.e.,}$
        and that \eqref{eq:assumption_f_strong} holds.
        Then $(u(t,\cdot))_{t> 0}$ also solves
        \begin{equation}\label{eq:f_lemma_intro_FPE}
            \partial_t u = \Delta \Big( \big(a(x,u) + \frac{f(x,u)}{u} \big)  u \Big) - \nabla \cdot \Big( \big( b (x,  u ) + \frac{\nabla f(x,u)}{u} \big) u\Big),
    \end{equation}
    where the new coefficients are defined on $\hat{\Pscr}_* \coloneqq \Pscr_* \cap \{ u \in \Dscr(f): \, f(\cdot,u) \in W^{1,1}_{\textup{loc}}(\R^d)\}$, with the convention $\frac {f(\cdot,u)} {u} := 0, \,\frac{\nabla f(\cdot, u)}{u} := 0$ on $\{u = 0\}$, and there exists a weak solution $(X_t)_{t\geq 0}$ to the corresponding MV-SDE 
        \begin{equation}
        \begin{dcases}
            dX_t = \left( b(X_t,u(t,\cdot)) + \frac{\nabla f(X_t,u(t,\cdot))}{u(t,X_t)} \right) dt + \sqrt{2}\left( a(X_t,u(t,\cdot)) + \frac{f(X_t,u(t,\cdot))}{u(t,X_t)} \right)^{\frac{1}{2}} dW_t, \\
            \mathcal{L}(X_t) = u(t,x)dx,\,  t>0, \,\,\, \mathcal{L}(X_0) = \zeta.
        \end{dcases}
    \end{equation}
\end{proposition}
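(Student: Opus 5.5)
The argument has two parts. First, verify directly in the weak (distributional) formulation of \Cref{def:FPE-sol} that $u$ also solves \eqref{eq:f_lemma_intro_FPE}. Second, apply the superposition principle (\Cref{thm:SP-pr}) to the new linearized equation to obtain the weak solution $X$.

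\textbf{The identity.} For $\varphi\in C_c^\infty(\R^d)$, the weak form of \eqref{intro:nl-PDE} reads
\begin{equation}
\int \varphi\, u(t,x)\,dx-\int\varphi\,d\zeta=\int_0^t\int \big(a(x,u(s))\Delta\varphi+b(x,u(s))\cdot\nabla\varphi\big)u(s,x)\,dx\,ds .
\end{equation}
Compare this with the weak form of \eqref{eq:f_lemma_intro_FPE}. The new coefficients are $\hat a=a+f/u$ and $\hat b=b+\nabla f/u$, with the convention that both quotients vanish on $\{u=0\}$. The extra terms therefore are
\begin{equation}
\int_0^t\int \Big(\frac{f}{u}\Delta\varphi+\frac{\nabla f}{u}\cdot\nabla\varphi\Big)u\,dx\,ds=\int_0^t\int_{\{u>0\}}\big(f\,\Delta\varphi+\nabla f\cdot\nabla\varphi\big)\,dx\,ds .
\end{equation}
If the integrals were over all of $\R^d$, this would vanish for a.e.\ $s$: since $f(\cdot,u(s))\in W^{1,1}_{\textup{loc}}$, integration by parts gives $\int f\Delta\varphi=-\int\nabla f\cdot\nabla\varphi$.

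So the key point is the contribution of $\{u=0\}$, and this is where I expect the main obstacle. I would use the standing hypothesis \eqref{eq:assumption_f_strong}, which I take to mean that $f(\cdot,u(s))=0$ a.e.\ on $\{u(s)=0\}$ (or some equivalent support condition). Since $f(\cdot,u(s))\in W^{1,1}_{\textup{loc}}$, the standard fact that Sobolev functions have $\nabla g=0$ a.e.\ on $\{g=0\}$ then gives $\nabla f=0$ a.e.\ there. Hence the integral over $\{u>0\}$ equals the integral over $\R^d$, which is zero.

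Two further points are needed for the identity:
\begin{itemize}
\item Measurability in $s$ and the applicability of Fubini follow from the integrability part of \eqref{eq:assumption_f_strong}, namely $\int_0^T\int_K(|f|+|\nabla f|)\,dx\,ds<\infty$ for compact $K$.
\item The positivity hypothesis $f+au\ge0$ $u\,dx\,dt$-a.e.\ ensures $\hat a\ge 0$ there. We may redefine $\hat a$ as $0$ on the null set where it fails, so $\sqrt{2\hat a}$ makes sense.
\end{itemize}

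\textbf{The stochastic representation.} Freeze $\mu_t=u(t,x)dx$ and view $\hat a(x,u(t))$ and $\hat b(x,u(t))$ as time-dependent coefficients of a linear FPE, which $\mu$ solves by the identity above. To apply \Cref{thm:SP-pr}, I must check the integrability
\begin{equation}
\int_0^T\int (|\hat a|+|\hat b|)\,u\,dx\,dt<\infty .
\end{equation}
This is the analogue of \eqref{eq:integrability_our_paper}. It is bounded by the original condition for $(a,b)$ plus $\int_0^T\int(|f|+|\nabla f|)\,dx\,dt$, where the factor $u$ cancels the division. I expect \eqref{eq:assumption_f_strong} to provide exactly this global integrability; this is the second place where the assumption on $f$ is used.

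The superposition principle then yields a probability measure on path space under which the canonical process solves the martingale problem with coefficients $(\hat a,\hat b)$ and has marginals $\mu_t$ and initial law $\zeta$. The standard martingale-problem-to-weak-solution correspondence, enlarging the probability space if the diffusion degenerates, produces a Brownian motion $W$ and a weak solution of the displayed MV-SDE with $\mathcal L(X_t)=u(t,x)dx$.
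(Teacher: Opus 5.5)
Your route is the paper's. The proof of \Cref{lem:many-PDEs_new} checks the new weak formulation by integrating by parts in $x$ for a.e.\ fixed time. \Cref{cor:new} then applies \Cref{thm:SP-pr} after bounding $\int_0^T\int_{\R^d}(|\hat a|+|\hat b|)\,u\,dx\,dt$ by the old integrability plus $\int_0^T\|f(\cdot,u(t,\cdot))\|_{1,1}\,dt$, exactly as you do.

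Your reading of \eqref{eq:assumption_f_strong} is off, however. It is only the condition $[(t,x)\mapsto f(x,u(t,\cdot))]\in L^1_{\textup{loc}}(\R_+;W^{1,1}(\R^d))$. That justifies your integration by parts and your global integrability bound. It contains no support information, so ``$f(\cdot,u(t,\cdot))=0$ a.e.\ on $\{u(t,\cdot)=0\}$'' follows from no hypothesis of the statement, and your treatment of $\{u=0\}$ is unjustified as written.

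That said, you have found a real issue that the paper passes over. Its proof equates $\int(\frac{f}{u}\Delta\varphi+\frac{\nabla f}{u}\cdot\nabla\varphi)\,u\,dx$ with $\int_{\R^d}(f\Delta\varphi+\nabla f\cdot\nabla\varphi)\,dx$. With the convention on $\{u=0\}$, the former is an integral over $\{u>0\}$ only. \Cref{rmk:f_lemma_zero_set} concerns the values of the coefficients on $\{u=0\}$, not this identity.

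Without your extra condition the statement is actually false. Take the PME Barenblatt solution $u^z$ with $(a,b)=(u^{m-1},0)$, and let $f(x,u):=\psi(x)$ with $\psi\in C^\infty_c(\R^d)$, $\psi\ge 0$, $\psi\equiv 1$ near $\overline{B_{R(T)}(z)}$. All hypotheses hold. Now take $\varphi\in C^2_c(\R^d)$ equal to $\frac12|x-z|^2$ near $\overline{B_{R(T)}(z)}$ and $t\le T$. The additional term in the weak formulation is then $\int_0^t\int_{B_{R(s)}(z)}\Delta\varphi\,dx\,ds=d\int_0^t|B_{R(s)}(z)|\,ds>0$. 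By It\^o's formula, the new MV-SDE also has no solution with marginals $u^z$.

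The fix is to add an explicit hypothesis: ``$f(\cdot,u(t,\cdot))=0$ a.e.\ on $\{u(t,\cdot)=0\}$ for a.e.\ $t$'', or alternatively that $\{u(t,\cdot)=0\}$ is Lebesgue-null. This holds in every application in the paper, where $f$ is $-\tfrac12u$, $cu^p$, $-u^m$ or $u-u^m$. With that hypothesis stated, your argument is complete, including the step $\nabla f=0$ a.e.\ on $\{f=0\}$, and it is more careful than the paper's.
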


We remark that the above proposition extends to time-dependent coefficients $a(t,x,u)$, $b(t,x,u)$, and function $f(t,x,u)$ without additional assumptions; however, the present formulation suffices for our purposes.
This proposition shows that essentially equivalent nonlinear formulations of a PDE lead to different MV-SDEs, which can all be solved with the same prescribed one-dimensional time marginal densities $u(t,\cdot)$. Thus, if $\{u^z\}_{z\in \R^d}$ is a family of solutions to \eqref{intro:nl-PDE}, then \Cref{prop:many-PDEs_new_intro_summ} implies the existence of solution families $\{X^z\}_{z\in \R^d}$ to distinct MV-SDEs of type above with one-dimensional time marginals $\{u^z(t,\cdot)\}_{z\in \R^d, t >0}$.

In Step (3), we suppose Step (2) has been implemented for a family of solutions $\{u^z\}_{z\in \R^d}$ to \eqref{intro:nl-PDE}, yielding a family of solutions $\{X^z\}_{z\in \R^d}$ to \eqref{eq:main_MV-SDE_intro} with $\mathcal{L}(X^z_t) = u^z(t,x)dx$ and $X^z_0 = z$. Then we prove that these solutions are unique and their path laws form a \emph{nonlinear Markov core}. The latter is a new natural notion, introduced in \Cref{def:NMC}, which we motivate in the paragraph preceding this definition. Step (3) then culminates in Theorem \ref{thm:Markov-construction} and is a refinement of the general nonlinear Markov construction in \cite[Theorem 3.8]{RehmeierRoeckner2025NonlinearMarkov}. Also from this step no nonuniqueness arises, as we show that the nonlinear Markov process is uniquely determined by its marginals \emph{and the MV-SDE}.
We emphasize that Theorem \ref{thm:Markov-construction} proves weak uniqueness of solutions to the MV-SDE with Dirac initial condition, even though we assume only uniqueness for a class of linear FPEs with regular initial data. 

The conclusion of our scheme is that equivalent FPE-reformulations of a PDE lead to entirely distinct MV-SDEs and, thereby, distinct nonlinear Markov processes with common one-dimensional time marginals. \Cref{fig:pde-nfpe-mvsde-nmp} displays the structure of our scheme.
\end{enumerate}

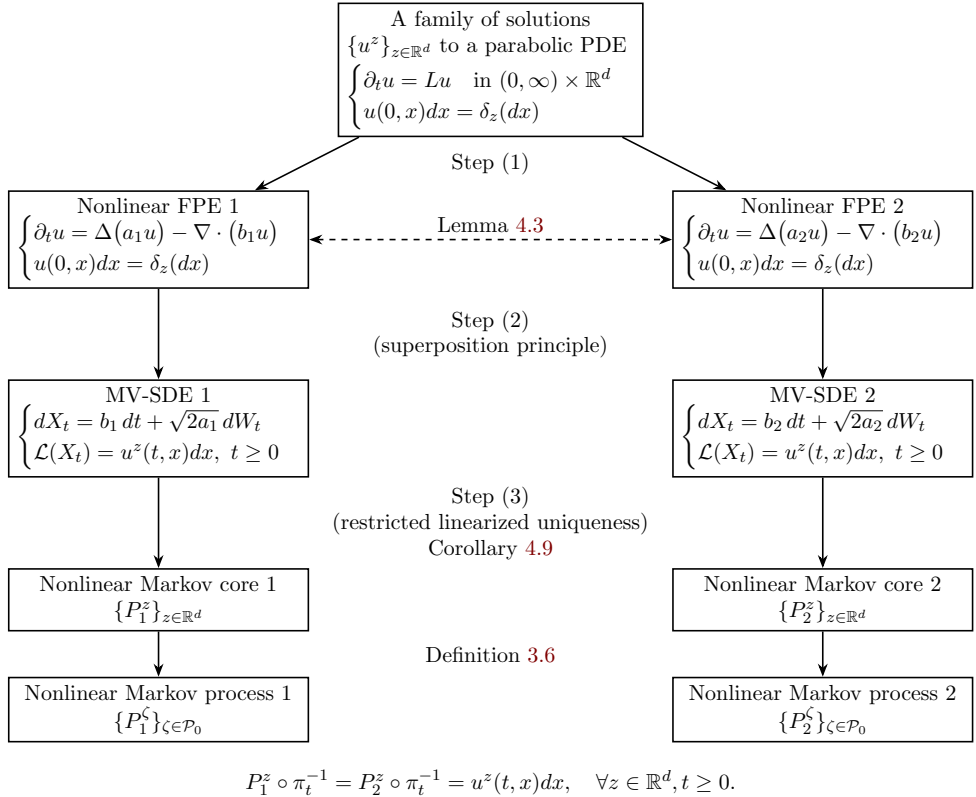
\begin{figure}
\centering
\scalebox{0.75}{
\begin{tikzpicture}[
  node distance=16mm and 5mm, 
  >=Stealth,
  box/.style={draw, thick, minimum width=30mm, minimum height=9mm, align=center}
]

\node[box] (pde) {A family of solutions \\ $\{u^z\}_{z \in \R^d} $ to a parabolic PDE \vspace{2pt} \\ $\begin{cases}
        \partial_t u   =  L u \quad \textrm{in }  (0, \infty) \times \mathbb{R}^d\\
        u(0,x)dx = \delta_z(dx)
    \end{cases}$ };

\node[box, left=of pde, yshift=-30mm, text width=50mm] (fpe1) {Nonlinear FPE 1 \\ $\begin{cases} \partial_t u = \Delta \big( a_1  u \big) - \nabla \cdot \big( b_1 u\big) \\ u(0,x)dx = \delta_z(dx) \end{cases} $};
\node[box, right=of pde, yshift=-30mm, text width=50mm] (fpe2) {Nonlinear FPE 2 \\ $\begin{cases} \partial_t u = \Delta \big( a_2  u \big) - \nabla \cdot \big( b_2 u\big) \\ u(0,x)dx = \delta_z(dx) \end{cases} $};

\node[box, below=of fpe1, yshift=0mm, text width=50mm] (mvsde1) {MV-SDE 1 \\ $\begin{cases}
    dX_t = b_1 \, dt + \sqrt{2 a_1} \, dW_t \\
    \mathcal{L}(X_t) = u^z(t,x)dx,\,\, t \geq 0
    \end{cases}$};
\node[box, below=of fpe2, yshift=0mm, text width=50mm] (mvsde2) {MV-SDE 2 \\ $\begin{cases}
    dX_t = b_2 \, dt + \sqrt{2 a_2} \, dW_t \\
    \mathcal{L}(X_t) = u^z(t,x)dx,\,\, t\geq0
    \end{cases}$};

\node[box, below=of mvsde1, yshift=0mm, text width=50mm] (nmc1) {Nonlinear Markov core 1 \\ $\{P_1^z\}_{z \in \R^d} $}; 
\node[box, below=of mvsde2, yshift=0mm, text width=50mm] (nmc2) {Nonlinear Markov core 2 \\ $\{ P_2^z\}_{z \in \R^d}  $};

\node[box, below=of nmc1, yshift=8mm, text width=50mm] (nmp1) {Nonlinear Markov process 1 \\ $\{P_1^\zeta\}_{\zeta \in \Pscr_0} $};
\node[box, below=of nmc2, yshift=8mm, text width=50mm] (nmp2) {Nonlinear Markov process 2 \\ $\{ P_2^\zeta\}_{\zeta \in \Pscr_0} $};

\draw[->, thick] (pde) -- (fpe1);
\draw[->, thick] (pde) -- (fpe2);

\draw[->, thick] ([xshift=0pt]fpe1.south) -- ([xshift=0pt]mvsde1.north); 

\draw[->, thick] ([xshift=0pt]fpe2.south) -- ([xshift=0pt]mvsde2.north); 

\draw[->, thick] (mvsde1) -- (nmc1);
\draw[->, thick] (mvsde2) -- (nmc2);

\draw[->, thick] (nmc1) -- (nmp1);
\draw[->, thick] (nmc2) -- (nmp2);

\node[draw=none, align=center, yshift=-0mm] 
at ($ ($(pde.south)!0.5!(fpe1.north)$)!0.5!($(pde.south)!0.5!(fpe2.north)$) $) 
{Step (1)};

\draw[<->, dashed, thick] (fpe1.east) -- (fpe2.west)
node[midway, above] {\Cref{lem:many-PDEs_new}};

\node[draw=none, align=center] at ($ ($(fpe1.south)!0.5!(mvsde1.north)$)!0.5!($(fpe2.south)!0.5!(mvsde2.north)$) $) {Step (2)\\
(superposition principle)};

\node[draw=none, align=center] at ($ ($(mvsde1.south)!0.5!(nmc1.north)$)!0.5!($(mvsde2.south)!0.5!(nmc2.north)$) $) {Step (3) \\
(restricted linearized uniqueness)
\\
\Cref{crl:nl_markov}};

\node[draw=none, align=center] at ($ ($(nmc1.south)!0.5!(nmp1.north)$)!0.5!($(nmc2.south)!0.5!(nmp2.north)$) $) {\Cref{def:NMC}};

\node[draw=none, align=center, yshift=-7mm] 
at ($ (nmp1.south)!0.5!(nmp2.south) $) 
{$P^z_1 \circ \pi_t^{-1} = P^z_2 \circ \pi_t^{-1} = u^z(t,x) dx, \quad \forall z \in \R^d, t \geq 0$.};
\end{tikzpicture}}
\captionsetup{font=footnotesize}
\caption{From a PDE to multiple nonlinear Fokker--Planck formulations, to MV-SDEs, and finally to nonlinear Markov processes, illustrating non-uniqueness in the scheme of \Cref{sec:scheme}.}
\label{fig:pde-nfpe-mvsde-nmp}
\end{figure}

\begin{enumerate}[label=\textbf{(\roman*)}, font=\normalfont, leftmargin=15pt, itemindent=25pt,itemsep=1em]
\setcounter{enumi}{2}
\item \textbf{(\Cref{sec:porous_medium}; Porous medium equation).} While a nonlinear Markov process with one-dimensional time marginals given by the Barenblatt solutions \eqref{eq:barenblatt_solution_porous} to the PME \eqref{intro:PME} was already constructed in \cite[Sect. 4.2]{RehmeierRoeckner2025NonlinearMarkov} and it was proven in \cite{R.BGR25} to consist of \emph{strong} solutions to the pure-noise SDE \eqref{eq:DDSDE-PME}, it was left open whether these solutions are unique. We close this gap by providing an affirmative answer via Proposition \ref{prop1}.

Next, by recasting the PME as the nonlinear first-order FPE
$$ \partial_t u = - \nabla \cdot \big( \frac{-\nabla u^m}{u} u\big)$$
and applying our scheme from \Cref{sec:scheme}, we construct in \Cref{prop2} a nonlinear Markov process with Barenblatt one-dimensional time marginals, consisting of the unique solutions to the distribution-dependent ODE
\begin{equation}\label{intro:coeff}
    \begin{dcases}
        dX_t = -\frac{\nabla u^m(t,X_t)}{u(t,X_t)}dt, \\
        \mathcal{L}(X_t) = u(t,x)dx,\,\,\, t >0. 
    \end{dcases}
\end{equation}
We then interpolate between these pure-diffusion and pure-drift cases, i.e.,
we prove uniqueness and the nonlinear Markov property of solutions to 
\begin{equation}\label{eq:beta_MVSDE_PME_intro}
    \begin{dcases}
    dX_t = -(1-\beta) \frac{\nabla u^m(t,X_t)}{u(t,X_t)}dt + \sqrt{2\beta} \, u(t,X_t)^{\frac{m-1}{2}}dW_t, \\ 
    \mathcal{L}(X_t) = u(t,x)dx,\,\,\, t >0,
    \end{dcases}
\end{equation}
(see Proposition \ref{prop_porous_beta}), where $\beta \in (0,\infty)$. Notably, for $\beta = \frac{2m}{m+1}$, this equation turns out to be (formally) equivalent to the pure-noise Stratonovich-SDE \eqref{eq:PME-pure-diff-Stratono} (see \Cref{subsect:PME-Stratono} and Appendix \ref{subsec:theta_interpretations}).

As a final result, we recast the PME as
\begin{equation}
    \partial_t u = \Delta u - \nabla \cdot \bigg(\frac{\nabla u- \nabla u^m}{u} u \bigg),
\end{equation}
in order to construct a nonlinear Markov process with Barenblatt one-dimensional time marginals consisting of the unique solutions to the \emph{additive noise} SDE
\begin{equation}
\begin{dcases}
    dX_t = \frac{\nabla u(t,X_t) - \nabla u^m(t,X_t)}{u(t,X_t)}dt + \sqrt{2} dW_t, \\
    \mathcal{L}(X_t) = u(t,x)dx,\quad t>0,
    \end{dcases}
\end{equation}
for $1<m<2$ (see \Cref{prop:PME-addnoise-MVSDE}).

All these results are obtained within the framework developed in Section \ref{sec:scheme}, i.e., by utilizing \Cref{lem:many-PDEs_new} and \Cref{crl:nl_markov}. One key step towards the application of the latter is to verify a restricted uniqueness condition for the linear PDE \eqref{eq4_new}. For instance, we show that the vector field in \eqref{intro:coeff}, upon choosing $u$ as the Barenblatt solution, is Lipschitz continuous on the support of the latter and that this suffices to derive the required uniqueness result.
\item \textbf{(\Cref{sec:pLaplace}; $p$-Laplace equation).}  In \Cref{subsec:pLaplace_diffusion}, we show that Barenblatt solutions \eqref{eq:barenblatt_solution_plaplace} to the $p$-Laplace equation \eqref{intro:pL} also solve a pure-diffusion FPE
\begin{equation}\label{intro1x}
   \partial_t u = \Delta(a(x,u)u), 
\end{equation}
where $a: \R^d \times \Pscr_* \to \R$ is a \emph{nonlocal} functional of $u$ given in \eqref{eq:a_pure_Ito_diffu_pLaplace_nl}, which is 
defined on the set $\Pscr_*$ \eqref{eq:P_star_pure_Ito_diffu_pLaplace} of absolutely continuous probability measures whose densities are radially symmetric and whose radial profiles possess sufficient Sobolev regularity. By exploiting the radial symmetry of Barenblatt solutions $u^z$, we show that they belong to $\Pscr_*$, satisfy $\nabla \big(a(x,u)u \big) = |\nabla u|^{p-2} \nabla u$ in distributional sense, and that the diffusion coefficient $a$ is non-negative along them. Thus, we prove in \Cref{proposition:pure_diffusion_plaplace} that the associated MV-SDE
    \begin{equation}
        \begin{dcases}
            dX_t = \sqrt{2a(X_t,u(t,\cdot))} \, dW_t, \\
            \mathcal{L}(X_t) = u(t,x)dx,\quad t>0,
        \end{dcases}
    \end{equation}
has unique solutions with Barenblatt time marginals, which form a nonlinear Markov process. By construction, these solutions are \emph{martingales}, which leads to \Cref{prop:intro-martingale}.

In \Cref{subsec:pLaplace_drift}, in contrast to \eqref{intro1x},  we interpret the $p$-Laplace equation as the \emph{first-order} nonlinear FPE
\begin{equation}\label{eq:pLaplace_drift_intro}
        \partial_t u   =  - \nabla \cdot \Big( \frac{-|\nabla u|^{p-2} \nabla u}{u}u \Big)
    \end{equation}
to which we relate the distribution-dependent ODE
\begin{equation}
    \begin{dcases}
        dX_t = -\frac{|\nabla u(t,X_t)|^{p-2} \nabla u(t,X_t)}{u(t,X_t)}dt, \\
        \mathcal{L}(X_t) = u(t,x)dx, \,\,\, \forall t>0.
    \end{dcases}
\end{equation}
Similarly to the PME, we construct unique solutions to this ODE with Barenblatt one-dimensional time marginals and prove their nonlinear Markov property. To us, it was interesting to observe that, upon inserting the Barenblatt solution for $u$, the vector field in the previous ODE is very similar to the one in \eqref{intro:coeff}, see \Cref{rem:similar-ODEs}. Again, interpolation between these pure-diffusion and pure-drift cases is possible in order to construct infinitely many nonlinear Markov processes with Barenblatt one-dimensional time marginals, see \Cref{rem:interpolation-pL}. 

In \Cref{subsec:pLaplace_gen_diffusion}, we indicate that, for any $\theta \in [0,1]$, $u^z$ also solves 
\begin{equation}\label{eq:intro-theta}
        \partial_t u =  \Delta \big(a_\theta (x,u) u\big) -  \nabla \cdot \big((1-\theta)\nabla a_\theta(x,u) u\big),
    \end{equation}
with $a_0(x,u) = |\nabla u(x)|^{p-2}$ and, for $\theta \in ]0,1]$, $a_\theta$ is defined in \eqref{eq:a_generalized_diffusion_pLaplace}. We indicate that the associated MV-SDE is, at least formally, equivalent to the pure-diffusion MV-SDE
    \begin{equation}
    \begin{dcases}
        dX_t = \sqrt{2a_\theta(X_t,u(t,\cdot))} \circ^\theta  dW_t, \\
         \mathcal{L}(X_t) = u(t,x)dx,\quad t>0,
    \end{dcases}  
    \end{equation}   
    where $\circ^\theta$ denotes $\theta$-stochastic integration (see Appendix \ref{subsec:theta_Integration}). For $\theta = 0$ (``fully anticipating'' stochastic integral) and $\theta = \frac 1 2$ (Stratonovich integration), one recovers $p$-Brownian motion from \cite{BarbuRehmeierRockner2024} and a pure-noise Stratonovich-SDE, respectively. This particularly addresses the open question of whether $p$-Brownian motion can be interpreted as the solution to a pure-noise SDE.

   Similarly to \Cref{sec:porous_medium}, these results are obtained by applying \Cref{thm:Markov-construction} and \Cref{lem:extrem-unique-equiv} (more precisely, Corollary \ref{crl:nl_markov}) to the Barenblatt solutions and the respective nonlinear FPEs, i.e., \eqref{intro1x}, \eqref{eq:pLaplace_drift_intro} and \eqref{eq:intro-theta}. In the pure-diffusion case \eqref{intro1x}, the proof of restricted uniqueness for the associated linear PDE \eqref{lin p lap diff} follows from a deep uniqueness result in \cite{Belaribi2012}.

   \begin{figure}
    \centering
    \setlength{\tabcolsep}{4pt}
    \renewcommand{\arraystretch}{1.15}
    \begin{tabular}{>{\centering\arraybackslash}m{0.067\textwidth}
                    >{\centering\arraybackslash}m{0.288\textwidth}
                    >{\centering\arraybackslash}m{0.288\textwidth}
                    >{\centering\arraybackslash}m{0.288\textwidth}}          
        \rule{0pt}{5ex} 
        
        & {\small\textbf{$p$-Laplace equation}}
        & {\small\textbf{Porous medium equation}}
        & {\small\textbf{Heat equation}} \\
        {\footnotesize$\beta = 0.0$}
        & \includegraphics[width=\linewidth]{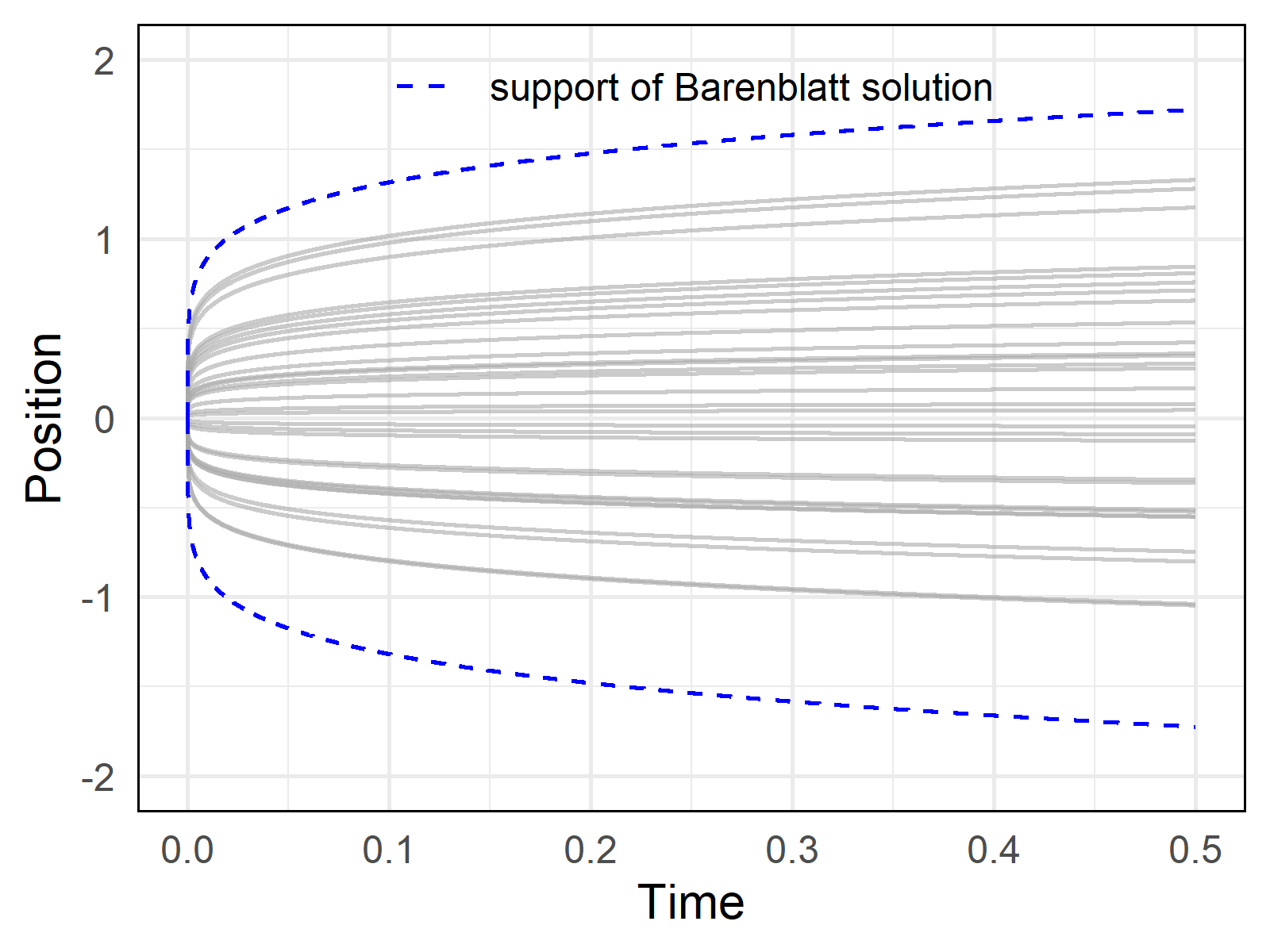}
        & \includegraphics[width=\linewidth]{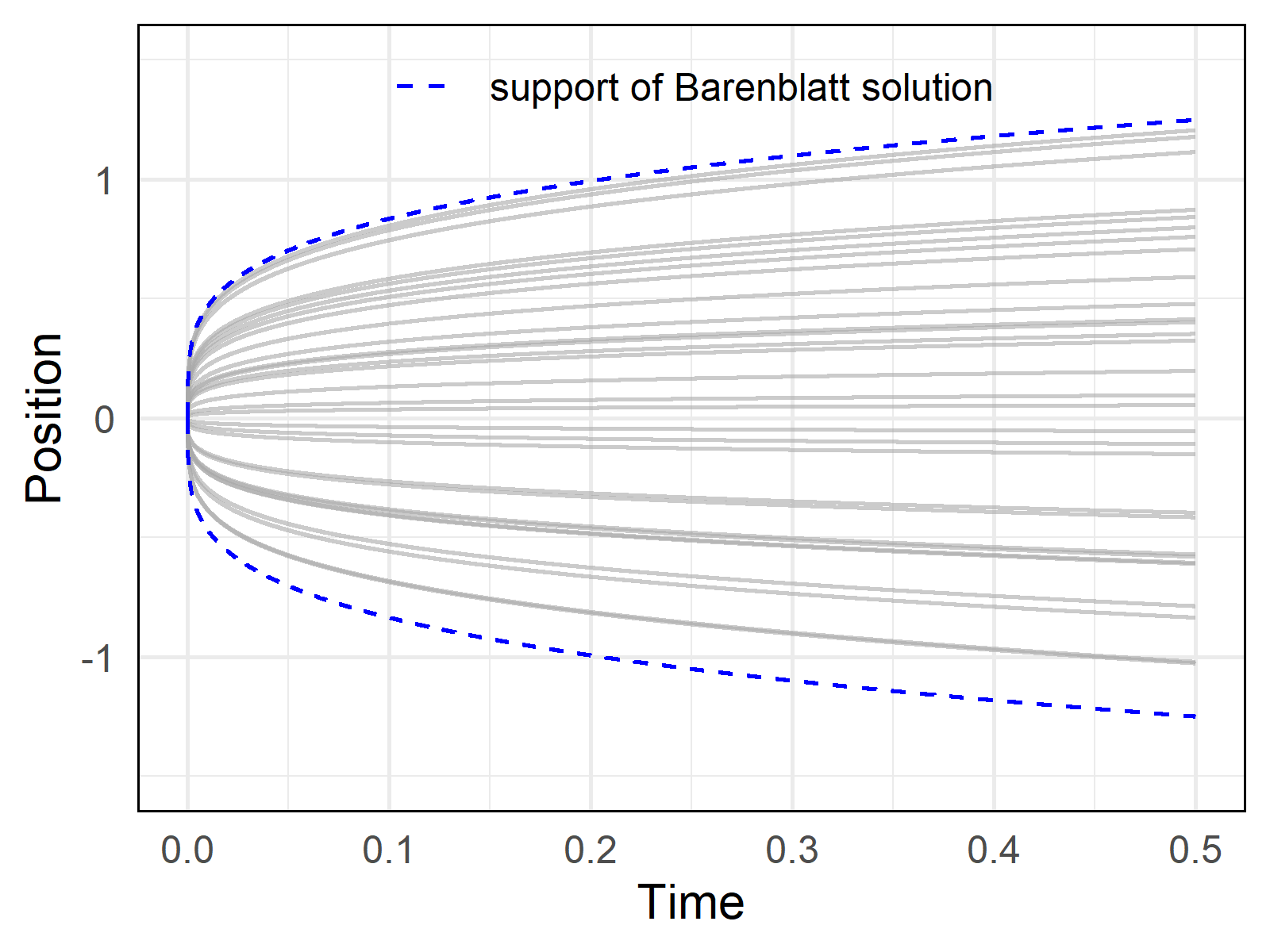}
        & \includegraphics[width=\linewidth]{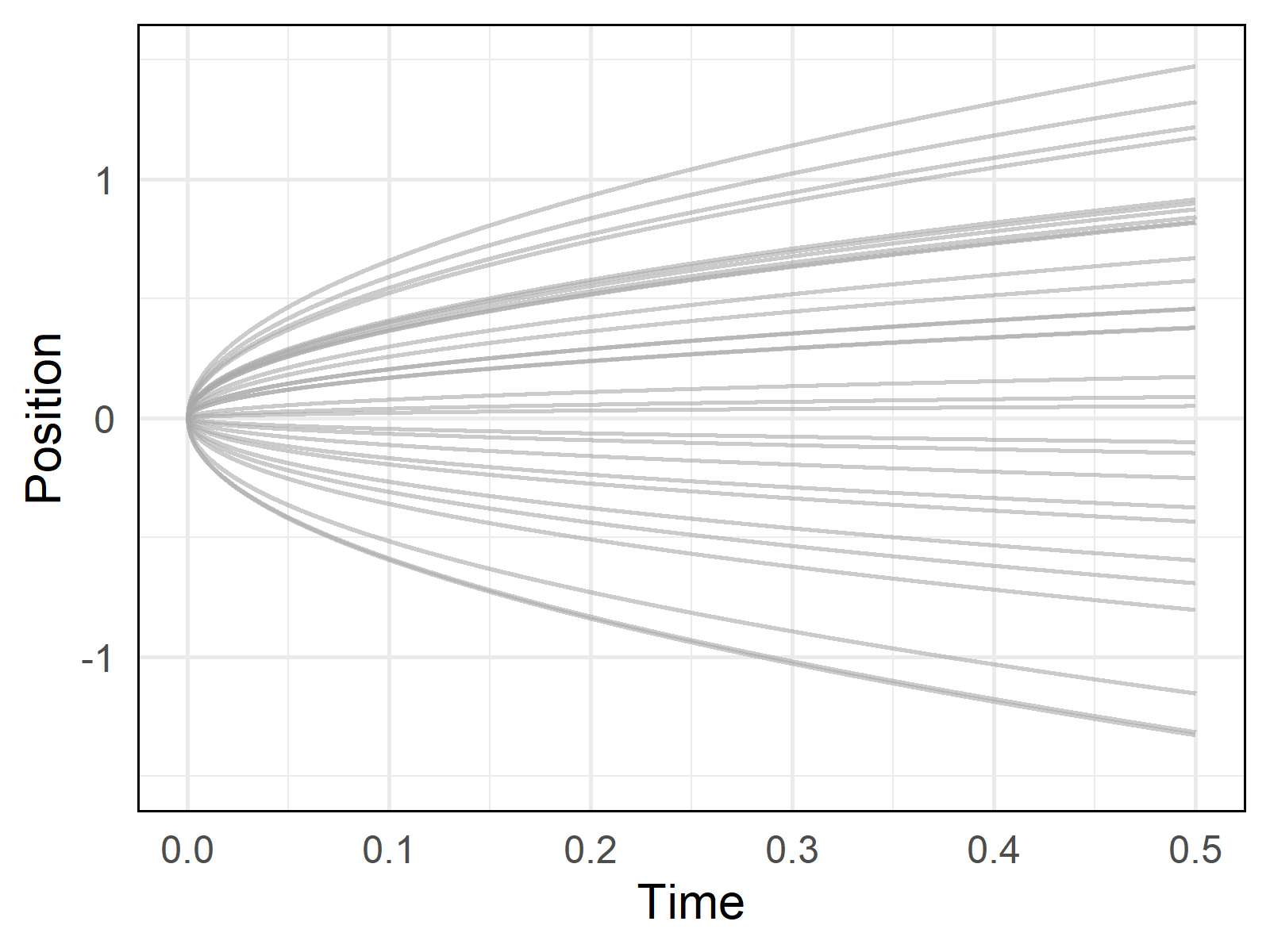} \\

        {\footnotesize$\beta = 0.1$}
        & \includegraphics[width=\linewidth]{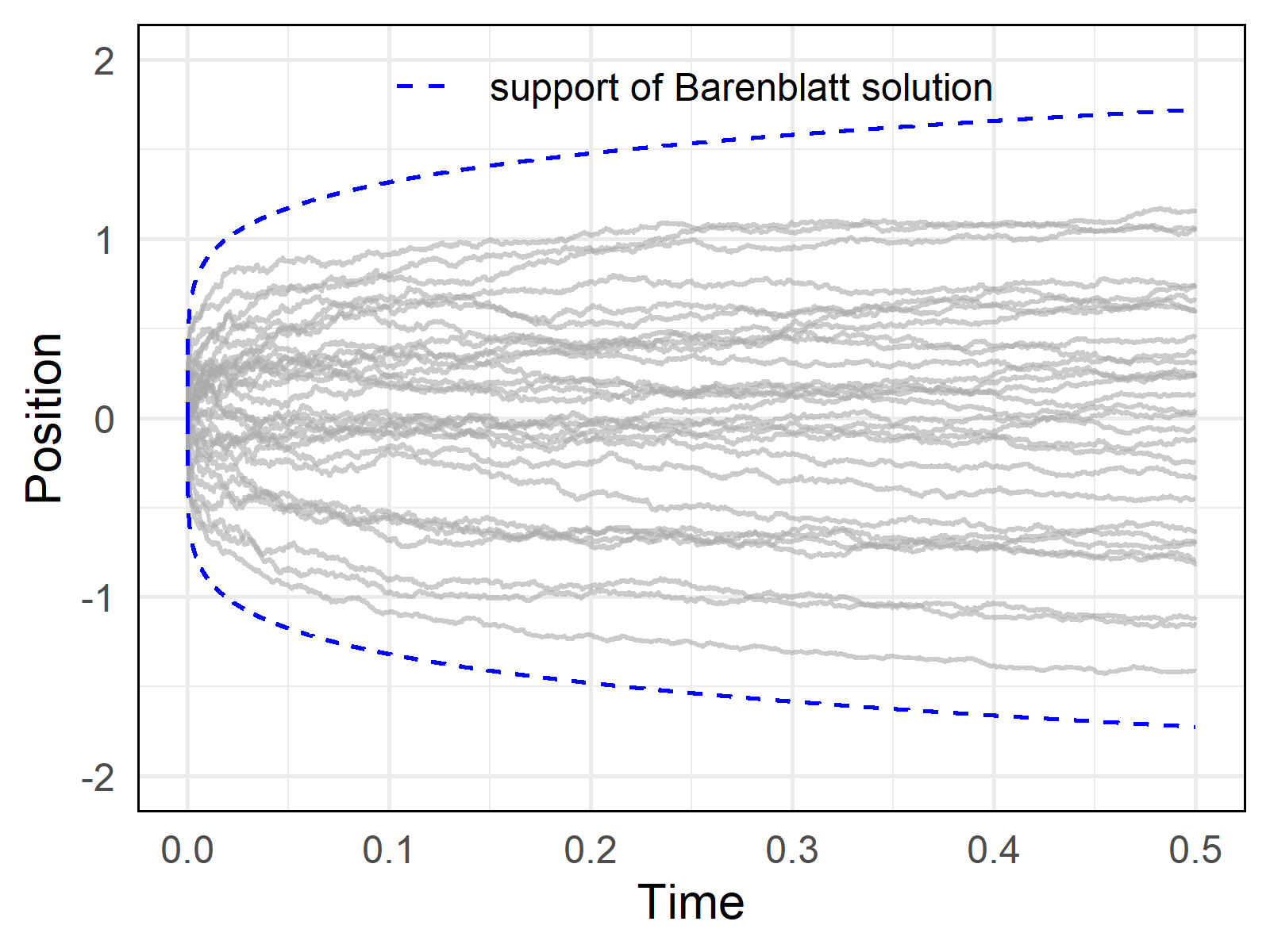}
        & \includegraphics[width=\linewidth]{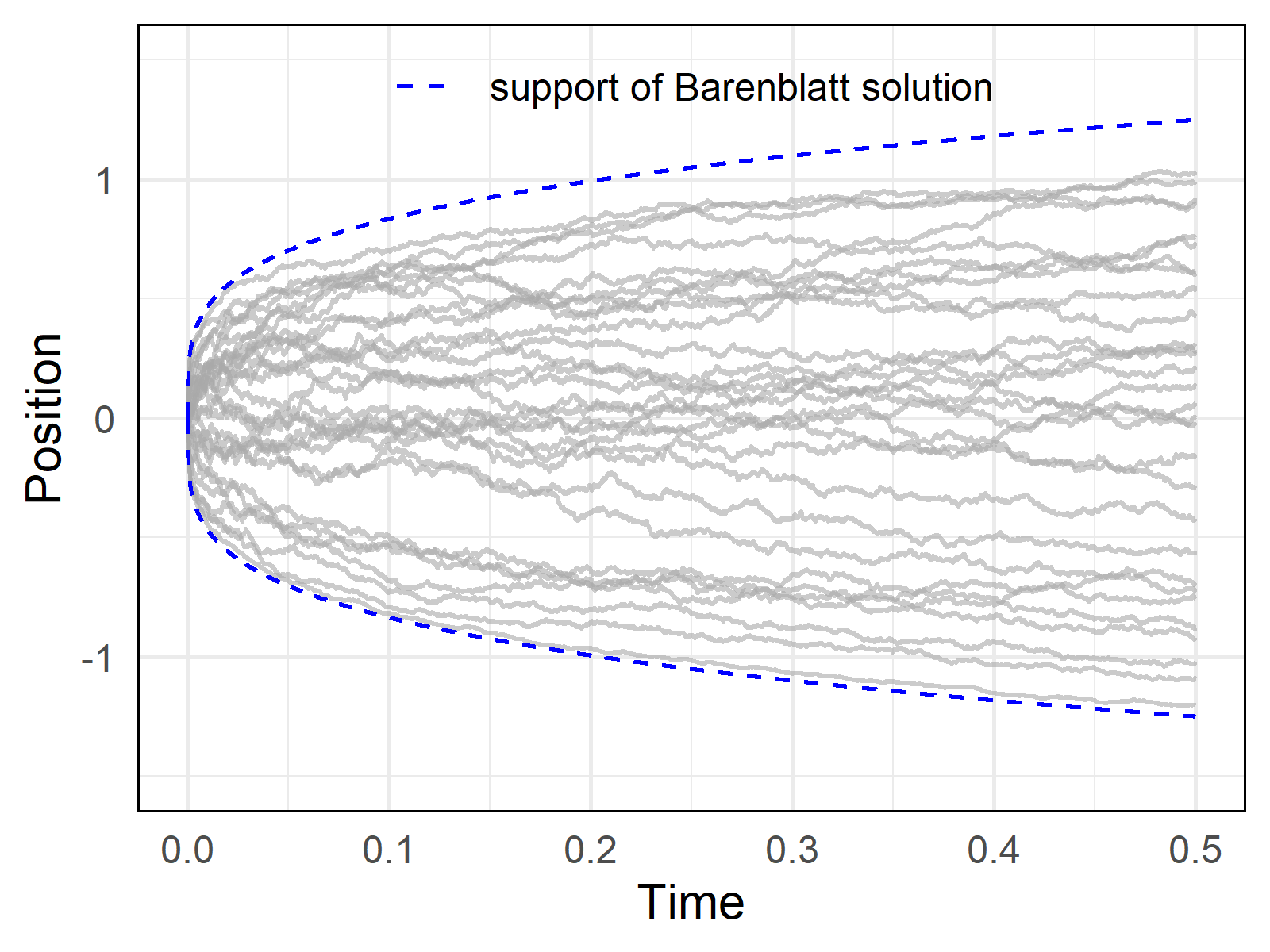}
        & \includegraphics[width=\linewidth]{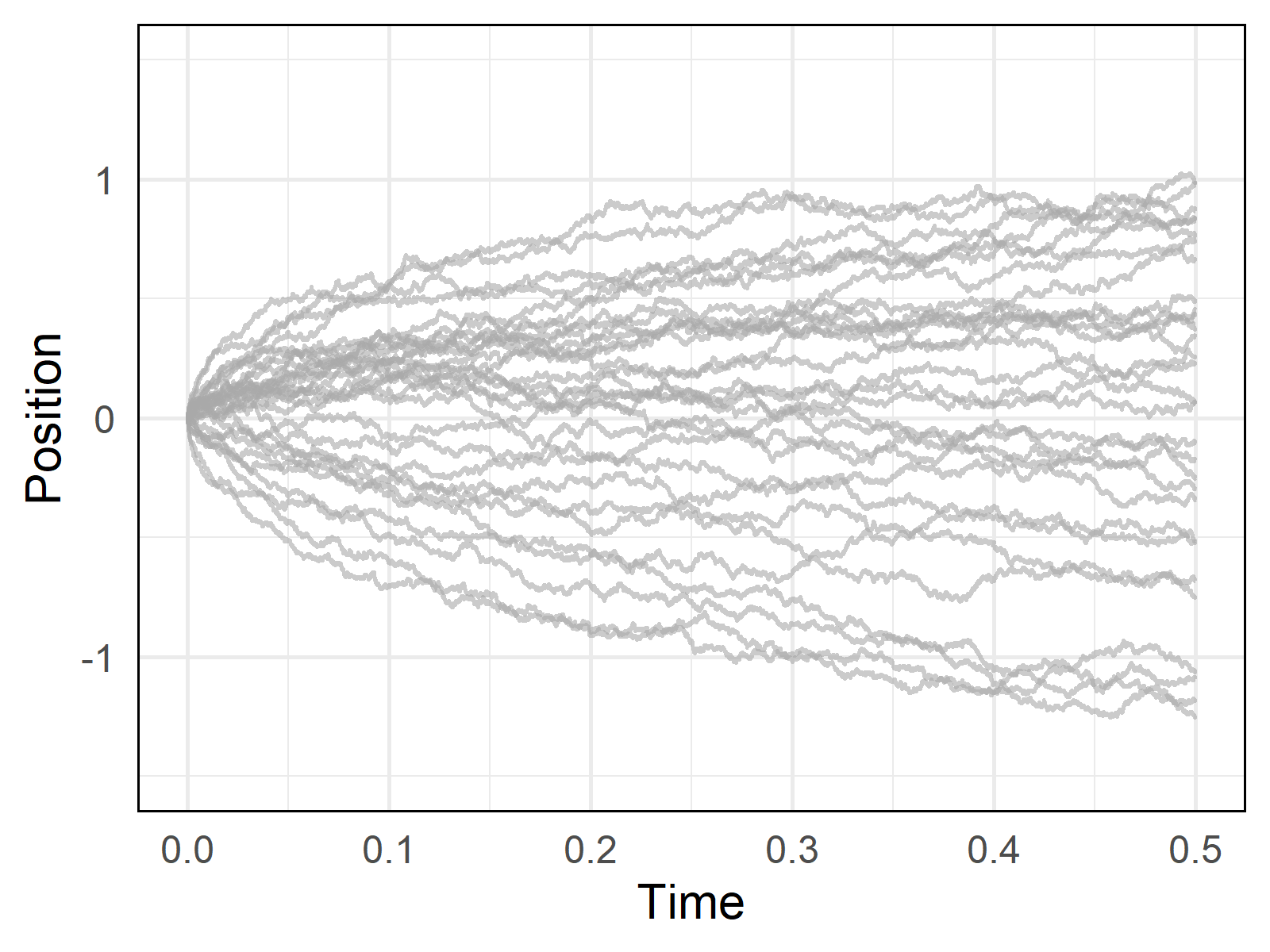} \\

        {\footnotesize$\beta = 1.0$}
        & \includegraphics[width=\linewidth]{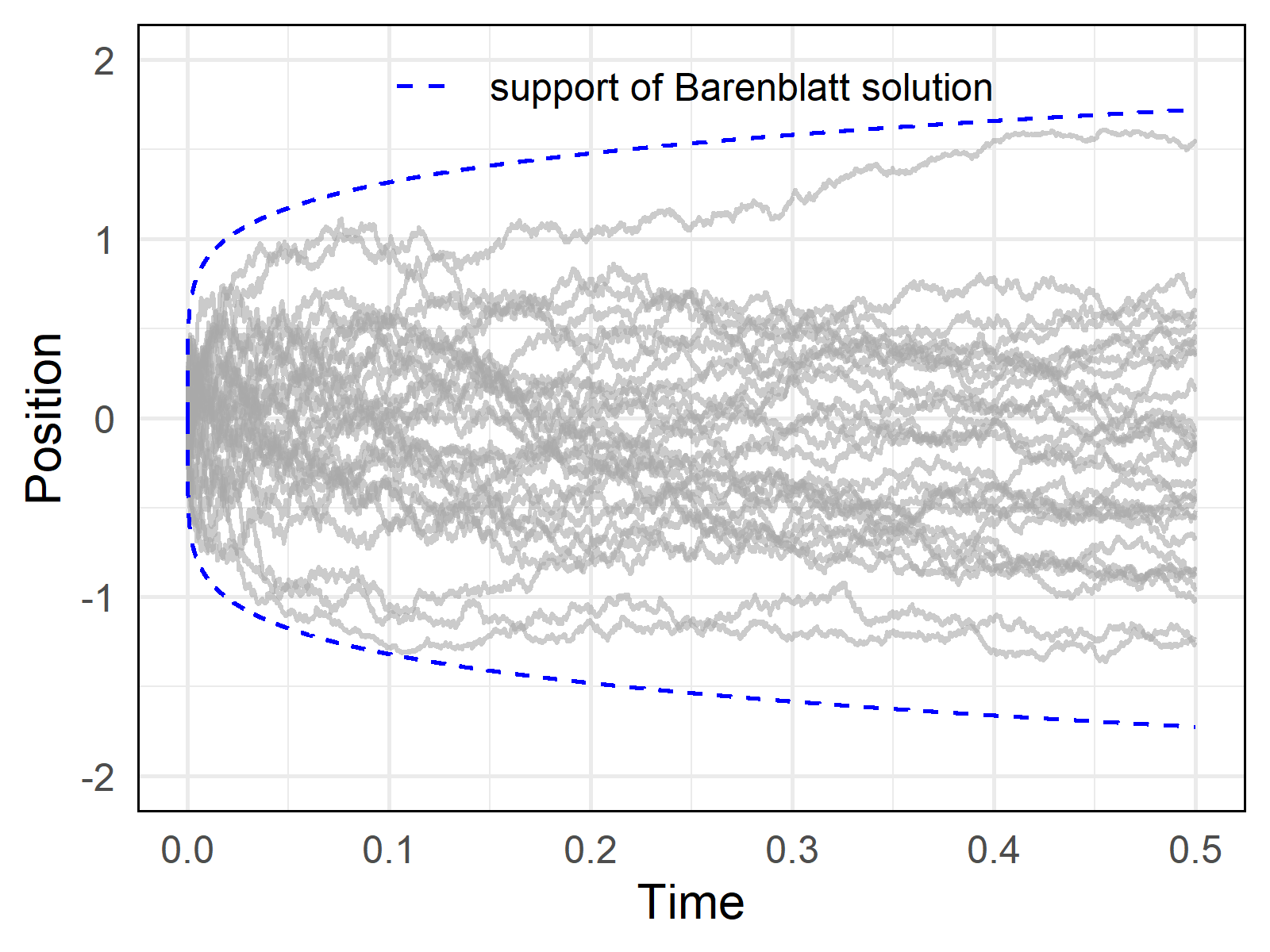}
        & \includegraphics[width=\linewidth]{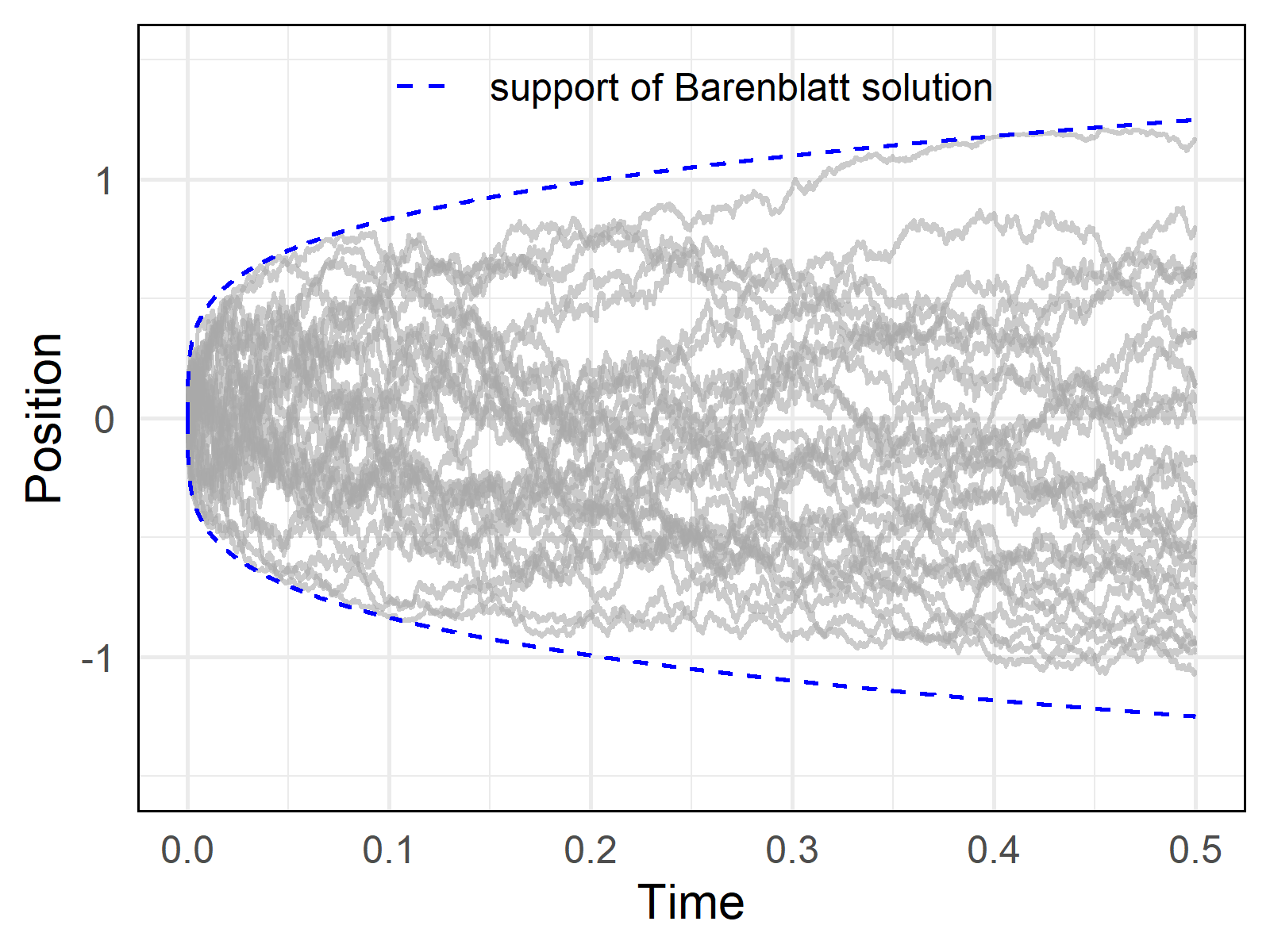}
        & \includegraphics[width=\linewidth]{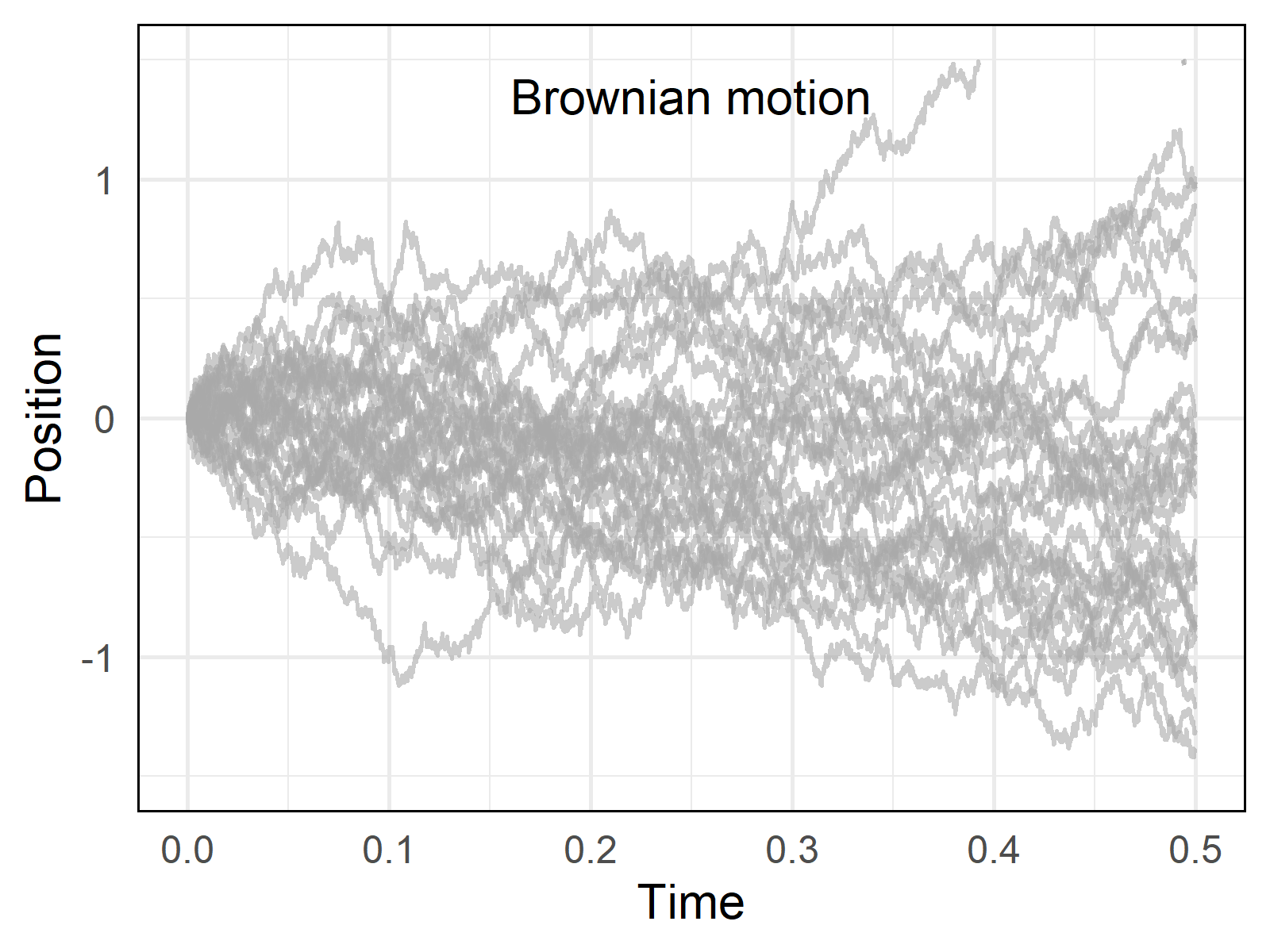} \\

        {\footnotesize$\beta = 1.5$}
        & \includegraphics[width=\linewidth]{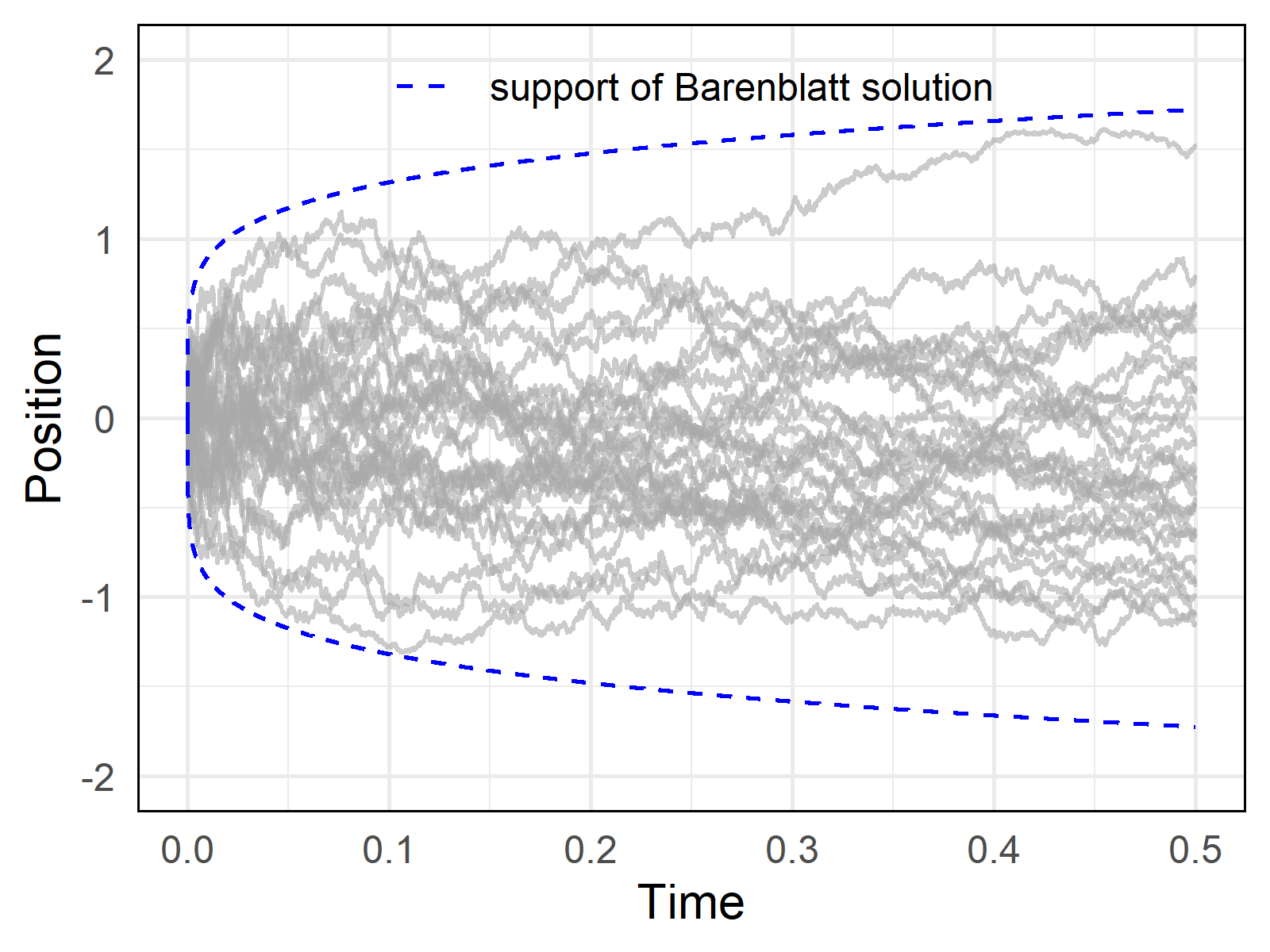}
        & \includegraphics[width=\linewidth]{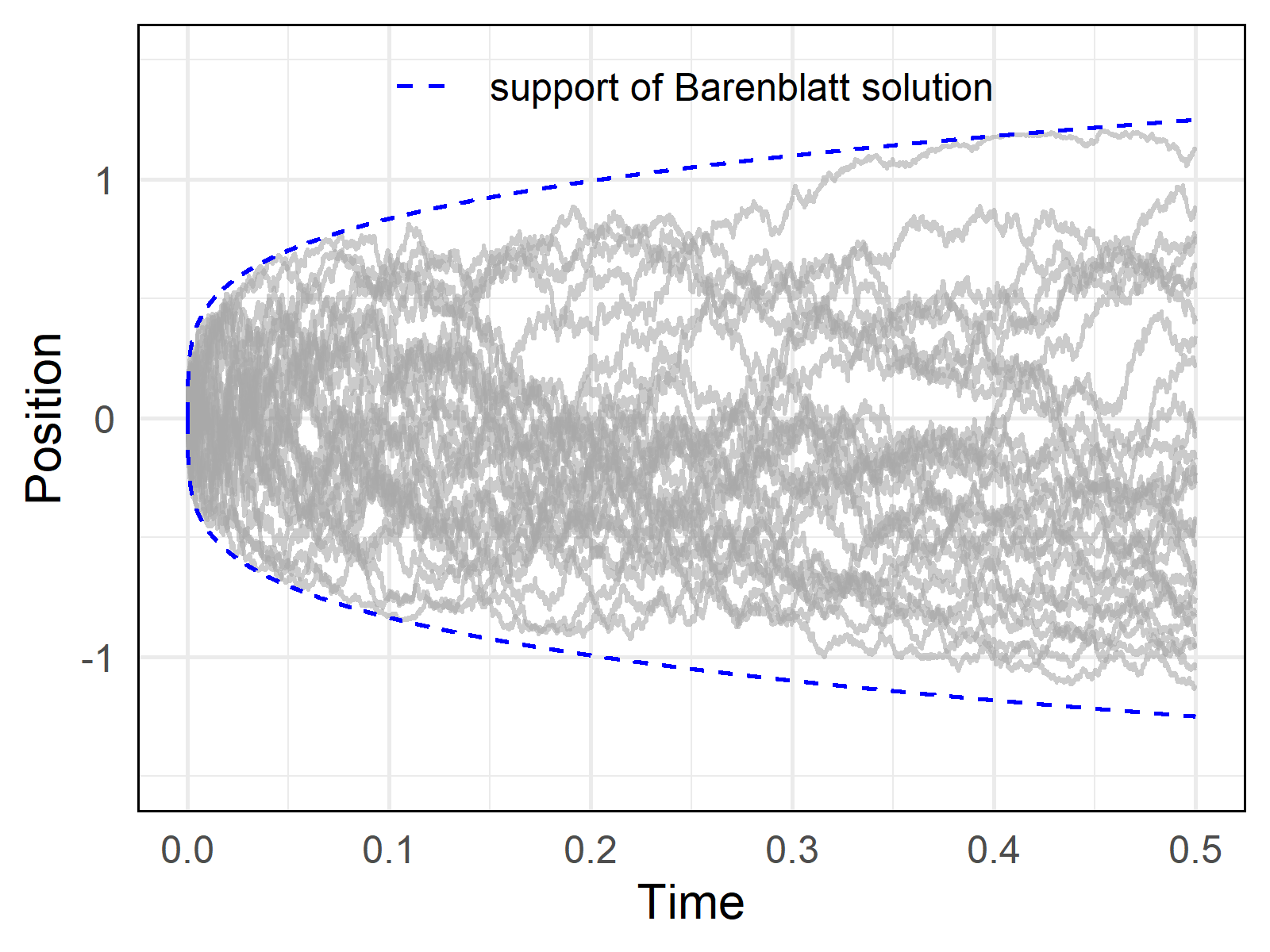}
        & \includegraphics[width=\linewidth]{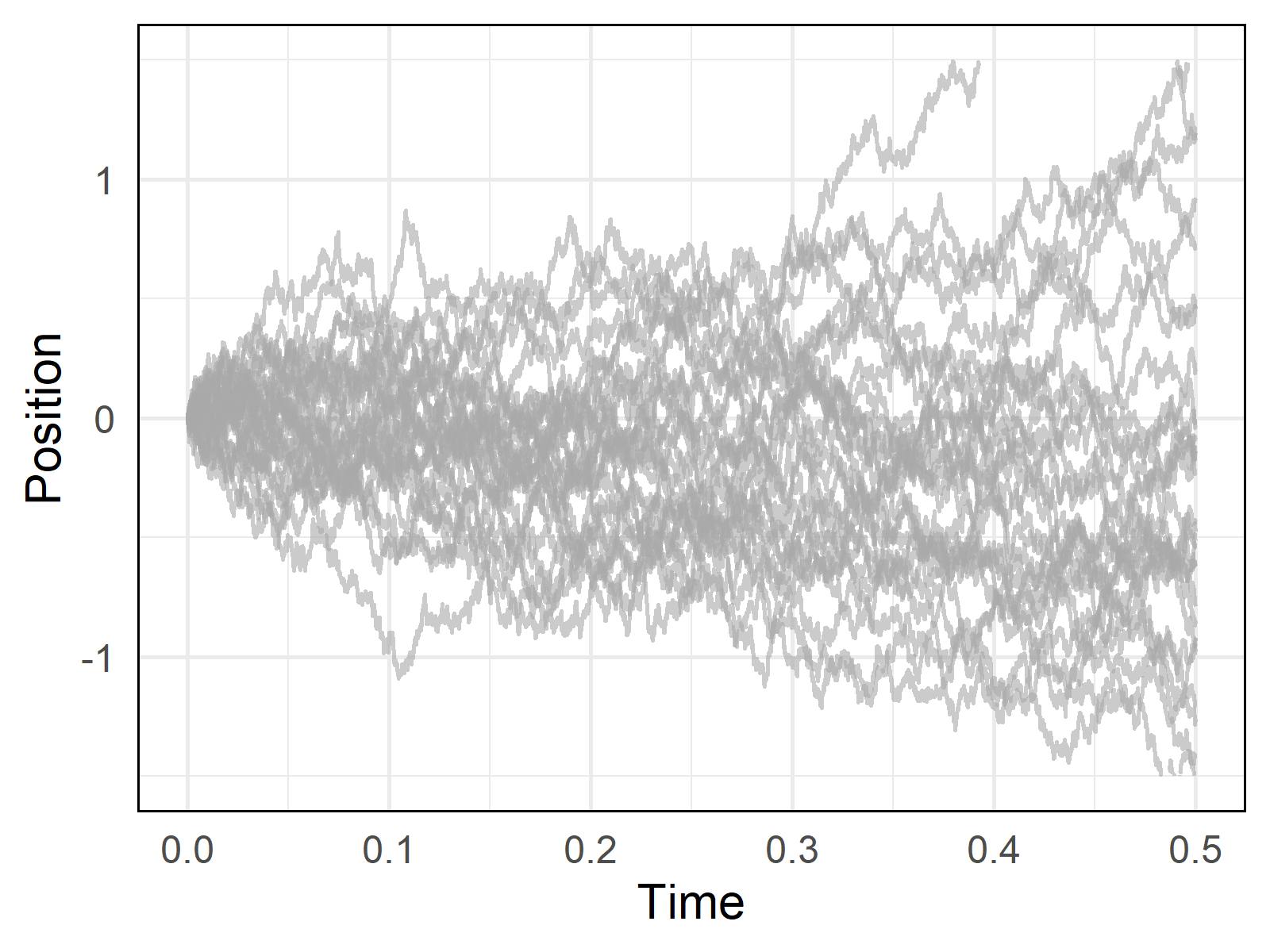} \\
    \end{tabular}

    \captionsetup{font=footnotesize}
    \caption{Sample path simulations of MV-SDEs associated with different nonlinear FP-interpretations of a given PDE, interpolating between the pure-drift ($\beta=0$) and pure-It\^{o}-diffusion ($\beta=1$) cases. The PDEs are the $p$-Laplace equation ($p=4$), the porous medium equation ($m=3$), and the heat equation (all with initial condition $z=0$ on the real line). The dashed blue curves indicate the support of the Barenblatt solutions in the porous medium and $p$-Laplace cases. For the heat equation, $\beta = 1$ recovers standard Brownian motion. For the $p$-Laplace equation, $\beta = 1$ corresponds to the martingale representation of the Barenblatt solution developed in \Cref{subsec:pLaplace_diffusion}. For the porous medium equation, $\beta = 1.5$ corresponds to the pure-Stratonovich-diffusion interpretation discussed in \Cref{subsect:PME-Stratono}. The time interval is $[0,T]$ with $T=0.5$, discretized with step size $\Delta t = 10^{-4}$, and $N=30$ sample paths are simulated via the Euler--Maruyama scheme. In the case of singular coefficients at $t=0$, the position at time $\Delta t$ is sampled from the distribution $u^z(\Delta t,x)\,dx$. For more details and plots, see Appendix \ref{Apndx:num_sim}.}
    \label{fig:simulations-grid}
\end{figure}

\item \textbf{(\Cref{sec:heat_equation}; Heat equation).} In this section, we implement our scheme for the heat equation. First, to shed light on our new notion of nonlinear Markov core, we show by an explicit computation that Brownian motion is a nonlinear Markov core for the standard heat kernel (Proposition \ref{lemaux}). In Section \ref{subsec:heat_equation_drift}, we recast the heat equation as the first-order nonlinear FPE \eqref{prop:Mc-heateq} and explicitly construct the unique nonlinear Markov process associated with the corresponding distribution-dependent ODE by separation of variables and a direct verification of the nonlinear Markov property (Proposition \ref{prop:Mc-heateq}). As a next result, we solve the interpolated MV-SDE
\begin{equation}\label{eq:beta_MVSDE_HE_intro}
    \begin{dcases}
        dX_t =-(1-\beta)\frac{\nabla u(t, X_t)}{2u(t, X_t)} dt + \sqrt{\beta}dW_t, \\
         \mathcal{L}(X_t) = u(t,x)dx,\,\,\, t >0,
    \end{dcases}
\end{equation}
for all $\beta \in [0,\infty)$ and construct the associated nonlinear Markov processes with heat kernel time marginals. This demonstrates that even for solutions to standard linear PDEs, a rich structure of associated nonlinear Markov processes emerges from our scheme. This is underlined further by Section \ref{subsect:further}, where we show that our scheme allows us to obtain new well-posedness results for MV-SDEs with coefficients of superexponential growth (see \Cref{prop9}).  \Cref{fig:simulations-grid} shows numerical simulations of the $\beta$-interpolated MV-SDE associated with the heat equation, PME, and the $p$-Laplace equation. 

\item \textbf{(\Cref{sec:SPDE}; Identifying MV-SDEs with geometries)} In this section, we provide a geometric viewpoint on the previously discussed MV-SDEs related to the PME. Recall that for the heat equation and the PME, the corresponding PDEs model the average behavior of a corresponding particle system (see, e.g., \cite{Sznitman1991}). In this context, the PDE provides a law of large numbers for the particle system. With this viewpoint, it is not surprising that different particle systems (with different mean field limits) yield the same law of large numbers, i.e., the same PDE solution. Beyond the macroscopic PDE scale for such particle systems, one can study mesoscopic fluctuations in a corresponding ``central limit theorem regime.'' This approach naturally leads to the (generalized) Dean--Kawasaki equation \cite{Dean1996}, \cite[Section 1.1]{Cornalba2019}, which we derive for particle systems associated to different McKean--Vlasov interpretations of the PME in Section \ref{sec:SPDE}. This way, we established that different McKean--Vlasov interpretations imply different fluctuations on top of the PME (as expressed by their respective generalized Dean--Kawasaki equations).

In a crucial second step, we recall that besides particle systems, there is the alternative viewpoint of the heat equation and PME as a gradient flow on an infinite-dimensional manifold, pioneered by Otto \cite{Otto2001}. Here, a canonical way of introducing mesoscopic fluctuations (and, thus, deriving an associated generalized Dean--Kawasaki equation) consists in perturbing the PDE by ``Brownian motion on the tangent plane.'' This viewpoint is contingent on a choice of geometry and, in particular, different geometries imply different associated generalized Dean--Kawasaki equations. We explicitly provide geometric interpretations for the PME and derive the associated generalized Dean--Kawasaki equation in \Cref{sec:DK-2}.

Combining both viewpoints, we identify different geometries for the PME with different associated McKean--Vlasov interpretations, if the corresponding generalized Dean--Kawasaki equations --- derived by means of ``Brownian motion on the tangent plane'' and mesoscopic fluctuations of an associated particle system, respectively --- coincide. In this context, the different McKean--Vlasov interpretations provided in Section \ref{sec:porous_medium} are not ad hoc objects but rather manifestations of corresponding geometric viewpoints on the PME. 
\end{enumerate}

\subsubsection*{Organization of the paper}
The rest of the paper is structured as follows.
\textbf{\Cref{sec:FP-prelims}} contains preliminaries on FPEs and MV-SDEs. In \textbf{\Cref{sec:nonunique-Markov}}, we recall the notion of (nonlinear) Markov processes, introduce nonlinear Markov cores, and prove \Cref{intro:thm-2D} above.
In \textbf{\Cref{sec:scheme}}, we set out our scheme, and in \textbf{Sections \ref{sec:heat_equation}, \ref{sec:porous_medium}, \ref{sec:pLaplace},} we apply it to the heat equation, porous medium equation, and $p$-Laplace equation, respectively. \textbf{\Cref{sec:SPDE}} contains the formal discussion on identifying MV-SDEs with geometries on the space of probability measures. 
In the \textbf{Appendix}, we discuss further pure-diffusion interpretations and present supplementary plots as well as details on our numerical simulations.

\subsubsection*{Notation} We set $\R_+ := [0,\infty)$, denote by $A^T$ the transpose of a matrix $A$, and by $1_d$ the $d\times d$-unit matrix. For $x,y\in \R^d$ and $r>0$, $x\cdot y$ is the standard Euclidean inner product and $B_r(x)$ the Euclidean open ball with radius $r$ around $x$. 

\textit{Function spaces.} For topological spaces $U,V$, we write $C(U;V)$ for the space of continuous functions from $U$ to $V$, and abbreviate $\Cscr := C(\R_+;\R^d)$, endowed with the topology of locally uniform convergence, i.e., $\Fscr:= \Bscr(\Cscr) = \sigma(\pi_t, t \geq 0)$, where $\pi_t$ denotes the canonical projection $\pi_t: \Cscr \to \R^d$, $\pi_t(w) = w(t)$. We let $\Fscr_t := \sigma(\pi_s, 0 \leq s \leq t)$ and $\Pi_t: \Cscr \to \Cscr$, $\Pi_t(w) = w(t+\cdot)$. 

For $U \subseteq \R^d$ open and $k,m \in \N_0$, we denote by $C^k(U;\R^m)$ ($C^k_b(U;\R^m)$ and $C^k_c(U;\R^m)$) the usual spaces of $k$-times (bounded and compactly supported, respectively) continuously differentiable $\varphi: U\to \R^m$. For $k=0$ and $m=1$, we write $C(U;\R^m)$ and $C^k(U)$, respectively, and $C^\infty(U;\R^m) = \cap_{k\in \N}C^k(U;\R^m)$ (likewise for $C_b$ and $C_c$).

For $p\in [1,\infty]$, $U \subseteq \R^d$ and $\mu$ a measure on $\Bscr(U)$, let $L^p(U;\mu)$ denote the usual space of $p$-fold $\mu$-integrable functions $\varphi: U\to \R$, and $L^1_{\textup{loc}}(U;\mu)$ the corresponding local spaces; for $\mu = dx$, we write $L^p(U)$. $W^{1,1}(\R^d)$ ($W^{1,1}_{\textup{loc}}(\R^d)$) are the usual spaces of (locally) integrable maps $\varphi: \R^d \to \R$ having a (locally) integrable weak derivative, equipped with the usual norm $||\cdot||_{1,1}$. For $U\subseteq \R^m$, we write $L^1(U;W^{1,1}(\R^d))$ for the space of those $\varphi: U \to W^{1,1}(\R^d)$ with $\int_U ||\varphi(t)||_{1,1} dt < \infty$, and similarly with $L^1$ and $W^{1,1}$ replaced by $L^1_{\textup{loc}}$ and $W^{1,1}_{\textup{loc}}$.

\textit{Measures.} On a topological space $(T,\tau)$, $\Pscr(T)$ is the set of probability measures on the Borel $\sigma$-algebra $\Bscr(T)$ of $T$, endowed with the usual topology of weak convergence of probability measures.; we write $\Pscr = \Pscr(\R^d)$, and $\Pscr_{\textup{ac}}\subseteq \Pscr$ for the set of absolutely continuous measures with respect to Lebesgue measure $dx$. For $z\in \R^d$, $\delta_z$ denotes the usual Dirac measure at $z$, and $\Nscr(z,A)$ is the normal distribution with mean vector $z$ and covariance matrix $A$. For a random variable $X: \Omega \to S$ on a probability space $(\Omega, \Ascr, \mathbb{P})$ with values in a measurable space $S$, we write $\mathcal{L}(X)$ for its distribution, i.e., $\mathcal{L}(X) = \mathbb{P}\circ X^{-1}$ (the latter denotes the image measure of $X$ with respect to $\mathbb{P}$). If $S = \Cscr$, $\mathcal{L}(X) \in \Pscr(\Cscr)$ is the path law of the stochastic process $X$. For $P \in \Pscr(\Cscr)$, we shortly write $P_t := P\circ \pi_t^{-1}$.

Recall that for a Borel probability measure $P$ on a Polish space $T$ and a measurable map $g: \Omega \to S$ to another Polish space $S$, the \emph{disintegration family} of $P$ with respect to $g$ is the $P \circ g^{-1}$-a.s. unique family $\{Q_\omega\}_{\omega \in \Omega} \subseteq \Pscr(T)$ such that $\omega \mapsto Q_\omega(A)$ is measurable and $P(A) = \int_{\Omega} Q_\omega(A)\,d(P\circ g^{-1})$ for all $A\in \Bscr(T)$.

For a stochastic processes $M$ and $N$, the quadratic (co-)variation of $M$ (and $N$) is denoted by $[M]_t$ ($[M,N]_t$).


\subsubsection*{Acknowledgments}
We would like to thank Matthias Erbar for fruitful discussions, in particular on \Cref{sec:SPDE}; Thomas Koch for the inspiration for \Cref{thm:NMP_2D}; and Michael R\"{o}ckner for several insightful discussions on this research program.
F.B. gratefully acknowledges the financial support of the Deutsche Forschungsgemeinschaft (DFG, German Research Foundation) – SFB 1283/2 2021 – 317210226 (Project B7). E.A. acknowledges the partial funding by Deutsche Forschungsgemeinschaft (DFG) – Project-ID 318763901 – SFB 1294.

\section{Preliminaries on Fokker--Planck equations and McKean--Vlasov SDEs}\label{sec:FP-prelims}
We briefly recall the notions of solution to the classes of Fokker--Planck and McKean--Vlasov equations studied in this paper, as well as their relations.

\subsectionnotoc{Nonlinear Fokker--Planck equations} Let $\Pscr_* \subseteq \Pscr$, and $a_{ij}, b_i: \R^d \times \Pscr_* \to \R$, $1\leq i,j \leq d$, be such that $a \coloneqq (a_{ij})_{i,j \leq d}$ takes values in the space of symmetric $d\times d$-matrices. Consider the nonlinear Fokker--Planck equation (FPE)
\begin{equation}\label{eq:FPE}
    \partial_t \mu_t = \partial_{ij}\big( a_{ij}(x,\mu_t)\mu_t \big) - \partial_{i} \big( b_i(x,\mu_t)\mu_t\big)
\end{equation}
(here and throughout, we use Einstein summation convention).
For $\mu \in \Pscr_*$ and $\varphi \in C^2(\R^d)$, we write $L_{a(\mu),b(\mu)}\varphi(x) := a_{ij}(x,\mu)\partial_{ij}\varphi (x) + b_i(x,\mu) \partial_i \varphi (x)$ .
\begin{definition}\label{def:FPE-sol}
    A \emph{distributional solution} (short: \emph{solution}) to \eqref{eq:FPE} with initial condition $\zeta \in \Pscr$ is a weakly continuous curve of probability measures $\mu = (\mu_t)_{t> 0} \subseteq \Pscr$ such that $\mu_t \in \Pscr_*$ for all $t\in (0,\infty)$, $a(x,\mu_t)$ is non-negative definite for all $(t,x) \in (0,\infty)\times \R^d$, $(t,x)\mapsto a_{ij}(x,\mu_t)$ and $ (t,x)\mapsto b_i(x,\mu_t)$, $1\leq i,j \leq d$, belong to $L^1_{\textup{loc}}([0,\infty)\times \R^d; \mu_t dt)$, and for all $t>0$,
    \begin{equation}\label{eq:FPE-sol}
        \int_{\R^d}\varphi \,d\mu_t - \int_{\R^d}\varphi\, d\zeta = \int_0^t \int_{\R^d} L_{a(\mu_s),b(\mu_s)}\varphi\, d\mu_s ds
    \end{equation}
    holds for all $\varphi \in C^2_c(\R^d)$. $\zeta$ is called \emph{initial condition} or \emph{initial datum} of $\mu$.
\end{definition}
Note that $\mu_t \to \zeta$ weakly as $t\to 0$. Indeed, \eqref{eq:FPE-sol} shows $\mu_t \to \zeta$ vaguely, thus weakly, as $\{\mu_t, t >0, \zeta \} \subseteq \Pscr$. Thus, throughout we also refer to $(\mu_t)_{t\geq 0}$ as the solution instead of $(\mu_t)_{t>0}$, with $\mu_0 := \zeta$.

We often require the following stronger integrability condition for solutions $\mu$ to \eqref{eq:FPE}:
\begin{equation}\label{eq:L1-int-SP}
    \int_0^T \int_{\R^d}|a_{ij}(x,\mu_t)| + |b_i(x,\mu_t)| \,d\mu_t(x) dt < \infty, \quad \forall i,j \leq d \text{ and }T>0,
\end{equation}
which holds in particular when $a_{ij}$ and $b_i$ are bounded on $\R^d \times \Pscr_*$.
\\

We also encounter \emph{linear} FPEs with \emph{time-dependent} coefficients, i.e.
\begin{equation}\label{eq:lFPE}
    \partial_t \mu_t = \partial_{ij}\big( a_{ij}(t,x)\mu_t \big) - \partial_{i} \big( b_i(t,x)\mu_t\big)
\end{equation}
for Borel coefficients $a_{ij}, b_i: (0,\infty)\times \R^d \to \R$ such that $a \coloneqq (a_{ij})_{i,j \leq d}$ takes values in the space of symmetric non-negative definite $d\times d$-matrices. We recall their usual notion of distributional solutions (which we could have included as a special case of \Cref{def:FPE-sol}, had we considered time-dependent coefficients therein). Similarly as above, we write 
$L_{a(t),b(t)}\varphi (x) := a_{ij}(t,x)\partial_{ij}\varphi (x) + b_i(t,x)\partial_i \varphi (x)$.
\begin{definition}\label{def:lFPE-sol}
    A \textit{distributional solution} (short: \textit{solution}) to \eqref{eq:lFPE} with initial condition $\zeta \in \Pscr$ is a weakly continuous curve $\nu = (\nu_t)_{t> 0} \subseteq \Pscr$ such that $(t,x) \mapsto a_{ij}(t,x)$ and $(t,x) \mapsto b_i(t,x),$ $1\leq i,j \leq d,$ belong to $ L^1_{\textup{loc}}([0,\infty)\times \R^d ;\nu_t dt)$, and for all $t >0$,
   \begin{equation}\label{eq:lFPE-sol}
        \int_{\R^d}\varphi \,d\nu_t - \int_{\R^d}\varphi\, d\zeta = \int_0^t \int_{\R^d} L_{a(s),b(s)}\varphi\, d\nu_s ds
    \end{equation}
    holds for all $\varphi \in C^2_c(\R^d)$. Again, it follows that $\nu_t \to \zeta$ weakly as $t\to 0$, and we use $(\nu_t)_{t>0}$ and $(\nu_t)_{t\geq 0}$ (with $\nu_0 := \zeta$) interchangeably.
\end{definition}

An important class of equations related to \eqref{eq:FPE} are its corresponding \emph{linearized FPEs}. Precisely, let $ (\mu_t)_{t> 0}$ be any curve in $\Pscr_*$ such that $(t,x) \mapsto a_{ij}(x,\mu_t)$ and $(t,x)\mapsto b_i(x,\mu_t)$ are Borel measurable, and consider
\begin{equation}\label{eq:linFPE}\tag{$\mu-\text{linFPE}$}
    \partial_t \nu_t = \partial_{ij}\big( a_{ij}(x,\mu_t) \nu_t\big) - \partial_i \big(b_i(x,\mu_t)\nu_t\big),
\end{equation}
i.e. the \emph{linear} FPE with coefficients $(t,x)\mapsto a_{ij}(x,\mu_t)$, $(t,x)\mapsto b_i(x,\mu_t)$.
Solutions $\nu$ to \eqref{eq:lFPE-sol} (in the sense of \Cref{def:lFPE-sol}) need not be related to $\mu$ and need not belong to $\Pscr_*$. However, we emphasize the following obvious, yet important observation, which follows immediately from Definitions \ref{def:FPE-sol} and \ref{def:lFPE-sol}.
\begin{lemma}
Let $\mu = (\mu_t)_{t> 0}$ be a solution to \eqref{eq:FPE} with initial condition $\mu_0$. Then $\mu$ is also a solution to \eqref{eq:linFPE} with initial condition $\mu_0$.
\end{lemma}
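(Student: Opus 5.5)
The plan is to check the requirements of \Cref{def:lFPE-sol} one at a time, using the time-dependent coefficients $\tilde a_{ij}(t,x) := a_{ij}(x,\mu_t)$ and $\tilde b_i(t,x) := b_i(x,\mu_t)$, and to take $\nu := \mu$ as the candidate solution. The whole argument rests on one observation. After $\mu$ is substituted into the nonlinear coefficients, the nonlinear identity \eqref{eq:FPE-sol} becomes, term by term, the linear identity \eqref{eq:lFPE-sol} with $\nu = \mu$.

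The checks go in this order.

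First, $\mu$ is a weakly continuous curve in $\Pscr$ with $\mu_t \to \mu_0$ weakly, because \Cref{def:FPE-sol} requires exactly this.

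Second, $\tilde a(t,x)$ is symmetric and non-negative definite for every $(t,x)$, since \Cref{def:FPE-sol} requires $a(x,\mu_t)$ to be non-negative definite. This is the structural condition that \eqref{eq:lFPE} imposes on its coefficients.

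Third, $\tilde a_{ij}$ and $\tilde b_i$ must be Borel and lie in $L^1_{\textup{loc}}([0,\infty)\times\R^d;\mu_t\,dt)$. The integrability is part of \Cref{def:FPE-sol}. The Borel measurability of $(t,x)\mapsto a_{ij}(x,\mu_t)$ and $(t,x)\mapsto b_i(x,\mu_t)$ is the standing assumption under which \eqref{eq:linFPE} is formulated. It is implicit in \Cref{def:FPE-sol} as well, since the $L^1_{\textup{loc}}$ requirement there only makes sense for measurable maps.

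Fourth, for every $\varphi\in C^2_c(\R^d)$ and $s>0$ we have $L_{\tilde a(s),\tilde b(s)}\varphi = L_{a(\mu_s),b(\mu_s)}\varphi$ pointwise. Integrating against $\mu_s\,ds$ over $(0,t)$ therefore turns \eqref{eq:FPE-sol} into \eqref{eq:lFPE-sol} with $\nu=\mu$ and $\zeta=\mu_0$.

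There is no genuine obstacle. The only point that needs care is the measurability requirement in the third step: \Cref{def:FPE-sol} does not state it explicitly, so it must be read off from the integrability requirement there or from the standing assumption for \eqref{eq:linFPE}.
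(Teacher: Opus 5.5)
Your proposal is correct and matches the paper, which gives no separate argument and simply notes that the lemma follows immediately from \Cref{def:FPE-sol} and \Cref{def:lFPE-sol}; your term-by-term check, taking Borel measurability from the standing assumption under which \eqref{eq:linFPE} is formulated, is exactly that verification. One small inaccuracy that does not affect the argument: $\mu_t\to\mu_0$ weakly is not required by \Cref{def:FPE-sol}. It is a consequence noted after that definition, and you do not need it for the proof.
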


\vspace{0.5em}
\noindent\textbf{The generalized Nemytskii-case.} An important special case of \eqref{eq:FPE} arises via the \emph{generalized Nemytskii-case}, i.e., when $a$ and $b$ depend on their measure argument pointwise via the measure's density or, as will be important for us, pointwise on a function of the latter. Then $\Pscr_* \subseteq \Pscr_{\textup{ac}}$ and
\begin{equation}\label{eq:Nemytskii-coeff}
    a_{ij}(x,\mu) = A_{ij}\big(x,(\Gamma_{ij} u)(x)\big), \quad b_i(x,\mu) = B_i\big(x,(\Lambda_i u)(x)\big)
\end{equation}
for Borel maps $A_{ij}, B_i: \R^d \times \R \to \R$, 
where $\mu = u(x) dx$, i.e. the density of $\mu$ with respect to Lebesgue measure is $u$, which we identify with $\mu$, and $\Gamma_{ij}, \Lambda_i$ map $u \in \Pscr_*$ to functions $\Gamma_{ij} u, \Lambda_i u: \R^d \to \R$, for instance
$\Gamma_{ij} u = u$
or $\Lambda_i u = |\nabla u|^{\gamma_1}|\partial_i u|^{\gamma_2}$ for $\gamma_1,\gamma_2 \in \R$. $\Gamma_{ij}$ and $\Lambda_i$ need not be linear, but we still write $\Gamma_{ij} u$ and $\Lambda_i u$ instead of $\Gamma_{ij}(u)$ and $\Lambda_i (u)$ to shorten notation. Then, rewriting \eqref{eq:FPE} as an equation for the densities $u(t,\cdot)$ of $\mu_t$ yields the nonlinear PDE
\begin{equation}\label{eq:nlPDE}
    \partial_t u(t,x) = \partial_{ij}\big( A_{ij}\big(x,(\Gamma_{ij} u(t,\cdot))(x)\big)u(t,x)\big) - \partial_{i} \big( B_i\big(x,(\Lambda_i u(t,\cdot))(x)\big)u(t,x)\big).
\end{equation}
Usually, the case $\Gamma_{ij}u = u = \Lambda_i u$ is called \emph{Nemytskii-case} in the literature. For brevity, we henceforth also call \eqref{eq:Nemytskii-coeff} \emph{Nemytskii} instead of \emph{generalized Nemytskii}.
\begin{remark}\label{rem:PDE-FPE-equiv}
\begin{enumerate}[label=\textbf{(\roman*)}]
    \item  In the case of \eqref{eq:Nemytskii-coeff}, a nonnegative Borel function $u: (0,\infty)\times \R^d \to \R_+$ with $\int_{\R^d}u(t,x)dx = 1$ for all $t > 0$ and $u(t,x)dx \to \zeta \in \Pscr$ weakly as $t \to 0$ such that $t \mapsto \mu_t := u(t,x)dx$ is weakly continuous is a weak solution to \eqref{eq:nlPDE} (in usual PDE-sense) if and only if $(\mu_t)_{t > 0}$ is a distributional solution to \eqref{eq:FPE} with initial condition $ \zeta$. 
    \item In \eqref{eq:Nemytskii-coeff}, $a_{ij}$ and $b_i$ are irregular in their measure-variable. Indeed, the map $u(x)dx \mapsto u(x_0)$, mapping an absolutely continuous probability measure to its density (or to a function of the latter) evaluated at a point $x_0\in \R^d$ is not continuous with respect to the standard topologies on $\Pscr$. Hence, the entire literature on nonlinear FPEs with (Wasserstein Lipschitz-)continuous coefficients in their measure-argument does not apply. In particular, proving the existence and uniqueness of solutions to Nemytskii-FPEs with degenerate (e.g., Dirac) initial conditions is often delicate.
    The benefit of this challenging class of FPEs is that many nonlinear PDEs are of type \eqref{eq:nlPDE}, even though this may not be obvious. Hence, solutions to such PDEs can be studied in the framework of FPEs and their associated stochastic differential equations, which we discuss next.
\end{enumerate}
\end{remark}

\subsectionnotoc{Associated McKean--Vlasov SDEs} 
The FPE \eqref{eq:FPE} is closely related to the McKean--Vlasov SDE (MV-SDE)
\begin{equation}\label{eq:DDSDE}
    dX_t = b(X_t,\mathcal{L}(X_t)) dt + \sigma(X_t, \mathcal{L}(X_t)) dW_t,
\end{equation}
where $\sigma: \R^d \times \Pscr_* \to \R^{d\times d}$ is determined via $2a = \sigma\sigma^T$, and $W=(W_t)_{t\geq 0}$ is a standard $d$-dimensional Brownian motion.
\begin{definition}\label{def:MVSDE-sol}
\begin{enumerate}
    \item [(i)]  A \textit{probabilistically weak solution} (short: \textit{solution}) to \eqref{eq:DDSDE} is a triplet consisting of a filtered probability space $(\Omega, \Gscr, (\Gscr_t)_{t\geq 0}, \mathbb{P})$, an $(\Gscr_t)$-Brownian motion $W = (W_t)_{t\geq 0}$ and an $(\Gscr_t)$-adapted stochastic process $X = (X_t)_{t\geq 0}$ such that $\mathcal{L}(X_t) \in \Pscr_*$ for all $t>0$,
    \begin{equation}
        \mathbb{E}\Bigg[\int_0^T |b(X_t,\mathcal{L}(X_t))| + |\sigma \sigma^T(X_t,\mathcal{L}(X_t))|dt\bigg] < \infty, \quad \forall T>0
    \end{equation}
    ($\mathbb{E}[\,\cdot\,]$ denotes expectation with respect to $\mathbb{P}$), and, $\mathbb{P}$-a.s.,
    \begin{equation}
        X_t = X_0 + \int_0^t b(X_s,\mathcal{L}(X_s)) ds + \int_0^t \sigma(X_s,\mathcal{L}(X_s)) dW_s,\quad \forall t \geq 0.
    \end{equation}
    $\mathcal{L}(X_0)$ is called \textit{initial condition} of $X$. Below, we often refer to $X$ as the solution (instead of the entire triplet).
    \item[(ii)] Solutions to \eqref{eq:DDSDE} with initial condition $\zeta \in \Pscr$ are \textit{weakly unique}, if $\mathcal{L}(X) = \mathcal{L}(Y)$ for any weak solutions $X, Y$ with initial condition $\zeta$.
\end{enumerate}
\end{definition}

\begin{remark}
 The path law $\mathcal{L}(X)$ of any weak solution $X$ to \eqref{eq:DDSDE} solves the nonlinear martingale problem with coefficients $a = \frac 1 2 \sigma \sigma^T$ and $b$. Vice versa, for any solution $P \in \Pscr(\Cscr)$ to this martingale problem, there exists a weak solution $X$ to \eqref{eq:DDSDE} with $\mathcal{L}(X) = P$, see for instance \cite{Stroock87,StroockVaradh2007}. We call $P \in \Pscr(\Cscr)$ a \emph{solution path law} to \eqref{eq:DDSDE}, if there exists a weak solution $X$ with $\mathcal{L}(X) = P$. We say that such a solution path law $P$ is \emph{unique} (given some initial condition $\zeta$) if $\mathcal{L}(X) = P$ for any weak solution $X$ with initial condition $\zeta$.
\end{remark}

The intimate relation between \eqref{eq:FPE} and \eqref{eq:DDSDE} is the following: A straightforward application of It\^{o}'s formula implies that for any solution path law $P$ to \eqref{eq:DDSDE}, its curve of one-dimensional time marginals $\mu_t =P_t$, $P_t := P \circ \pi_t^{-1}$, is a solution to \eqref{eq:FPE} satisfying \eqref{eq:L1-int-SP}. The converse is implied by the celebrated Ambrosio--Figalli--Trevisan superposition principle:
\begin{theorem}[Superposition principle, \cite{Trevisan16}]\label{thm:SP-pr}
    Let $(\mu_t)_{t\geq 0}$ be a solution to \eqref{eq:FPE} satisfying \eqref{eq:L1-int-SP}. Then there exists a solution path law $P$ to \eqref{eq:DDSDE} with $P_t = \mu_t$ for all $t \geq 0$.
\end{theorem}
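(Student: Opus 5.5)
The statement is the superposition principle of Trevisan, so the plan is to reduce it to the linear case by freezing the measure argument. Given the solution $(\mu_t)_{t\ge0}$ to \eqref{eq:FPE} satisfying \eqref{eq:L1-int-SP}, I will define time-dependent Borel coefficients $\bar a_{ij}(t,x) := a_{ij}(x,\mu_t)$ and $\bar b_i(t,x) := b_i(x,\mu_t)$. By the lemma stated after \eqref{eq:linFPE}, $\mu$ is then a distributional solution of the \emph{linear} equation \eqref{eq:lFPE} with these coefficients and initial datum $\mu_0$. Moreover, \eqref{eq:L1-int-SP} says exactly that $\int_0^T\int |\bar a|+|\bar b|\,d\mu_t\,dt<\infty$, which is the global integrability hypothesis of the linear superposition principle.

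Next I will invoke the linear superposition principle (Figalli for bounded coefficients, Trevisan's extension under the $L^1(\mu_t\,dt)$ condition). It gives $P\in\Pscr(\Cscr)$ with $P_t=\mu_t$ for all $t\ge0$ which solves the linear martingale problem for $L_{\bar a(t),\bar b(t)}$. Concretely, for every $\varphi\in C^2_c(\R^d)$ the process
$$\varphi(\pi_t)-\varphi(\pi_0)-\int_0^t L_{\bar a(s),\bar b(s)}\varphi(\pi_s)\,ds$$
is an $(\Fscr_t)$-martingale under $P$. I treat this step as a black box. If I had to prove it, the main work would be the following: mollify $\mu$ in space to obtain smooth positive solutions with regularized coefficients $a^\varepsilon=(a\mu)*\rho_\varepsilon/\mu*\rho_\varepsilon$ (and likewise $b^\varepsilon$), whose $L^1$ bounds are preserved by Jensen. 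One then solves the regularized martingale problems, proves tightness on $\Cscr$ via the uniform integrability bound, and passes to the limit in the martingale property. That limit passage is the genuinely hard point, since the coefficients are merely integrable, and it is handled by approximating $\bar a,\bar b$ in $L^1(\mu_t\,dt)$ with continuous functions.

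Finally, since $P_s=\mu_s$, the identity $L_{\bar a(s),\bar b(s)}\varphi(\pi_s)=L_{a(\mu_s),b(\mu_s)}\varphi(\pi_s)$ holds with $\mu_s=P\circ\pi_s^{-1}$. Therefore $P$ solves the nonlinear martingale problem with coefficients $(a,b)$. The remark preceding the theorem (martingale problem $\Leftrightarrow$ weak solution, via a representation theorem with $\sigma$ defined by $\sigma\sigma^T=2a$, enlarging the probability space if $a$ degenerates) then yields a weak solution $X$ of \eqref{eq:DDSDE} with $\mathcal L(X)=P$. The integrability requirement in \Cref{def:MVSDE-sol} follows from \eqref{eq:L1-int-SP}, since $\mathcal L(X_t)=\mu_t\in\Pscr_*$. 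This makes $P$ a solution path law with $P_t=\mu_t$. The only nonroutine ingredient is the linear superposition step, which I cite.
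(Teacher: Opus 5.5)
Your proposal is correct and takes the route the paper has in mind when it cites Trevisan without giving a proof. You freeze $\mu$ in the coefficients, use the lemma that $\mu$ solves ($\mu$-linFPE), and apply the linear superposition principle under \eqref{eq:L1-int-SP}. You then use $P_t=\mu_t$ and the martingale-problem/weak-solution equivalence from the preceding remark to conclude that $P$ is a solution path law of \eqref{eq:DDSDE}.
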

Thus, one way to solve \eqref{eq:DDSDE} is to first solve \eqref{eq:FPE} and to then apply the previous result. This \emph{lifting} of a nonlinear FPE solution to an MV-SDE solution is fundamental for our results.

When the FPE is of Nemytskii-type \eqref{eq:nlPDE}, the associated MV-SDE becomes
\begin{equation}\label{eq:Neymtskii-DDSDE}
    dX_t = B\big(X_t,(\Lambda u(t))(X_t)\big) dt + \Sigma\big(X_t, (\Gamma u(t))(X_t)\big) dW_t, \quad \mathcal{L}(X_t) = u(t,x)dx,\quad t >0,
\end{equation}
where we write $u(t)$ for the function $x\mapsto u(t,x)$ and set
$$B(x,(\Lambda u)(x)) := \big(B_i(x,(\Lambda_i u)(x))\big)_{i \leq d},\quad \Sigma(x,(\Gamma u)(x)) := \big(\Sigma_{ij}(x,(\Gamma_{ij} u)(x))\big)_{i,j \leq d},$$
where $\Sigma = (\Sigma_{ij})_{i,j \leq d}$ is such that $2 A = \Sigma \Sigma^T$ pointwise (with $A = (A_{ij})_{i,j \leq d}$).
We present several explicit such MV-SDEs in Sections \ref{sec:heat_equation},
\ref{sec:porous_medium} and \ref{sec:pLaplace}.

\section{Markov processes and cores}\label{sec:nonunique-Markov}
Here, we first recall the definition of classical Markov processes and their relation to linear FPEs and SDEs. Then, we recall the notion and basic properties of \emph{nonlinear Markov processes} from \cite{McKean1966, RehmeierRoeckner2025NonlinearMarkov}, in particular how to construct a nonlinear Markov process from a family of solutions to a nonlinear FPE (Theorem \ref{thm:NL-MP-construction-old}). Finally, we introduce the natural new notion of \emph{nonlinear Markov core}, which is useful for the results of this paper.

Recall that for $P \in \Pscr (\Cscr)$, we write $P_t \coloneqq P \circ \pi_t^{-1}$.

\subsectionnotoc{Linear Markov processes}
Throughout, we call classical Markov processes \emph{linear Markov processes} to distinguish them from nonlinear Markov processes.

\begin{definition}\label{def:LMP} 
A \emph{(normal, time-homogeneous) linear Markov process} is a family $\{P^z
\}_{z\in \R^d} \subset \Pscr(\Cscr)$  such that 
\begin{enumerate}[label=(\roman*), font=\normalfont]
        \item\label{itm:def_LMP_0} $P_0^z = \delta_z$ for all $z \in \R^d$; 
        \item\label{itm:def_LMP_1} $z\mapsto P^z(G)$ is Borel measurable for all $G \in \Bscr(\Cscr)$;
        \item\label{itm:def_LMP_2}  the \emph{(time-homogeneous) linear Markov property} holds, i.e., 
\begin{equation}\label{eq:linear_Markov_property}
    P^{z} (\pi_{t+s} \in A | \mathcal{F}_s)(\cdot) = P^{\pi_s(\cdot)} (\pi_t\in A) \quad P^z-\text{a.s.},\quad \forall s,t \geq 0, \, A \in \mathcal{B}(\mathbb{R}^d).
\end{equation}
\end{enumerate}
\end{definition}
Observe that the map 
\begin{equation}\label{eq:Pzeta_for_linearMP}
  \Pscr \ni   \zeta \mapsto P^\zeta \coloneqq \int_{\R^d} P^z d \zeta (z)\, \in \Pscr(\Cscr),
\end{equation}
i.e. $P^\zeta(G) = \int_{\R^d} P^z(G)\,d\zeta(z)$ for $G\in \Bscr(\Cscr)$,
is linear, and
\eqref{eq:linear_Markov_property} yields, for $\zeta = P^z_t$,
\begin{equation}\label{eq:shift_identity_LMP}
    P^\zeta = P^z \circ \Pi_t^{-1}.
\end{equation}
The following result is classical and follows from the fact that the family of finite-dimensional time marginals $P^z\circ (\pi_{t_1},\dots,\pi_{t_n})^{-1}$, $n\in \N$, $0\leq t_1 < \dots <t_n$, of a linear Markov process is determined by its one-dimensional time marginals $\{P^z_t\}_{z\in \R^d, t \geq 0}$ and that, of course, $P^z$ is determined by its family of finite-dimensional time marginals.
\begin{proposition}\label{prop:LMP_uniquely_marginal}
    A linear Markov process $\{P^z\}_{z\in \R^d}$ is uniquely determined by its family of one-dimensional time marginals
    $\{\mu_t^{z}\}_{(t,z) \in\mathbb{R}_+\times \mathbb{R}^d}$, where $\mu_t^{z} \coloneqq P_t^z$, i.e., if $\{Q^z\}_{z\in \R^d}$ is another linear Markov process with $Q^z_t = P^z_t$ for all $(t,z) \in\mathbb{R}_+\times \mathbb{R}^d$, then $Q^z = P^z$ for all $z \in \R^d$.
\end{proposition}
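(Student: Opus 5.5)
The plan is to show that every finite-dimensional marginal $P^z\circ(\pi_{t_1},\dots,\pi_{t_n})^{-1}$ can be written explicitly in terms of the family $\{\mu^y_t\}_{y\in\R^d,t\ge 0}$ alone. Since finite-dimensional cylinder sets form a $\cap$-stable generator of $\Fscr=\Bscr(\Cscr)$, the uniqueness part of Dynkin's $\pi$-$\lambda$ theorem then gives $Q^z=P^z$ whenever $Q^z_t=P^z_t$ for all $t,z$.

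Fix $z\in\R^d$ and set $p_t(y,A):=\mu^y_t(A)=P^y(\pi_t\in A)$ for $t\ge0$, $y\in\R^d$, $A\in\Bscr(\R^d)$. By \ref{itm:def_LMP_1}, $y\mapsto p_t(y,A)$ is Borel, so $p_t$ is a probability kernel. The Markov property \eqref{eq:linear_Markov_property} says that, $P^z$-a.s.,
\[
P^z(\pi_{t+s}\in A\mid\Fscr_s)=p_t(\pi_s,A).
\]
This identity extends from indicators to bounded Borel $f$:
\[
E^{P^z}[f(\pi_{t+s})\mid\Fscr_s]=\int_{\R^d} f(y)\,p_t(\pi_s,dy).
\]

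The argument is by induction on $n$, for $0\le t_1<\dots<t_n$ and $A_1,\dots,A_n\in\Bscr(\R^d)$. The claim is
\[
P^z(\pi_{t_1}\in A_1,\dots,\pi_{t_n}\in A_n)=\int_{A_1}\!\cdots\!\int_{A_n} p_{t_n-t_{n-1}}(y_{n-1},dy_n)\cdots p_{t_2-t_1}(y_1,dy_2)\,\mu^z_{t_1}(dy_1).
\]
The case $n=1$ is just the definition of $\mu^z_{t_1}$. For the inductive step, condition on $\Fscr_{t_{n-1}}$, with respect to which the first $n-1$ indicators are measurable. Applying the conditional identity with $s=t_{n-1}$ and $t=t_n-t_{n-1}$ turns the last indicator into $p_{t_n-t_{n-1}}(\pi_{t_{n-1}},A_n)$. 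This is a bounded Borel function of $\pi_{t_{n-1}}$, so the induction hypothesis, in its form for bounded measurable integrands of $(\pi_{t_1},\dots,\pi_{t_{n-1}})$, applies; that extension is standard by monotone class.

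The right-hand side involves only $\{\mu^y_t\}$. Since $Q$ is also a linear Markov process with the same one-dimensional marginals, it has the same kernels $p_t$, and hence the same finite-dimensional distributions as $P^z$. The only point requiring care is the measurability of $y\mapsto\mu^y_t(A)$, which is needed so that the iterated integrals make sense. It is supplied precisely by \ref{itm:def_LMP_1}, and no other step is a genuine obstacle.
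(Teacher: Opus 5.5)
Your proof is correct and follows the route the paper indicates. The paper's argument is that the finite-dimensional marginals of a linear Markov process are determined by its one-dimensional marginals, via the kernels $p_t(y,\cdot)=\mu^y_t$, and that a path law on $\Cscr$ is determined by its finite-dimensional marginals. You supply the details the paper leaves as classical: measurability of the kernels from the measurability axiom, the induction over $n$, and the $\pi$-$\lambda$ argument.
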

As a main result of this paper, we prove that the previous proposition is not true for nonlinear Markov processes (see Theorem \ref{intro:thm1}).

The term \emph{linear} Markov process refers to the linearity of \eqref{eq:Pzeta_for_linearMP} and to the fact that typical examples of such processes are related to \emph{linear} Fokker--Planck equations, as we recall now.

\vspace{1em}
\noindent
\textbf{Relation to SDEs and linear FPEs.}
The following relations between linear Markov processes, linear FPEs, and SDEs are well-known. Assume that for Borel maps $a_{ij},b_i : \R^d \to \R$, $1\leq i,j\leq  d$, with $a = (a_{ij})_{i,j \leq d}$ symmetric non-negative definite, the linear FPE with coefficients $a_{ij}$ and $b_i$ has a unique distributional solution $\mu^z = (\mu^z_t)_{t\geq 0}$ with initial condition $\delta_z$ for all $z\in \R^d$ in the sense of \Cref{def:lFPE-sol} such that \eqref{eq:L1-int-SP} holds with $a_{ij}(x)$ and $b_i(x)$ replacing $a_{ij}(x,\mu_t)$ and $b_i(x,\mu_t)$. Then the SDE with coefficients $b = (b_1,\dots,b_d)$ and $\sigma$ satisfying $2a = \sigma\sigma^T$ has a unique solution path law $P^z$ for all $z \in \R^d$ (see \cite[Thm.6.2.3]{StroockVaradh2007}). Furthermore, $\{P^z\}_{z\in \R^d}$ is a linear Markov process with one-dimensional time marginals $\{\mu^z_t\}_{(t,z)\in \R_+\times \R^d}$, i.e. 
\begin{equation}\label{eq:qrt}
    P^z_t = \mu^z_t,\quad \forall (t,z)\in \R_+ \times \R^d,
\end{equation}
and it is the unique such linear Markov process.

Put differently and in the spirit in which we treat the nonlinear case in this paper, starting from a family of linear FPE-solutions, there exists a unique linear Markov process consisting of solution path laws to the associated SDE with one-dimensional time marginals equal to the given FPE-solution family.

\subsectionnotoc{Nonlinear Markov processes and cores}
We recall the notion of \textit{nonlinear Markov processes} from \cite{McKean1966,RehmeierRoeckner2025NonlinearMarkov}. Since it suffices for our purposes, we restrict to the time-homogeneous case (which was not explicitly presented in \cite{RehmeierRoeckner2025NonlinearMarkov}).

\begin{definition}\label{def:NL-MP} 
    Let $\Pscr_0 \subseteq \Pscr$. A \emph{(time-homogeneous)  nonlinear Markov process} is a family $\{P^\zeta\}_{\zeta \in \Pscr_0} \subset \Pscr(\Cscr)$ such that, setting $\mu^\zeta_t := P^\zeta_t$ for all $(\zeta,t) \in \Pscr_0 \times \R_+$,
    \begin{enumerate}[label=(\roman*), font=\normalfont]
        \item\label{itm:def_NMP_0} $\mu^\zeta_0 = \zeta$ for all $\zeta \in \Pscr_0$;
        \item\label{itm:def_NMP_1} $\mu^\zeta_t \in \Pscr_0$ for all $t > 0, \zeta \in \Pscr_0$;
        \item\label{itm:def_NMP_2} the \emph{(time-homogeneous) nonlinear Markov property} holds, i.e.,
        \begin{equation}\label{eq:nonlinear_Markov_property}
        P^\zeta(\pi_{t+s} \in A | \Fscr_s) ( \cdot) = P^{\mu^\zeta_s,\pi_s(\cdot)}(\pi_t \in A)\quad P^\zeta-\text{a.s.},\quad \forall s,t \geq 0, \, \zeta \in \Pscr_0, \, A \in \Bscr(\R^d),
    \end{equation}
    where for $\zeta \in \Pscr_0$, $\{P^{\zeta,y}\}_{y\in \R^d}$ is the disintegration family of $P^{\zeta}$ with respect to $\pi_0$.
    \end{enumerate}
\end{definition}
By \ref{itm:def_NMP_0}, we have $P^\zeta_0 = \zeta$ for all $ \zeta \in \Pscr_0$, which can be regarded as the \emph{normality} property in analogy with the linear \Cref{def:LMP} \ref{itm:def_LMP_0}. 
$\Pscr_0$ is the class of initial distributions of the nonlinear Markov processes.
By \ref{itm:def_NMP_1}, $\Pscr_0$ is invariant under the flow of one-dimensional time marginals.
The term \emph{nonlinear} reflects that the map
$
    \zeta \mapsto P^\zeta,
$
from $\Pscr_0$ to $\Pscr(\Cscr)$, need \emph{not} be linear on $\Pscr_0$ (even if $\Pscr_0 = \Pscr$), in contrast to the linear case \eqref{eq:Pzeta_for_linearMP}. 

Let us recall some properties of nonlinear Markov processes, in particular the fact that they generalize the notion of linear Markov processes.

\begin{proposition}[{\cite[Section 2]{RehmeierRoeckner2025NonlinearMarkov}}]\label{lem:basic-props-NLMP}
\begin{enumerate}[leftmargin=20pt, label=(\roman*), font=\normalfont]
    \item Let $\{P^z\}_{z\in \R^d}$ be a normal linear Markov process, and set 
    $$P^\zeta := \int_{\R^d} P^z \,d\zeta(z) 
    $$ for all $\zeta \in \Pscr$. Then $\{P^\zeta\}_{\zeta \in \Pscr}$ is a nonlinear Markov process with $\Pscr_0 = \Pscr$.
    Likewise, $\{P^\zeta\}_{\zeta \in \bar{\Pscr}_0}$ is a nonlinear Markov process, where $\bar{\Pscr}_0 \coloneqq \{P^z_t\,:\, z\in \R^d, t \geq 0\}$. The latter is the minimal nonlinear Markov process containing $\{P^z\}_{z\in \R^d}$, i.e. for any nonlinear Markov process $\{P^\zeta\}_{\zeta \in \Pscr_0}$ with $\{\delta_z , z\in \R^d\} \subseteq \Pscr_0$ and $P^{\delta_z} = P^z$ one has $\bar{\Pscr}_0 \subseteq \Pscr_0$.
    \item Unlike for linear Markov processes, the one-dimensional time marginals of a nonlinear Markov process in general do not satisfy the Chapman--Kolmogorov equations
    $$\mu^z_{t+s} = \int_{\R^d} \mu^y_t \,d\mu^z_s(y),\quad \forall z \in \R^d, s,t \geq 0,$$
    even if $\{\delta_z\,|\, z\in \R^d\} \subseteq \Pscr_0$. They do, however, satisfy the \emph{flow property}
    \begin{equation}\label{eq:flow-prop}
        \mu^\zeta_{t+s} = \mu^{\mu^\zeta_s}_t,\quad \forall \zeta \in \Pscr_0, s,t \geq 0.
    \end{equation} 
    In the linear case, the flow property follows from the Chapman--Kolmogorov equations.
    \item Similarly (but more involved) than in the linear case, a nonlinear Markov process is uniquely determined by a family of certain one-dimensional time marginals. More precisely, let $\{P^\zeta\}_{\zeta \in \Pscr_0}$ be a nonlinear Markov process and define $p^{\zeta}_{t}(y,dx) \in \Pscr$ by 
	\begin{equation}\label{eq:rcp-marginals}
		p^{\zeta}_{t}(y,dx) := P^{\zeta,y}\circ \pi_t^{-1}(dx),
	\end{equation}
	which is uniquely determined for $\zeta$-a.e. $y\in \R^d$. Here, $P^{\zeta, y}$ is as in Definition \ref{def:NL-MP}. Then for all $n \in \N_0$, $f: (\R^d)^{\otimes n} \to \R$ bounded measurable and $0 \leq t_1< \dots <t_n$, we have
	\begin{multline}\label{eq:Markov-fdd}
		\E_{\,\zeta} [f(\pi_{t_1},\dots,\pi_{t_n})] = \\ \notag \int_{\mathbb{R}^d}\bigg(\dots \int_{\mathbb{R}^d}\bigg(\int_{\mathbb{R}^d} f(x_1,\dots,x_n)\, p^{\mu^\zeta_{t_{n-1}}}_{t_n-t_{n-1}}(x_{n-1},dx_n)\bigg) p^{\mu^\zeta_{t_{n-2}}}_{t_{n-1}-t_{n-2}}(x_{n-2},dx_{n-1})\dots\bigg)\mu^{\zeta}_{t_1}(dx_1),
	\end{multline}
    where $\mathbb{E}_{\,\zeta}[\cdot]$ denotes expectation with respect to $P^\zeta$. 
\end{enumerate}
\end{proposition}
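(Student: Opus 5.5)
We prove the three items separately, working throughout from \Cref{def:LMP} and \Cref{def:NL-MP}.

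\emph{Item (i).} Set $P^\zeta := \int P^z\,d\zeta(z)$. Normality is immediate: $P^\zeta_0 = \int \delta_z\,d\zeta(z) = \zeta$. Invariance of $\Pscr_0=\Pscr$ is trivial. For $\bar{\Pscr}_0$ we first prove the identity $\int P^y_t\,dP^z_s(y) = P^z_{t+s}$. It follows by taking expectations in \eqref{eq:linear_Markov_property}. Since $P^z_s\in\bar{\Pscr}_0$, this identity gives $\mu^{P^z_s}_t = P^z_{t+s}\in\bar{\Pscr}_0$.

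For the nonlinear Markov property, note that the disintegration of $P^\zeta$ with respect to $\pi_0$ is $P^{\zeta,y}=P^y$ for $\zeta$-a.e.\ $y$. This holds by measurability in $z$ and because $P^z(\pi_0=z)=1$. The right-hand side of \eqref{eq:nonlinear_Markov_property} is therefore $P^{\pi_s}(\pi_t\in A)$, independently of $\mu^\zeta_s$. This is defined $\mu^\zeta_s$-a.e., and the law of $\pi_s$ under $P^\zeta$ is $\mu^\zeta_s$, so the expression is well defined $P^\zeta$-a.s.

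It remains to verify the conditional expectation identity under the mixture $P^\zeta$. Take $G\in\Fscr_s$. Then
\[P^\zeta(G\cap\{\pi_{t+s}\in A\}) = \int E^z[1_G\, P^{\pi_s}(\pi_t\in A)]\,d\zeta(z) = E^\zeta[1_G P^{\pi_s}(\pi_t\in A)],\]
using \eqref{eq:linear_Markov_property} under each $P^z$.

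Minimality: let $\{P^\zeta\}_{\zeta\in\Pscr_0}$ be a nonlinear Markov process with $\delta_z\in\Pscr_0$ and $P^{\delta_z}=P^z$. Property \ref{itm:def_NMP_1} of \Cref{def:NL-MP} gives $P^{\delta_z}_t = P^z_t \in \Pscr_0$ for all $t$, hence $\bar{\Pscr}_0\subseteq\Pscr_0$.

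\emph{Item (ii).} Integrating \eqref{eq:nonlinear_Markov_property} over the whole space gives
\[\mu^\zeta_{t+s}(A)=\int P^{\mu^\zeta_s,y}(\pi_t\in A)\,d\mu^\zeta_s(y) = P^{\mu^\zeta_s}(\pi_t\in A)=\mu^{\mu^\zeta_s}_t(A),\]
where the middle equality is the disintegration formula. This uses $\mu^\zeta_s\in\Pscr_0$, so that $P^{\mu^\zeta_s}$ is defined. The failure of the Chapman--Kolmogorov equations is shown by a counterexample. The natural candidates are the Barenblatt/McKean examples, or simply a distribution-dependent ODE whose drift depends on the law. For instance, take a deterministic drift equal to the mean, started from nondegenerate data. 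Then $\mu^{\delta_y}$ differs from the conditional law given start $y$ under a spread-out $\zeta$. We would cite the construction in \cite{RehmeierRoeckner2025NonlinearMarkov} rather than redo it.

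\emph{Item (iii).} Argue by induction on $n$. The case $n=1$ is the definition of $\mu^\zeta_{t_1}$.

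For the induction step, condition on $\Fscr_{t_{n-1}}$ and use \eqref{eq:nonlinear_Markov_property} with $s=t_{n-1}$ and $t=t_n-t_{n-1}$. Extending from indicators to bounded measurable functions by a monotone class argument gives
\[E_\zeta\bigl[f(\pi_{t_1},\dots,\pi_{t_n})\mid\Fscr_{t_{n-1}}\bigr] = \int f(\pi_{t_1},\dots,\pi_{t_{n-1}},x_n)\,p^{\mu^\zeta_{t_{n-1}}}_{t_n-t_{n-1}}(\pi_{t_{n-1}},dx_n).\]
The right-hand side is a bounded measurable function of $(\pi_{t_1},\dots,\pi_{t_{n-1}})$. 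Applying the induction hypothesis to it yields the nested formula.

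The main obstacle is a measurability and version issue: $p^{\mu}_t(y,\cdot)$ is determined only for $\mu$-a.e.\ $y$. We need $y\mapsto p^{\mu^\zeta_{t_{n-1}}}_{t}(y,\cdot)$ to be a measurable kernel and the $\mu^\zeta_{t_{n-1}}$-null ambiguity to be harmless. This holds because $\pi_{t_{n-1}}$ has law $\mu^\zeta_{t_{n-1}}$ under $P^\zeta$, and the joint function of $(x_1,\dots,x_{n-1})$ is handled by a monotone class argument on product functions $f=f_1\otimes\cdots\otimes f_n$.
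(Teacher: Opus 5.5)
Your arguments for (i), for the flow property in (ii), and for (iii) are correct. The paper gives no proof of its own here and simply cites \cite[Section 2]{RehmeierRoeckner2025NonlinearMarkov}, and your arguments are the standard ones:
\begin{itemize}
\item identifying the disintegration of the mixture as $P^{\zeta,y}=P^y$ and checking the conditional expectation on $G\in\Fscr_s$ under each $P^z$;
\item obtaining invariance of $\bar{\Pscr}_0$ from $\int P^y_t\,dP^z_s(y)=P^z_{t+s}$, and minimality from property (ii) of \Cref{def:NL-MP};
\item running the induction in (iii) with a monotone class argument, noting that the kernel ambiguity lives on a $\mu^\zeta_{t_{n-1}}$-null set.
\end{itemize}
You skip the last sentence of (ii). It is the one-line computation $\mu^\zeta_{t+s}=\int\mu^z_{t+s}\,d\zeta(z)=\iint\mu^y_t\,d\mu^z_s(y)\,d\zeta(z)=\mu^{\mu^\zeta_s}_t$ in the linear case.

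The step that fails is your counterexample to the Chapman--Kolmogorov equations. These equations only involve the Dirac-started marginals $\mu^z=\mu^{\delta_z}$. Whenever $\mu^z_s$ is itself a Dirac measure, they reduce to the flow property, which always holds. That is exactly what happens for your noiseless mean-field ODE $dX_t=\E[X_t]\,dt$. There $\mu^z_t=\delta_{ze^t}$ and $\int\mu^y_t\,d\mu^z_s(y)=\delta_{ze^{s}e^{t}}=\mu^z_{t+s}$, so Chapman--Kolmogorov \emph{holds}.

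What you actually observe, $P^{\zeta,y}\neq P^{\delta_y}$ for spread-out $\zeta$, is the non-linearity of $\zeta\mapsto P^\zeta$. By the flow property, failure of Chapman--Kolmogorov forces this non-linearity on $\{\mu^z_s\}$, but not conversely.

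You need an example whose Dirac-started marginals genuinely spread out. Two options work:
\begin{itemize}
\item Adding noise: for $dX_t=\E[X_t]\,dt+dW_t$ in $d=1$ (well posed for initial laws with finite first moment), one gets $\mu^z_t=\Nscr(ze^t,t)$. Then $\int\mu^y_t\,d\mu^z_s(y)=\Nscr(ze^{t+s},t+e^{2t}s)\neq\mu^z_{t+s}$ for $s,t>0$.
\item Within this paper, the PME Barenblatt core of \Cref{prop1}: $u^z(t+s,\cdot)$ vanishes outside $\bar B_{R(t+s)}(z)$, whereas $\int u^y(t,\cdot)\,u^z(s,y)\,dy$ is positive on $B_{R(s)+R(t)}(z)$. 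Moreover $R(s)+R(t)>R(s+t)$, because $R(r)=(C/q)^{1/2}r^{k/d}$ with $k/d\in(0,1)$.
\end{itemize}
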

We stress that the kernels $p^\zeta_t(y,\cdot)$ are \emph{not} equal to the one-dimensional time marginals $\mu^\zeta_t$ of the nonlinear Markov process. Consequently, the previous formula does \emph{not} imply that $\{\mu^\zeta_t\}_{t\geq 0, \zeta \in \Pscr_0}$ uniquely determines the nonlinear Markov process $\{P^\zeta\}_{\zeta \in \Pscr_0}$, in contrast to the linear case, see \Cref{prop:LMP_uniquely_marginal}. 

One main reason to study nonlinear Markov processes is that they naturally arise in the context of nonlinear FPEs and MV-SDEs, as the following result from \cite{RehmeierRoeckner2025NonlinearMarkov} shows.
\begin{theorem}[{\cite[Theorem 3.8]{RehmeierRoeckner2025NonlinearMarkov}}]\label{thm:NL-MP-construction-old}
    Let $\Pscr_0 \subseteq \Pscr$, and for any $\zeta \in \Pscr_0$ let $\mu^\zeta = (\mu^\zeta_t)_{t\geq 0}$ be a solution to \eqref{eq:FPE} satisfying \eqref{eq:L1-int-SP} such that
    \begin{enumerate}[label=(\roman*), font=\normalfont]
        \item $\{\mu^\zeta_t\}_{t\geq 0, \zeta \in \Pscr_0}$ satisfies the flow property \eqref{eq:flow-prop};
        \item for each $\zeta \in \Pscr_0$, $\mu^\zeta$ is an extreme point in the convex set of solutions to the linearized FPE $(\mu^\zeta$-linFPE) with initial condition $\zeta$ satisfying \eqref{eq:L1-int-SP}.
    \end{enumerate}
    Then there exists a nonlinear Markov process $\{P^\zeta\}_{\zeta \in \Pscr_0}$ such that each $P^\zeta$ is the unique solution path law to the associated MV-SDE \eqref{eq:DDSDE} with $P^\zeta_t = \mu^\zeta_t$, $t\geq 0$. In particular, this nonlinear Markov process is uniquely determined by \eqref{eq:DDSDE} and $\{\mu^\zeta_t\}_{t\geq 0, \zeta \in \Pscr_0}$.
\end{theorem}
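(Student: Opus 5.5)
The plan is to build $P^\zeta$ by superposition and obtain uniqueness of the solution path law from the extremality assumption (ii). The nonlinear Markov property then follows from this uniqueness together with the flow property (i).

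\emph{Existence.} For each $\zeta\in\Pscr_0$, \Cref{thm:SP-pr} gives a solution path law $P^\zeta$ of \eqref{eq:DDSDE} with $P^\zeta_t=\mu^\zeta_t$ for all $t\ge 0$. Any solution path law $Q$ with $Q_t=\mu^\zeta_t$ solves the \emph{linear} martingale problem with frozen coefficients $a(x,\mu^\zeta_t)$, $b(x,\mu^\zeta_t)$. Moreover, for any nonnegative bounded $\Fscr_s$-measurable weight $H$ with $\E_Q[H]=1$, the curve $t\mapsto E_Q[H\,1_{\cdot}(\pi_{s+t})]$ solves the time-shifted linearized equation. This holds because the martingale increments after time $s$ are orthogonal to $\Fscr_s$-measurable weights, and the integrability \eqref{eq:L1-int-SP} is inherited. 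By the flow property, $\mu^\zeta_{s+\cdot}=\mu^{\mu^\zeta_s}$, so the shifted linearized equation is exactly the ($\mu^{\mu^\zeta_s}$-linFPE), and hypothesis (ii) applies at every time $s$.

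\emph{Step 1: $Q$ is Markov.} Fix $s$ and $G\in\Fscr_s$, and put $h(y):=Q(G\mid\pi_s=y)$. I would take the weights $H_\pm:=1\pm\tfrac12\bigl(1_G-h(\pi_s)\bigr)$. Both are nonnegative, and both give curves $\nu^\pm$ with initial datum $\mu^\zeta_s$ (since $E_Q[1_G-h(\pi_s)\mid\pi_s]=0$) whose average is $\mu^\zeta_{s+\cdot}$. Extremality forces $\nu^+=\mu^\zeta_{s+\cdot}$, that is,
\[
E_Q\bigl[1_A(\pi_{s+t})1_G\bigr]=E_Q\bigl[1_A(\pi_{s+t})h(\pi_s)\bigr].
\]
Hence $\pi_{s+t}$ is conditionally independent of $\Fscr_s$ given $\pi_s$.

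\emph{Step 2: the transition kernels are determined.} Let $Q$ and $Q'$ both have marginals $\mu^\zeta$, with kernels $q_{s,t}(y,\cdot)$ and $q'_{s,t}(y,\cdot)$ given by conditioning on $\pi_s=y$. For Borel $C$, the mixed curves
\[
\nu^1:=\int\bigl(1_C q+1_{C^c}q'\bigr)\,d\mu^\zeta_s,\qquad \nu^2:=\int\bigl(1_C q'+1_{C^c}q\bigr)\,d\mu^\zeta_s
\]
both solve the ($\mu^{\mu^\zeta_s}$-linFPE) from $\mu^\zeta_s$ (gluing two martingale-problem solutions at time $s$), and they average to $\mu^\zeta_{s+\cdot}$. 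Extremality then gives $\int_{C}(q-q')\,d\mu^\zeta_s=0$ for every $C$, so $q=q'$ $\mu^\zeta_s$-a.e. By Step 1, all finite-dimensional distributions of $Q$ are products of these kernels and the marginals, so $Q=Q'$. This proves uniqueness of $P^\zeta$.

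\emph{Step 3: nonlinear Markov property.} By Step 1, $P^\zeta(\pi_{s+t}\in A\mid\Fscr_s)=q_{s,t}(\pi_s,A)$. The shifted measure $P^\zeta\circ\Pi_s^{-1}$ is a solution path law with marginals $\mu^\zeta_{s+\cdot}=\mu^{\mu^\zeta_s}$, so by uniqueness it equals $P^{\mu^\zeta_s}$. Its disintegration with respect to $\pi_0$ therefore gives $P^{\mu^\zeta_s,y}(\pi_t\in A)=q_{s,t}(y,A)$, which is \eqref{eq:nonlinear_Markov_property}. Properties (i) and (ii) of \Cref{def:NL-MP} are immediate from the flow property.

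The main obstacle is the rigorous verification that the weighted and glued measures solve the linearized FPE with the required integrability. In particular, the gluing in Step 2 needs a measurable selection of kernels, and one has to check that conditioning on $\pi_s$ preserves the martingale problem from time $s$ on. I would handle this through the martingale-problem characterization, in the style of Stroock--Varadhan, using a countable determining class of test functions $\varphi\in C^2_c$.
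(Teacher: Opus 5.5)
Your proof is correct. The paper does not prove this theorem itself; it quotes it from \cite[Theorem 3.8]{RehmeierRoeckner2025NonlinearMarkov}. So the comparison below is with the route the paper points to, namely \Cref{lem:extrem-unique-equiv}, which recasts extremality as uniqueness among linearized solutions with $\nu_t\le C\mu_t$.

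\textbf{Comparison of routes.} In that restricted-uniqueness formulation, one runs conditioning arguments of Stroock--Varadhan type: restricting to $G\in\Fscr_s$ produces a linearized solution dominated by $\mu/Q(G)$. Your symmetric weights $H_\pm=1\pm\frac12\bigl(1_G-h(\pi_s)\bigr)$ use the same idea with the convex decomposition built in. Since $\frac12\le H_\pm\le\frac32$ and $\frac12(\nu^++\nu^-)=\mu^\zeta_{s+\cdot}$, you apply extremality directly and never need the restricted-uniqueness lemma.

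Your organisation also differs.
\begin{itemize}
\item You first show that \emph{every} solution path law with the prescribed marginals is a (linear, time-inhomogeneous) Markov process.
\item Uniqueness then reduces to equality of the kernels $q_{s,t}$, i.e.\ of the two-dimensional marginals. This is exactly the mechanism the paper uses in the proof of \Cref{thm:NMP_2D}.
\item You obtain the nonlinear Markov property by identifying $P^\zeta\circ\Pi_s^{-1}=P^{\mu^\zeta_s}$ through uniqueness. This spares you from showing that the disintegration measures $P^{\mu^\zeta_s,y}$ solve the martingale problem from $\delta_y$.
\end{itemize}

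\textbf{The obstacle you flag is not actually there.} In Step 2 you need no path-space gluing and no measurable selection. Write
$\nu^1_t(A)=\E_Q[1_C(\pi_s)1_A(\pi_{s+t})]+\E_{Q'}[1_{C^c}(\pi_s)1_A(\pi_{s+t})]$.
Each summand solves the shifted linearized equation by the same weighting argument as in Step 1, now with the $\Fscr_s$-measurable weights $1_C(\pi_s)$ and $1_{C^c}(\pi_s)$. By linearity, $\nu^1$ is a solution with initial datum $\mu^\zeta_s$. Moreover $\nu^1_t\le 2\mu^\zeta_{s+t}$, which gives \eqref{eq:L1-int-SP}.

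Likewise, you never condition on $\pi_s$ at the path level. All you use is that, for each $\varphi\in C^2_c(\R^d)$,
$\varphi(\pi_t)-\varphi(\pi_0)-\int_0^t L_{a(\mu^\zeta_r),b(\mu^\zeta_r)}\varphi(\pi_r)\,dr$
is a $Q$-martingale. This follows from It\^o's formula and the integrability built into \Cref{def:MVSDE-sol}. Since the FPE is tested one $\varphi$ at a time, no countable determining class is needed either.

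\textbf{Two points to state explicitly.}
\begin{itemize}
\item Hypothesis (ii) is applied to $\mu^{\mu^\zeta_s}$. This uses $\mu^\zeta_s\in\Pscr_0$, which is implicit in the flow property.
\item $P^\zeta\circ\Pi_s^{-1}$ is a solution path law to \eqref{eq:DDSDE} because the coefficients depend on time only through the law, with driving Brownian motion $W_{s+\cdot}-W_s$.
\end{itemize}
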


\vspace{0.5em}
\noindent
\textbf{Motivation for a new notion: Nonlinear Markov cores.}
    For nonlinear Markov processes, typical choices for $\Pscr_0$  are $\Pscr_0 = \Pscr$, $\Pscr_{\textup{ac}}$ or $ \Pscr_{\textup{ac}} \cap L^\infty$ (see also the examples in \cite[Section 4]{RehmeierRoeckner2025NonlinearMarkov}). $\Pscr_0 = \{\delta_z, z \in \R^d\}$ is not reasonable, since by \Cref{def:NL-MP}, this imposes the pathological constraint that \emph{all} one-dimensional time marginals $\mu^\zeta_t$ of the nonlinear Markov process are Dirac measures. Thus, in contrast to the linear case, a nonlinear Markov process typically consists not only of path measures with Dirac initial conditions.

    However, in the present paper we usually encounter the following situation. For each $z\in \R^d$, there exists a solution $u^z = (u^z(t,\cdot))_{t > 0}$ to a nonlinear PDE with initial condition $\delta_z$ (e.g., in the case of the $p$-Laplace equation, $u^z$ is the explicit Barenblatt solution with center of mass $z$) and we would like to construct a (in fact, several) nonlinear Markov process(es) with one-dimensional time marginal densities $(u^z(t,\cdot))_{t > 0}$. Since the PDE is nonlinear, solutions corresponding to general initial data $\zeta \in \Pscr$ cannot be obtained by convex combinations of $u^z$, $z\in \R^d$. Thus, one would like to set $\Pscr_0 = \{\delta_z, z \in \R^d\}$, although, as mentioned above, this is not feasible.
    
    This motivates the following new notion. The idea is to associate with the given data $\{\mu^z_t\}_{t\geq 0,\; z \in \R^d}$, a \emph{minimal} nonlinear Markov process, in the sense that its set of initial data $\Pscr_0$ is as small as possible while necessarily accommodating $\{\mu^\zeta_t\}_{t\geq 0, z\in \R^d}$ (see also \Cref{lem:basic-props-NLMP} (i)).

\begin{definition}\label{def:NMC} 
     Let $\{\mu^z_t\}_{t\geq 0, z \in \R^d} \subset \Pscr$ be such that $\mu^z_0 = \delta_z$ for all $z \in \R^d$ and $(t,z) \mapsto \mu^z_t$ is injective from $\R_+\times \R^d$ to $ \Pscr$. We call a family $\{P^z\}_{z\in \R^d} \subset \Pscr(\Cscr)$ a \emph{(time-homogeneous) nonlinear Markov core for $\{\mu^z_t\}_{t\geq 0, z \in \R^d}$}
     if
        \begin{enumerate}[label=(\roman*), font=\normalfont]
        \item \label{itm:def_NMC_1} $P^z_t = \mu^z_t$ for all $(t,z)\in  \R_+ \times \R^d$;
        \item \label{itm:def_NMC_2}  the family $\{P^\zeta\}_{\zeta \in \mathcal{P}_0}\subset \Pscr(\Cscr)$, defined by $\Pscr_0 \coloneqq \{\mu^z_t\}_{t\geq 0, z \in \R^d}$ and 
        \begin{equation}
            P^{\zeta} \coloneqq P^z \circ \Pi_t^{-1} \quad \text{for } \zeta = \mu^z_t,
        \end{equation}
     is a nonlinear Markov process in the sense of \Cref{def:NL-MP}. We call $\{P^\zeta\}_{\zeta \in \mathcal{P}_0}$ the nonlinear Markov process \emph{induced} by $\{P^z\}_{z\in \R^d}$.
    \end{enumerate}
\end{definition}
The injectivity condition is, in particular, needed to ensure that $P^\zeta$ in (ii) is well-defined. As a first example of a nonlinear Markov core, we consider the linear special case: 

\begin{lemma}\label{lemma:LMP_NMC}
    Let $\{P^z\}_{z\in \mathbb{R}^d}$ be a normal linear Markov process such that $(t,z) \mapsto P^z_t =: \mu^z_t$ is injective from $\R_+\times \R^d$ to $\Pscr$. Then $\{P^z\}_{z\in \R^d}$ is a nonlinear Markov core for $\{\mu^z_t\}_{t\geq 0, z \in \R^d}$ and its induced nonlinear Markov process is $\{P^\zeta\}_{\zeta\in \Pscr_0}$, where $\Pscr_0 := \{\mu^z_t\}_{t \geq 0, z\in \R^d}$ and $P^\zeta \coloneqq \int_{\mathbb{R}^d} P^z d \zeta (z)$.
\end{lemma}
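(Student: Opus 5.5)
Condition \ref{itm:def_NMC_1} of Definition \ref{def:NMC} holds by the definition of $\mu^z_t$, and injectivity of $(t,z)\mapsto\mu^z_t$ is assumed. So the work is to show that the induced family, with $P^\zeta := P^z\circ\Pi_t^{-1}$ for $\zeta=\mu^z_t$, coincides with $\int P^y\,d\zeta(y)$ and is a nonlinear Markov process on $\Pscr_0=\{\mu^z_t\}$.

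\textbf{Step 1: identifying the induced family.} The shift identity \eqref{eq:shift_identity_LMP} gives $P^z\circ\Pi_t^{-1}=\int_{\R^d}P^y\,d\mu^z_t(y)$. Hence the induced family is exactly $\zeta\mapsto\int P^y\,d\zeta(y)$ restricted to $\Pscr_0$. To justify \eqref{eq:shift_identity_LMP}, I would check equality of finite-dimensional marginals, using the Markov property \eqref{eq:linear_Markov_property} iterated via the tower property (the usual Chapman--Kolmogorov argument). This also uses measurability of $y\mapsto P^y(G)$ from \ref{itm:def_LMP_1}.

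\textbf{Step 2: the axioms of Definition \ref{def:NL-MP}.}
\begin{itemize}
\item \ref{itm:def_NMP_0}: for $\zeta=\mu^z_t$ we have $P^\zeta_0=P^z_t=\zeta$.
\item \ref{itm:def_NMP_1}: $P^\zeta_s=P^z_{t+s}=\mu^z_{t+s}\in\Pscr_0$.
\item \ref{itm:def_NMP_2}: apply Lemma \ref{lem:basic-props-NLMP}(i), which says $\{P^\zeta\}_{\zeta\in\bar\Pscr_0}$ with $P^\zeta=\int P^y\,d\zeta(y)$ is a nonlinear Markov process on $\bar\Pscr_0=\{P^z_t\}=\Pscr_0$.
\end{itemize}
Together with Step 1, this completes the proof.

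If a self-contained verification of \ref{itm:def_NMP_2} is preferred, I would proceed as follows. The disintegration of $P^\zeta=\int P^y\,d\zeta(y)$ with respect to $\pi_0$ is $P^{\zeta,y}=P^y$ for $\zeta$-a.e.\ $y$, because $P^y_0=\delta_y$. So the right-hand side of \eqref{eq:nonlinear_Markov_property} equals $P^{\pi_s}(\pi_t\in A)$, which holds $\mu^\zeta_s$-a.e., i.e.\ $P^\zeta$-a.s.\ in $\pi_s$. For the left-hand side, take $G\in\Fscr_s$ and compute $P^\zeta(G\cap\{\pi_{t+s}\in A\})=\int P^y(G\cap\{\pi_{t+s}\in A\})\,d\zeta(y)$. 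Then apply \eqref{eq:linear_Markov_property} under each $P^y$ and integrate in $y$.

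\textbf{Main obstacle.} The only delicate point is the null-set bookkeeping in the disintegration. Since $P^{\mu^\zeta_s,x}$ is defined only for $\mu^\zeta_s$-a.e.\ $x$, one must note that $\pi_s$ under $P^\zeta$ has law $\mu^\zeta_s$. Then the identity holds $P^\zeta$-almost surely regardless of the version chosen.
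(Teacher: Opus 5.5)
Your proposal is correct and matches the paper's proof: the shift identity \eqref{eq:shift_identity_LMP} identifies the induced family $P^z\circ\Pi_t^{-1}$ with $\int_{\R^d}P^y\,d\mu^z_t(y)$, and the second statement of \Cref{lem:basic-props-NLMP} (i) then gives the nonlinear Markov property on $\Pscr_0=\bar{\Pscr}_0$. Your optional direct check of \eqref{eq:nonlinear_Markov_property} via the disintegration $P^{\zeta,y}=P^y$ is also sound, and it mirrors the hands-on argument the paper gives for Brownian motion in \Cref{lemaux}.
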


\begin{proof}
    This follows from \eqref{eq:shift_identity_LMP} (which itself follows from the classical Markov property) and the second statement in \Cref{lem:basic-props-NLMP} (i).
\end{proof}

One of our main results is that a nonlinear Markov process is \emph{not} uniquely determined by its \emph{one}-dimensional time marginals. We prove, however, that the \emph{two}-dimensional time marginals, $P^\zeta_{s,t} := P^\zeta \circ (\pi_s,\pi_t)^{-1}$, $0 \leq s \leq t$, characterize a nonlinear Markov process, in the next theorem:
\begin{theorem}\label{thm:NMP_2D}
    Let $\Pscr_0 \subseteq \Pscr$, and $\{\mathbb{P}^\zeta\}_{\zeta \in \Pscr_0}$, $\{\mathbb{Q}^\zeta\}_{\zeta \in \Pscr_0}$ be nonlinear Markov processes such that
    $$\mathbb{P}^\zeta_{s,t} = \mathbb{Q}^\zeta_{s,t},\quad \forall \zeta \in \Pscr_0, \, 0\leq s \leq t.$$
    Then $\mathbb{P}^\zeta = \mathbb{Q}^\zeta$ for all $\zeta \in \Pscr_0$.
\end{theorem}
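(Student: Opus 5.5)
The plan is to use the finite-dimensional representation \eqref{eq:Markov-fdd} from \Cref{lem:basic-props-NLMP}(iii). Since a measure on $\Cscr$ is determined by its finite-dimensional marginals, it suffices to show that every finite-dimensional marginal of $\mathbb{P}^\zeta$ coincides with that of $\mathbb{Q}^\zeta$. By \eqref{eq:Markov-fdd}, each such marginal is built from two ingredients. The first is the one-dimensional marginal $\mu^\zeta_{t_1}$, which agrees for $\mathbb{P}$ and $\mathbb{Q}$ (take $s=t$ in the hypothesis). The second is the family of kernels $p^{\mu^\zeta_{t_{k}}}_{t_{k+1}-t_k}(x_k,dx_{k+1})$, where only their values for $\mu^\zeta_{t_k}$-a.e.\ $x_k$ matter. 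It therefore suffices to show that, for every $\eta \in \Pscr_0$ and every $r \geq 0$, the kernels $p^{\eta}_r(y,\cdot)$ defined from $\mathbb{P}$ and from $\mathbb{Q}$ agree for $\eta$-a.e.\ $y$. Note that $\eta=\mu^\zeta_{t_k}$ lies in $\Pscr_0$ by \ref{itm:def_NMP_1}, and that $\mu^\zeta_{t_k}$ is the same measure for both processes.

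The key identification is as follows. By definition \eqref{eq:rcp-marginals}, $p^\eta_r(y,dx) = \mathbb{P}^{\eta,y}\circ\pi_r^{-1}(dx)$, where $\{\mathbb{P}^{\eta,y}\}$ is the disintegration of $\mathbb{P}^\eta$ with respect to $\pi_0$. Consequently $\eta(dy)\,p^\eta_r(y,dx)$ is exactly the joint law $\mathbb{P}^\eta\circ(\pi_0,\pi_r)^{-1} = \mathbb{P}^\eta_{0,r}$. Indeed, for bounded measurable $g$,
\[
\int\!\!\int g(y,x)\,p^\eta_r(y,dx)\,\eta(dy) = \int \mathbb{E}_{\mathbb{P}^{\eta,y}}[g(y,\pi_r)]\,\eta(dy) = \mathbb{E}_{\mathbb{P}^\eta}[g(\pi_0,\pi_r)],
\]
using $\mathbb{P}^{\eta,y}(\pi_0=y)=1$ for $\eta$-a.e.\ $y$, which is a standard property of disintegrations. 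So $p^\eta_r$ is a version of the regular conditional distribution of $\pi_r$ given $\pi_0$ under $\mathbb{P}^\eta_{0,r}$. Since $\mathbb{P}^\eta_{0,r} = \mathbb{Q}^\eta_{0,r}$ by hypothesis (with $s=0$, $t=r$), uniqueness of disintegrations gives that the $\mathbb{P}$- and $\mathbb{Q}$-kernels coincide for $\eta$-a.e.\ $y$.

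It remains to substitute these kernels into \eqref{eq:Markov-fdd}. Proceeding inductively from the innermost integral outward, each inner integral is a measurable function of $x_{n-1}$. That function is identical for $\mathbb{P}$ and $\mathbb{Q}$ outside a $\mu^\zeta_{t_{n-1}}$-null set, and $\mu^\zeta_{t_{n-1}}$ is exactly the marginal of $x_{n-1}$ under the outer iterated integrals. So the null sets are harmless, and we obtain $\mathbb{E}^{\mathbb{P}}_{\zeta}[f(\pi_{t_1},\dots,\pi_{t_n})] = \mathbb{E}^{\mathbb{Q}}_{\zeta}[f(\pi_{t_1},\dots,\pi_{t_n})]$.

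The main point requiring care is this null-set bookkeeping. One must check that the $x_k$-marginal of the iterated measure in \eqref{eq:Markov-fdd} is indeed $\mu^\zeta_{t_k}$, so that the a.e.\ equality of kernels suffices. This follows from \eqref{eq:Markov-fdd} itself applied with $f$ depending only on $x_k$, or more directly from the flow property \eqref{eq:flow-prop} together with the identity $\int p^\eta_r(y,\cdot)\,\eta(dy) = \mu^\eta_r$. Once this is established, the argument closes with the uniqueness theorem for measures on $\Cscr$, since $\Fscr=\sigma(\pi_t, t\ge 0)$.
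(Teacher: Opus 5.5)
Your proposal is correct and follows essentially the paper's proof. Both arguments identify $\eta(dy)\,p^\eta_r(y,dx)$ with $\mathbb{P}^\eta\circ(\pi_0,\pi_r)^{-1}$, use $\eta$-a.e.\ uniqueness of such kernels to get equality of the $\mathbb{P}$- and $\mathbb{Q}$-kernels, and substitute into the finite-dimensional formula of \Cref{lem:basic-props-NLMP}(iii). The differences are minor: you derive the kernel identity from the concentration property of the disintegration rather than from the nonlinear Markov property at $s=0$, and you spell out the null-set bookkeeping that the paper leaves implicit.
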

In fact, the following proof shows that the assumption of the theorem can be weakened by considering only $s=0$ and arbitrary $t>0$.
\begin{proof}
The assumption in particular implies that the one-dimensional time marginals of both nonlinear Markov processes coincide, and we set
$$\mu^\zeta_t := \mathbb{P}^\zeta \circ \pi_t^{-1} = \mathbb{Q}^\zeta \circ \pi_t^{-1},\quad \forall \zeta \in \Pscr_0, \, t \geq 0.$$
Denote by $\{P^{\zeta,y}\}_{y\in \R^d}$ and $\{Q^{\zeta,y}\}_{y\in \R^d}$ the disintegration family of $\mathbb{P}^\zeta$ and $\mathbb{Q}^\zeta$ with respect to $\pi_0$, respectively (each uniquely determined $\zeta$-a.s.), and consider (as in Proposition \ref{lem:basic-props-NLMP} (iii)) for $t\geq 0$ the probability kernels
$$(y,A) \mapsto p^\zeta_t(y,A) := P^{\zeta,y} \circ \pi_t^{-1}(A),\quad (y,A) \in \R^d \times \Bscr(\R^d),$$
and likewise $q^\zeta_t(y,A) := Q^{\zeta,y} \circ \pi_t^{-1}(A)$. 
For each $(\zeta, t) \in \Pscr_0\times \R_+$, $p^\zeta_t$ and $q^{\zeta}_t$ are the $\zeta$-a.s. unique probability kernels $p$ and $q$ such that
$$\mathbb{P}^\zeta \circ (\pi_{0},\pi_{t})^{-1} (A\times B) = \int_A p(z,B)d\zeta(z)$$
	and
		$$\mathbb{Q}^\zeta \circ (\pi_{0},\pi_{t})^{-1} (A\times B) = \int_A q(z,B)d\zeta(z)$$
	for all $A,B \in \Bscr(\R^d)$,	respectively (the exceptional set depends on $\zeta$ and $t$, but this is not harmful for the remainder of the proof). Indeed, the existence and $\zeta$-a.s. uniqueness of such kernels $p$ and $q$ is classical, see for instance \cite[Theorem 8.5]{K21}. We have
\begin{align}
\mathbb{P}^{\zeta} \circ (\pi_{0},\pi_{t})^{-1} (A\times B) = \E_{\zeta}[\mathbb{P}_{\zeta}[\pi_{t}\in B|\pi_{0}] \mathds{1}_A(\pi_{0})] = \int_A p^\zeta_{t}(y,B)\,d\zeta(y),
\end{align}
where for the second equality we used $\mathbb{P}^{\zeta} \circ \pi_0^{-1} = \zeta$ and the nonlinear Markov property. Of course, the applies to $\mathbb{Q}^\zeta$ and $q^\zeta_t$ instead of $\mathbb{P}^\zeta$ and $p^\zeta_t$. Hence, our assumption entails that for each $t\geq 0$ and $\zeta \in \Pscr_0$
$$p^\zeta_t(y,dx) = q^\zeta_t(y,dx)\quad \zeta-\text{a.e. } y\in \R^d.$$ But then the formula for the finite-dimensional time marginals of a nonlinear Markov process from Proposition \ref{lem:basic-props-NLMP} (iii) implies
$$\mathbb{P}^\zeta \circ(\pi_{t_1},\dots,\pi_{t_n})^{-1} = \mathbb{Q}^\zeta \circ (\pi_{t_1},\dots,\pi_{t_n})^{-1},\quad \forall n \in \N, 0\leq t_1 \leq \dots \leq t_n.$$
This implies the claim.
\end{proof}

Finally, we mention that the previous result as well as the notion of nonlinear Markov cores can obviously be extended to the time-inhomogeneous setting, but since all our examples and results concern time-homogeneous cases, we refrain from doing so.

\section{From one PDE to many MV--SDEs and nonlinear Markov processes}\label{sec:scheme}
We come to the core matter of our paper, described by the following three-step scheme.

\vspace{1em}

\noindent
\textbf{Scheme.} We start from a parabolic PDE
\begin{equation}\label{eq:general-PDE}
    \partial_t u =  L u,
\end{equation}
where $L$ is some (possibly nonlinear) differential operator, for instance $Lu = \Delta u$ (heat equation), $Lu = \Delta u^m$ (porous medium equation), or $Lu = \nabla \cdot (|\nabla u|^{p-2}\nabla u)$ ($p$-Laplace equation). Suppose there exists a family of solutions $\{u^z\}_{z\in \R^d}$, $u^z \coloneqq (u^z(t,\cdot))_{t>0}$, to \eqref{eq:general-PDE} such that for all $z\in \R^d$ and $t>0$, $u^z (t,\cdot)$ is a probability density, and $u^z(t,x)dx \rightarrow {\delta}_z$ weakly as $t \downarrow 0$. 
We proceed along the following three steps:
\begin{enumerate}\label{itm:Construction}
    \item Recast \eqref{eq:general-PDE} as a FPE \eqref{eq:FPE} with Nemytskii-coefficients \eqref{eq:Nemytskii-coeff} solved by
    $$\mu^z \coloneqq (\mu^z_t)_{t\geq 0}, \,\mu^z_t \coloneqq u^z (t,x) dx, \,\mu^z_0 = \delta_z, \quad z\in \R^d,$$ in the sense of \Cref{def:FPE-sol}.
    \item Construct solution path laws $\{P^z\}_{z\in \R^d}$ to the corresponding MV-SDE \eqref{eq:Neymtskii-DDSDE} with $P^z_t = \mu^z_t$ via \Cref{thm:SP-pr} (superposition principle).
    \item Prove $\{P^z\}_{z\in \R^d}$ is a nonlinear Markov core for $\{u^z\}_{z\in \R^d}$, inducing a nonlinear Markov process $\{P^\zeta\}_{\zeta \in \Pscr_0}$ in the sense of \Cref{def:NMC}, where $\Pscr_0 := \{\mu^z_t \mid z \in \R^d, t \geq 0\}$.
\end{enumerate}
This way, starting from a solution family $\{u^z\}_{z\in \R^d}$ to the PDE \eqref{eq:general-PDE}, we construct a nonlinear Markov process with one-dimensional time marginals densities $u^z(t,\cdot)$, consisting of solution path laws to an associated MV-SDE.
\begin{remark}
    \begin{enumerate}
        \item [(i)] It is irrelevant in which sense (e.g., strong or weak) $u^z$ solves \eqref{eq:general-PDE} as long as it solves the associated FPE from Step (1) in the sense of \Cref{def:FPE-sol}. Thus we do not explicitly state a definition of solutions for \eqref{eq:general-PDE}.
        \item[(ii)] Step (1) induces a severe degree of nonuniqueness: As we will demonstrate in \Cref{subsect:step1}, a solution to \eqref{eq:general-PDE} typically solves an uncountable number of associated nonlinear FPEs. 
        \item[(iii)] On the other hand, the identification of the MV-SDE associated with a \emph{given} FPE in Step (2) is (at least in our framework) unique. Also, the corresponding nonlinear Markov core and process from Step (3) will be uniquely determined by $\{u^z\}_{z\in \R^d}$ and the MV-SDE.
    \end{enumerate}
\end{remark}

\subsection{Step (1): From one PDE to many nonlinear FPEs}\label{subsect:step1}

    Consider the parabolic PDE \eqref{eq:general-PDE} and assume that we have a solution given by a weakly continuous curve of probability densities $ (u(t,\cdot))_{t>0} \subset \Pscr_{\textup{ac}}$.
    We would like to identify an FPE solved by $(u(t,\cdot))_{t>0}$. 
    Our ansatz is to find coefficients $(a_{ij},b_{i})$ of Nemytskii-type as in \eqref{eq:Nemytskii-coeff}, expressed in terms of $(A_{ij}, B_i,\Gamma_{ij}, \Lambda_i)$, such that
    $$ Lu (x) \overset{!}{=} \partial_{ij}\big( a_{ij}(x, u) u(x)\big) - \partial_{i} \big( b_{i}(x, u) u(x)\big) $$
    which is, at this point, only formal. We restrict our ansatz to the case where the diffusion matrix is diagonal, i.e., $a_{ij} = \delta_{ij} a$, as this is sufficient for our applications. To turn this formal ansatz into a rigorous approach, we consider the following
    \vspace{0.5em}

    \noindent
    \textbf{Problem.}
    For a weakly continuous curve of probability densities $(u(t,\cdot))_{t>0} \subset \Pscr_{\textup{ac}}$, find coefficients $(a,b) : \R^d \times \Pscr_* \to \R \times \R^d$ and a domain $\Pscr_* \subseteq \Pscr_{\textup{ac}}$ such that
    $(u(t,\cdot))_{t>0}$ solves the FPE
        \begin{equation}\label{eq:Main_FPE_our_paper}
            \partial_t u = \Delta \big( a(x,u)  u \big) - \nabla \cdot \big( b (x,  u ) u\big) 
        \end{equation}
    in distributional sense, i.e.,
    \begin{enumerate}[label=(\roman*), font=\normalfont]
        \item\label{itm:sol1} $(u(t,\cdot))_{t >0} \subseteq \Pscr_*$;
        \item\label{itm:sol2} $a(x,u(t,\cdot)) \geq 0$ $u(t,x)$ $dxdt$-a.e. $(t,x) \in (0,\infty)\times \R^d$;
        \item\label{itm:sol3} $(t,x)\mapsto a(x,u(t,\cdot))$ and $ (t,x)\mapsto b(x,u(t,\cdot))$ belong to $L^1_{\textup{loc}}(\R_+\times \R^d;u(t,x)dxdt)$;
        \item\label{itm:sol4} $(u(t,\cdot))_{t>0}$ satisfies the weak formulation of \eqref{eq:Main_FPE_our_paper}.
    \end{enumerate}
    \vspace{0.5em}

    \noindent
     This motivates the definition of the following set. 

    \begin{definition}[Set of admissible coefficients]
        Given a weakly continuous curve of probability densities $(u(t,\cdot))_{t>0} \subset \Pscr_{\textup{ac}}$, we define the following (possibly empty) set 
        {
        \setlength{\jot}{-1pt}
        \begin{align}
            M_{(u(t,\cdot))}   \coloneqq \Big\{ \big((a,b),\Pscr_*\big): \quad\,\,\, \Pscr_* \subseteq \Pscr_{\textup{ac}} \text{ and } (a, b): \R^d \times \Pscr_* \to \R \times \R^d \text{ such that } \,\,\,\\
            (u(t,\cdot))_{t >0} \text{ solves the FPE \eqref{eq:Main_FPE_our_paper} i.e. \ref{itm:sol1}-\ref{itm:sol4} above hold} \Big\}.
        \end{align}
        }
    \end{definition}

    It is clear that this set, provided it is non-empty, is \emph{convex} in $a$ and $b$. More precisely, if $\big((a,b),\Pscr_*\big) \in M_{(u(t,\cdot))}$ and $\big((\hat{a},\hat{b}),\hat{\Pscr}_*\big) \in M_{(u(t,\cdot))}$, then for any $\beta \in (0,1)$
    $$
    \Big((\beta a+(1-\beta)\hat{a},\, \beta b+(1-\beta)\hat{b}), \, \Pscr_* \cap \hat{\Pscr}_* \Big) \in M_{(u(t,\cdot))}.
    $$
     When $\hat{a} = 0$, one can even choose $\beta \in (0,\infty)$. 
    
    It is not our objective here to fully characterize this set of admissible coefficients.
    Instead, we address the following question.
    \emph{Given an admissible tuple, can we construct further ones by a general procedure?} The next lemma provides an affirmative answer. More precisely, starting from  $\big((a,b),\Pscr_*\big)\in M_{(u(t,\cdot))}$, we show that if there exists $f:\R^d\times\Dscr(f)\to\R$ satisfying suitable integrability and regularity assumptions, then, for a suitable domain $\hat\Pscr_* \subseteq \Pscr_*$,
    $$
    \Big((a+\frac{f}{u},b+\frac{\nabla f}{u}),\hat{\Pscr}_*\Big) \in M_{(u(t,\cdot))}.
    $$
    Not all elements in $M_{(u(t,\cdot))}$ are of this type, as is easily seen by adding a weakly divergence-free vector field to $b$.

    \begin{lemma}\label{lem:many-PDEs_new}
        Let $(u(t,\cdot))_{t>0} \subseteq  \Pscr_* $ be a distributional solution to the FPE \eqref{eq:Main_FPE_our_paper} with coefficients $(a, b): \R^d \times \Pscr_* \to \R \times \R^d$ for some $\Pscr_* \subseteq \Pscr_{\textup{ac}} $. Let $f: \R^d \times \Dscr(f) \to \R $ with $\Dscr(f) \subseteq \Pscr_{\textup{ac}}$ such that $(u(t,\cdot))_{t >0} \subseteq\Dscr(f)$,
        \begin{equation}\label{eq:f_lemma_assumption_2}
            f(x,u(t,\cdot)) + a(x,u(t,\cdot))u(t,x) \geq 0 \quad u(t,x)dxdt\text{-a.e.,}
        \end{equation}
        and \begin{equation}\label{eq:f_lemma_assumption_3}
            \big[(t,x)\mapsto f(x,u(t,\cdot))\big] \in L^1_{\textup{loc}}(\R_+;W^{1,1}_{\textup{loc}}(\R^d)).
        \end{equation}
        Then $(u(t,\cdot))_{t> 0}$ is a distributional solution to the FPE with new coefficients $(\hat{a},\hat{b}): \R^d \times \hat{\Pscr}_*   \to \R \times \R^d$ given by 
        \begin{equation}\label{eq:f_lemma_new_coeff}
             \hat{a} (x,u) \coloneqq  a (x,u) + \frac{f(x,u)}{u(x)},\quad \text{and}\quad \hat{b} (x,u) \coloneqq  b (x,u) + \frac{\nabla f(x,u)}{u(x)},
        \end{equation}    
        on $\{u > 0\}$, and 
        $\frac {f(\cdot,u)} {u} := 0, \,\frac{\nabla f(\cdot, u)}{u} := 0$ on $\{u = 0\}$, defined on
        \begin{equation}\label{eq:f_lemma_new_domain}
            \hat{\Pscr}_* \coloneqq \Pscr_* \cap \{ u \in \Dscr(f): \, f(\cdot,u) \in W^{1,1}_{\textup{loc}}(\R^d)\}.
        \end{equation}  
    \end{lemma}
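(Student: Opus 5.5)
The proof checks items \ref{itm:sol1}--\ref{itm:sol4} of the Problem for the new tuple $((\hat a,\hat b),\hat\Pscr_*)$. The key identity is that for $F_t(x):=f(x,u(t,\cdot))\in W^{1,1}_{\textup{loc}}(\R^d)$ we have, distributionally,
\[
\Delta F_t - \nabla\cdot(\nabla F_t) = 0 .
\]
So adding $\frac{f}{u}u$ to the diffusion and $\frac{\nabla f}{u}u$ to the drift does not change the equation. The only subtlety is the convention on $\{u=0\}$, where $\frac{f}{u}u$ and $\frac{\nabla f}{u}u$ vanish instead of equaling $F_t$ and $\nabla F_t$.

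\textbf{Items \ref{itm:sol1}--\ref{itm:sol3}.} Item \ref{itm:sol1} is immediate: $u(t,\cdot)\in\Pscr_*\cap\Dscr(f)$, and $F_t\in W^{1,1}_{\textup{loc}}$ holds for a.e.\ $t$ by \eqref{eq:f_lemma_assumption_3}. If this holds only for a.e.\ $t$, I would redefine $f$ on a null set of times, or note that only a.e.\ $t$ matters in the time integral. For item \ref{itm:sol2}, on $\{u>0\}$ we have $\hat a = (au+f)/u\ge 0$ $u\,dxdt$-a.e.\ by \eqref{eq:f_lemma_assumption_2}; on $\{u=0\}$ the set is $u\,dxdt$-null anyway. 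For item \ref{itm:sol3}, observe that $\int_K |f/u|\,u\,dxdt\le\int_K|F_t(x)|\,dxdt$ and $\int_K |\nabla f/u|\,u\,dxdt\le\int_K|\nabla F_t|\,dxdt$ for compact $K$. Both are finite by \eqref{eq:f_lemma_assumption_3}, and the old coefficients are integrable by assumption.

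\textbf{Item \ref{itm:sol4}.} For $\varphi\in C^2_c$ the weak formulation requires
\[
\int_0^t\!\!\int \Big(\hat a\,\Delta\varphi + \hat b\cdot\nabla\varphi\Big)u\,dx\,ds = \int_0^t\!\!\int (a\Delta\varphi+b\cdot\nabla\varphi)u\,dx\,ds + \int_0^t\!\!\int_{\{u>0\}}\big(F_s\Delta\varphi+\nabla F_s\cdot\nabla\varphi\big)\,dx\,ds .
\]
It therefore suffices to show the last term vanishes. The term $\int_{\R^d}(F_s\Delta\varphi+\nabla F_s\cdot\nabla\varphi)\,dx$ is zero by the definition of the weak gradient: integrate by parts in $\int F_s\,\partial_i(\partial_i\varphi)$, which is allowed since $F_s\in W^{1,1}_{\textup{loc}}$ and $\partial_i\varphi\in C^1_c$. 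The integral carries a minus sign; checking it gives $\int F\Delta\varphi = -\int\nabla F\cdot\nabla\varphi$, so the sum is $0$.

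\textbf{Main obstacle: the set $\{u=0\}$.} We need
\[
\int_{\{u(s,\cdot)=0\}}\big(F_s\Delta\varphi+\nabla F_s\cdot\nabla\varphi\big)\,dx=0 .
\]
This is not automatic, because $F_s$ need not vanish where $u=0$. I would first try to exploit the structure of the hypotheses. \eqref{eq:f_lemma_assumption_2} only holds $u\,dx$-a.e., so it gives no information on $\{u=0\}$; the claim genuinely needs $F_s=0$ a.e.\ on $\{u(s,\cdot)=0\}$. Then $\nabla F_s=0$ a.e.\ there as well, by the standard Stampacchia property of Sobolev functions on level sets.

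I would handle this in one of two ways. Either I read this vanishing into the statement, interpreting $f(\cdot,u)$ as supported in $\{u>0\}$, as holds in all the applications where $f$ is a function of $u$ and its derivatives vanishing with $u$. Or I would state it explicitly as a needed convention. With that in place, both integrands vanish a.e.\ on $\{u=0\}$, so the $\{u>0\}$ integral equals the full-space integral, which is zero. This completes the verification that $(u(t,\cdot))_{t>0}$ solves the FPE with coefficients $(\hat a,\hat b)$ on $\hat\Pscr_*$.
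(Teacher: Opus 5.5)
Your argument is the paper's argument: items (i)--(iii) are checked directly, and item (iv) follows by integrating $f(\cdot,u(r,\cdot))\Delta\varphi$ by parts against its weak gradient for a.e.\ $r$. Your worry about $\{u=0\}$ points to a genuine defect of the statement, not of your proposal.

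The paper's first equality, $\int\frac{f}{u}\Delta\varphi\,u\,dx=\int f\,\Delta\varphi\,dx$, tacitly assumes $f(\cdot,u(r,\cdot))=0$ a.e.\ on $\{u(r,\cdot)=0\}$. \Cref{rmk:f_lemma_zero_set} does not cover this, because once $u$ is cancelled the integral is taken against $dx$, not against $u\,dx\,dt$.

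Without such a hypothesis the lemma is false. Take the PME Barenblatt solution $u=u^z$ with $a=u^{m-1}$, $b=0$, and $f\equiv c>0$. All hypotheses hold, but the weak formulation for $(\hat a,\hat b)$ acquires the extra term $c\int_0^t\int_{B_{R(r)}(z)}\Delta\varphi\,dx\,dr$. For $\varphi=|x-z|^2$ near $\overline{B_{R(t)}(z)}$ this equals $2dc\int_0^t|B_{R(r)}(z)|\,dr>0$.

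So your added assumption, with Stampacchia to dispose of $\nabla f$ on $\{u=0\}$, is the right repair. It holds in the paper's concrete applications ($f=-\frac12u$, $-u^m$, $u-u^m$, $cu^p$ with $p>0$). Strictly speaking, only the vanishing of the $\{u=0\}$-contribution is needed, so "genuinely needs" slightly overstates it. Your condition is the natural sufficient one, and it covers Lebesgue-null zero sets vacuously.
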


    \begin{remark}\label{rmk:f_lemma_zero_set}
        Note that for fixed $t>0$, $\{x: u(t,x) = 0\}$ clearly does not need to be a $dx$-zero set. However, since in the weak formulation of the FPE, the new coefficients defined in \eqref{eq:f_lemma_new_coeff} are never integrated with respect to $dx$, but only with respect to $u(t,x)dx$, the definition of $\frac {f(\cdot,u)} {u}$ and $\frac{\nabla f(\cdot, u)}{u}$ on $\{(t,x): u(t,x) = 0\}$ is irrelevant, since the latter is clearly a $u(t,x)dxdt$-zero set.
    \end{remark}

    \begin{proof}[{Proof of \Cref{lem:many-PDEs_new}}]
        By assumption \eqref{eq:f_lemma_assumption_3}, we have $f(\cdot,u(t,\cdot)) \in W^{1,1}_{\textup{loc}}(\R^d)$ $dt$-a.s., hence $u(t,\cdot) \in \hat{\Pscr}_*$ $dt$-a.s.
        Assumption \eqref{eq:f_lemma_assumption_2} ensures that $\hat{a}$ is non-negative along the solution.  
        Assumption \eqref{eq:f_lemma_assumption_3}
        guarantees that $(t,x)\mapsto \hat{a}(x,u(t,\cdot))$ and $ (t,x)\mapsto \hat{b}(x,u(t,\cdot))$ belong to $L^1_{\textup{loc}}(\R_+\times \R^d;u(t,x)dxdt)$. 
        It remains to verify the distributional formulation of $u$ for the FPE given by the new coefficients. This holds, since for all $\varphi \in C^2_c(\R^d)$ and $t >s>0$, we have
        \begin{align}
            &\int_s^t\int_{\R^d}\bigg(\frac {f(x,u(r,\cdot))} {u(r,x)}\Delta \varphi(x) + \frac{\nabla f(x,u(r,\cdot))}{u(r,x)} \cdot \nabla \varphi(x)\bigg) u(r,x) \,dx dr \\& = \int_s^t \int_{\R^d}\bigg( f(x,u(r,\cdot)) \Delta \varphi(x) + \nabla f(x,u(r,\cdot)) \cdot \nabla \varphi(x)\bigg) dx dr
            \\&= \int_s^t \int_{\R^d}\bigg( f(x,u(r,\cdot)) \Delta \varphi (x)- f(x,u(r,\cdot)) \Delta \varphi(x) \bigg) dx dr = 0,
        \end{align}
        where in the last step, we applied integration by parts for a.e. fixed $r\in (s,t)$.
    \end{proof}
    
   We now discuss two ``extreme'' special cases, in one of which the diffusion coefficient is eliminated, and in the other the drift coefficient is eliminated.

\subsubsection{Pure-drift formulation}\label{subsubsec:pure_drift}
    Consider the setting of \Cref{lem:many-PDEs_new}. We start from a distributional solution $(u(t,\cdot))_{t>0}$ to the FPE with coefficients $(a,b)$ and domain $\Pscr_*$, and our aim is to find a new coefficient $\hat{b}$ with a suitable domain $\hat{\Pscr}_*$ such that $(u(t,\cdot))_{t>0}$ is also a distributional solution to the first-order FPE
    \begin{equation}\label{eq:pure_drift_FPE_explanation}
    \partial_t u
    =
    -\nabla\cdot\big(\hat b(x,u)\,u\big).
    \end{equation}
    In view of \Cref{lem:many-PDEs_new}, the correct choice for $f$ is
    \begin{equation}
        f(x,u) := -a(x,u)u(x)
    \end{equation}
    with $\Dscr(f) \coloneqq \Pscr_* $, which leads to 
    \begin{equation}
        \hat a(x,u) = 0, \qquad \hat b(x,u)
    =
    b(x,u)
    -
    \frac{\nabla (a(x,u)u(x) )}{u(x)},
    \end{equation}
    defined on $ \hat{\Pscr}_* \coloneqq 
    \{ u\in \Pscr_* : \, 
    a(\cdot,u)u(\cdot) \in W^{1,1}_{\mathrm{loc}}(\R^d)
    \}$, with the same convention on $\{u = 0\}$ as in the previous lemma.
    Therefore, if $(t,x)\mapsto a(x,u(t,\cdot))u(t,x)$  belongs to $L^1_{\mathrm{loc}}\big(\R_+;W^{1,1}_{\mathrm{loc}}(\R^d)\big)$, \Cref{lem:many-PDEs_new} applies and yields that $(u(t,\cdot))_{t>0}$ is a distributional solution to \eqref{eq:pure_drift_FPE_explanation}. 
    Rewriting the FPE in the continuity equation form \eqref{eq:pure_drift_FPE_explanation} is useful from the optimal transport and gradient flow perspectives \cite{AGS2008GFs}, and has also been studied from a computational point of view \cite{MaoutsaReichManfred2020}.
    We refer to this as the \emph{pure-drift} interpretation.

\subsubsection{Pure-diffusion formulation}\label{subsubsec:pure_diff}
    Here, our goal is to find $\hat{a}$ with a suitable domain $\hat{\Pscr}_*$ such that $(u(t,\cdot))_{t>0}$ is a distributional solution to
    \begin{equation}\label{eq:pure_Ito_diffusion_FPE_explanation}
    \partial_t u
    =
    \Delta \big(\hat a(x,u)\,u\big).
    \end{equation} 
    This, however, is \emph{not} always possible.
    In view of \Cref{lem:many-PDEs_new}, the ansatz is to find $f$ such that
    \begin{equation}\label{eq:ansatz_f_pure_Ito_diffusion_FPE}
        \nabla f(x,u) = - b (x,u) u(x)
    \end{equation}
    weakly on $\{u>0\}$, with domain $ \mathcal{D}(f) \coloneqq \{u \in \Pscr_* : \, b(\cdot,u) u(\cdot) \in L^1_{\textup{loc}}(\R^d)\}$. This equation need not have a solution, since $b(\cdot,u)u(\cdot)$ need not be of gradient form.
    
    In dimension $d=1$, however, \eqref{eq:ansatz_f_pure_Ito_diffusion_FPE} can be solved by direct integration, provided that $b(\cdot,u)u(\cdot) \in L^1(\R)$.
    This leads to
    \begin{equation}
     \hat{a} (x,u) \coloneqq  a (x,u) + \frac{1}{u(x)} \int_{x}^{+\infty} b(y,u) u (y) d y ,
    \end{equation}
    defined on $\hat{\Pscr}_* \coloneqq 
    \{ u\in \Pscr_* : \, b(\cdot,u)u(\cdot) \in L^1(\R)
    \}$, with the same convention on $\{u = 0\}$ as before. Consequently, if all assumptions of \Cref{lem:many-PDEs_new} are satisfied, $(u(t,\cdot))_{t>0}$ is a distributional solution to \eqref{eq:pure_Ito_diffusion_FPE_explanation}. 
    
    For $d \geq 2$, a sufficient condition for the existence of a solution $f$ to \eqref{eq:ansatz_f_pure_Ito_diffusion_FPE} is that the vector field on the right-hand side be \emph{curl-free}, that is, $\textup{curl} (b(\cdot,u)u) = 0$ in the weak sense.
    This condition is satisfied in the case of radial symmetry, which appears in our applications. Indeed, suppose $u(x) = U (|x|)$ and $b (x,u) = \Psi (|x|,U) \tfrac{x}{|x|}$ for radial profiles $U: \R_+ \to \R_+$ and $\Psi(\cdot,U): \R_+ \to \R$ such that $\Psi (\cdot, U) U(\cdot) \in L^1(\R_+)$. 
    Then the vector field $b (\cdot,u) u$ is radial and of gradient form; in particular, its curl is zero in the weak sense. Accordingly, one can see that $f (x,u) \coloneqq \int_{|x|}^{+\infty} \Psi(\rho,U) U (\rho) d \rho $ is a candidate solution to \eqref{eq:ansatz_f_pure_Ito_diffusion_FPE}. This leads to
    \begin{equation}
         \hat{a} (x,u) \coloneqq  a (x,u) + \frac{1}{u(x)} \int_{|x|}^{+\infty} \Psi(\rho,U) U (\rho) d \rho ,
    \end{equation}
    defined on
    \begin{align}
       \hat{\Pscr}_* \coloneqq \big\{u \in \Pscr_*: \,\,\, \exists\, U:\R_+\to \R_+ \text{ such that } u(x) = U(|x|) \text{ and } \Psi (\cdot,U)U(\cdot) \in L^1(\R_+) \big\},
   \end{align}
   with the same convention on $\{u = 0\}$ as before.
   One then needs to verify the assumptions of \Cref{lem:many-PDEs_new}; then, it follows that $(u(t,\cdot))_{t>0}$ is a distributional solution to \eqref{eq:pure_Ito_diffusion_FPE_explanation}.
    We call this the \emph{pure-diffusion} interpretation.

\subsection{Step (2): From one nonlinear FPE to one MV-SDE}\label{subsect:step-2} 
    Suppose that for a weakly continuous solution $(u(t,\cdot))_{t>0} \subset \Pscr_{\textup{ac}}$ of a parabolic PDE  \eqref{eq:general-PDE} with initial condition $\zeta \in \Pscr$, we have already identified a nonlinear FPE of type \eqref{eq:Main_FPE_our_paper} solved by $(u(t,\cdot))_{t>0}$ in the sense of \Cref{def:FPE-sol}.
    The second step consists of applying the superposition principle (\Cref{thm:SP-pr}) in order to construct a weak solution to the corresponding MV-SDE  with one-dimensional time marginal densities $(u(t,\cdot))_{t>0}$. To this end, we assume additionally the integrability condition
        \begin{equation}\label{eq:integrability_our_paper}
        \int_0^T \int_{\R^d} \Big( |a(x, u(t,\cdot))| + |b(x, u(t,\cdot))|\Big) u(t,x) dx dt <+\infty, \qquad \forall T>0.
        \end{equation}
        Then, \Cref{thm:SP-pr} yields the existence of a weak solution $(X_t)_{t\geq 0}$ to the MV-SDE associated with \eqref{eq:general-PDE}, i.e.,
    \begin{equation}\label{eq:main_MV-SDE}
        \begin{dcases}
            dX_t = b(X_t,u(t,\cdot)) \, dt + \sqrt{2} \big( a(X_t,u(t,\cdot)) \big)^{\frac{1}{2}} \, dW_t, \\
            \mathcal{L}(X_t) = u(t,x)dx,\quad t>0,
        \end{dcases}
    \end{equation}
    with $\mathcal{L}(X_0) = \zeta$. As seen in Step (1), a curve $(u(t,\cdot))_{t>0}$ 
     may solve different FPEs. The following corollary shows that the
     equivalent reformulations of FPE \eqref{eq:Main_FPE_our_paper} in \Cref{lem:many-PDEs_new} lead to non-equivalent MV-SDEs and corresponding distinct solutions with the same one-dimensional time marginals $(u(t,\cdot))_{t>0}$.

    \begin{corollary}\label{cor:new}
        Let $(u(t,\cdot))_{t>0} \subseteq  \Pscr_* $ be a distributional solution to the FPE \eqref{eq:Main_FPE_our_paper} in the sense of \Cref{def:FPE-sol} with initial condition $\zeta \in \Pscr$ and coefficients $(a, b): \R^d \times \Pscr_* \to \R \times \R^d$ for some $\Pscr_* \subseteq \Pscr_{\textup{ac}} $, and assume further \eqref{eq:integrability_our_paper}. Let $f: \R^d \times \Dscr(f) \to \R $ with $\Dscr(f) \subseteq \Pscr_{\textup{ac}}$ be as in \Cref{lem:many-PDEs_new},
        and assume further
        \begin{equation}\label{eq:assumption_f_strong}
            \big[(t,x)\mapsto f(x,u(t,\cdot))\big] \in L^1_{\textup{loc}}(\R_+;W^{1,1}(\R^d)).
        \end{equation}
        Then there exists a weak solution $(X_t)_{t\geq 0}$ to 
        \begin{equation}
        \begin{dcases}
            dX_t = \left( b(X_t,u(t,\cdot)) + \frac{\nabla f(X_t,u(t,\cdot))}{u(t,X_t)} \right) dt + \sqrt{2}\left( a(X_t,u(t,\cdot)) + \frac{f(X_t,u(t,\cdot))}{u(t,X_t)} \right)^{\frac{1}{2}} dW_t, \\
            \mathcal{L}(X_t) = u(t,x)dx,\quad t>0,
        \end{dcases}
    \end{equation}
    with $\mathcal{L}(X_0) = \zeta$, with the same convention for the coefficients on $\{u=0\}$ as in \Cref{lem:many-PDEs_new}.
    \end{corollary}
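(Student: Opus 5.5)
The plan is to combine \Cref{lem:many-PDEs_new} with the superposition principle \Cref{thm:SP-pr}. First, \eqref{eq:assumption_f_strong} is stronger than \eqref{eq:f_lemma_assumption_3}, because $W^{1,1}(\R^d)\subseteq W^{1,1}_{\textup{loc}}(\R^d)$. Hence \Cref{lem:many-PDEs_new} applies and shows that $(u(t,\cdot))_{t>0}$ is a distributional solution, with initial condition $\zeta$, of the FPE with coefficients $(\hat a,\hat b)$ from \eqref{eq:f_lemma_new_coeff} on $\hat\Pscr_*$. In particular, $\hat a\geq 0$ holds $u\,dxdt$-a.e. by \eqref{eq:f_lemma_assumption_2}.

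The initial datum is unchanged. In the weak formulation \eqref{eq:FPE-sol} the term involving $\zeta$ is not altered by the lemma: the extra contribution $\int_0^t\int (f\Delta\varphi+\nabla f\cdot\nabla\varphi)\,dx\,dr$ vanishes for every $t>0$. This uses $f\in L^1_{\textup{loc}}(\R_+;W^{1,1}_{\textup{loc}})$, which gives integrability near $r=0$ on $[0,t]$.

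Second, I verify the integrability condition \eqref{eq:L1-int-SP} for $(\hat a,\hat b)$ along $u$. For $T>0$,
\begin{equation}
\int_0^T\!\!\int_{\R^d}\big(|\hat a|+|\hat b|\big)u\,dx\,dt\le \int_0^T\!\!\int_{\R^d}\big(|a|+|b|\big)u\,dx\,dt+\int_0^T\!\big(\|f(\cdot,u(t,\cdot))\|_{L^1}+\|\nabla f(\cdot,u(t,\cdot))\|_{L^1}\big)dt .
\end{equation}
Here the factor $u$ cancels the denominators in $f/u$ and $\nabla f/u$ on $\{u>0\}$, while on $\{u=0\}$ these quotients are set to $0$. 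The first term is finite by \eqref{eq:integrability_our_paper}, and the second is $\int_0^T\|f(\cdot,u(t,\cdot))\|_{1,1}dt<\infty$ by \eqref{eq:assumption_f_strong}. This step is the reason the global $W^{1,1}$ assumption is needed rather than the local one.

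Third, \Cref{thm:SP-pr} applied to this solution yields a solution path law $P$ with $P_t=u(t,x)dx$ for $t>0$ and $P_0=\zeta$. By the remark following \Cref{def:MVSDE-sol}, there is a weak solution $X$ with $\mathcal L(X)=P$ to the MV-SDE with drift $\hat b$ and diffusion $\sqrt2\,\hat a^{1/2}$. This is the displayed equation; since $\hat a\ge0$, the square root is well defined a.e. along the solution.

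I expect the only delicate point to be bookkeeping on null sets. Nonnegativity of $\hat a$ and membership of $u(t,\cdot)$ in $\hat\Pscr_*$ hold only $dt$-a.e. or $u\,dxdt$-a.e., whereas \Cref{def:FPE-sol} asks for them at every $t$. To handle this, I would redefine $\hat a$ (for instance as $\hat a^+$) and $\hat b$ on the exceptional sets, which are $u\,dxdt$-null, and enlarge $\hat\Pscr_*$ accordingly. This does not affect the weak formulation or the resulting SDE, as already noted in \Cref{rmk:f_lemma_zero_set}.
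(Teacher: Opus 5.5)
Your proposal is correct and is essentially the paper's proof: apply \Cref{lem:many-PDEs_new}, use the global $W^{1,1}$ bound together with the cancellation of $u$ to obtain \eqref{eq:integrability_our_paper} for the new coefficients, and invoke \Cref{thm:SP-pr}; you also correctly spell out the initial-datum and null-set bookkeeping that the paper leaves implicit. One caveat applies equally to the paper's proof of \Cref{lem:many-PDEs_new}: because $f/u:=0$ on $\{u=0\}$, the extra term is really $\int_0^t\int_{\{u>0\}}(f\Delta\varphi+\nabla f\cdot\nabla\varphi)\,dx\,dr$, and equating it with the integral over all of $\R^d$ (which vanishes by integration by parts) tacitly requires $f(\cdot,u(t,\cdot))=0$ a.e.\ on $\{u(t,\cdot)=0\}$ --- this holds in all the paper's examples but should be stated as a hypothesis.
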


    \begin{proof}
        By \Cref{lem:many-PDEs_new}, $(u(t,\cdot))_{t>0}$ solves the FPE with new coefficients $a + \frac{f}{u}$ and $b+ \frac{\nabla f}{u}$. The assumption \eqref{eq:assumption_f_strong} guarantees that the new coefficients also satisfy \eqref{eq:integrability_our_paper}.
        Thus, \Cref{thm:SP-pr} applies and yields the existence of a weak solution $(X_t)_{t\geq 0}$ to the MV-SDE above. 
    \end{proof}

    Clearly, the two MV-SDEs above are not equivalent, even though their corresponding FPEs are (in the sense that they are solved by the same curve of densities). In particular, by choosing $f$ as in \Cref{subsubsec:pure_drift,subsubsec:pure_diff}, one solves in \Cref{cor:new} \emph{pure-drift} and \emph{pure-diffusion} MV-SDEs, respectively.

    The bottomline of Steps (1) and (2) is the following. Starting from a solution $(u(t,\cdot))_{t>0}$ to \eqref{eq:general-PDE}, if we identify an FPE \eqref{eq:Main_FPE_our_paper} solved by $(u(t,\cdot))_{t>0}$, then by \Cref{lem:many-PDEs_new} we typically obtain (uncountably) many equivalent nonlinear FPEs also solved by $(u(t,\cdot))_{t>0}$. Each of these FPEs comes with its associated MV-SDE, for which we construct a weak solution whose one-dimensional time marginal densities are $(u(t,\cdot))_{t>0}$. Although these stochastic processes share the same one-dimensional time marginals, they are substantially different, and the equations they solve typically exhibit very different properties in terms of regularity and 
    singularities.

    In particular, for a \textit{family} of solutions to \eqref{eq:general-PDE}, 
    $\{u^z\}_{z\in \R^d}$, each with initial condition $\delta_z$, we construct uncountably many MV-SDEs and, for each of them, 
    a family of solutions $\{X^z\}_{z\in \R^d}$ with 
    $\mathcal{L}(X^z_t) = u^z(t,x)dx$ for all $t>0$ and $\mathcal{L}(X^z_0) = \delta_z$. 
    In the next step, we investigate whether each of these solution families is uniquely determined by $(u^z(t,\cdot))_{t>0}$ together with its 
    MV-SDE and, moreover, whether each of these families constitutes a nonlinear 
    Markov core.

\subsection{Step (3): From one MV-SDE to one nonlinear Markov core}\label{subsect:step-3}
One of the main results of this paper is the following theorem, which generalizes \Cref{thm:NL-MP-construction-old} in a suitable way in order to implement Step (3) of our scheme. Given a measure-valued curve $\mu \coloneqq (\mu_t)_{t\geq 0}$ and $s \geq 0$, we write $\mu_{s+} \coloneqq (\mu_{s+t})_{t\geq 0}$.

\begin{theorem}\label{thm:Markov-construction}
    For each $z \in \R^d$, let $\mu^z = (\mu^z_t)_{t\geq 0}$ be a solution to the FPE \eqref{eq:FPE} with initial datum $\mu^z_0 = \delta_z$, satisfying \eqref{eq:L1-int-SP}, such that
    \begin{enumerate}[label=(\roman*), font=\normalfont]
        \item\label{itm:extm_1} $(t,z) \mapsto \mu^z_t$ is injective from $\R_+\times \R^d$ to  $\Pscr$;

        \item\label{itm:extm_2} for each $z \in \R^d$ and $s \in \R_+ \setminus \{0\}$, $\mu^z_{s+}$ is an extreme point in the set of solutions to ($\mu^z_{s+}$-linFPE) (in the sense of \Cref{def:lFPE-sol}) with initial condition $\mu^z_s$.
    \end{enumerate}
    Then, for each $z \in \R^d$, there exists a unique solution path law $P^z$ to the MV-SDE \eqref{eq:DDSDE} with one-dimensional time marginals $P^z_t = \mu^z_t$, $t \geq 0$. $\{P^z\}_{z\in \R^d}$ is a nonlinear Markov core for $\{\mu^z\}_{z\in \R^d}$, which is uniquely determined by $\{\mu^z\}_{z \in \R^d}$ and \eqref{eq:DDSDE}.
\end{theorem}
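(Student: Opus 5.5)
The plan is to get existence of $P^z$ from the superposition principle, to derive weak uniqueness and a Markov-type transition structure from the extremality in \ref{itm:extm_2}, and then to check \Cref{def:NMC} directly. \emph{Existence} is immediate: since $\mu^z$ solves \eqref{eq:FPE} with \eqref{eq:L1-int-SP}, \Cref{thm:SP-pr} gives a solution path law $P^z$ to \eqref{eq:DDSDE} with $P^z_t=\mu^z_t$ for all $t\ge 0$. The remaining work has three parts: uniqueness of $P^z$, the nonlinear Markov property of the induced family, and the final uniqueness statement.

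\emph{Step 1: extremality implies restricted linear uniqueness.} Fix $z$ and $s>0$. I will show the following claim. If $\nu,\eta$ are solutions to ($\mu^z_{s+}$-linFPE) with the same initial datum $\nu_0=\eta_0\le C\mu^z_s$ and with $\nu_t,\eta_t\le C\mu^z_{s+t}$ for all $t$, then $\nu=\eta$.

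To prove it, take $\varepsilon\in(0,1/C)$ and set $\tilde\nu:=(1-\varepsilon)^{-1}(\mu^z_{s+}-\varepsilon\nu)$. This is a nonnegative curve of probability measures, and by linearity of the equation it solves ($\mu^z_{s+}$-linFPE) with initial datum $(1-\varepsilon)^{-1}(\mu^z_s-\varepsilon\nu_0)$. Define $\tilde\eta$ analogously from $\eta$. Then
\[
\mu^z_{s+} = \tfrac12\bigl(\varepsilon\nu+(1-\varepsilon)\tilde\eta\bigr) + \tfrac12\bigl(\varepsilon\eta+(1-\varepsilon)\tilde\nu\bigr).
\]
Both brackets are solutions with initial condition $\mu^z_s$, because $\nu_0=\eta_0$. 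Extremality of $\mu^z_{s+}$ therefore forces the two brackets to coincide. Since $\varepsilon\nu+(1-\varepsilon)\tilde\eta=\mu^z_{s+}+\varepsilon(\nu-\eta)$ and $\varepsilon\eta+(1-\varepsilon)\tilde\nu=\mu^z_{s+}+\varepsilon(\eta-\nu)$, their equality gives $\nu=\eta$. Local integrability of the coefficients is inherited from the domination by $C\mu^z_{s+t}$.

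\emph{Step 2: uniqueness of $P^z$ and identification of its finite-dimensional distributions.} Let $P$ be any solution path law to \eqref{eq:DDSDE} with $P_t=\mu^z_t$; I will show $P$ is determined by $\mu^z$. Fix $s>0$ and $G\in\Fscr_s$ with $P(G)>0$, and set $\nu^G_t:=P(G,\pi_{s+t}\in\cdot\,)/P(G)$. Since $P$ solves the linear martingale problem with frozen coefficients $a(\cdot,\mu^z_r),b(\cdot,\mu^z_r)$, Itô's formula applied after time $s$ on the event $G$ shows that $\nu^G$ solves ($\mu^z_{s+}$-linFPE). Moreover $\nu^G_t\le P(G)^{-1}\mu^z_{s+t}$.

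By Step 1, $\nu^G$ is the \emph{unique} dominated solution with initial datum $\nu^G_0=P(G,\pi_s\in\cdot\,)/P(G)$. Taking $G=\{\pi_s\in A\}$ shows this solution depends only on $\mu^z$; call it $\Phi_{s,t}(\mu^z_s(A)^{-1}\mu^z_s|_A)$. Uniqueness also makes $\Phi_{s,t}$ additive over disjoint sets $A$, so it is given by a kernel $p^z_{s,t}(y,dx)$ determined by $\mu^z$ alone. Comparing the general $G$ with the one obtained by disintegrating $P(G,\pi_s\in\cdot\,)$ then yields
\[
P(\pi_{s+t}\in A\mid \Fscr_s)=p^z_{s,t}(\pi_s,A)\quad P\text{-a.s.}
\]
Consequently all finite-dimensional distributions of $P$ at times $0<t_1<\dots<t_n$ are given by iterating these kernels against $\mu^z_{t_1}$. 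Adding the time $0$ is harmless, because $P_0=\delta_z$ makes $\pi_0$ a.s. constant. Hence $P=P^z$.

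\emph{Step 3: the core property and its uniqueness.} By \ref{itm:extm_1}, the family $P^\zeta:=P^z\circ\Pi_r^{-1}$ for $\zeta=\mu^z_r$ is well defined. The requirements $P^\zeta_0=\zeta$ and $\mu^\zeta_t=\mu^z_{r+t}\in\Pscr_0$ hold trivially. For the nonlinear Markov property, the Markov structure from Step 2 gives
\[
P^\zeta(\pi_{t+s}\in A\mid\Fscr_s)=p^z_{r+s,t}(\pi_s,A).
\]
On the other hand, $\mu^\zeta_s=\mu^z_{r+s}$ and $P^{\mu^z_{r+s}}=P^z\circ\Pi_{r+s}^{-1}$, whose disintegration with respect to $\pi_0$, evaluated at time $t$, is exactly $p^z_{r+s,t}(y,\cdot)$. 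This is \eqref{eq:nonlinear_Markov_property}. For $r=0$ the same identity follows from $P^z_0=\delta_z$. Finally, uniqueness of the core given $\{\mu^z\}$ and \eqref{eq:DDSDE} follows from uniqueness of each $P^z$.

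The main obstacle is Step 2. There I must verify carefully that $\nu^G$ really is a distributional solution of the linearized equation started at time $s$, including the weak continuity and local integrability required in \Cref{def:lFPE-sol}. I also need to handle the Dirac initial time, where extremality is not assumed, and I will do this purely through $P_0=\delta_z$ and path continuity.
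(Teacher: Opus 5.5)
Your proof is correct, but it takes a genuinely different, self-contained route.

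\emph{The paper's route.} The paper does not re-derive any Markov structure. It defines the flow $\mu^\zeta_t:=\mu^z_{s+t}$ on $\Pscr_0$ and invokes \cite[Cor.~3.10]{RehmeierRoeckner2025NonlinearMarkov}. This yields a nonlinear Markov process, with uniqueness of the path law for every initial datum in $\mathfrak{P}_0=\{\mu^z_s: s>0\}$. The Dirac data are then handled separately by a shift argument. If $t_1>0$, the finite-dimensional marginals of any solution $P$ with marginals $\mu^z$ agree with those of $P\circ\Pi_{t_1}^{-1}$, which is unique by the first part. The case $t_1=0$ is trivial because $P_0=\delta_z$.

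\emph{Your route.} You open the black box instead:
\begin{itemize}
\item Step 1 strengthens the forward implication of \Cref{lem:extrem-unique-equiv}: extremality gives uniqueness among dominated solutions for every dominated initial datum, not only for $\mu^z_s$.
\item Step 2 uses this to show that any solution path law with marginals $\mu^z$ is a time-inhomogeneous Markov process whose transition kernels are fixed by $\mu^z$. This gives uniqueness, including at the Dirac time, in one stroke.
\item Step 3 uses the observation that, for a core, the nonlinear Markov property of $P^z\circ\Pi_r^{-1}$ is just the classical Markov property of $P^z$ at time $r+s$.
\end{itemize}

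\emph{Comparison.} The paper's route is shorter and fits the general machinery of \cite{RehmeierRoeckner2025NonlinearMarkov}. Yours avoids checking that corollary's hypotheses and makes the mechanism explicit.

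\emph{One step to write out.} In ``comparing the general $G$'' you need that $t\mapsto E_P\bigl[P(G\mid\pi_s)\,\mathds{1}_{\{\pi_{s+t}\in\cdot\}}\bigr]/P(G)$ is also a dominated solution with the same initial datum as $\nu^G$. This follows from the same martingale computation, with the bounded $\Fscr_s$-measurable weight $P(G\mid\pi_s)$ in place of $\mathds{1}_G$. Step 1 then yields conditional independence of $\Fscr_s$ and $\pi_{s+t}$ given $\pi_s$. The ``additivity'' remark is also unnecessary: the two-dimensional marginal $P\circ(\pi_s,\pi_{s+t})^{-1}$ is determined directly on rectangles.
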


\begin{remark}  We emphasize the following two points:
\begin{itemize}
    \item[(i)] No regularity assumptions are imposed on the coefficients of \eqref{eq:FPE}. Hence, the theorem applies in the Nemytskii case \eqref{eq:nlPDE}. 
    \item[(ii)] The extremality condition \ref{itm:extm_2} (equivalently: the restricted linearized uniqueness condition from \Cref{lem:extrem-unique-equiv} below) is required only for \emph{strictly} positive $s$. In our applications, $\mu^z$ are explicitly given function-valued solutions to a nonlinear parabolic PDE with initial condition $\delta_z$. Typically, such solutions are quite regular for strictly positive times. Hence, we only need to verify the uniqueness condition for the linearized equation from \Cref{lem:extrem-unique-equiv} for regular initial data and thus avoid singularities in the linearized coefficients.
\end{itemize}
\end{remark}

\begin{proof}[Proof of \Cref{thm:Markov-construction}]
Let $\Pscr_0 := \{\mu^z_s \,:\, s \geq 0, z \in \R^d\}$. Assumption \ref{itm:extm_1} implies that for $\zeta \in \Pscr_0$, there is a unique pair $(s,z)$ such that $\zeta = \mu^z_s$. Hence for $s\geq 0$ and $\zeta \in \Pscr_0$ with $\zeta = \mu^z_s$, $\mu^\zeta_t := \mu^z_{t+s}$ is well-defined for $t\geq 0$. Then, for $\zeta = \mu^z_s$,
$$\mu^\zeta_{r+t} = \mu^z_{r+t+s} = \mu^{\mu^z_{r+s}}_t = \mu^{\mu^\zeta_r}_t,\quad \forall r,t \geq 0,$$
i.e. $\{\mu^\zeta\}_{\zeta \in \Pscr_0}$, $\mu^\zeta := (\mu_t^\zeta)_{t\geq 0}$, satisfies the flow property. Moreover, setting $\mathfrak{P}_0 := \{\mu^z_s\,:\, z\in \R^d, s >0\}$ (so that $\Pscr_0 \setminus \mathfrak{P}_0 = \{\delta_z\,:\, z \in \R^d\}$), clearly
$$\mu^\zeta_t \in \mathfrak{P}_0 \text{ for all }(\zeta, t) \in \Pscr_0 \times [0,\infty) \text{ such that either }\zeta \in \mathfrak{P}_0 \text{ or } t>0.$$ Also, it is clear that $\mu^\zeta$ is a solution to \eqref{eq:FPE} with initial condition $\zeta$, and that \eqref{eq:L1-int-SP} is satisfied with $\mu_t$ replaced by $\mu_t^\zeta$ for each $\zeta \in \Pscr_0$. By assumption \ref{itm:extm_2} and since $\mu^z_{s+} = \mu^{\mu^z_s}$, i.e. $\{\mu^z_{s+}\}_{z\in \R^d, s >0} = \{\mu^\zeta\}_{\zeta \in \mathfrak{P}_0}$, it follows that we can apply \cite[Cor.3.10]{RehmeierRoeckner2025NonlinearMarkov} (in the time-homogeneous setting). We thus obtain a nonlinear Markov process $\{P^\zeta\}_{\zeta \in \Pscr_0}$, consisting of solution path laws to the corresponding MV-SDE \eqref{eq:DDSDE} such that, for $\zeta = \mu^z_s$,
\begin{equation}\label{eq:marginals-proof}
P^\zeta_t = \mu^\zeta_t = \mu^{\mu^z_s}_t = \mu^z_{t+s},\quad \forall t \geq 0.
\end{equation}
Moreover, for each $\zeta \in \mathfrak{P}_0$, $P^\zeta$ is uniquely determined by \eqref{eq:DDSDE} and its one-dimensional time marginals \eqref{eq:marginals-proof}.
\vspace{0.3em}

\begin{itemize}[itemsep=0.3em, leftmargin=1.5em] 
    	\item[] \textit{Claim: $P^\zeta$ is uniquely determined by \eqref{eq:DDSDE} and \eqref{eq:marginals-proof} for \emph{every} $\zeta \in \Pscr_0$.}
        
        \item[] \textit{Proof of Claim:}
        Since $\Pscr_0 \setminus \mathfrak{P}_0= \{\delta_z\,:\, z \in \R^d\},$ it suffices to consider $\zeta = \delta_z$ for arbitrary $z\in \R^d$. Suppose $P^1,P^2$ are solution path laws to \eqref{eq:DDSDE} with $P^i_t = \mu^z_t$ for all $t\geq 0$ and $i \in \{1,2\}$. We prove the claim by showing
        \begin{equation}\label{eq:proof-findim-equal}
            P^1_{t_1,\dots,t_n} = P^2_{t_1,\dots,t_n},\quad \forall 0\leq t_1 < \dots < t_n < \infty, n \in \N,
        \end{equation}
        where $P_{t_1,\dots,t_n} := P \circ (\pi_{t_1},\dots,\pi_{t_n})^{-1}$ denotes the finite-dimensional time marginal of a path law $P$ at times $0\leq t_1 < \dots < t_n$. Note first that for $i \in \{1,2\}$ and any $s>0$, $P^i \circ (\Pi_s)^{-1}$ is a solution path law to \eqref{eq:DDSDE} with initial condition $P^i_s = \mu^z_s \in \mathfrak{P}_0$ and one-dimensional time marginals $(P^i \circ (\Pi_s)^{-1})_t = \mu^z_{t+s}$, $t\geq 0$. Thus, if $t_1>0$, then $P^i_{t_1,\dots,t_n} = (P^i \circ \Pi_{t_1}^{-1})_{0,t_2-t_1,\dots,t_n-t_1}$, and we already know from the previous part of the proof that the right-hand sides of the last equality coincide for $i=1$ and $i=2$. If $t_1 = 0$, since $P^i_0 = \delta_z$, we find
        $$P^i_{t_1,\dots,t_n} = \delta_z \otimes P^i_{t_2,\dots,t_n},$$
        and we conclude as in the first case, using $t_2>0$. Thus \eqref{eq:proof-findim-equal} follows, and the claim is proven. 
\end{itemize}

The proof of the claim also shows that if $\zeta = \mu^z_s$, then $P^\zeta = P^z \circ (\Pi_s)^{-1}$. Hence $\{P^z\}_{z\in \R^d}$ is indeed a nonlinear Markov core for $\{\mu^z\}_{z\in \R^d}$, which concludes the proof of the theorem.
\end{proof}

In applications, the following lemma from \cite{RehmeierRoeckner2025NonlinearMarkov} is very useful for verifying the extremality condition \ref{itm:extm_2} from \Cref{thm:Markov-construction}.
\begin{lemma}[{\cite[Lemma 3.5]{RehmeierRoeckner2025NonlinearMarkov}}]\label{lem:extrem-unique-equiv}
  Let $(z,s) \in \R^d \times \R_+\setminus \{0\}$. In the setting of \Cref{thm:Markov-construction}, $\mu^z_{s+}$ is an extreme point in the set of all solutions to ($\mu^z_{s+}$-linFPE) with initial condition $\mu^z_s$ if and only if $\mu^z_{s+}$ is the unique solution to ($\mu^z_{s+}$-linFPE) with initial condition $\mu^z_s$ in 
    \begin{equation}\label{eq:set1}
        \big\{(\nu_t)_{t\geq 0} \in C(\R_+;\Pscr): \nu_t \leq C \mu^z_{s+t}\,\forall t \geq 0 \text{ for some }C\geq 1 \text{ independent of }t\big\},
    \end{equation}
    where the inequality is understood set-wise, i.e. $\nu_t(A) \leq C \mu^z_{s+t}(A)$ for all $A \in \Bscr(\R^d)$.
\end{lemma}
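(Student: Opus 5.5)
The lemma is the equivalence, for fixed $(z,s)$ with $s>0$, between extremality of $\mu^z_{s+}$ among solutions of ($\mu^z_{s+}$-linFPE) with initial condition $\mu^z_s$, and uniqueness of $\mu^z_{s+}$ in the class \eqref{eq:set1}. Throughout, write $\mu := \mu^z_{s+}$ and $\Sigma$ for the convex set of solutions to ($\mu$-linFPE) with initial condition $\mu_0=\mu^z_s$. The proof uses two observations. First, \eqref{eq:lFPE-sol} is linear in $\nu$, so $\Sigma$ is convex. Second, domination by $C\mu$ is precisely what connects convex decompositions to elements of \eqref{eq:set1}.

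\emph{Uniqueness $\Rightarrow$ extremality.} Suppose $\mu=\lambda\nu^1+(1-\lambda)\nu^2$ with $\nu^i\in\Sigma$ and $\lambda\in(0,1)$. Then $\nu^1_t\le \lambda^{-1}\mu_t$ and $\nu^2_t\le(1-\lambda)^{-1}\mu_t$ set-wise for all $t$, with a constant $C:=\max(\lambda^{-1},(1-\lambda)^{-1})\ge 1$ independent of $t$. Each $\nu^i$ is weakly continuous, starts at $\mu_0$, and belongs to \eqref{eq:set1}. One point needs checking: the integrability requirement in \Cref{def:lFPE-sol} for $\nu^i$, namely that the coefficients lie in $L^1_{\textup{loc}}(\nu^i_t dt)$. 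This is inherited from the integrability along $\mu$, since $\nu^i_t\le C\mu_t$. Uniqueness in \eqref{eq:set1} then gives $\nu^1=\nu^2=\mu$, so $\mu$ is extreme.

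\emph{Extremality $\Rightarrow$ uniqueness.} Let $\nu$ solve ($\mu$-linFPE) with $\nu_0=\mu_0$ and $\nu_t\le C\mu_t$ for all $t$; we may take $C>1$, the case $C=1$ forcing $\nu=\mu$ directly. Define
\[
\tilde\nu_t := \frac{C\mu_t-\nu_t}{C-1}.
\]
This is a nonnegative measure because $\nu_t\le C\mu_t$, has mass one, and is weakly continuous. Its coefficients are locally integrable against $\tilde\nu_t dt$ because $\tilde\nu_t\le \frac{C}{C-1}\mu_t$. By linearity of \eqref{eq:lFPE-sol} it solves ($\mu$-linFPE), and its initial condition is $(C\mu_0-\mu_0)/(C-1)=\mu_0$. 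Hence $\tilde\nu\in\Sigma$, and
\[
\mu=\tfrac1C\nu+\left(1-\tfrac1C\right)\tilde\nu
\]
is a nontrivial convex combination of elements of $\Sigma$. Extremality forces $\nu=\tilde\nu=\mu$, which proves uniqueness in \eqref{eq:set1}.

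The only delicate points are these bookkeeping checks: initial conditions, weak continuity, and the $L^1_{\textup{loc}}$ conditions, which follow from set-wise domination. If the extremality in \Cref{thm:Markov-construction} is instead taken within solutions satisfying \eqref{eq:L1-int-SP}, the same domination transfers that integrability as well. Because of this, I expect no real obstacle. The one place to be careful is the initial condition of $\tilde\nu$: it must hold exactly at time $0$ via the weak limit $t\downarrow0$, which follows since both $\mu_t$ and $\nu_t$ converge weakly to $\mu_0$.
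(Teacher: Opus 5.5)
Your proof is correct. It is the standard convex-decomposition argument for this equivalence: domination $\nu^i_t\le \max(\lambda^{-1},(1-\lambda)^{-1})\,\mu_t$ for one direction, and the splitting $\mu=\frac1C\nu+(1-\frac1C)\frac{C\mu-\nu}{C-1}$ for the other. Note that the paper does not prove this lemma itself but imports it from \cite[Lemma 3.5]{RehmeierRoeckner2025NonlinearMarkov}, and your bookkeeping (mass, nonnegativity, weak continuity, initial datum, and transfer of the integrability conditions via set-wise domination) is handled adequately.
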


Finally, we apply \Cref{thm:Markov-construction} and \Cref{lem:extrem-unique-equiv} in the framework of Steps (1) and (2), i.e., when $(\mu^z_t)_{t>0} = (u^z(t,x)dx)_{t>0}$ is a solution to the FPE \eqref{eq:Main_FPE_our_paper} with initial condition $\delta_z$. This results in the following corollary.

\begin{corollary}\label{crl:nl_markov}
    Let $\{u^z\}_{z\in\R^d}$ be such that each $u^z\coloneqq (u^z(t,\cdot))_{t>0}$ is a solution to the FPE \eqref{eq:Main_FPE_our_paper} in the sense of \Cref{def:FPE-sol} with initial condition $\delta_z$, satisfying the integrability condition \eqref{eq:integrability_our_paper} with $u^z$ replacing $u$. Assume further injectivity of $(t,z)\mapsto u^z(t,x)dx$ and that, for every $(s,z)\in (0,\infty) \times \R^d$, $u^z_{s+} \coloneqq (u^z({s+t},\cdot))_{t\geq 0} $ is the unique solution to the linearized FPE
    \begin{equation}\label{eq4_new}
        \begin{dcases}
        \partial_t v(t,x) = \Delta\Big( a\big(x, u^z(s+t,\cdot)\big)\, v(t,x)\Big) - \nabla \cdot \Big( b\big(x, u^z(s+t,\cdot)\big) \, v(t,x)\Big), \\
        v(0,\cdot) = u^z(s,\cdot),
        \end{dcases}
    \end{equation}
    in 
     \begin{equation}\label{eq:set2}
     \big\{v \geq 0: [t\mapsto v(t,x)dx] \in C(\R_+;\Pscr),  v(t,x) \leq Cu^z(s+t,x)\,\text{ a.s. for some }C\geq 1 \big\}.
     \end{equation}
     Then there exists a nonlinear Markov core $\{P^z\}_{z\in \R^d}$ for $\{u^z\}_{z\in \R^d}$ in the sense of \Cref{def:NMC}, consisting of the unique solution path laws to \eqref{eq:main_MV-SDE} with $P^z_t = u^z(t,x)dx$ and $P^z_0 = \delta_z$. In particular, the induced nonlinear Markov process $\{P^\zeta\}_{\zeta \in \Pscr_0}$, $\Pscr_0 := \{u^z(t,x)dx| z \in \R^d, t \geq 0\}$, is uniquely determined by $\{u^z\}_{z \in \R^d}$ and \eqref{eq:main_MV-SDE}.
\end{corollary}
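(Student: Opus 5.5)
The plan is to reduce the corollary to \Cref{thm:Markov-construction}, with \Cref{lem:extrem-unique-equiv} bridging between the uniqueness hypothesis on \eqref{eq4_new} and the extremality condition \ref{itm:extm_2}. For each $z$ we set $\mu^z_t := u^z(t,x)dx$ for $t>0$ and $\mu^z_0 := \delta_z$. The FPE \eqref{eq:Main_FPE_our_paper} is the special case of \eqref{eq:FPE} with $a_{ij} = \delta_{ij}a$, and its associated MV-SDE \eqref{eq:DDSDE}, with $\sigma = \sqrt{2a}\,1_d$, is exactly \eqref{eq:main_MV-SDE}. By \Cref{def:FPE-sol}, each $\mu^z$ is a solution with initial datum $\delta_z$. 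The integrability condition \eqref{eq:integrability_our_paper} is precisely \eqref{eq:L1-int-SP} for this diagonal diffusion matrix, since $|a_{ij}| \le |a|$. Hypothesis \ref{itm:extm_1} is assumed verbatim.

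The substantive step is \ref{itm:extm_2}. Fix $s>0$ and $z$. By \Cref{lem:extrem-unique-equiv}, it suffices to show that $\mu^z_{s+}$ is the unique solution of ($\mu^z_{s+}$-linFPE) with initial condition $\mu^z_s$ in the class \eqref{eq:set1}. I would take any $\nu$ in \eqref{eq:set1} that solves this linear equation. The bound $\nu_t \le C\mu^z_{s+t}$ setwise gives $\nu_t \ll dx$. Its density $v(t,\cdot)$ therefore satisfies $0\le v(t,x) \le C u^z(s+t,x)$ for a.e.\ $x$, by Radon--Nikodym, since a setwise inequality of measures passes to densities a.e. Weak continuity of $t\mapsto \nu_t$ is part of both classes, so $v$ belongs to \eqref{eq:set2}.

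Next I need to check that being a distributional solution in the sense of \Cref{def:lFPE-sol} coincides with being a solution of \eqref{eq4_new}. The coefficients are the same, namely $(t,x)\mapsto a(x,u^z(s+t,\cdot))$ and $b(x,u^z(s+t,\cdot))$. The local integrability with respect to $\nu_t dt$ is inherited from the domination $v\le Cu^z(s+\cdot)$ together with \eqref{eq:integrability_our_paper}. The uniqueness hypothesis then yields $v = u^z(s+\cdot)$, i.e.\ $\nu = \mu^z_{s+}$. Conversely, $\mu^z_{s+}$ itself lies in \eqref{eq:set1} with $C=1$, and it solves the linearized equation by the observation (the lemma in Section~2) that a nonlinear solution also solves its own linearization, applied after the time shift.

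\Cref{thm:Markov-construction} then yields unique solution path laws $P^z$ to \eqref{eq:main_MV-SDE} with $P^z_t=\mu^z_t$. It also shows that $\{P^z\}$ is a nonlinear Markov core, uniquely determined by $\{u^z\}$ and the equation. The induced process on $\Pscr_0$ is given by \Cref{def:NMC}, and it is unique because each $P^z$ is. The main (though mild) obstacle is the bookkeeping in the second paragraph: matching the measure-valued class \eqref{eq:set1} with the density class \eqref{eq:set2}, and interpreting the ``a.s.'' in \eqref{eq:set2} as $dx$-a.e.\ for each $t$. Everything else is direct substitution.
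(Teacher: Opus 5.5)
Your proposal is correct and follows the paper's route: apply \Cref{thm:Markov-construction}, using \Cref{lem:extrem-unique-equiv} to turn the extremality condition into restricted linearized uniqueness. The paper likewise justifies the corollary only by noting that the measure class \eqref{eq:set1} coincides with the density class \eqref{eq:set2}, because the domination $\nu_t \le C\,u^z(s+t,x)dx$ forces $\nu_t$ to have a density; your additional bookkeeping (matching \eqref{eq:integrability_our_paper} with \eqref{eq:L1-int-SP}, and observing that only the inclusion of \eqref{eq:set1} into \eqref{eq:set2} is needed) is consistent with this.
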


Note that the sets in \eqref{eq:set1} and \eqref{eq:set2} indeed coincide when $\mu^z_t = u^z(t,x)dx$, since in this case any element $\nu$ in \eqref{eq:set1} consists of densities $\nu_t = v(t,x)dx$. Hence, restricting to this set of solutions, the linearized FPE ($\mu^z_{s+}$-linFPE) can indeed be reformulated as an equation for densities, as done in \eqref{eq4_new}.

In particular, combined with \Cref{lem:many-PDEs_new}, the previous theorem allows us to construct uncountable families of distinct nonlinear Markov processes with identical one-dimensional time marginal densities $(u^z(t,\cdot))_{t>0}$. Each of these nonlinear Markov processes consists of solution path laws to a MV-SDE and is uniquely determined by the latter and $\{u^z\}_{z\in \R^d}$. 

\section{Heat equation}\label{sec:heat_equation}
As a first application, we consider the classical 
heat equation
\begin{align}\label{eq:heat}
        \partial_t u = \frac{1}{2}\Delta u, \qquad (t,x)\in   (0, +\infty) \times \mathbb{R}^d,
    \end{align}
where $d \in \N$. The family $\{ u^z\}_{z \in \mathbb{R}^d}$ of fundamental solutions to \eqref{eq:heat} with initial condition $u^z(t,\cdot)\,dx \to \delta_z$ weakly as $t \downarrow 0$
is given by the well-known heat kernel
\begin{equation}\label{eq:fundamental_solution_heat}
    u^z (t,x) = \frac{1}{(2\pi t)^{\frac d 2}}\exp\bigg(-\frac {|x-z|^2}{2t}\bigg), \qquad (t,x) \in (0,+\infty) \times \R^d.
\end{equation}
Let us recall their self-similar form
\begin{equation}\label{eq:uz_self_similar_heat}
    u^z(t,x)=t^{-\frac d2}\, g \big(t^{-\frac12}|x-z|\big),
\end{equation}
with $g:\R_+\to\R_+$ given by $g(\xi) \coloneqq (2\pi)^{-\frac d 2} \exp (-\tfrac{\xi^2}{2} )$. 
Moreover, a simple calculation gives
\begin{equation}\label{eq:grad_heat}
    \nabla u^z(t,x)
    = -\frac{x-z}{t}\,u^z(t,x).
\end{equation}

The goal for this section is to study several interpretations of \eqref{eq:heat} as a nonlinear FPE and their associated MV-SDEs, as well as the construction of the corresponding nonlinear Markov cores for $\{u^z\}_{z\in \R^d}$. 
For the sake of completeness, we first recall the usual linear interpretation, for which the associated stochastic process is Brownian motion. We then consider the pure-drift interpretation, as well as interpolations between these two cases and, finally, even more general interpretations beyond such pure-drift and pure-diffusion interpolations.

\smallskip
\noindent
\subsection{Pure-diffusion interpretation}
The heat equation \eqref{eq:heat} is obviously a linear Fokker--Planck with coefficients 
\[
a = \frac{1}{2}, \qquad b =0.
\] 
whose associated SDE is, of course,
\begin{equation}\label{eq:HE-SDE-pure-diff}
        dX_t = dW_t.
\end{equation}
With initial condition $X_0=z$, it has unique solutions $X^z_t=W_t+z$. We set $\mathbb{W}^z:= \mathcal{L}(X^z)$, or put differently, $\mathbb{W}^z = \mathbb{W}^0 \circ T_z^{-1}$, where $\mathbb{W}^0$ is the classical Wiener measure and $T_z: \mathcal{C}\to \mathcal{C}$, $T_z w := z + w$ denotes translation by $z$.

For illustrative reasons, we explicitly verify below that $\{\mathbb{W}^z\}_{z\in \R^d}$ satisfies the nonlinear Markov property. Alternatively, this can be proven by applying \Cref{lemma:LMP_NMC} or \Cref{thm:Markov-construction} (the latter in conjunction with  \Cref{lem:extrem-unique-equiv}).

\begin{proposition}\label{lemaux}
   $\{\mathbb{W}^z\}_{z\in \R^d}$ is a nonlinear Markov core for $\{u^z\}_{z\in \R^d}$.
\end{proposition}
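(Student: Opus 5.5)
We verify \Cref{def:NMC} directly for the heat kernel family $\mu^z_t := u^z(t,x)dx$ (with $\mu^z_0 = \delta_z$). First, injectivity of $(t,z)\mapsto \mu^z_t$ holds because $\mu^z_t$ has mean $z$ and covariance $t1_d$. Condition \ref{itm:def_NMC_1} is immediate: $\mathbb{W}^z_t = \mathcal{L}(z+W_t) = \Nscr(z,t1_d) = \mu^z_t$. For \ref{itm:def_NMC_2}, set $\Pscr_0 = \{\mu^z_t\}_{t\geq 0,z\in\R^d}$ and, for $\zeta = \mu^z_s$, $P^\zeta := \mathbb{W}^z\circ\Pi_s^{-1}$. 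By the classical Markov property of Brownian motion, $P^\zeta = \int_{\R^d}\mathbb{W}^y\,d\mu^z_s(y)$, i.e. the law of $z+W_s + \tilde W$ with $\tilde W$ an independent Brownian motion. This explicit form is the key object to work with.

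We then check the items of \Cref{def:NL-MP}. For \ref{itm:def_NMP_0}, $P^\zeta_0 = \mu^z_s = \zeta$. For \ref{itm:def_NMP_1}, $P^\zeta_t = \mu^z_{s+t}\in \Pscr_0$, and hence $\mu^\zeta_t = \mu^z_{s+t}$. For \ref{itm:def_NMP_2}, we first identify the disintegration of $P^\zeta$ with respect to $\pi_0$: from the mixture representation, $P^{\zeta,y} = \mathbb{W}^y$ for $\zeta$-a.e. $y$. Consequently, for any $\eta\in\Pscr_0$ the kernel $P^{\eta,y}(\pi_t\in A) = \mathbb{W}^y(\pi_t\in A)$ does not depend on $\eta$ (for $\eta$-a.e. $y$). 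The right-hand side of \eqref{eq:nonlinear_Markov_property} with $\eta = \mu^\zeta_r$ thus becomes $\mathbb{W}^{\pi_r(\cdot)}(\pi_t\in A)$.

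The left-hand side is $P^\zeta(\pi_{t+r}\in A\mid\Fscr_r)$. Under $P^\zeta$ the canonical process is $Y_u = z+W_{s+u}$, and by independence of Brownian increments this conditional probability equals $\Nscr(\pi_r,t1_d)(A) = \mathbb{W}^{\pi_r}(\pi_t\in A)$, $P^\zeta$-a.s. This matches the right-hand side, establishing the nonlinear Markov property.

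The only delicate point is the a.e. ambiguity: $P^{\mu^\zeta_r,y}$ is defined only for $\mu^\zeta_r$-a.e. $y$, and we evaluate it at $y = \pi_r(\cdot)$ under $P^\zeta$. This is harmless because $P^\zeta\circ\pi_r^{-1} = \mu^\zeta_r$, so null sets transfer. I expect this measure-theoretic bookkeeping, rather than any computation, to be the main thing needing care.
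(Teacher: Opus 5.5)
Your proposal is correct and follows essentially the same route as the paper: you identify the disintegration of $P^\zeta=\mathbb{W}^z\circ\Pi_s^{-1}$ with respect to $\pi_0$ as $\mathbb{W}^y$ via the classical Markov property, so that the right-hand side of \eqref{eq:nonlinear_Markov_property} becomes $\mathbb{W}^{\pi_r(\cdot)}(\pi_t\in A)$, and you then check that this is also the conditional probability given $\Fscr_r$. The only cosmetic difference is in that last check: you use independence of Brownian increments for $Y_u=z+W_{s+u}$, whereas the paper integrates against bounded $\Fscr_s$-measurable $g$ using the mixture representation and the Markov property of each $\mathbb{W}^y$.
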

\begin{proof}
We set $\mu_t^z := u^z(t,x)dx$ for $t>0$ and $\mu^z_0 = \delta_z$.
Of course $\mathbb{W}^z_t = \mu^z_t$, and $(t,z)\mapsto \mu^z_t$ is injective. Hence, with regard to Definition \ref{def:NMC}, it remains to show that
$$\{P^{\mu^z_t}\}_{t\geq 0, z\in \R^d},\quad P^{\mu^z_t}:=\mathbb{W}^z \circ \Pi_t^{-1},$$
is a nonlinear Markov process. To this end, we verify Definition \ref{def:NL-MP} by hand, where $\mathcal{P}_0=\{\mu^z_t: t \geq 0, z\in \R^d\}$. Note that for $A\in \mathcal{B}(\R^d)$, by definition,
    \[
    P^{\mu^z_t}_0(A) =\mu^z_t(A)
    \]
    and, similarly, for $s>0$
    \[
    P^{\mu^z_t}_s=\big(\mathbb{W}^z\circ \Pi_t^{-1} \big) \circ \pi_s{-1}=\mu^z_{s+t}\in \mathcal{P}_0,
    \]
    i.e., (i) of \Cref{def:NL-MP} is satisfied.  Regarding (ii), let $s,r,t\geq 0$ and consider the disintegration family $\{P^{\mu^z_{s+r}, y}\}_{y\in \R^d}$ of $P^{\mu^z_{s+r}}$ with respect to $\pi_0$, i.e., the $\mu^z_{s+r}$-a.s. unique family of probability measures on $\Bscr(\Cscr)$ such that for any $B\in \mathcal{B}(\Cscr)$
    \[
    P^{\mu^z_{s+r}}(B)=\int_{\R^d} P^{\mu^z_{s+r}, y}(B) d\mu_{s+r}^z(y).
    \]
    The classical Markov property for $\{\mathbb{W}^z\}_{z\in \R^d}$ implies, for $\mu^z_{s+r}$-a.e. $y\in \R^d$,
    \[\label{aaux}
     P^{\mu^z_{s+r}, y}(B) =\mathbb{W}^y(B)
    \]
   hence, for any $A\in \Bscr(\R^d)$,
   \[\label{bbux}
     P^{\mu^z_{s+r}, \pi_s(\cdot)}(\pi_t\in A)= \mathbb{W}^{\pi_s(\cdot)}(\pi_t\in A),\quad P^{\mu^z_{r}}-\text{a.s.}
    \]
    Next, let $g: \Cscr \to \R_+$ be bounded and $\mathcal{F}_s$-measurable. Then, by the classical Markov property and \eqref{aaux}
    \begin{equation}
        \begin{split}
            \int_\Cscr \mathds{1}_{\{\pi_{t+s}\in A\}}(\omega)g(\omega)dP^{\mu^z_r}(\omega)&= \int_{\R^d}\int_\Cscr \mathds{1}_{\{\pi_{t+s}\in A\}}(\omega)g(\omega)d\mathbb{W}^{y}(\omega)d\mu^z_r(y)\\
            &=\int_{\R^d} \int_\Cscr\mathbb{W}^{\pi_s(\omega)}(\pi_t\in A)g(\omega)d\mathbb{W}^{y}(\omega)d\mu^z_r(y)\\
            &=\int_\Cscr \mathbb{W}^{\pi_s(\omega)}(\pi_t\in A)g(\omega)dP^{\mu^z_r}(\omega),
        \end{split}
    \end{equation}
    from which we infer
    \[ 
    P^{\mu^z_r}(\pi_{t+s}\in A|\mathcal{F}_s)(\cdot)=\mathbb{W}^{\pi_s(\cdot)}(\pi_t\in A),\quad P^{\mu^z_r}\text{-a.s.}
    \]
    which together with \eqref{bbux} shows that  $\{P^{\mu^z_r}\}_{r\geq 0, z\in \R^d}$ is a nonlinear Markov process, and hence $\{\mathbb{W}^z\}_{z\in \R^d}$ is a nonlinear Markov core for $\{u^z\}_{z\in \R^d}$, as claimed.
\end{proof}

\subsection{Pure-drift interpretation}\label{subsec:heat_equation_drift}
Next, we recast the heat equation \eqref{eq:heat} as
\begin{equation}\label{eq:HE-divergenceform}
    \partial_t u = -\nabla \cdot \bigg(\frac{-\nabla u}{2u} \,u\bigg),
\end{equation}
i.e., as a nonlinear first-order FPE with coefficients $(a,b) : \R^d \times \Pscr_* \to \R_+ \times \R^d $ given by
\begin{equation}\label{eq:coefficients_heat-drift}
a(x,u) \coloneqq 0,\qquad  b(x,u) \coloneqq -\frac{\nabla u (x)}{2u(x)},
\end{equation}
with $b(x,u) \coloneqq 0 $ on $\{u=0\}$, defined on $\Pscr_{*} \coloneqq  \Pscr_{\textup{ac}} \cap W^{1,1}_{\textup{loc}}(\R^d)$.
Note that $u^z$ is also a distributional solution to \eqref{eq:HE-divergenceform}. This can be seen, for instance, via \Cref{lem:many-PDEs_new} with the choice $a = \frac{1}{2}, b= 0$, $\Pscr_*=\Pscr $, and $f(u) = - \frac 1 2 u$ on $\mathcal{D}(f) := \Pscr_{\textup{ac}}$. 
With this choice, one obtains new coefficients  \eqref{eq:coefficients_heat-drift} and the domain $\Pscr_{*} = \Pscr_{\textup{ac}} \cap W^{1,1}_{\textup{loc}}(\R^d)$, and, using \eqref{eq:grad_heat}, one can check that $\big[(t,x)\mapsto f(x,u(t,\cdot))\big] \in L^1_{\textup{loc}}(\R_+;W^{1,1}(\R^d))$. Therefore, the assumptions of \Cref{lem:many-PDEs_new} (and \Cref{cor:new}) are all satisfied. 
\\
The MV-SDE corresponding to \eqref{eq:HE-divergenceform} is the distribution-dependent ODE
\begin{equation}\label{eq:HE-ODE}
    \begin{dcases}
    dX_t = -\frac{\nabla u(t,X_t)}{2u(t,X_t)}dt,\\ \mathcal{L}(X_t) = u(t,x)dx \,\,\,\forall t >0.
    \end{dcases}
\end{equation}
Specifically, for $u = u^z $, a direct calculation shows that \eqref{eq:HE-ODE} simplifies to
\begin{equation}\label{eq:HE-ODE_simplification}
    \begin{dcases}
    dX_t = \frac{X_t-z}{2t}dt, \\
    \mathcal{L}(X_t) = u^z(t,x)dx\,\,\,\forall t >0.
     \end{dcases}
\end{equation}
Using separation of variables, an explicit solution to \eqref{eq:HE-ODE_simplification} with $\mathcal{L}(X_0) = \delta_z$ is given by $(X^z_t)_{t\geq 0}$ as follows
\begin{equation}\label{eq:solutions_ODE_heat}
    X^z_t \coloneqq z+\eta\sqrt{t}, 
\end{equation}
where $\eta$ is an $\R^d$-valued random variable  with $\mathcal{L}(\eta)=\mathcal{N}(0,1_d)$.

\begin{proposition}\label{prop:Mc-heateq}
    Let $\mathbb{V}^z:=\mathcal{L}(X^z)$, where $X^z$ is given in \eqref{eq:solutions_ODE_heat} with $\eta$ being an $\R^d$-valued random variable with $\mathcal{L}(\eta)= \mathcal{N}(0,1_d)$. Then $\{\mathbb{V}^z\}_{z\in \R^d}$ is a nonlinear Markov core for $\{u^z\}_{z\in \R^d}$.
\end{proposition}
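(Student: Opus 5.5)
We verify \Cref{def:NMC} directly for $\{\mathbb{V}^z\}_{z\in\R^d}$, exploiting the fact that every path of $X^z$ is a deterministic function of a single Gaussian variable. First, $\mathcal{L}(z+\eta\sqrt t)=\Nscr(z,t1_d)=u^z(t,x)dx$ for $t>0$ and $\mathbb{V}^z_0=\delta_z$, so \ref{itm:def_NMC_1} holds. The map $(t,z)\mapsto\mu^z_t$ is injective, since $\Nscr(z,t1_d)$ determines its mean $z$ and its covariance $t$, and $t=0$ gives a Dirac mass. Hence $\Pscr_0=\{\mu^z_r\}_{r\ge0,z\in\R^d}$, and each $\zeta\in\Pscr_0$ determines a unique pair $(r,z)$. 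By definition $P^\zeta:=\mathbb{V}^z\circ\Pi_r^{-1}$, which is the law of $(z+\eta\sqrt{r+t})_{t\ge0}$. Then $P^\zeta_0=\mu^z_r=\zeta$ and $P^\zeta_s=\mu^z_{r+s}\in\Pscr_0$, so properties \ref{itm:def_NMP_0} and \ref{itm:def_NMP_1} of \Cref{def:NL-MP} hold, with $\mu^\zeta_s=\mu^z_{r+s}$.

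The main work is to compute the disintegration $\{P^{\zeta,y}\}_y$ of $P^\zeta$ with respect to $\pi_0$. For $r>0$, we have $P^\zeta$-a.s. $\pi_0=z+\eta\sqrt r$, and therefore
\[
\pi_t=z+(\pi_0-z)\sqrt{(r+t)/r},\qquad t\ge0.
\]
So the path is a measurable function $\Phi_r(\pi_0)$ of its starting point, where $\Phi_r(y)(t):=z+(y-z)\sqrt{(r+t)/r}$, and consequently $P^{\zeta,y}=\delta_{\Phi_r(y)}$ for $\zeta$-a.e.\ $y$. For $r=0$, i.e.\ $\zeta=\delta_z$, the disintegration is trivial: $P^{\delta_z,z}=\mathbb{V}^z$.

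Next we verify the nonlinear Markov property \eqref{eq:nonlinear_Markov_property} for $\zeta=\mu^z_r$ and $s,t\ge0$. If $r+s>0$, then $\pi_s$ is $\Fscr_s$-measurable and $P^\zeta$-a.s.
\[
\pi_{t+s}=z+(\pi_s-z)\sqrt{(r+s+t)/(r+s)}=\Phi_{r+s}(\pi_s)(t).
\]
Hence $P^\zeta(\pi_{t+s}\in A\mid\Fscr_s)=\mathds 1_A(\Phi_{r+s}(\pi_s)(t))$. Since $\mu^\zeta_s=\mu^z_{r+s}$ and $r+s>0$, the disintegration computed above gives $P^{\mu^\zeta_s,\pi_s}(\pi_t\in A)=\mathds 1_A(\Phi_{r+s}(\pi_s)(t))$, which is the same expression. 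We must also check that the $\mu^z_{r+s}$-null exceptional set of the disintegration is harmless: it is, because $\mathcal{L}(\pi_s)=\mu^z_{r+s}$ under $P^\zeta$. In the remaining case $r=s=0$, the $\sigma$-algebra $\Fscr_0$ is $\mathbb{V}^z$-trivial, because $\pi_0=z$ a.s. Both sides then equal $\mathbb{V}^z(\pi_t\in A)$, using $P^{\delta_z,z}=\mathbb{V}^z$.

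The only delicate points are the careful identification of the disintegration as Dirac measures on the curves $\Phi_{r}(y)$, including the null-set bookkeeping, and the separate treatment of the degenerate case $r+s=0$. Everything else is explicit. As a cross-check, one could instead invoke \Cref{crl:nl_markov}: the drift $(x-z)/(2(s+t))$ of the linearized equation \eqref{eq4_new} is Lipschitz for $s>0$, so uniqueness of the linear continuity equation holds. However, the direct verification above is self-contained and is the route I would follow.
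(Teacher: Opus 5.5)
Your proof is correct and follows essentially the same route as the paper. Both arguments verify the definition directly: the disintegration of $\mathbb{V}^z\circ\Pi_r^{-1}$ with respect to $\pi_0$ is identified as Dirac masses on the explicit curves $t\mapsto z+c\sqrt{r+t}$, and the conditional law of $\pi_{t+s}$ given $\Fscr_s$ is read off from the deterministic relation $\pi_{t+s}=z+(\pi_s-z)\sqrt{(r+s+t)/(r+s)}$. Your separate treatment of the degenerate case $r+s=0$ is a small improvement, since there the paper's formula $(y-z)/\sqrt{s+r}$ is undefined.
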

Again, we verify the nonlinear Markov property directly for illustrative purposes. Alternatively, one may verify the conditions from Theorem \ref{thm:Markov-construction} to obtain the same assertion by applying the latter. Note that our direct calculation below does not prove that $\{\mathbb{V}^z\}_{z\in \R^d}$ is the \emph{unique} nonlinear Markov core for $\{u^z\}_{z\in \R^d}$ consisting of solution path laws to \eqref{eq:HE-ODE_simplification}. For this, one either has to argue that solutions to \eqref{eq:HE-ODE_simplification} are weakly unique or one applies Theorem \ref{thm:Markov-construction}, which includes this uniqueness claim.
\begin{proof}[Proof of \Cref{prop:Mc-heateq}]
Let $\mathcal{P}_0=\{\mu^z_t\}_{t\geq 0, z\in \R^d}$, where $\mu^z_t := u^z(t,x)dx$ for $t>0$ and $\mu^z_0 = \delta_z$. Define
\[
\Phi^z_t:\R^d\to \Cscr, \qquad c\to \gamma^z_{c,t}, \,\, \gamma^z_{c,t}(r):= z+c\sqrt{t+r}
\]
By definition $\mathbb{V}^z=\mathcal{L}(\eta)\circ (\Phi^z_0)^{-1}$, and $\mathbb{V}^z \circ \Pi_t^{-1}= \mathcal{L}(\eta) \circ (\Phi^z_t)^{-1}$. Defining $P^{\mu^z_t}:=\mathbb{V}^z\circ \Pi_t^{-1}$, we have for $A\in \mathcal{B}(\R^d)$
\[
P^{\mu^z_t}_0(A) =\mu^z_t(A),
\]
and, similarly,
\[
P^{\mu^z_t}_s(A)=\mu^z_{s+t}(A),
\]
so the condition (i) of \Cref{def:NL-MP} holds. For $B\in \Bscr(\Cscr)$, 
define the Borel set
$$A(B)^z_{s+r} := A^z_{s+r}\coloneqq\{c\in \R^d \ | \ \exists \omega\in B : \omega=\gamma^z_{c,s+r}\} = (\Phi^z_{s+r})^{-1}(B)$$
Then 
\begin{multline}
   P^{\mu^z_{s+r}}(B)=  \mathcal{L}(\eta)((\Phi^z_{s+r})^{-1}(B))=\mathcal{L}(\eta)(A^z_{s+r}) \\ =\int_{\R^d} \mathds{1}_{A^z_{s+r}}(y)\,d\mu^0_1(y) = \int_{\R^d} \mathds{1}_{A^z_{s+r}}\left(\frac{y-z}{\sqrt{s+r}}\right)d\mu^z_{s+r}(y).
\end{multline}
By definition 
\begin{equation}
    \mathds{1}_{A^z_{s+r}}\left(\frac{y-z}{\sqrt{s+r}}\right)= \delta_{\gamma^z_{\frac{(y-z)}{\sqrt{s+r}},s+r}}(B)  , 
\end{equation}
and therefore, denoting by $\{ P^{\mu^z_{s+r}, y}\}_{y\in \R^d}$ the disintegration family of $P^{\mu^z_{s+r}}$ with respect to $\pi_0$, we conclude $P^{\mu^z_{s+r}, y}=\delta_{\gamma^z_{\frac{(y-z)}{\sqrt{s+r}},s+r}}$ for $\mu^z_{s+r}$-a.e. $y$. Thus,
\[
P^{\mu^z_{s+r}, \pi_s(\cdot)}(\pi_t\in A)=\delta_{\gamma^z_{\frac{(\pi_s(\cdot)-z)}{\sqrt{s+r}},s+r}}(\pi_t\in A)=\mathds{1}_A\left(z+\frac{(\pi_s(\cdot)-z)}{\sqrt{s+r}}\sqrt{s+r+t}\right),\quad P^{\mu^z_r}\text{-a.s.}
\]
It remains to show
\[
P^{\mu_r^z}(\pi_{t+s}\in A|\mathcal{F}_s)=\mathds{1}_A\left(z+\frac{(\pi_s(\cdot)-z)}{\sqrt{s+r}}\sqrt{s+r+t}\right),\quad P^{\mu^z_r}\text{-a.s.}
\]
That is, for any nonnegative bounded $\Fscr_s$-measurable $g$, we need
\[
\int_\mathcal{C}\mathds{1}_A(\pi_{t+s}(\omega))g(\omega)P^{\mu^z_r}d(\omega)=\int_\mathcal{C}\mathds{1}_A\left(z+\frac{(\pi_s(\omega)-z)}{\sqrt{s+r}}\sqrt{s+r+t}\right) g(\omega)P^{\mu^z_r}d(\omega).
\]
By definition, $P^{\mu^z_r}$ is concentrated on 
\[
S^z_r:=\{\omega\in \mathcal{C}| \ \exists c(\omega)\in \R^d: \pi_s(\omega)=z+c(\omega)\sqrt{r+s}\  \text{for all}\ s\geq0 \}.
\]
By definition of $S^z_r$, for every $\omega\in S^z_r$, there exists $c(\omega)\in \R^d$ such that
\[
\pi_s(\omega)=z+c(\omega)\sqrt{s+r}, \qquad \pi_{s+t}(\omega)=z+c(\omega)\sqrt{s+t+r}.
\]
In particular, we have 
\[
\pi_{s+t}(\omega)=z+\frac{\pi_s(\omega)-z}{\sqrt{s+r}}\sqrt{s+r+t}
\]
and, therefore,
\begin{equation}
    \begin{split}
       \int_\mathcal{C}\mathds{1}_A(\pi_{t+s}(\omega))g(\omega)dP^{\mu^z_r}(\omega)&=\int_{S^z_r}\mathds{1}_A(\pi_{t+s}(\omega))g(\omega)dP^{\mu^z_r}(\omega) \\
       &=\int_{S^z_r}\mathds{1}_A(z+\frac{\pi_s(\omega)-z}{\sqrt{s+r}}\sqrt{s+r+t})g(\omega)dP^{\mu^z_r}(\omega) \\
       &=\int_{\mathcal{C}}\mathds{1}_A(z+\frac{\pi_s(\omega)-z}{\sqrt{s+r}}\sqrt{s+r+t})g(\omega)dP^{\mu^z_r}(\omega). \qedhere
    \end{split}
\end{equation}
\end{proof}

By \Cref{lemaux} and \Cref{prop:Mc-heateq}, we constructed nonlinear Markov cores $\{\mathbb{W}^z\}_{z\in \R^d}$ and $\{\mathbb{V}^z\}_{z\in \R^d}$ for the heat kernel $\{u^z\}_{z\in \R^d}$, i.e., in particular
$$\mathbb{W}^z_t = u^z(t,x)dx = \mathbb{V}^z_t,\quad \forall z\in \R^d, t \geq 0.$$
Note that $\mathbb{W}^z$ and $\mathbb{V}^z$ are solution path laws to a pure-diffusion SDE and a distribution-dependent ODE, respectively, namely to \eqref{eq:HE-SDE-pure-diff} and \eqref{eq:HE-ODE}. Clearly, for each $z \in \R^d$, the path laws $\mathbb{W}^z$ and  $\mathbb{V}^z$ from \Cref{lemaux} and \Cref{prop:Mc-heateq} are mutually singular. Indeed, for $\varepsilon >0$, $\mathbb{V}^z$ is concentrated on solution trajectories to \eqref{eq:HE-ODE_simplification}, i.e., on the measurable set $A:=\{w\in \Cscr\,: \, w(t) = c \sqrt{t} + z, \, c \in \R^d\} \subseteq C^{1}((0, \infty); \R^d)$. On the other hand, of course $\mathbb{W}^z\big(
 A) = 0$, since $\mathbb{W}^z(A) >0$ would contradict the well-known fact that Wiener measure is concentrated on non-differentiable paths.

In particular, this shows that nonlinear Markov processes are not uniquely determined by their one-dimensional time marginals. Next, we construct further nonlinear Markov cores for $\{u^z\}_{z\in \R^d}$ by interpolating between the pure-drift and pure-diffusion cases.


\subsection{Interpolation between pure-diffusion and pure-drift cases}\label{subsec:heat_interpolation}
Here we recast \eqref{eq:heat} as
\begin{equation}\label{eq:HE-beta}
    \partial_t u = \frac{\beta}{2}\Delta u - (1-\beta)\nabla \cdot \bigg(\frac{-\nabla u}{2u} \,u\bigg),
\end{equation}
with interpolation parameter $\beta \in (0, \infty)$. This is a nonlinear Nemytskii-type FPE  with coefficients  $(a,b) : \R^d \times \Pscr_* \to \R_+ \times \R^d $ given by
\begin{equation}\label{eq:HE_beta_coeff}
    a (x,u) \coloneqq \frac{\beta}{2}, \qquad b(x,u)\coloneqq(1-\beta)\frac{-\nabla u}{2u}(x)
\end{equation}
and $b(x,u) \coloneqq 0 $ on $\{u=0\}$, defined on $\Pscr_{*} \coloneqq  \Pscr_{\textup{ac}} \cap W^{1,1}_{\textup{loc}}(\R^d)$.
Clearly, $\beta = 1$ and $\beta = 0$ correspond to the previously considered pure-diffusion and pure-drift cases, respectively. The associated MV-SDE for \eqref{eq:HE-beta} is
\begin{equation}\label{eq:SDE-beta}
    \begin{dcases}
    dX_t =-(1-\beta)\frac{\nabla u(t, X_t)}{2u(t, X_t)} dt + \sqrt{\beta}dW_t,\\
    \mathcal{L}(X_t) = u(t,x)dx,\,\,\, t >0.
    \end{dcases}
\end{equation}
For $u = u^z$, this equation simplifies to
\begin{equation}\label{eq:SDE-beta-HE}
    \begin{dcases}
    dX_t = (1-\beta) \frac{X_t-z}{2t} d t + \sqrt{\beta}dW_t, \\ \mathcal{L}(X_t) = u^z(t,x)dx,\,\,\, t >0,
    \end{dcases}
\end{equation}
which, with initial condition $\mathcal{L}(X_0) = \delta_z$, can be solved via variation of constants:
\begin{equation}\label{Xbeta}
    X^{\beta, z}_t \coloneqq z+\sqrt{\beta}\int_0^t\left(\frac{t}{s}\right)^{\frac{1-\beta}{2}}dW_s.
\end{equation}

\begin{proposition}\label{prop:HE-infin-many}
    Let $\beta \in (0,\infty)$. 
    Let $\mathbb{V}^{\beta,z}:=\mathcal{L}(X^{\beta, z})$, where $X^{\beta, z}$ as in \eqref{Xbeta}. Then $\{\mathbb{V}^{\beta,z}\}_{z\in \R^d}$ is a nonlinear Markov core for $\{u^z\}_{z\in \R^d}$, and it is uniquely determined by $\{u^z\}_{z\in \R^d}$ and \eqref{eq:SDE-beta-HE}.
    \label{Xbeta nonlinear core}
\end{proposition}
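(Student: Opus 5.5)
The plan is to apply \Cref{thm:Markov-construction} (via \Cref{crl:nl_markov}) to the FPE \eqref{eq:HE-beta} with coefficients \eqref{eq:HE_beta_coeff}, and then identify the resulting unique solution path law with $\mathbb{V}^{\beta,z}$.

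\textbf{Step 1: $X^{\beta,z}$ is an admissible solution.} We first check that $X^{\beta,z}$ from \eqref{Xbeta} has the correct marginals. It is a centered Gaussian (shifted by $z$) with covariance
\[
\beta\int_0^t (t/s)^{1-\beta}\,ds\, 1_d = t\, 1_d ,
\]
so $\mathcal{L}(X^{\beta,z}_t)=u^z(t,x)dx$. Itô's product rule applied to $t^{\frac{1-\beta}{2}}\int_0^t s^{-\frac{1-\beta}{2}}dW_s$ shows that $X^{\beta,z}$ solves \eqref{eq:SDE-beta-HE}. The integrability condition \eqref{eq:integrability_our_paper} holds because $\int |x-z|/(2t)\,u^z(t,x)dx \lesssim t^{-1/2}$, which is integrable near $0$. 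The map $(t,z)\mapsto u^z(t,x)dx$ is injective, since the mean determines $z$ and the variance determines $t$. Finally, $u^z$ solves \eqref{eq:HE-beta}, by \Cref{lem:many-PDEs_new} with $f=-\frac{1-\beta}{2}u$; for $\beta>1$ this is allowed because $a+f/u=\beta/2\ge0$.

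\textbf{Step 2: restricted linearized uniqueness (the main step).} Fix $s>0$. The linearized equation \eqref{eq4_new} has constant diffusion $\beta/2$ and drift
\[
b(t,x)=(1-\beta)\frac{x-z}{2(s+t)} .
\]
This drift is smooth and globally Lipschitz in $x$, uniformly for $t\ge0$, and it has linear growth. Its singularity sits only at time $-s$, which is why $s>0$ is essential. For such coefficients, uniqueness of probability solutions to the linear FPE in the class $C(\R_+;\Pscr)$ is classical. One route is the superposition principle (\Cref{thm:SP-pr}) combined with pathwise uniqueness of the SDE with Lipschitz coefficients: any solution $\nu$ with $\nu_t\le C u^z(s+t,\cdot)$ satisfies the integrability condition \eqref{eq:L1-int-SP}, hence is the marginal curve of some SDE solution, and pathwise uniqueness implies weak uniqueness. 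Alternatively, one can use a duality argument with the backward Kolmogorov equation, which has smooth solutions. Either way, $u^z_{s+}$ is the unique solution in \eqref{eq:set2}.

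\textbf{Step 3: conclusion.} \Cref{crl:nl_markov} now yields a nonlinear Markov core consisting of the unique solution path laws to \eqref{eq:SDE-beta} with marginals $u^z$ and initial law $\delta_z$. By Step 1, $\mathbb{V}^{\beta,z}$ is such a path law, so it coincides with the core element. This gives both assertions of the proposition. The only delicate point is the domination by $C u^z(s+t)$, which ensures the moment bounds needed for superposition; the singular behaviour at $t=0$ is handled by the Dirac-initial-datum claim inside \Cref{thm:Markov-construction}.
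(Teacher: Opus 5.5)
Your proposal is correct and follows the paper's route: you check that $u^z$ solves \eqref{eq:HE-beta}, prove uniqueness of the linearized FPE (constant diffusion $\beta/2$ and, for $s>0$, a drift that is Lipschitz uniformly in $t$) in the dominated class, and conclude with \Cref{thm:Markov-construction} and \Cref{lem:extrem-unique-equiv} via \Cref{crl:nl_markov}. The differences are minor: you justify the classical linear uniqueness through the superposition principle combined with pathwise uniqueness, where the paper cites \cite[Theorem 9.4.3]{BogachevKrylovRoecknerShaposhnikov2015}, and you spell out two points the paper leaves implicit, namely that $X^{\beta,z}$ has heat-kernel marginals and that $\mathbb{V}^{\beta,z}$ coincides with the core element.
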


\begin{proof}
It is immediate that $u^z$ also solves the interpolated FPE \eqref{eq:HE-beta}, since it satisfies both the pure-drift equation \eqref{eq:HE-divergenceform} (as argued there) and the pure-diffusion equation (the heat equation). 
We want to apply Theorem \ref{thm:Markov-construction} together with Lemma \ref{lem:extrem-unique-equiv}.
    Let $z \in \R^d$ and $s>0$. We need to show that the linear FPE
    \begin{equation}\label{eq:linFPE-beta}
    \partial_t \mu_t =  \frac \beta 2 \Delta \mu_t - \frac {1-\beta} {2} \nabla \cdot\bigg( \frac{x-z}{t+s} \mu_t\bigg),\quad (t,x) \in (0,\infty)\times \R^d,\quad \mu_0 = u^z(s,y)dy,
\end{equation}
has a unique distributional solution in 
$$\{(\mu_t)_{t\geq 0} \in C(\R_+;\Pscr)\,:\, \mu_t \leq C u^z(t+s,y)dy, \, \forall t \geq 0, \text{ for some }C\geq 1\text{ independent from }(t,y)\}$$
(this solution is $\mu_t = u^z(t,y)dy$).
Clearly, the coefficients of \eqref{eq:linFPE-beta}, 
$$a(t,x) \coloneqq \frac \beta 2, \quad b(t,x) \coloneqq \frac{1-\beta}{2} \frac{x-z}{t+s},$$
are uniformly in $t\geq 0$ Lipschitz continuous in $x$ and bounded on each $\R_+\times K$ for any compact set $K \subseteq \R^d$ (here we use $s>0$). Thus, since $a$ is a strictly positive constant, the required uniqueness follows by classical theory, for instance from \cite[Theorem 9.4.3]{BogachevKrylovRoecknerShaposhnikov2015}.
 Now Lemma \ref{lem:extrem-unique-equiv} and Theorem \ref{thm:Markov-construction} apply and yield the assertion.
\end{proof}

The next lemma shows that, for each $z \in \R^d$, the family $\{\mathbb{V}^{\beta,z}\}_{\beta >0}$ consists of pairwise mutually singular path laws. In particular, this rules out that these path laws are related by a simple Girsanov transformation.

\begin{lemma}\label{lem:singularlaws}
    Let $z\in \R^d$,  $\beta_1, \beta_2\in (0, \infty)$, and let $\mathbb{V}^{\beta_1,z}$ and $\mathbb{V}^{\beta_2,z}$ be the path laws defined in \Cref{Xbeta nonlinear core}. Then $\mathbb{V}^{\beta_1,z}$ and $\mathbb{V}^{\beta_2,z}$ are mutually singular whenever $\beta_1\neq \beta_2$.
\end{lemma}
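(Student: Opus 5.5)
The strategy is to separate the two laws by quadratic variation. Under $\mathbb{V}^{\beta,z}$, the canonical process $\pi$ is a continuous semimartingale on $(0,\infty)$. Its martingale part is $\sqrt{\beta}\,W$ and its drift is $(1-\beta)\frac{\pi_t - z}{2t}\,dt$, as in \eqref{eq:SDE-beta-HE}. I would therefore read $\beta$ off the sample path as the almost-sure limit of realized quadratic variation, which is a measurable functional of the path. Fix $0<t_0<t_1$ and the dyadic partitions $t_0 = s^n_0 < \dots < s^n_{2^n} = t_1$. Define the Borel set
$$A_\beta := \Big\{ w \in \Cscr : \lim_{n\to\infty} \sum_{k} \big|w(s^n_{k+1})-w(s^n_k)\big|^2 = d\,\beta\,(t_1-t_0)\Big\}.$$
Since the constants $d\beta_1(t_1-t_0)$ and $d\beta_2(t_1-t_0)$ differ, the sets $A_{\beta_1}$ and $A_{\beta_2}$ are disjoint whenever $\beta_1\neq\beta_2$. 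It then suffices to show $\mathbb{V}^{\beta,z}(A_\beta)=1$ for every $\beta>0$.

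To prove this, write $X=X^{\beta,z}$ from \eqref{Xbeta} as
$$X_t = z + \sqrt{\beta}\, t^{\frac{1-\beta}{2}} M_t, \qquad M_t := \int_0^t s^{-\frac{1-\beta}{2}}\,dW_s.$$
The integral $M_t$ is well defined because $\int_0^t s^{\beta-1}ds<\infty$ for $\beta>0$. On $[t_0,t_1]$, Itô's product rule then gives
$$dX_t = \sqrt{\beta}\,dW_t + \tfrac{1-\beta}{2}\,\sqrt{\beta}\,t^{-\frac{1+\beta}{2}}M_t\,dt.$$
So on $[t_0,t_1]$ the process $X$ equals $\sqrt{\beta}\,W$ plus a $C^1$ (hence bounded-variation) process, and this identifies the drift with the one in \eqref{eq:SDE-beta-HE}. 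The drift part contributes nothing to the limit of squared increments. The cross terms are bounded by $\max_k|\Delta W|$ times the total variation of the drift, which tends to $0$. Lévy's theorem on Brownian quadratic variation along dyadic partitions, which holds almost surely because $\sum_n 2^{-n}<\infty$ combined with Borel–Cantelli, then yields
$$\sum_k |\Delta X|^2 \to \beta \sum_{i=1}^d (t_1-t_0) = d\beta(t_1-t_0) \quad \text{a.s.}$$
Hence $\mathbb{V}^{\beta,z}(A_\beta)=1$.

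Combining the two facts, $\mathbb{V}^{\beta_1,z}(A_{\beta_1})=1$ while $\mathbb{V}^{\beta_2,z}(A_{\beta_1}) \le \mathbb{V}^{\beta_2,z}(A_{\beta_2}^c) = 0$, which proves mutual singularity. The only mildly delicate step is the behaviour near $t=0$, where the drift is singular. I avoid it entirely by working on $[t_0,t_1]$ with $t_0>0$. Measurability of $A_\beta$ is standard, since it is defined by a limit of continuous functionals. I would also note that the same argument separates $\mathbb{W}^z=\mathbb{V}^{1,z}$ from the others.
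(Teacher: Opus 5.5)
Your proof is correct and takes essentially the same route as the paper: both separate $\mathbb{V}^{\beta_1,z}$ and $\mathbb{V}^{\beta_2,z}$ by the limit of realized quadratic variation along partitions, which yields a Borel set of full measure for one law and null measure for the other. The differences are only technical. You work on $[t_0,t_1]$ with $t_0>0$, which avoids the singular drift, and use dyadic partitions so the convergence is almost sure, and you correctly track the factor $d$. The paper instead works on $[0,T]$ with convergence in probability and extracts a Borel--Cantelli subsequence.
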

\begin{proof}
    Let $\beta_1\neq \beta_2$ and $X^{\beta_i,z}$ be the stochastic processes defined in \eqref{Xbeta} for $i \in \{1,2\}$. Recalling that $X^{\beta_i, z}$ solves the SDE
    \[
    dX^{\beta_i, z}_t= (1-{\beta_i})\frac{X^{\beta_i, z}_t-z}{2t}dt+\sqrt{{\beta_i}}dW_t,
    \]
    we readily infer that their quadratic variation processes $t\mapsto [X^{\beta_i,z}]_t$ equal ${\beta_i}t$, almost surely. Fix $T>0$, set $\varepsilon:=|\beta_1-\beta_2|T/4>0$, and let $(\mathcal{P}^n)_n$ be a sequence of partition of $[0, T]$ with mesh $|\mathcal{P}^n|\to 0$ as $n\to \infty$.
    For $\omega\in C([0, T]; \R^d)$, define
    \[
    \mathcal{Q}^n\omega:=\sum_{[u, v]\in \mathcal{P}^n}(\omega(u)-\omega(v))^2.
    \]
    By the definition of quadratic variation (considering the natural restriction of $\mathbb{V}^{\beta_i,z}$ to $C([0,T];\R^d)$)
    \[
    \lim_{n\to \infty}\mathbb{V}^{\beta_1,z}(\underbrace{\{\omega \in C([0, T]; \R^d): \ |\mathcal{Q}^n\omega-\beta_1T|\geq \varepsilon\}}_{=:A^n})=0.
    \]
    In particular, there exists a subsequence $(n_k)_{k\in \mathbb{N}}$ such that $\mathbb{V}^{\beta_1,z}(A^{n_k})\leq 2^{-k}$. Defining
    $A:=\bigcap_{i=1}^\infty \bigcup_{j=i}^\infty A^{n_j}$,
    Borel-Cantelli lemma yields $\mathbb{V}^{\beta_1,z}(A)=0$. On the other hand, since
    \[
     \lim_{n\to \infty}\mathbb{V}^{\beta_2,z}(\{\omega \in C([0, T]; \R^d): \ |\mathcal{Q}^n\omega-\beta_2T|\geq \varepsilon\})=0,
    \]
   we obtain
    \begin{equation}
        \begin{split}
            \mathbb{V}^{\beta_2,z}(A)&=\mathbb{V}^{\beta_2,z}(\{\omega\in C([0, T]; \R^d):\ |\mathcal{Q}^{n_k}\omega-\beta_1T|\geq \varepsilon \ \text{for infinitely many}\ k\})\\
            &\geq \mathbb{V}^{\beta_2,z}(\{\omega\in C([0, T]; \R^d):\ |\mathcal{Q}^{n_k}\omega-\beta_2T|<\varepsilon  \ \text{for infinitely many}\ k\})=1.
        \end{split}
    \end{equation}
    where we used that by our choice of $\varepsilon>0$, the intervals $[\beta_1T-\varepsilon, \beta_1T+\varepsilon]$ and $[\beta_2T-\varepsilon, \beta_2T+\varepsilon]$ are disjoint.
\end{proof}

Since $\beta = 0$ in \eqref{eq:HE-beta} formally corresponds to the pure-drift case investigated above, it is natural to ask whether $\mathbb{V}^{\beta,z} \to \mathbb{V}^z$ as $\beta\to 0$, where $\mathbb{V}^z$ denotes the path law constructed in \Cref{prop:Mc-heateq}. The answer is positive, as the following lemma shows.
\begin{lemma}\label{lemlem}
    Let $z\in \R^d$, $\beta\in (0, 1)$, $X^{\beta, z}$ be the stochastic process from \eqref{Xbeta}, and let $X^{0, z}_t:=z+\eta \sqrt{t}$, where $\eta$ is an $\R^d$-valued random variable with $\mathcal{L}(\eta) =  \mathcal{N}(0, 1_d)$. Then $X^{\beta,z}\in C^{1/2-}$ almost surely, and $\lim_{\beta\to 0}X^{\beta, z}=X^{0, z}$ in distribution on $[0,T]$ for all $T>0.$
    \label{convergence beta0}
\end{lemma}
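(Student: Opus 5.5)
The plan is to work directly with the explicit Gaussian representation \eqref{Xbeta}, i.e.\ with $X^{\beta,z}_t - z = \sqrt{\beta}\, t^{\frac{1-\beta}{2}} \int_0^t s^{-\frac{1-\beta}{2}}\,dW_s$. Both $X^{\beta,z}$ and $X^{0,z}$ are centered (after subtracting $z$) continuous Gaussian processes with independent coordinates. Since $X^{\beta,z}$ is Gaussian, Hölder regularity and convergence in law both reduce to statements about covariances. Throughout, fix $\beta\in(0,1)$ and a coordinate.

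\emph{Covariance computation.} By Itô's isometry, for $0\le s\le t$,
\[
\E\big[(X^{\beta,z}_s-z)(X^{\beta,z}_t-z)\big] = \beta (st)^{\frac{1-\beta}{2}}\int_0^s r^{\beta-1}\,dr = s^{\frac{1+\beta}{2}} t^{\frac{1-\beta}{2}}.
\]
As $\beta\to0$ this tends to $\sqrt{st}$, which is exactly the covariance of $X^{0,z}$, since $\E[\eta^2]\sqrt{s}\sqrt{t}=\sqrt{st}$. Convergence of covariances of centered Gaussian vectors gives convergence of all finite-dimensional distributions.

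\emph{Regularity and tightness.} I would bound $\E|X^{\beta,z}_t-X^{\beta,z}_s|^2 = t - 2 s^{\frac{1+\beta}{2}}t^{\frac{1-\beta}{2}} + s$ by $C|t-s|$, uniformly in $\beta\in(0,1)$ and $s,t\in[0,T]$. Write $s=\lambda t$ with $\lambda\in[0,1]$. The quantity becomes $t\,(1+\lambda-2\lambda^{\frac{1+\beta}{2}})$, so it suffices to show $1+\lambda - 2\lambda^{\frac{1+\beta}{2}} \le 2(1-\lambda)$. This holds because $\lambda^{\frac{1+\beta}{2}}\ge\lambda$ for $\lambda\in[0,1]$, giving $1+\lambda-2\lambda^{\frac{1+\beta}{2}}\le 1-\lambda$. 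Gaussian hypercontractivity then yields $\E|X^{\beta,z}_t-X^{\beta,z}_s|^{2p}\le C_p|t-s|^p$ uniformly in $\beta$ and for every $p\ge1$. Kolmogorov's continuity theorem gives a version that is $\gamma$-Hölder for every $\gamma<\tfrac12$. Since $X^{\beta,z}$ is already continuous (the stochastic integral is continuous for $t>0$, and at $t=0$ the variance $t\to0$), this version coincides with $X^{\beta,z}$, so $X^{\beta,z}\in C^{1/2-}$ a.s.

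\emph{Conclusion.} The same uniform moment bound, together with $X^{\beta,z}_0=z$, gives tightness in $C([0,T];\R^d)$ by the Kolmogorov--Chentsov tightness criterion. Combined with finite-dimensional convergence, this yields $X^{\beta,z}\to X^{0,z}$ in distribution on $[0,T]$.

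The main obstacle is obtaining the increment bound uniformly in $\beta$ near $t=0$, where the integrand $s^{-\frac{1-\beta}{2}}$ is singular. The scaling substitution $s=\lambda t$ above handles this. I would also double-check continuity of the stochastic integral at $t=0$, which follows from the variance computation together with Kolmogorov.
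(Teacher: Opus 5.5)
Your proposal is correct and takes essentially the same route as the paper. Both arguments get a uniform-in-$\beta$ bound $\mathbb{E}|X^{\beta,z}_t-X^{\beta,z}_s|^2\le |t-s|$, upgrade it through Gaussian moments to Kolmogorov regularity and tightness, and identify the limit from the convergence of the covariances $s^{\frac{1+\beta}{2}}t^{\frac{1-\beta}{2}}\to\sqrt{st}$. The only cosmetic difference is that you read the increment variance off the covariance function with the substitution $s=\lambda t$, whereas the paper splits the increment into two independent stochastic integrals.
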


\begin{proof}
   Let $T>0$ and $z\in \R^d$. We start by showing that the laws of $\{X^{\beta, z}\}_{\beta\in (0, 1)}$ is tight on $C([0, T]; \R^d)$. For $0\leq s\leq t\leq T$, one has
    \[
    X^{\beta, z}_t-X^{\beta, z}_s=\sqrt{\beta}\int_0^s\left(\left(\frac{t}{r} \right)^{\frac{1-\beta}{2}}-\left(\frac{s}{r} \right)^{\frac{1-\beta}{2}}\right)dW_r+\sqrt{\beta}\int_s^t\left(\frac{t}{r} \right)^{\frac{1-\beta}{2}}dW_r=:I^1+I^2.
    \]
    Note that $I^1$ and $I^2$ are independent Gaussian random variables, thus $X^{\beta,z}_t-X^{\beta,z}_s$ is Gaussian with variance $\mbox{Var}(X^{\beta,z}_t-X^{\beta,z}_s)=\mbox{Var}(I^1)+\mbox{Var}(I^2)$. By It\^{o} isometry
    \[
   \mbox{Var}(I^1)=s^\beta(t^{(1-\beta)/2}-s^{(1-\beta)/2})^2, \qquad  \mbox{Var}(I^2)=t^{1-\beta}(t^\beta-s^\beta).
    \]
Note that we have 
    \begin{equation}
        \begin{split}
            0&\leq s^\beta(t^{(1-\beta)/2}-s^{(1-\beta)/2})^2 +t^{1-\beta}(t^\beta-s^\beta)
            =s^\beta t^{1-\beta}-2s^{(1+\beta)/2}t^{(1-\beta)/2}+s+t-t^{1-\beta}s^\beta\\
            &=(t-s)+2s-2s^{(1+\beta)/2}t^{(1-\beta)/2}
            =(t-s)+2s^{(1+\beta)/2}(s^{(1-\beta)/2}-t^{(1-\beta)/2})
            \leq (t-s).
        \end{split}
    \end{equation}
    Since $X^{\beta, z}_t-X^{\beta, z}_s$ is Gaussian, we infer, for $p\in \mathbb{N}$,
    \[
    \mathbb{E}[|X^{\beta, z}_t-X_s^{\beta, z}|^{2p}]=(p-1)!! \mbox{Var}(X^{\beta,z}_t-X^{\beta,z}_s)^p\leq (p-1)!! |t-s|^p,
    \]
    where $!!$ denotes the usual double factorial.
    Thus, by the Kolmogorov continuity theorem, the laws of $\{X^{\beta, z}\}_{\beta\in (0,1)}$ on $C([0, T]; \R^d)$ are tight and $X^{\beta, z}\in C^{1/2-}$ almost surely, yielding the first claim. By the Prokhorov theorem, there exists a sequence $(\beta^n)_{n\in \mathbb{N}}$  in $(0,1)$ with $\beta^n\to 0$ and a measure $\mathbb{V}^{0,z}\in \Pscr(C([0, T];\R^d))$ such that $\mathbb{V}^{\beta_n,z} \to \mathbb{V}^{0,z}$ weakly. In particular, the corresponding characteristic functions of $k$-time marginals, i.e.
    \[
    \phi_{(X^{\beta^n,z}_{t_1}, \ldots , X^{\beta^n,z}_{t_k})}(\xi)=\exp\left(i\sum_{i=1}^k x_i\xi_i\right)\exp\left(-\frac{1}{2}\xi^T \Sigma_n \xi\right),
    \]
    converge,
    where $\Sigma_n$ denotes the covariance matrix of the Gaussian vector $(X^{\beta^n,z}_{t_1}, \ldots,  X^{\beta^n,z}_{t_k})$. Note that we can calculate its elements explicitly: if $s\leq t$, then
    \[
    \mathbb{E}[(X^{\beta^n, z}_s-z)(X^{\beta^n, z}_t-z)]=t^\frac{1-\beta^n}{2}s^{\frac{1+\beta^n}{2}}.
    \]
    Taking the limit, we obtain
       \[
    \lim_{n\to \infty}\mathbb{E}[(X^{\beta^n, z}_s-z)(X^{\beta^n, z}_t-z)]=t^{1/2}s^{1/2},
    \]
    thus we conclude
    \[
    \lim_{n\to \infty}\phi_{(X^{\beta^n,z}_{t_1}, \ldots, X^{\beta^n,z}_{t_k})}(\xi)=\exp(i\sum_{j=1}^k x_j\xi_j)\exp(-\frac{1}{2}\xi^T \Sigma \xi),
    \]
    where $(\Sigma)_{ij}=(t_i\vee t_j)^{1/2}(t_i\wedge t_j)^{1/2}$. On the other hand, if $X^{0,z}=z+\eta \sqrt{t}$, we can also calculate the characteristic function of the Gaussian vector $(X^{0,z}_{t_1}, \ldots, X^{0,z}_{t_k})$ as
    \[
    \phi_{(X^0_{t_1}, \ldots, X^0_{t_k})}(\xi)=\exp(i\sum_{j=1}^k x_j\xi_j)\exp(-\frac{1}{2}\xi^T \Sigma \xi).
    \]
Since the characteristic functions coincide, we conclude that the distribution of $(X^{0,z}_{t_1}, \ldots, X^{0,z}_{t_k})$ equals $\mathbb{V}^{0,z}\circ (\pi_{t_1}, \ldots, \pi_{t_k})^{-1}$. Since $k\in \mathbb{N}$ was arbitrary, we conclude $\mathbb{V}^{0,z}=\mathcal{L}(X^{0,z})$. Finally, note that the above argument is valid for any sequence $(\beta^n)_n$ converging to zero, yielding the claim.
\end{proof}

\begin{remark}[Heuristics on infinite diffusion asymptotics $\beta \to \infty$]

\label{beta infty}
    Recall that for $s\leq t$
    \[
\mathbb{E}[(X^{\beta, z}_t-z)(X^{\beta, z}_s-z)]=\sqrt{st}{\left(s/t\right)}^{\beta/2}.
\]
Thus, by formally taking the limit $\beta \to \infty$, the expected limiting process $X^{\infty, z}$ as $\beta \to \infty$ is a Gaussian with mean $z$ and covariance
\[
\mathbb{E}[(X^{\infty,z}_t-z)(X^{\infty,z}_s-z)]=t\delta_{st}.
\]
The unique such Gaussian process is
\[X^{\infty, z}_t:=z+\eta_t\sqrt{t},\] where $(\eta_t)_{t\geq 0}$ is a family of i.i.d. $\mathcal{N}(0,1_d)-$distributed $\R^d$-valued random variables. Hence, one might expect $\lim_{\beta \to \infty} X^{\beta, z}=X^{\infty, z}$ in distribution. However, establishing this rigorously appears challenging, as one cannot show tightness of $\mathcal{L}(X^{\beta, z})_{\beta\geq 1}$ on $C([0, T]; \R^d)$. Indeed, note that $X^{\infty, z}$ does not even have c\`adl\`agtrajectories.
\end{remark}

    In conclusion, we have constructed an uncountable number of pairwise mutually singular nonlinear Markov cores $\{\mathbb{V}^{\beta,z}\}_{z\in \R^d}$, $\beta \in (0,\infty),$ for $\{u^z\}_{z\in \R^d}$, each consisting of solution path laws to its associated MV-SDE \eqref{eq:SDE-beta}, and $\beta \mapsto \mathbb{V}^{\beta,z}$ is continuous with respect to the weak topology on $\Pscr(\Cscr)$. This emphasizes that nonlinear Markov processes in the sense of \Cref{def:NL-MP} to the classical heat kernel are far from unique.

\subsection{Further interpretations}\label{subsect:further}
Finally, we provide yet another interpretation of the heat equation as a nonlinear FPE and study its associated MV-SDE. We do so for two reasons: First, by providing an example not covered by the previous interpolation, we stress that the non-uniqueness we discuss here extends far beyond this family. Second, we show that finding interpretations of the heat equation as a nonlinear FPE may lead to corresponding MV-SDEs which are non-trivial to solve by standard methods. Thus, it is possible to use this approach to come up with examples of SDEs that admit weak solutions, while appearing quite pathological at first sight. 

Let $0 < p<1+\frac{1}{d}$ and $c\geq0$ (or $p=1$ and $c\geq -1/2$). Here we show that it is possible to recast the heat equation \eqref{eq:heat} as 
\begin{equation}\label{eq:PDE_heat_further}
    \partial_t u = \Delta \big( a(x,u)  u \big) - \nabla \cdot \big( b (x,  u ) u\big)
\end{equation}
with coefficients $(a,b) : \R^d \times \Pscr_* \to \R_+ \times \R^d $ given by
\begin{equation}\label{eq:coefficients_heat_further}
a(x,u) \coloneqq \frac{1}{2} + c \, u^{p-1},\qquad  b(x,u) \coloneqq c \frac{\nabla u^p (x) }{u(x)},
\end{equation}
with $b(x,u) \coloneqq 0 $ on $\{u=0\}$, defined on $\Pscr_* \coloneqq \{ u \in \Pscr_{\textup{ac}} : u^p \in W^{1,1}_{\textup{loc}}(\R^d) \} $. As will be shown in the next proposition, this interpretation can be obtained from \Cref{lem:many-PDEs_new} starting from $(a = \frac{1}{2}, b =0)$ and the choice $f = c u^{p} $. Observe that \eqref{eq:coefficients_heat_further} generalizes the $\beta$-interpolated interpretation from \Cref{subsec:heat_interpolation}, for which $p=1$ and $c = (\beta-1)/2$, $\beta \in [0,\infty)$.
The associated MV-SDE is
\begin{equation}  \label{nontriv MVSDE_heat}
\begin{dcases}
  dX_t= c \frac{\nabla u^p (t,X_t) }{u(t,X_t)} dt+\sqrt{1+2 \, c \, u (t, X_t)^{p-1}}dW_t,  \\
  \mathcal{L}(X_t) = u(t,x)dx.
  \end{dcases}
\end{equation}
Specifically, for $u = u^z$, using \eqref{eq:grad_heat}, this equation turns into
\begin{equation}  \label{nontriv SDE heat}
\begin{dcases}
  dX_t=-cp\frac{X_t-z}{t}u^z(t, X_t)^{p-1}dt+\sqrt{1+2\, c\, u^z(t, X_t)^{p-1}}dW_t,  \\
  \mathcal{L}(X_t) = u^z(t,x)dx.
  \end{dcases}
\end{equation}

\begin{remark}

   Let us consider the particular case $p<1$. In this case, the diffusion coefficient in \eqref{nontriv SDE heat} is of superexponential growth, since
   \[\label{eq:llabel}
  \sqrt{1+2c(u^z(t, x))^{p-1}}=\sqrt{1+2\frac{c}{(2\pi t)^{d(p-1)/2}}\exp\left(\frac{(1-p)|x-z|^2}{2t}\right)},
   \]
   with $1-p >0$.
   At the same time, the drift is mean-reverting with a superexponential mean reversion level, since 
   \[
  -cp\frac{x-z}{t}u^z(t,x)^{p-1}= -(x-z)\frac{cp}{t}\frac{1}{(2\pi t)^{d(p-1)/2}}\exp\left(\frac{(1-p)|x-z|^2}{2t}\right).
   \]
   Hence, solving \eqref{eq:llabel} directly by probabilistic methods appears challenging. Nevertheless, by our approach, the existence of weak solutions for every initial datum $\delta_z$ is obtained. Heuristically, the superexponential diffusion and the superexponential mean reverting drift are suitably balanced. In fact, we even prove weak uniqueness for \eqref{eq:llabel} and, therefore, that its unique solution path laws form a nonlinear Markov core for $\{u^z\}_{z\in \R^d}$, as the following result shows.
\end{remark} 

\begin{proposition}\label{prop9}
    Let $0 < p<1+\frac{1}{d}$ and $c\geq0$ (or $p=1$ and $c\geq -1/2$). For every $z \in \R^d$, there exists a unique weak solution $X^{p,z}$ to \eqref{nontriv MVSDE_heat} with one-dimensional time marginals given by the heat kernel \eqref{eq:fundamental_solution_heat}, i.e., $\mathcal{L}(X^{p,z}_t) = u^z (t,x) dx$, $t >0$, and $\mathcal{L}(X^{p,z}_0) = \delta_z$.
    \\
    Moreover, the corresponding family of solution path laws $\{\mathbb{V}^{p,z}\}_{z \in \R^d}$, $\mathbb{V}^{p,z} \coloneqq \mathcal{L} (X^{p,z})$,  form a nonlinear Markov core for $\{u^z\}_{z\in \R^d}$, which is uniquely determined by $\{u^z\}_{z\in \R^d}$ and \eqref{nontriv MVSDE_heat}.
\end{proposition}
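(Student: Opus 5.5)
The plan follows the scheme of \Cref{sec:scheme}: Step (1) via \Cref{lem:many-PDEs_new}, Step (2) via \Cref{cor:new}, and Step (3) via \Cref{crl:nl_markov}.

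\emph{Steps (1) and (2).} Start from the linear interpretation $a=\frac12$, $b=0$, $\Pscr_*=\Pscr$, and take $f(x,u)=c\,u(x)^p$ with $\Dscr(f)=\Pscr_{\textup{ac}}$. Then \eqref{eq:f_lemma_new_coeff} gives exactly the coefficients \eqref{eq:coefficients_heat_further}. Condition \eqref{eq:f_lemma_assumption_2} is $cu^p+\frac12u\ge0$, which is trivial for $c\ge0$; for $p=1$ it is exactly $c\ge-\frac12$. For \eqref{eq:assumption_f_strong} we use \eqref{eq:grad_heat}:
\[
\nabla (u^z)^p=-p\,\frac{x-z}{t}\,(u^z)^p .
\]
By self-similarity \eqref{eq:uz_self_similar_heat},
\[
\int (u^z)^p\,dx=C\,t^{-d(p-1)/2}, \qquad \int |\nabla (u^z)^p|\,dx=C\,t^{-d(p-1)/2-1/2}.
\]
Both are locally integrable at $t=0$ iff $d(p-1)/2+1/2<1$, i.e.\ $p<1+\frac1d$. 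This is precisely where the hypothesis on $p$ enters, and it is also where $p>0$ is needed for the Gaussian integrals to be finite. Hence \Cref{cor:new} yields a weak solution $X^{p,z}$ with heat kernel marginals and $X^{p,z}_0=z$.

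\emph{Step (3).} Injectivity of $(t,z)\mapsto u^z(t,x)dx$ is clear, since the mean is $z$ and the covariance is $t1_d$. By \Cref{crl:nl_markov} it remains to prove, for fixed $s>0$, uniqueness of the linearized equation
\[
\partial_t v=\Delta\big(a(t,x)v\big)-\nabla\cdot\big(b(t,x)v\big),\qquad v(0)=u^z(s,\cdot),
\]
with
\[
a(t,x)=\tfrac12+c\,u^z(s+t,x)^{p-1},\qquad b(t,x)=-cp\,\frac{x-z}{s+t}\,u^z(s+t,x)^{p-1},
\]
within the class $0\le v\le Cu^z(s+\cdot,\cdot)$. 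For $s>0$ these coefficients are smooth in $x$, locally bounded and locally Lipschitz on $\R_+\times\R^d$. The diffusion is uniformly nondegenerate, $a\ge\frac12$ when $c\ge0$; when $p=1$, $a=\frac12+c$ is a constant, positive for $c>-\frac12$ and zero for $c=-\frac12$, in which case we are in the pure-drift situation with a linear vector field that is Lipschitz uniformly in $t$. If $p=1$, the drift is linear and uniqueness is classical, as in \Cref{prop:HE-infin-many}.

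\emph{Main obstacle.} The real difficulty is the case $p<1$, where the coefficients grow superexponentially, so global Lipschitz or linear-growth theory fails. My plan is to use the a priori bound $v\le Cu^z(s+t,\cdot)$ together with a uniqueness criterion for linear FPEs with locally regular, nondegenerate coefficients under an integrability condition on the solution class, such as \cite[Theorem 9.4.3]{BogachevKrylovRoecknerShaposhnikov2015} or its variants. The condition to check is
\[
\int_0^T\!\!\int \frac{|a(t,x)|+|b(t,x)|\,(1+|x|)}{1+|x|^2}\,v\,dx\,dt<\infty .
\]
For $v\le Cu^z(s+t)$, the integrand is bounded by $C(1+|x|)\,\big(u^z+(u^z)^p\big)$, which is integrable because $p>0$. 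If that theorem does not apply verbatim, I will instead argue by duality: solve the backward Kolmogorov equation with smooth coefficients on large balls, and control the boundary terms using the Gaussian decay of $v$ through the integrability above. With uniqueness established, \Cref{lem:extrem-unique-equiv} and \Cref{thm:Markov-construction} give weak uniqueness of $X^{p,z}$ and the nonlinear Markov core property.
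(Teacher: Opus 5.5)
Your proposal is correct and follows the paper's proof essentially step for step. It uses the same choice $f=c\,u^p$ in \Cref{lem:many-PDEs_new}, the same Gaussian computation showing that $\nabla (u^z)^p$ is in $L^1$ in time exactly when $p<1+\frac1d$, and the same appeal to \cite[Theorem 9.4.3]{BogachevKrylovRoecknerShaposhnikov2015}, via the bound $v\le Cu^z(s+\cdot)$, for the restricted linearized uniqueness in \Cref{crl:nl_markov}. Your separate treatment of $p=1$, $c\in[-\frac12,0)$ is slightly more careful than the paper, which asserts $a^z\ge\frac12$ throughout: at $c=-\frac12$ the linearized equation is first order, so the Lipschitz continuity-equation uniqueness you invoke is what is needed, not the nondegenerate result.
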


\begin{proof}
We first verify that $u^z$ also solves \eqref{eq:PDE_heat_further} with \eqref{eq:coefficients_heat_further}. We apply \Cref{lem:many-PDEs_new} starting from $(a = \frac{1}{2}, b =0)$ and the choice $f = c u^{p} $, $\mathcal{D} (f) = \Pscr_{\textup{ac}}$. This gives \eqref{eq:coefficients_heat_further} and the domain $\Pscr_* \coloneqq \{ u \in \Pscr_{\textup{ac}} : u^p \in W^{1,1}_{\textup{loc}}(\R^d) \}$. Second, note that the diffusion coefficient is indeed non-negative in both cases. If $0<p<1+\frac{1}{d}$ and $c\geq 0$, then $ 1+2c\,u^{p-1}\geq 1>0.$
If $p=1$, then $
1+2c\,u^{p-1}=1+2c\geq 0 $, 
provided $c\geq -\tfrac{1}{2}$. Next, we show that $\big[(t,x)\mapsto f(x,u^z(t,\cdot))\big] \in L^1_{\textup{loc}}(\R_+;W^{1,1}(\R^d))$. Using the explicit form of the heat kernel \eqref{eq:fundamental_solution_heat}, we have
\begin{align}
    \int_0^T\int_{\R^d}|f(x,u^z(t,\cdot))|dxdt
    &= |c|\int_0^T\int_{\R^d} u^z(t,x)^p dxdt  \notag\\
    &= |c|\,p^{-\frac d2}(2\pi)^{-\frac{d(p-1)}{2}}
    \int_0^T t^{-\frac{d(p-1)}{2}}dt <+\infty .
\end{align}
Similarly, using \eqref{eq:grad_heat}, we obtain
\begin{align}
    \int_0^T\int_{\R^d}|\nabla f(x,u^z(t,\cdot))|dxdt
    &= |c|p\int_0^T\int_{\R^d} \frac{|x-z|}{t}u^z(t,x)^p dxdt  \notag\\
    &= |c|p(2\pi)^{-\frac{dp}{2}}S_{d-1}
    \int_0^T t^{-1-\frac{dp}{2}}
    \int_0^\infty r^d e^{-\frac{p r^2}{2t}}drdt \notag\\
    &= |c|p(2\pi)^{-\frac{dp}{2}}\frac{S_{d-1}}{2}
    \left(\frac{2}{p}\right)^{\frac{d+1}{2}}
    \Gamma(\frac{d+1}{2})
    \int_0^T t^{-\frac{d(p-1)+1}{2}}dt <+\infty .
\end{align}
Here, $\Gamma$ denotes the Gamma function and $S_{d-1}$ denotes the surface area of the unit sphere in $\R^d$. 
For both cases $p<1+\frac1d$ and $p =1$, the final time integral is finite.
Again, we use Theorem \ref{thm:Markov-construction} and Lemma \ref{lem:extrem-unique-equiv}. To this end, we consider the linear equation ($\mu^z_{s+}$-linFPE) (here, $\mu^z_{s+} = (u^z(s+t,x)dx)_{t\geq 0}$) with coefficients
    $$b^z(t,x) \coloneqq -\frac{cp}{s+t}\frac{x-z}{(2\pi (s+t)^{(p-1)d/2}}\exp\left(\frac{(1-p)|x-z|^2}{2(s+t)}\right)$$
    and
    $$a^z(t,x) \coloneqq \frac 1 2+\frac{c}{(2\pi (t+s))^{(p-1)d/2}}\exp\left(\frac{(1-p)|x-z|^2}{2(t+s)}\right).$$
    These coefficients depend locally Lipschitz continuously on $x$, with local Lipschitz constants independent from $t\geq 0$ (since $s>0$) and, moreover, $a^z\geq \frac 1 2$. Hence the required uniqueness for this FPE in order to apply Theorem \ref{thm:Markov-construction} and Lemma \ref{lem:extrem-unique-equiv} follows from \cite[Theorem 9.4.3]{BogachevKrylovRoecknerShaposhnikov2015}, since clearly, due to $p>0$,
    $$[(t,x)\mapsto a^z(t,x) u^z(s+t,x)], [(t,x)\mapsto b^z(t,x)u^z(s+t,x)] \in L^1((0,T)\times \R^d;dxdt),\quad \forall T>0.$$
\end{proof}

\section{Porous medium equation}\label{sec:porous_medium}
In this section, we consider the porous medium equation (PME)
    \begin{align}\label{eq:porous_medium}
        \partial_t u = \Delta(u^m), \qquad (t,x)\in   (0, +\infty) \times \mathbb{R}^d,
    \end{align}
where $m \in (1,\infty)$ and $d \in \N$.
The family $\{ u^z\}_{z \in \mathbb{R}^d}$ of explicit \emph{Barenblatt solutions} \cite{Barenblatt1952porous,ZeldovichKompaneets1950} to \eqref{eq:porous_medium} with initial condition $u^z(t,\cdot)\,dx \to \delta_z$ weakly as $t \downarrow 0$ is given by
\begin{equation}\label{eq:barenblatt_solution_porous}
u^z(t,x):=t^{-k}\Big(C - q \, t^{-\frac{2k}{d}}
|x-z|^{2}\Big)^{\frac{1}{m-1}}_+, \qquad (t,x) \in (0, +\infty) \times \mathbb{R}^d,
\end{equation}
where $k \coloneqq (m-1+\frac{2}{d})^{-1}$, $q:= \frac{k}{d} \tfrac{m-1}{2m}$, $y_+:=\mathrm{max}(0,y)$ for any real number $y$, and $C=C_{m,d} \in (0,\infty)$ is the unique constant such that $\lVert  u^{z}(t,\cdot) \rVert_{L^1(\mathbb{R}^d)}=1$ for all $t>0$ (a straightforward calculation shows that such a constant exists). One directly calculates that the support of $u^z(t,\cdot)$ is the closed ball centered at $z$ with radius
\begin{equation}\label{eq:support_porous}
    R(t) \coloneqq \left( \tfrac{C}{q}\right)^{\frac{1}{2}} t^{\frac{k}{d}}.
\end{equation}

\smallskip
\noindent  
\textbf{Self-similar form of $u^z$.}
    It is straightforward and useful for our subsequent computation to note that by scaling analysis of the PME, any mass-preserving, self-similar, radially symmetric solution to the PME centered at $z \in \R^d$ is of the form $t^{-k} g (t^{-\frac{k}{d}}|x-z| )$ for some radial profile $g: \R_+ \to \R_+$, where $k$ is given above.
    In particular, for the Barenblatt solution \eqref{eq:barenblatt_solution_porous} we have
    \begin{equation}\label{eq:uz_self_similar_porous}
        u^z(t,x) = t^{-k} g\big(t^{-\frac{k}{d}}|x-z|\big),
    \end{equation}
    with the radial profile $g$ given by
    \begin{equation}\label{eq:g_for_Barenblatt_porous}
        g(\xi) \coloneqq \big(C - q \,  \xi^2\big)^{\frac{1}{m-1}}_+, \quad \xi \in \R_+,
    \end{equation}
    whose derivative can be written as
    \begin{equation}\label{eq:fprime_for_Barenblatt_porous}
        g'(\xi) = - \frac{1}{m} \frac{k}{d}\, \xi\, g(\xi)^{2-m}, \quad \xi \in (0,R(1)).
    \end{equation}

\subsection{Pure-diffusion interpretation}\label{subsec:porous_medium_pure_diff}
A nonlinear Markov process corresponding to the pure-diffusion interpretation of the PME was constructed in \cite[Section 4.2]{RehmeierRoeckner2025NonlinearMarkov}. However, the uniqueness of this process was partially left open. We close this gap in this subsection. 
Consider the pure-diffusion interpretation
\begin{equation}\label{eq:PME-diffusion}
    \partial_t u = \Delta (u^{m-1} u),
\end{equation}
i.e., coefficients  $(a,b) : \R^d \times \Pscr_{\textup{ac}} \to \R_+ \times \R^d $ given by
\begin{equation}
    a(x,u) \coloneqq u^{m-1}(x),\qquad b(x,u) \coloneqq  0.
\end{equation}
The corresponding MV-SDE is
\begin{equation}\label{eq:DDSDE-PME}
    \begin{dcases}
    dX_t = \sqrt{2}u^{\frac{m-1}{2}}(t,X_t) dW_t, \\ \mathcal{L}(X_t) = u(t,x)dx,\,\,\,t >0.
     \end{dcases}
\end{equation}
Observe that for $u=u^z$, by \eqref{eq:uz_self_similar_porous} and \eqref{eq:g_for_Barenblatt_porous}, one obtains
\begin{equation}\label{eq:az_porous_self-similar}
    a(x,u^z(t,\cdot)) = t^{-\alpha}\, h \big(t^{-\frac{k}{d}}|x-z|\big),
\end{equation}
where $\alpha \coloneqq k (m-1) \in (0,1)$ and the radial profile $h:\R_+\to\R_+$ is given by
\begin{equation}\label{eq:h_for_az_porous_self-similar}
    h(\xi)\coloneqq g(\xi)^{m-1}
    =\big(C-q\,\xi^2\big)_+.
\end{equation}

In \cite{RehmeierRoeckner2025NonlinearMarkov}, a nonlinear Markov process $\{\mathbb{P}^\zeta\}_{\zeta \in \Pscr_0}$, $\mathcal{P}_0 = \Pscr$, was constructed, consisting of solution path laws to \eqref{eq:DDSDE-PME} with $\mathbb{P}^\zeta_0 = \zeta$ and $\mathbb{P}^\zeta_t = u^\zeta(t,x)dx$, where $u^\zeta = (u^\zeta(t,\cdot))_{t> 0}$, $u^\zeta(t, \cdot) \in \Pscr_{\textup{ac}}$, is a solution to \eqref{eq:porous_medium} with initial condition $\zeta$ (that such solutions exist for arbitrary initial condition $\zeta\in \Pscr$ was proven in \cite{NLFPK-DDSDE5}), and $u^\zeta = u^z$ as in \eqref{eq:barenblatt_solution_porous} for $\zeta = \delta_z$. However, it was only proven that $\mathbb{P}^\zeta$ is the \emph{unique} solution path law to \eqref{eq:DDSDE-PME} with marginal densities $u^\zeta(t,\cdot)$ if $\zeta \in \Pscr_{\textup{ac}}\cap L^\infty(\R^d)$, so in particular the question of uniqueness of solutions to \eqref{eq:DDSDE-PME} with initial data $\delta_z$ and marginal densities $u^z(t,\cdot)$ was left open. We close this gap by the following result.

\begin{proposition}\label{prop1}
    Let $m>1$. For every $z\in \R^d$,
    there exists a unique weak solution $X^{1,z}$
    to the MV-SDE \eqref{eq:DDSDE-PME} with one-dimensional time marginals given by the Barenblatt solution \eqref{eq:barenblatt_solution_porous}, i.e., $\mathcal{L}(X^{1,z}_t) = u^z (t,x) dx$, $t >0$, and $\mathcal{L}(X^{1,z}_0) = \delta_z$. 
    \\
    Furthermore, the corresponding family of solution path laws $\{\mathbb{V}^{1,z}\}_{z\in \R^d}$, $\mathbb{V}^{1,z} := \mathcal{L}(X^{1,z})$, is a nonlinear Markov core for $\{u^z\}_{z\in \R^d}$. In particular, this nonlinear Markov core is uniquely determined by $\{u^z\}_{z\in \R^d}$ and \eqref{eq:DDSDE-PME}. 
    \\
    Finally, for every $\zeta = u^z(s,x)dx$, $s>0$, the path law $\mathbb{V}^{1,\zeta}$ from the nonlinear Markov process induced by $\{\mathbb{V}^{1,z}\}_{z\in \R^d}$ coincides with the path law $\mathbb{P}^\zeta$ constructed in \cite[Section 4.2 (iv)]{RehmeierRoeckner2025NonlinearMarkov}.
\end{proposition}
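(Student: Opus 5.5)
The plan is to apply \Cref{crl:nl_markov} (i.e.\ \Cref{thm:Markov-construction} combined with \Cref{lem:extrem-unique-equiv}) to the Barenblatt family $\{u^z\}_{z\in\R^d}$ with coefficients $a(x,u)=u^{m-1}(x)$, $b=0$. The preliminary checks are routine. First, $u^z$ is a weak solution of \eqref{eq:porous_medium}, hence a distributional solution of \eqref{eq:PME-diffusion} by \Cref{rem:PDE-FPE-equiv}. Second, the integrability condition \eqref{eq:integrability_our_paper} holds since, by \eqref{eq:az_porous_self-similar}, $a(x,u^z(t,\cdot))\le C t^{-\alpha}$ with $\alpha\in(0,1)$, and this is integrable near $t=0$. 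Third, $(t,z)\mapsto u^z(t,x)dx$ is injective: the support radius $R(t)$ is strictly increasing in $t$, and the support is centered at $z$.

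The main step is restricted linearized uniqueness. Fix $s>0$ and $z$. We must show that $v(t,\cdot)=u^z(s+t,\cdot)$ is the only solution of
\[
\partial_t v=\Delta\big(a^z_s(t,x)v\big),\qquad a^z_s(t,x):=(s+t)^{-\alpha}h\big((s+t)^{-k/d}|x-z|\big),\quad v(0)=u^z(s,\cdot),
\]
among nonnegative $v$ with $v\le C u^z(s+\cdot,\cdot)$. The coefficient is bounded and Lipschitz in $x$ (since $h(\xi)=(C-q\xi^2)_+$), but it degenerates on the free boundary $|x-z|=R(s+t)$. Any admissible $v$ is supported in the moving ball, so classical nondegenerate results such as \cite[Theorem 9.4.3]{BogachevKrylovRoecknerShaposhnikov2015} do not apply directly. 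I would try the following options in order.

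\emph{(a)} Invoke the uniqueness result for linearized equations used in \cite[Section 4.2]{RehmeierRoeckner2025NonlinearMarkov}. That result yields uniqueness there for initial data $\zeta\in\Pscr_{\textup{ac}}\cap L^\infty$, and $u^z(s,\cdot)$ is such a datum with bounded linearized coefficient $(u^\zeta)^{m-1}$. The difference of two solutions in the class \eqref{eq:set2} is bounded, so an $L^1$/$H^{-1}$ duality argument in the spirit of Brezis--Crandall should apply.

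\emph{(b)} Alternatively, run a direct duality argument. Since $\sqrt{a^z_s}$ is Lipschitz in $x$ (indeed $\sqrt{h}$ is Lipschitz up to the boundary only in a weaker sense, so this must be checked), the linear SDE $dY=\sqrt{2a^z_s(t,Y)}\,dW$ has pathwise unique solutions. Superposition (\Cref{thm:SP-pr}) then lifts any admissible $v$ to a solution of this SDE, and uniqueness in law of the SDE gives $v=u^z(s+\cdot)$. If $\sqrt{h}$ fails to be Lipschitz at the boundary, I would instead use the Yamada--Watanabe-type $1/2$-Hölder criterion, which works only in $d=1$. For $d\ge 2$ I would fall back on (a).

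I expect this degenerate-boundary uniqueness to be the main obstacle.

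Once uniqueness holds, \Cref{crl:nl_markov} gives existence and uniqueness of $X^{1,z}$ with Dirac initial datum, the nonlinear Markov core property, and uniqueness of the core determined by $\{u^z\}$ and \eqref{eq:DDSDE-PME}. For the final claim, take $\zeta=u^z(s,x)dx\in\Pscr_{\textup{ac}}\cap L^\infty$. By \cite{RehmeierRoeckner2025NonlinearMarkov}, $\mathbb{P}^\zeta$ is the unique solution path law to \eqref{eq:DDSDE-PME} with marginals $u^\zeta(t,\cdot)$. By uniqueness of PME solutions with this datum, $u^\zeta(t,\cdot)=u^z(s+t,\cdot)$. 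The induced $\mathbb{V}^{1,\zeta}=\mathbb{V}^{1,z}\circ\Pi_s^{-1}$ is also a solution path law with these marginals, so $\mathbb{V}^{1,\zeta}=\mathbb{P}^\zeta$.
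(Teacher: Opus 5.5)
Your proposal is correct, and its skeleton matches the paper's. You verify the hypotheses of \Cref{crl:nl_markov} for $a=u^{m-1}$, $b=0$. You then obtain the final assertion from the weak uniqueness in \cite{RehmeierRoeckner2025NonlinearMarkov} for data in $\Pscr_{\textup{ac}}\cap L^\infty(\R^d)$, together with $u^\zeta=u^z(s+\cdot,\cdot)$. Your integrability check via $h\le C$ is a shortcut for the paper's exact computation of $\int_0^T\!\int (u^z)^m\,dx\,dt$. You also check injectivity, which the paper leaves implicit.

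The real difference is in the restricted linearized uniqueness.

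\textbf{The paper's route.} The paper settles this in one sentence. It notes that $x\mapsto u^z(s+t,x)^{m-1}$ is globally Lipschitz with a constant independent of $t\ge 0$ (because $s>0$). It then asserts that this gives uniqueness among \emph{all} measure-valued solutions of the linearized equation, which is more than \eqref{eq:set2} requires.

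\textbf{Your route.} Your option (a) uses that every element of \eqref{eq:set2} is a bounded, compactly supported density. So the difference $w$ of two admissible solutions lies in $L^\infty((0,T)\times\R^d)\cap L^\infty(0,T;L^1(\R^d))$, and the Brezis--Crandall estimate closes. Set $\Phi_\varepsilon(t):=\langle w(t),(\varepsilon-\Delta)^{-1}w(t)\rangle$. Then $\Phi_\varepsilon'=-2\langle aw,w\rangle+2\varepsilon\langle aw,(\varepsilon-\Delta)^{-1}w\rangle\le \varepsilon\|a\|_\infty\Phi_\varepsilon$. Since $\Phi_\varepsilon(0)=0$, Gronwall gives $w=0$. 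This needs only $0\le a\in L^\infty$ and no regularity of $a$ or $\sqrt a$. It works in every dimension. It is also the mechanism behind the result of \cite{RehmeierRoeckner2025NonlinearMarkov} that both you and the paper invoke in the last step.

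\textbf{What each route buys.} Your route buys robustness. As you observed, $\sqrt a$ is only $\tfrac12$-H\"{o}lder at $|x-z|=R(s+t)$. That is why the paper's one-line appeal to Lipschitz continuity of $a$ is less immediate than it looks: the standard degenerate uniqueness results ask for a Lipschitz or Sobolev square root, and Yamada--Watanabe is one-dimensional. The paper's route, if granted, buys unrestricted uniqueness.

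You should therefore lead with (a), write it as a proof rather than something that ``should apply'', and drop option (b).
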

\begin{proof}
    All assertions except for the final one follow from \Cref{crl:nl_markov}. In order to apply the latter, we first verify the integrability condition \eqref{eq:integrability_our_paper}. 
    With the self-similar form \eqref{eq:uz_self_similar_porous}, using the change of variables $r\coloneqq |x-z|$ and $\xi\coloneqq t^{-\frac{k}{d}}r$, we compute
    \begin{align}
        \int_{0}^{T} \int_{\mathbb{R}^d} |a(x,u^z (t,\cdot)| u^z (t,x) dx dt
        &= \int_{0}^{T} \int_{\R^d} (u^z(t,x))^m\,dx\,dt \\
        &= S_{d-1} \int_{0}^{T} \int_0^{+\infty} t^{-km}\,
        g\big(t^{-\frac{k}{d}}r\big)^m\, r^{d-1}\,dr\,dt \\
        &= S_{d-1}\left(\int_0^T t^{-\alpha}\,dt\right)
           \left(\int_0^{R(1)} g(\xi)^m\,\xi^{d-1}\,d\xi\right)
           <+\infty, \label{eq:PME_pure_diff_integrability}
    \end{align}
    where $S_{d-1}$ denotes the surface area of the unit sphere in $\mathbb{R}^d$, and $\alpha \coloneqq k (m-1) \in (0,1)$, so that the time integral finite. The spatial integral is also finite because $g$ is bounded and compactly supported in $[0,R(1)]$.
    Second, to apply \Cref{crl:nl_markov}, it only remains to verify that, for all $s>0$ and $z\in \R^d$, $[(t,x)\mapsto u^z(s+t,x)]$ is the unique solution to 
    $$\partial_t v(t,x) = \Delta (u^z(s+t,x)^{m-1}v(t,x)),\quad v(0,x) = u^z(s,x)$$
    in the class
    $$\{v\geq 0: [t\mapsto v(t,x)dx] \in C(\R_+;\Pscr), v(t,x) \leq C u^z(s+t,x) \text{ a.s. for some }C\geq 1\}.$$
    Since 
    $$u^z(s+t,x)^{m-1} =  (t+s)^{-k}\big(C- q (t+s)^{-\frac{2k}{d}} |x-z|^2 \big)_+$$
    is globally Lipschitz continuous in $x$ with Lipschitz constant bounded from above by a constant independent from $t\geq 0$ (here we use $s>0$), solutions to the above linear FPE are even unique in the class of all distributional (measure-valued) solutions. In particular, uniqueness in the class indicated above holds.
    Thus, \Cref{crl:nl_markov} applies.
    
    The final claim follows since it was shown already in \cite{RehmeierRoeckner2025NonlinearMarkov} that solutions to \eqref{eq:DDSDE-PME} are weakly unique for prescribed marginal densities $u^\zeta(t,x)dx$ and initial condition $\zeta$ for every $\zeta \in \Pscr_{\textup{ac}}\cap L^\infty(\R^d)$. Thus, since $u^z(s,x) \in \Pscr_{\textup{ac}} \cap L^\infty(\R^d)$ and $u^\zeta(t,x)$ as constructed in \cite{RehmeierRoeckner2025NonlinearMarkov} for $\zeta = u^z(s,x)dx$ equals $(u^z(s+t,x)dx)_{t\geq 0}$, it follows that $\mathbb{P}^{u^z(s,x)dx}$ from \cite{RehmeierRoeckner2025NonlinearMarkov} and our $\mathbb{V}^{1,u^z(s,x)dx} = \mathbb{V}^{1,z} \circ \Pi_s^{-1}$ solve \eqref{eq:DDSDE-PME} are such solutions to \eqref{eq:DDSDE-PME} and are therefore equal. This concludes the proof.
\end{proof}

\subsection{Pure-drift interpretation}\label{subsec:porous_medium_drift} We now turn to the pure-drift formulation of the PME \eqref{eq:porous_medium},
\begin{equation}\label{eq:PME-drift}
    \partial_t u = - \nabla \cdot \big( \frac{-\nabla u^m}{u} u\big),
\end{equation}
which is a first-order nonlinear FPE with coefficients $(a,b): \R^d \times \Pscr_{*} \to \R_+ \times \R^d$ given by
\begin{equation}\label{eq:coefficients_PME-drift}
    a (x,u) \coloneqq  0, \qquad b(x,u) \coloneqq  -\frac{\nabla u^m(x)}{u(x)},
\end{equation}
and $b(x,u) \coloneqq 0 $ on $\{u=0\}$, defined on $\Pscr_{*} \coloneqq \{u\in \Pscr_{\textup{ac}}: u^m \in W^{1,1}_{\textup{loc}} (\R^d) \}$. The corresponding MV-SDE is the distribution-dependent ODE
\begin{equation}\label{eq:PMEdrift-DDSDE}
    \begin{dcases}
    dX_t = -\frac{\nabla u^m(t,X_t)}{u(t,X_t)}dt, \\
    \mathcal{L}(X_t) = u(t,x)dx,\,\,\, t >0.
    \end{dcases}
\end{equation}
In particular, for $u=u^z$ given by the Barenblatt solution \eqref{eq:barenblatt_solution_porous}, a direct calculation (also included in the proof below) yields
\begin{equation}\label{eq:bz_porous_explicit_formula}
    b(x,u^z(t,\cdot)) =\frac k d \frac{x-z}{t} \mathds{1}_{B_{R(t)}(z)}(x),
\end{equation}
where $R(t)$ is given in \eqref{eq:support_porous}. Therefore, 
\eqref{eq:PMEdrift-DDSDE} with $u = u^z$ turns into
\begin{equation}\label{eq:PME-special}
    \begin{dcases} dX_t =  \frac k d \frac{X_t-z}{t} \mathds{1}_{B_{R(t)}(z)}(X_t) dt, \\ \mathcal{L}(X_t) = u^z(t,x)dx,\,\,\,t>0.
   \end{dcases}
\end{equation}
An explicit solution to \eqref{eq:PME-special} with $\mathcal{L}(X_0) = \delta_z$  is given by $(X^{0,z}_t)_{t\geq 0}$ as follows
\begin{equation}\label{eq:solutions_ODE_PME}
    X^{0,z}_t \coloneqq z+\eta \, t^{\frac{k}{d}}, 
\end{equation}
where $\eta$ is an $\R^d$-valued random variable  with $\mathcal{L}(\eta)=u^{0}(1,x) dx$.

\begin{proposition}\label{prop2}
    Let $m>1$. For every $z\in \R^d$, $X^{0,z}$ from \eqref{eq:solutions_ODE_PME} is the unique weak solution to \eqref{eq:PMEdrift-DDSDE}  with one-dimensional time marginals given by the Barenblatt solution \eqref{eq:barenblatt_solution_porous}, i.e., $\mathcal{L}(X^{0,z}_t) = u^z(t,x)dx$, $t>0$, and $\mathcal{L}(X^{0,z}_0) = \delta_z$.
    \\
    Furthermore, the family of corresponding solution path laws $\{\mathbb{V}^{0,z}\}_{z\in \R^d}$, $\mathbb{V}^{0,z} := \mathcal{L}(X^{0,z})$, is a nonlinear Markov core for $\{u^z\}_{z\in \R^d}$, which is uniquely determined by $\{u^z\}_{z\in \R^d}$ and \eqref{eq:PMEdrift-DDSDE}.
\end{proposition}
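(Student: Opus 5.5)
The plan is to apply \Cref{crl:nl_markov} to the pure-drift coefficients \eqref{eq:coefficients_PME-drift}. Along the way I will check directly that $X^{0,z}$ is a solution, so the abstract uniqueness statement identifies the law of $X^{0,z}$ as the core.

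\textbf{Step 1: explicit drift and FPE property.} From \eqref{eq:uz_self_similar_porous} and \eqref{eq:fprime_for_Barenblatt_porous} we get, on $B_{R(t)}(z)$,
\[
\nabla (u^z)^m = m (u^z)^{m-1}\nabla u^z, \qquad \nabla u^z(t,x) = -\tfrac{1}{m}\tfrac{k}{d}\,\tfrac{x-z}{t}\,u^z(t,x)^{2-m}.
\]
This yields \eqref{eq:bz_porous_explicit_formula}. That $u^z$ solves \eqref{eq:PME-drift} then follows from \Cref{lem:many-PDEs_new}, starting from the pure-diffusion coefficients $a=u^{m-1}$, $b=0$ with $f=-u^m$ (the pure-drift choice of \Cref{subsubsec:pure_drift}). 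Here $(u^z)^m = t^{-km}g^m(\cdot)$ has $g^m$ Lipschitz and compactly supported, since $m/(m-1)>1$. A change of variables as in \eqref{eq:PME_pure_diff_integrability} gives
\[
\int_0^T\!\!\int|\nabla (u^z)^m|\,dx\,dt \lesssim \int_0^T t^{-\alpha-k/d}\,dt .
\]
Here $\alpha+k/d = k(m-1+1/d) < k(m-1+2/d)=1$, so the integral is finite and \eqref{eq:assumption_f_strong} holds. The integrability \eqref{eq:integrability_our_paper} is immediate from $|b|\le \frac kd\frac{R(t)}{t}\sim t^{k/d-1}$, which is integrable. Injectivity of $(t,z)\mapsto u^z(t,x)dx$ is clear: the center of mass is $z$, and the support radius $R(t)$ is strictly increasing in $t$.

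\textbf{Step 2: $X^{0,z}$ is a solution.} Since $\eta\in \bar B_{R(1)}(0)$ a.s. and $R(t)=R(1)t^{k/d}$, we have $X^{0,z}_t\in \bar B_{R(t)}(z)$. We also get
\[
\frac{d}{dt}X^{0,z}_t=\frac kd\,\eta\, t^{k/d-1}=\frac kd\,\frac{X^{0,z}_t-z}{t}.
\]
The boundary sphere is null for $\mathcal L(\eta)$, so the indicator equals $1$ a.s. for all $t$, by Fubini and continuity of paths. The scaling in \eqref{eq:uz_self_similar_porous} gives $\mathcal L(X^{0,z}_t)=u^z(t,x)dx$.

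\textbf{Step 3: restricted linearized uniqueness (main obstacle).} Fix $s>0$. We must show $u^z(s+\cdot,\cdot)$ is the unique solution of
\[
\partial_t v = -\nabla\cdot\Big(\tfrac kd\tfrac{x-z}{s+t}\mathds 1_{B_{R(s+t)}(z)}(x)\,v\Big),\qquad v(0)=u^z(s,\cdot),
\]
in the class \eqref{eq:set2}. The drift is discontinuous across the moving sphere, so global Lipschitz theory does not apply directly. However, any $v$ in the class satisfies $v(t,\cdot)\le Cu^z(s+t,\cdot)$ and is therefore supported in $\bar B_{R(s+t)}(z)$.

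On this set the drift coincides with the globally Lipschitz field $\tilde b(t,x)=\frac kd\frac{x-z}{s+t}$, which is bounded on compacts and has Lipschitz constant $\le \frac{k}{ds}$. Since the coefficient only enters the weak formulation integrated against $v\,dx$, and the sphere is Lebesgue-null, $v$ also solves the linear continuity equation with drift $\tilde b$. For a globally Lipschitz drift with linear growth, measure-valued solutions of the continuity equation are unique (e.g. the flow representation / duality argument, or \cite[Theorem 9.4.3]{BogachevKrylovRoecknerShaposhnikov2015}-type results, or the Ambrosio--Gigli--Savar\'e uniqueness for Lipschitz fields \cite[Chapter 8]{AGS2008GFs}). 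The only subtle point, which I would check carefully, is that supports stay inside the ball for every $t$ and not just a.e.; this follows from weak continuity together with the domination bound. Hence $v=u^z(s+\cdot)$, and \Cref{crl:nl_markov} gives existence, uniqueness of the path law with these marginals, and the core property. By Step 2 the unique law equals $\mathcal L(X^{0,z})$.
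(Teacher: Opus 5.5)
Your proposal is correct and follows the paper's proof essentially step by step. The steps match: the explicit drift formula \eqref{eq:bz_porous_explicit_formula}, \Cref{lem:many-PDEs_new} with $f=-u^m$ starting from the pure-diffusion coefficients, the integrability checks, restricted linearized uniqueness via the observation that dominated solutions live on $\bar B_{R(s+t)}(z)$ where the cut-off drift agrees with the Lipschitz field $\frac kd\frac{x-z}{s+t}$, and finally \Cref{crl:nl_markov}. Your explicit verification that $X^{0,z}$ solves the ODE with Barenblatt marginals (which the paper merely asserts) and your sign of the linearized drift are both correct.
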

\begin{proof}
    We start by verifying the integrability condition  \eqref{eq:integrability_our_paper}. To this end, we first derive the explicit formula \eqref{eq:bz_porous_explicit_formula} for the drift coefficient. By taking the gradient of $u^z(t,x)$ in \eqref{eq:uz_self_similar_porous} at a point $x$ with $0<|x-z|<R(t)$, using \eqref{eq:fprime_for_Barenblatt_porous}, and writing $r \coloneqq |x-z|$, we obtain
    \begin{align}\label{eq:align-PME}
    \nabla (u^z(t,x)^m)
    & = m t^{-km -\frac{k}{d} } (g (t^{-\frac{k}{d}}r))^{m-1} g' (t^{-\frac{k}{d}}r)  \frac{x-z}{r} \\
    & = - \frac{k}{d} \, t^{-km-\frac{2k}{d}}\, g(t^{-\frac{k}{d}}r)\, (x-z) = - \frac{k}{d}  u^z(t,x) \, \frac{x-z}{t},
    \end{align}
    which, after dividing by $u^z(t,x) > 0$, yields \eqref{eq:bz_porous_explicit_formula}. Now, for any $T >0$, using polar coordinates, \eqref{eq:uz_self_similar_porous}, \eqref{eq:fprime_for_Barenblatt_porous}, and the change of variables $\xi \coloneqq t^{-\frac{k}{d}}r$ yields 
    \begin{align}
       \int_{0}^{T} \int_{\mathbb{R}^d} |b(x,u^z (t,\cdot)| u^z (t,x) dx dt & = \int_0^T \int_{\R^d} |\nabla u^z(t,x)^m| dx dt \\
       & = \frac{k}{d} S_{d-1} \int_{0}^{T}t^{-1-k}  \int_{0}^{+\infty} r g \big(t^{-\frac{k}{d}} r\big) r^{d-1} dr d t \\
        & = \frac{k}{d} S_{d-1} \left(\int_{0}^{T}t^{-(1-\frac{k}{d}) } d t \right) \left( \int_{0}^{R(1)} \xi^d  g (\xi ) d\xi   \right) < + \infty, \label{eq:PME_pure_drift_integrability}
     \end{align}
     where $S_{d-1}$ denotes the surface area of the unit sphere in $\R^d$. 
    Note that the time integral is finite because $(1-\frac{k}{d})  \in (0,1)$, and the spatial integral is finite because $g$ is bounded and compactly supported in $[0,R(1)]$.
    
    Next, we verify that $u^z$ is a distributional solution to \eqref{eq:PME-drift}. This can be seen, for instance, via \Cref{lem:many-PDEs_new} with the choice $a = u^{m-1}, b= 0$, $\Pscr_*=\Pscr_{\textup{ac}}$, (i.e., the pure-diffusion interpretation), and $f(u) = - u^m$, $\mathcal{D}(f) := \Pscr_{\textup{ac}}$. 
    With this choice, one obtains the reformulation \eqref{eq:PME-drift}, together with the domain $\Pscr_{*} \coloneqq \{u\in \Pscr_{\textup{ac}}: u^m \in W^{1,1}_{\textup{loc}} (\R^d) \}$. Moreover, by \eqref{eq:PME_pure_drift_integrability} and \eqref{eq:PME_pure_diff_integrability}, 
    we have $[(t,x)\mapsto f(u^z(t,\cdot))] \in L^1([0,T];W^{1,1}(\R^d))$ for all $T>0.$ Therefore, the assumptions of \Cref{lem:many-PDEs_new} (and \Cref{cor:new}) are satisfied. Note that \eqref{eq:solutions_ODE_PME} is indeed an explicit solution of the corresponding MV-SDE. 
    
    To apply \Cref{crl:nl_markov}, it remains to show that, for every $s>0$ and $z\in \R^d$, the linear FPE
    \begin{equation}\label{ui}
          \partial_t v(t,x) = \nabla \cdot \bigg(  \frac{\nabla (u^z)^m(t,x)}{u^z(t,x)}  v(t,x) \bigg),\quad v(0,x) = u^z(s,x),
    \end{equation}
    has a unique solution in the class
     \begin{equation}\label{eq:sol-class}
     \{v\geq 0: [t\mapsto v(t,x)dx] \in C(\R_+;\Pscr), v(t,x) \leq C u^z(s+t,x) \text{ a.s. for some }C\geq 1\}
     \end{equation}
     (then, clearly, this unique solution is $v(t,x) = u^z(s+t,x)$).
     We already calculated
     \begin{equation}
         \frac{\nabla (u^z(s+t,x)^m)}{u^z(t,x)} = \frac k d \frac{x-z}{s+t} \mathds{1}_{B_{R(s+t)}(z)}(x).
     \end{equation}
     While this coefficient is obviously not Lipschitz continuous (due to the cut-off resulting from the indicator function), we can argue as follows: Every solution $v$ from \eqref{eq:sol-class} satisfies $v(t,x) = 0$ $dx$-a.s. on $B_{R(s+t)}(z)^c$ for a.e. $t\geq 0$. Thus, the solutions from \eqref{eq:sol-class} also solve 
     $$\partial_t v(t,x) = \nabla \cdot \big( b^z(t,x) v(t,x)   \big),$$
     with $b^z(t,x) := \frac k d \frac{x-z}{s+t}$. Clearly, $x\mapsto b^z(t,x)$ is Lipschitz continuous with Lipschitz constant bounded from above by a finite number independent from $t\geq 0$ (since $s>0$). Thus, by classical theory, distributional solutions to this equation are unique, which implies that solutions to \eqref{ui} from \eqref{eq:sol-class} are unique. Therefore, \Cref{crl:nl_markov} applies and yields all assertions.
\end{proof}

\subsection{Interpolation between pure-diffusion and pure-drift cases}\label{subsec:PME-beta}
Here, we recast \eqref{eq:porous_medium} as
\begin{equation}\label{eq:PME-beta}
    \partial_t u = \beta \Delta(u^{m-1}u) - (1-\beta)\nabla \cdot \big( \frac{-\nabla u^m}{u} u \big),
\end{equation}
with interpolation parameter $\beta \in (0,\infty)$. This is a nonlinear FPE with coefficients $(a,b): \R^d \times \Pscr_{*} \to \R_+ \times \R^d$ given by
\begin{equation}\label{eq:PME_beta_coeff}
    a(x,u) \coloneqq \beta u^{m-1}(x),\qquad b(x,u) \coloneqq - (1-\beta)\frac{\nabla u^m(x)}{u(x)},
\end{equation}
and $b(x,u) \coloneqq 0 $ on $\{u=0\}$, defined on $\Pscr_{*} \coloneqq \{u \in \Pscr_{\textup{ac}}: u^m \in W^{1,1}_{\textup{loc}} (\R^d) \}$. For $\beta = 0$ and $\beta =1$ one recovers the pure-diffusion and pure-diffusion case, respectively. The corresponding MV-SDE is
\begin{equation}\label{op}
    \begin{dcases}
    dX_t = -(1-\beta) \frac{\nabla u(t,X_t)^m}{u(t,X_t)}dt + \sqrt{2\beta} \, u(t,X_t)^{\frac{m-1}{2}}dW_t, \\
    \mathcal{L}(X_t) = u(t,x)dx,\,\,\, t >0.
    \end{dcases}
\end{equation}
Using \Cref{prop1,prop2}, which correspond to $\beta = 1$ and $\beta = 0$, respectively, we obtain the following result for the interpolation regime.
\begin{proposition}\label{prop_porous_beta}
    Let $m>1$ and $\beta \in (0,\infty)$. For every $z\in \R^d$, there exists a unique weak solution $X^{\beta,z}$ to the MV-SDE \eqref{op} with one-dimensional time marginals given by the Barenblatt solution \eqref{eq:barenblatt_solution_porous}, i.e., $\mathcal{L}(X^{\beta,z}_t) = u^z(t,x)dx$, $t>0$, and $\mathcal{L}(X^{\beta,z}_0) = \delta_z$. 
    \\
    Furthermore, the corresponding family of solution path laws $\{\mathbb{V}^{\beta,z}\}_{z \in \R^d}$, $\mathbb{V}^{\beta,z} := \mathcal{L}(X^{\beta,z})$, forms a nonlinear Markov core for $\{u^z\}_{z\in \R^d}$, which is uniquely determined by $\{u^z\}_{z\in \R^d}$ and \eqref{op}.
\end{proposition}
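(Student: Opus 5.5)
The plan is to apply \Cref{crl:nl_markov} to the coefficients \eqref{eq:PME_beta_coeff} along the Barenblatt family $\{u^z\}_{z\in\R^d}$. Three things have to be checked: that $u^z$ solves \eqref{eq:PME-beta} in the sense of \Cref{def:FPE-sol}, that the integrability condition \eqref{eq:integrability_our_paper} holds, and that the restricted linearized uniqueness for \eqref{eq4_new} holds. Injectivity of $(t,z)\mapsto u^z(t,x)dx$ is clear, since the support is the ball $\overline{B_{R(t)}(z)}$ and $R$ is strictly increasing.

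\textbf{Solution property and integrability.} Start from the pure-diffusion interpretation $a=u^{m-1}$, $b=0$ on $\Pscr_{\textup{ac}}$, which $u^z$ solves. Apply \Cref{lem:many-PDEs_new} with $f(x,u):=-(1-\beta)u^m$. This gives $\hat a=u^{m-1}-(1-\beta)u^{m-1}=\beta u^{m-1}\ge 0$ for every $\beta>0$, which covers \eqref{eq:f_lemma_assumption_2}, and $\hat b=-(1-\beta)\nabla u^m/u$. The regularity requirement \eqref{eq:assumption_f_strong} follows from the bounds \eqref{eq:PME_pure_diff_integrability} and \eqref{eq:PME_pure_drift_integrability} established in the proofs of \Cref{prop1,prop2}, since $f$ is just a constant multiple of $u^m$. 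Those same two bounds also give \eqref{eq:integrability_our_paper} for $(\hat a,\hat b)$, because $|\hat a|u=\beta (u^z)^m$ and $|\hat b|u=|1-\beta||\nabla (u^z)^m|$.

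\textbf{Restricted uniqueness.} Fix $s>0$ and $z$. By \eqref{eq:bz_porous_explicit_formula}, the linearized equation \eqref{eq4_new} reads
\[
\partial_t v=\beta\Delta\big(h^z_s(t,x)v\big)-\nabla\cdot\Big((1-\beta)\tfrac kd\tfrac{x-z}{s+t}\mathds 1_{B_{R(s+t)}(z)}(x)\,v\Big),
\]
where $h^z_s(t,x):=(s+t)^{-\alpha}\big(C-q(s+t)^{-2k/d}|x-z|^2\big)_+$. As in the proof of \Cref{prop2}, any $v$ in the class \eqref{eq:set2} vanishes a.e. outside $B_{R(s+t)}(z)$, so the indicator can be dropped. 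The drift then becomes $b^z(t,x)=(1-\beta)\frac kd\frac{x-z}{s+t}$, which is globally Lipschitz in $x$ uniformly in $t\ge0$ because $s>0$. The diffusion coefficient $\beta h^z_s$ is also globally Lipschitz in $x$ uniformly in $t$, as in \Cref{prop1}.

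\textbf{Main obstacle.} The diffusion coefficient is degenerate: it vanishes on the boundary of the support, so nondegenerate results such as \cite[Theorem 9.4.3]{BogachevKrylovRoecknerShaposhnikov2015} do not apply directly. The first attempt will be to take the square root. The function $\sqrt{2\beta h^z_s}=\sqrt{2\beta}(s+t)^{-\alpha/2}\big(C-q(s+t)^{-2k/d}|x-z|^2\big)_+^{1/2}$ is not Lipschitz, so one cannot simply invoke pathwise uniqueness plus Yamada--Watanabe together with the superposition principle. Instead, I would use the fact that $a=\beta h^z_s\cdot 1_d$ is itself Lipschitz, uniformly in $t$, together with a Lipschitz drift. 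Classical uniqueness results for linear FPEs with Lipschitz, possibly degenerate, coefficients give uniqueness of distributional measure-valued solutions in that setting; this is exactly the principle already invoked in the proof of \Cref{prop1} for the case $b=0$. The plan is to cite the same result with the added Lipschitz drift, for instance through a duality argument: solve the backward Kolmogorov equation with $C^2$ data via a Lipschitz-coefficient flow, or use the Figalli/Bogachev--R\"ockner--Shaposhnikov uniqueness for $a\in W^{1,\infty}$ and Lipschitz $b$.

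If that route fails, the fallback uses that the solutions in \eqref{eq:set2} are bounded by $Cu^z(s+t,\cdot)$. In the region where $h^z_s$ is small, i.e. near the boundary of the support, $b^z$ points outward. One would then show directly, via an $L^2$-type energy estimate for the difference of two solutions tested against $\varphi=(v_1-v_2)/u^z$, that the difference vanishes.

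Once uniqueness is in hand, \Cref{crl:nl_markov} yields existence and uniqueness of $X^{\beta,z}$ and the nonlinear Markov core property.
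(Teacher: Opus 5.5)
This is essentially the paper's proof. It applies \Cref{crl:nl_markov}, takes the integrability from the $\beta=0,1$ computations in \Cref{prop1,prop2}, drops the cut-off $\mathds{1}_{B_{R(s+t)}(z)}$ because solutions in \eqref{eq:set2} vanish outside the support, and deduces restricted uniqueness from the $t$-uniform Lipschitz continuity in $x$ of both coefficients, which the paper attributes to ``standard theory'' without discussing the degeneracy you point out.

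One caution on how you fill that step in. The stochastic-flow duality you list would need $\sqrt{2\beta h^z_s}$ to be Lipschitz, which you yourself note fails. The weighted estimate is the natural argument in this class: with $u:=u^z(s+t,\cdot)$, formally $\frac{d}{dt}\int \frac{(v_1-v_2)^2}{u}\,dx=-2\beta\int h^z_s\,u\,\big|\nabla\frac{v_1-v_2}{u}\big|^2dx\le 0$. It needs no sign information on $b^z$, which in fact points inward, not outward, when $\beta>1$.
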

\begin{proof}
All assertions follow immediately from \Cref{crl:nl_markov}. The latter is applicable here, since all required integrability conditions follow immediately from the fact that these conditions were already verified for $\beta = 0$ and $\beta =1$ in the proofs of \Cref{prop1,prop2}. Since in these proofs it was also argued that the coefficients of the linear FPE
\begin{equation}
\partial_t v(t,x) = \beta\Delta ((u^z)^{m-1}(s+t,x)v(t,x)) - (1-\beta) \nabla \cdot \bigg(\frac{\nabla  (u^z(s+t,x)^m)}{u^z(s+t,x)}v(t,x)\bigg),\,\, v(0,x) = u^z(s,x)
\end{equation}
are Lipschitz continuous in $x$ with Lipschitz constant bounded from above by a constant independent from $t>0$ (here we use $s>0$), the uniqueness of this linear FPE in the class
 $$\{v\geq 0: [t\mapsto v(t,x)dx] \in C(\R_+;\Pscr), v(t,x) \leq C u^z(s+t,x) \text{ a.s. for some }C\geq 1\}$$ follows by standard theory. Hence, \Cref{crl:nl_markov} applies, which concludes the proof.
\end{proof}
\begin{remark}\label{rem-PME}
    By \Cref{prop2,prop_porous_beta}, we have constructed for every $\beta \in [0,\infty)$ a nonlinear Markov core $\{\mathbb{V}^{\beta,z}\}_{z\in \R^d}$ for the PME Barenblatt solutions $\{u^z\}_{z\in \R^d}$. Similarly to the heat equation case, for each $z\in \R^d$, the path laws $\mathbb{V}^{\beta_1,z},\mathbb{V}^{\beta_2,z}$ are distinct whenever $\beta_1 \neq \beta_2$, and similarly to the proof of \Cref{lem:singularlaws}, one can even show their mutual singularity.
\end{remark}

\subsection{Formal pure-Stratonovich-diffusion interpretation}\label{subsect:PME-Stratono}
    We highlight one particular element of the above interpolation family \eqref{eq:PME-beta}-\eqref{eq:PME_beta_coeff}, corresponding to $\beta=\frac{2m}{m+1}$, for which we obtain $a(x,u) = \frac{2m}{m+1} u^{m-1}(x)$ and the gradient relation
    \begin{equation}
        b(x, u)=\frac{1}{2}\nabla a(x, u).
    \end{equation}
If $\sqrt{a(x,u^z(t,\cdot))}$ was smooth, this relation would ensure that \eqref{op} for $\beta = \frac{2m}{m+1}$ was equivalent to the pure-Stratonovich-diffusion MV-SDE
\begin{equation}\label{eq:PME-pure-diff-Stratono}
\begin{dcases}
    dX_t = \sqrt{\tfrac{4m}{m+1}} u(t,X_t)^{\frac{m-1}{2}}\circ dW_t,
    \\
    \mathcal{L}(X_t) = u(t,x)dx,\,\,\, t >0.
\end{dcases}
\end{equation}
However, this smoothness fails, meaning that the rigorous establishment of \eqref{op} for $\beta=\frac{2m}{m+1}$ as a Stratonovich SDE is not straightforward. For a more detailed discussion, see Appendix \ref{subsec:theta_interpretations}.

\subsection{Additive noise interpretation}\label{subsec:PME-Addit}
As a final result for the PME, we construct a nonlinear Markov core with Barenblatt time marginals consisting of solutions to an additive noise MV-SDE. To this end, we recast \eqref{eq:porous_medium} as
\begin{equation}\label{eq:PME-addnoise}
    \partial_t u = \Delta u - \nabla \cdot \bigg(\frac{\nabla u - \nabla u^m }{u} u \bigg),
\end{equation}
i.e. as the nonlinear FPE with coefficients $(a,b): \R^d \times \Pscr_* \to \R_+ \times \R^d$,
\begin{equation}
    a(x,u) := 1, \qquad b(x,u) := \frac{\nabla  u- \nabla u^m }{u}(x),
\end{equation}
with $b(x,u):= 0$ on $\{u = 0\}$; these coefficients are defined on 
\begin{equation}\label{eq:PME-addnoise_domian}
    \Pscr_* := \{u \in \Pscr_{\textup{ac}} \,:\, u, u^m \in W^{1,1}_{\textup{loc}}(\R^d)\}.
\end{equation}
The corresponding MV-SDE is
\begin{equation}\label{eq:PME-addnoise-MVSDE}
    \begin{dcases}
     dX_t = \frac{\nabla u(t,X_t) - \nabla u^m(t,X_t)}{u(t,X_t)}dt + \sqrt{2} dW_t,\\
    \mathcal{L}(X_t) = u(t,x)dx,\quad t>0.
\end{dcases}
\end{equation}

\begin{proposition}\label{prop:PME-addnoise-MVSDE}
    Let $1 < m < 2$. For every $z\in \R^d$, there exists a unique weak solution $X^z$ to the MV-SDE \eqref{eq:PME-addnoise-MVSDE} with one-dimensional time marginals given by the Barenblatt solution \eqref{eq:barenblatt_solution_porous}, i.e., $\mathcal{L}(X^z_t) = u^z(t,x)dx$, $t>0$, and $\mathcal{L}(X^z_0) = \delta_z$.
    \\
    Furthermore, the corresponding family of solution path laws $\{\mathbb{V}^{z}\}_{z\in \R^d}$, $\mathbb{V}^{z}:= \mathcal{L}(X^{z})$, forms a nonlinear Markov core for $\{u^z\}_{z\in \R^d}$, which is uniquely determined by $\{u^z\}_{z\in \R^d}$ and \eqref{eq:PME-addnoise-MVSDE}.
\end{proposition}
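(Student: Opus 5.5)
We derive this from \Cref{crl:nl_markov}, with the coefficients obtained from the pure-diffusion interpretation through \Cref{lem:many-PDEs_new}. Start from $a = u^{m-1}$, $b = 0$ on $\Pscr_{\textup{ac}}$ and choose $f(x,u) := u(x) - u(x)^m$. Then $a + f/u = 1$ and $b + \nabla f/u = (\nabla u - \nabla u^m)/u$ on $\{u>0\}$, which are exactly the coefficients in \eqref{eq:PME-addnoise}. Condition \eqref{eq:f_lemma_assumption_2} holds trivially, since $f + au = u \geq 0$.

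For \eqref{eq:assumption_f_strong} and the integrability condition \eqref{eq:integrability_our_paper}, the term $u^m$ was already handled in \eqref{eq:PME_pure_diff_integrability} and \eqref{eq:PME_pure_drift_integrability}. It remains to show that $\int_0^T\int_{\R^d} |\nabla u^z(t,x)|\,dx\,dt < \infty$. This is where $m<2$ enters. From \eqref{eq:fprime_for_Barenblatt_porous},
$$|\nabla u^z(t,x)| = t^{-k-\frac kd}\,|g'(\xi)| = \tfrac{k}{md}\, t^{-k-\frac kd}\, \xi\, g(\xi)^{2-m}, \qquad \xi = t^{-\frac kd}|x-z|.$$
Since $2-m>0$, this is bounded and compactly supported in $\xi$, so the profile is integrable; for $m \ge 2$ the profile is not bounded at the edge of the support, which is why the restriction is needed. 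Changing variables as in the earlier proofs gives
$$\int_{\R^d}|\nabla u^z(t,x)|\,dx = c\, t^{-\frac kd}\int_0^{R(1)}\xi^d g(\xi)^{2-m}\,d\xi,$$
and $t^{-k/d}$ is integrable near $0$ because $k/d<1$. Hence $f(\cdot,u^z(t,\cdot)) \in L^1_{\textup{loc}}(\R_+;W^{1,1}(\R^d))$, so \Cref{lem:many-PDEs_new} and \Cref{cor:new} apply. The remaining integrability is immediate: $a = 1$, and $|b|u^z \le |\nabla u^z| + |\nabla (u^z)^m|$. Injectivity of $(t,z)\mapsto u^z(t,x)dx$ is clear, since the support radius $R(t)$ is strictly increasing and the center is $z$.

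The main obstacle is the restricted linearized uniqueness. For fixed $s>0$ we must show that $u^z(s+\cdot,\cdot)$ is the unique solution of
$$\partial_t v = \Delta v - \nabla\cdot(b^z v), \qquad b^z(t,x) = \frac{\nabla u^z(s+t,x)}{u^z(s+t,x)} + \frac kd\,\frac{x-z}{s+t}\mathds{1}_{B_{R(s+t)}(z)}(x),$$
within the class \eqref{eq:set2}. Here $\nabla u/u = \frac{1}{m-1}\nabla \log h$, with $h$ as in \eqref{eq:h_for_az_porous_self-similar}. This drift blows up like $\mathrm{dist}(x,\partial B_{R})^{-1}$ near the free boundary and points inward, so the classical Lipschitz argument used before fails.

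The first approach we would try is to exploit the bound $v \le C u^z(s+t,\cdot)$ and pass to the relative density $w := v/u^z(s+t,\cdot)$. This $w$ is bounded by $C$ and vanishes outside the ball. Formally, $w$ solves a linear equation in divergence form with respect to the weight $u^z$, namely
$$\partial_t(u w) = \nabla\cdot(u\nabla w) + \text{(a transport term with bounded Lipschitz field } \tfrac kd\tfrac{x-z}{s+t}\text{)},$$
and we want to conclude $w \equiv 1$. We would attempt an $L^2(u\,dx)$ energy estimate for $w-1$, justified by duality, or we would invoke the uniqueness results of \cite{BogachevKrylovRoecknerShaposhnikov2015} for FPEs with locally regular coefficients together with a Lyapunov-type function. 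Two facts would drive the argument. First, the drift is smooth inside each ball $B_{R(s+t)-\varepsilon}(z)$, with the ball expanding in time. Second, near the boundary the singular drift is inward-pointing and mass cannot enter from outside, because $v$ vanishes there. We would then localize with cutoffs $\chi_\varepsilon$ supported in $B_{R(s+t)-\varepsilon}$, control the boundary flux terms using $v\le Cu$ and $|\nabla u| \lesssim u^{2-m}$, which is integrable precisely when $m<2$, and let $\varepsilon\to0$. This boundary-layer control is the delicate step. Once uniqueness is established, \Cref{crl:nl_markov} yields all assertions.
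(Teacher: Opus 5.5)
Your reduction matches the paper's: \Cref{lem:many-PDEs_new} with $f=u-u^m$ applied to the pure-diffusion coefficients, the $L^1$ bound on $\nabla u^z$ from the self-similar form, and then \Cref{crl:nl_markov}. The gap is the restricted linearized uniqueness for \eqref{eqlin}. This is the only nontrivial step, and you leave it as a list of possible strategies (the relative density $w=v/u^z$, an energy estimate, cutoffs with boundary-layer control) without carrying any of them out.

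No boundary-layer analysis is needed. The diffusion is the identity, so it suffices to check the hypotheses of \cite[Theorem 9.3.6]{BogachevKrylovRoecknerShaposhnikov2015}, as the paper does. For every solution $v$ in the class \eqref{eq:set2}, you need $|b^z|\in L^1((0,T)\times\R^d;v\,dx\,dt)$ and $|b^z|\in L^2((0,T)\times K;v\,dx\,dt)$ for every compact $K$. The domination $v\le Cu^z(s+\cdot,\cdot)$ turns both into integrals against $u^z(s+t,x)\,dx\,dt$. The $L^1$ condition is exactly the integrability you already proved. For the $L^2$ condition, split $|b^z|^2\le 2|\nabla u^z|^2/(u^z)^2+2|\nabla (u^z)^m|^2/(u^z)^2$. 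By \eqref{eq:align-PME}, the second piece is bounded on $K$ by $2(k/d)^2M_K^2(s+t)^{-2}$, where $M_K:=\sup_{x\in K}|x-z|$. The first piece gives, by self-similarity and \eqref{eq:fprime_for_Barenblatt_porous},
\[
\int_{\R^d}\frac{|\nabla u^z(s+t,x)|^2}{u^z(s+t,x)}\,dx=c\,(s+t)^{-\frac{2k}{d}}\int_0^{R(1)}g(\xi)^{3-2m}\,\xi^{d+1}\,d\xi ,
\]
which is finite if and only if $(3-2m)/(m-1)>-1$, i.e. $m<2$. The time integrals are finite because $s>0$.

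This also corrects your account of where $m<2$ enters. It is not needed for $\nabla u^z\in L^1$. Indeed $g^{2-m}=(C-q\xi^2)_+^{(2-m)/(m-1)}$, and $(2-m)/(m-1)>-1$ for every $m>1$, so this profile is integrable for all $m>1$. So the claims that the $L^1$ bound is why the restriction is needed, and that $u^{2-m}$ is integrable precisely when $m<2$, are inaccurate. The sharp threshold comes from the Fisher-information-type quantity $\int|\nabla u^z|^2/u^z$, i.e. from $\nabla\log u^z\in L^2(u^z)$. Equivalently, your drift, which blows up like $\mathrm{dist}(x,\partial B_{R})^{-1}$, is square-integrable against $u^z\sim\mathrm{dist}^{1/(m-1)}$ exactly when $m<2$.
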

\begin{proof}
    We first verify the integrability condition \eqref{eq:integrability_our_paper}, which in this case reads:
    \begin{equation}\label{eq:PME-addnoise-intg}
        \int_0^T \int_{\R^d}\bigg(1 + \frac{|\nabla (u^z(t,x)-(u^z)^m(t,x))|}{u^z(t,x)}\bigg)\,u^z(t,x)dx dt < +\infty, \qquad \forall T>0. 
    \end{equation}
    This indeed holds since $\int_0^T \int_{\R^d} u^z(t,x)dx dt = T$, and $\int_0^T \int_{\R^d} | \nabla (u^z)^m(t,x)| dx dt < + \infty$, which was already shown in the proof of \Cref{prop2}. 
    As for the remaining term,
    using the self-similar form \eqref{eq:uz_self_similar_porous}, writing $r \coloneqq |x-z|$, the change of variable
    \(\xi=t^{-\frac{k}{d}}r\), and \eqref{eq:fprime_for_Barenblatt_porous}, we have  
    \begin{align}
        \int_0^T \int_{\R^d} |\nabla u^z(t,x)|\,dx\,dt
        &= S_{d-1}\int_0^T \int_0^{R(t)}
        t^{-k-\frac{k}{d}}
        \big|g'(t^{-\frac{k}{d}}r)\big|\,r^{d-1}\,dr\,dt \\
        &= S_{d-1} \left( \int_0^T t^{-k-\frac{k}{d}} t^{+k} \,dt  \right) \left(
        \int_0^{R(1)} |g'(\xi)|\,\xi^{d-1}\,d\xi\right) \\
         &= S_{d-1} \left( \int_0^T t^{-\frac{k}{d}}\,dt  \right) \left( \frac{1}{m} \frac{k}{d} 
        \int_0^{R(1)} g(\xi)^{2-m}  \,\xi^{d}\,d\xi\right) < + \infty,
    \end{align}
    where $S_{d-1}$ denotes the surface area of the unit sphere in $\R^d$.
    The time integral is finite since \(\frac{k}{d}\in(0,1)\). For $1<m < 2$, the integrand  $\xi \mapsto g(\xi)^{2-m}\xi^d$ is clearly bounded on $(0,R(1))$, and therefore the spatial integral is also finite. 
    
    Next, we note that $u^z$ is a distributional solution to \eqref{eq:PME-addnoise}. This can be seen, e.g., via \Cref{lem:many-PDEs_new} with $a = u^{m-1}, b= 0$, $\Pscr_*=\Pscr_{\textup{ac}}$, (i.e., the pure-diffusion interpretation), and $f(u) = u - u^m$, $\mathcal{D}(f) \coloneqq \Pscr_{\textup{ac}}$. 
    With this, one obtains precisely the reformulation \eqref{eq:PME-addnoise}, together with the corresponding domain \eqref{eq:PME-addnoise_domian}. Moreover, by \eqref{eq:PME-addnoise-intg} and \eqref{eq:PME_pure_diff_integrability}, we have $[(t,x)\mapsto f(u^z(t,\cdot))] \in L^1([0,T];W^{1,1}(\R^d))$ for all $T>0.$ Therefore, the assumptions of \Cref{lem:many-PDEs_new} are satisfied.

    Finally, in order to apply \Cref{crl:nl_markov}, it remains to prove that, for all $(s,z)\in (0,\infty)\times \R^d$, the linear FPE
    \begin{equation}\label{eqlin}
        \partial_t v(t,x) = \Delta v(t,x) - \nabla \cdot \bigg( \frac{\nabla(u^z-(u^z)^m)}{u^z}(s+t,x) v(t,x)\bigg),\quad v(0,x) = u^z(s,x),
    \end{equation}
    has a unique distributional solution in the sense of \Cref{def:lFPE-sol} in
    \begin{equation}
        \{v\geq 0: [t\mapsto v(t,x)dx] \in C(\R_+;\Pscr), v(t,x) \leq C u^z(s+t,x) \text{ a.s. for some }C\geq 1\}.
    \end{equation}
    By \eqref{eq:PME-addnoise-intg}, for any solution $v$ to this equation  in the above class and any $T > 0$, we have
    \begin{multline}\label{eq:proof_addnoise_int_1}
        \int_0^T \int_{\R^d} \bigg|\frac{\nabla(u^z-(u^z)^m)}{u^z}(s+t,x) \bigg|v(t,x) dx dt \\
        \leq C\int_0^T \int_{\R^d} \bigg|\frac{\nabla(u^z-(u^z)^m)}{u^z}(s+t,x) \bigg|u^z(s+t,x) dx dt < + \infty.
    \end{multline}
    Additionally, for each such $v$ and any compact $K \subseteq \R^d$, we have
    \begin{align}
        \int_0^T \int_K & \bigg|\frac{\nabla(u^z-(u^z)^m)}{u^z}(s+t,x)\bigg|^2  v(t,x)\,dx\,dt \\
        & \leq C\int_0^T \int_K \frac{|\nabla(u^z-(u^z)^m)|^2}{u^z}(s+t,x)\,dx\,dt \\
        &\leq 2C\int_0^T \int_{\R^d} \frac{|\nabla u^z|^2}{u^z}(s+t,x)\,dx\,dt +2C\int_0^T \int_K \frac{|\nabla (u^z)^m|^2}{u^z}(s+t,x)\,dx\,dt \\
        & \leq 2 C S_{d-1}\left(\int_0^T (s+t)^{-\frac{2k}{d}}\,dt\right)
        \left(\frac{1}{m^2}\frac{k^2}{d^2}\int_0^{R(1)} g(\xi)^{3-2m}\,\xi^{d+1}\,d\xi\right) \label{eq:04_14_104}\\
        & \,\, + 2C \Big(\frac{k}{d}\Big)^2 M_K^2 \int_0^T \frac{1}{(s+t)^2}
        \int_K u^z(s+t,x)\,dx\,dt < + \infty.\label{eq:04_14_105}
    \end{align}
    For the last inequality, we estimated the first term using the self-similar form \eqref{eq:uz_self_similar_porous}, the change of variables
    \(\xi=(s+t)^{-\frac{k}{d}}|x-z|\), and \eqref{eq:fprime_for_Barenblatt_porous}. To estimate the second term, we used the explicit formula \eqref{eq:align-PME} and denote $M_K:=\sup_{x\in K}|x-z|<+\infty$.
    In \eqref{eq:04_14_104}, the spatial integral is finite if and only if within the range $1 <m<2$, and the time integral is finite since $s>0$. In \eqref{eq:04_14_105}, the spatial integral can be clearly bounded by 1, and the time integral is also finite since $s>0$. 
    Hence, the required uniqueness follows from \cite[Theorem 9.3.6]{BogachevKrylovRoecknerShaposhnikov2015}. Now, finally, \Cref{crl:nl_markov} applies and yields all assertions.
\end{proof}

\begin{remark}
    As seen in the previous proof, the condition $1 < m <2$ is sharp for conditions of \cite[Theorem 9.3.6]{BogachevKrylovRoecknerShaposhnikov2015} to hold in order to prove the required uniqueness for \eqref{eqlin}. We suppose that this uniqueness can be proven also for $ m\geq 2$ by other means, but handling the drift part $\frac{\nabla u^z}{u^z}$ appears more delicate in this case. 
\end{remark}

\section{\texorpdfstring{$p$}{p}-Laplace equation}\label{sec:pLaplace}
In this section, we consider the parabolic $p$-Laplace equation
\begin{align}\label{eq:pLaplace_eq}
        \partial_t u   =  \nabla \cdot (|\nabla u|^{p-2} \nabla u), \qquad (t,x) \in  (0, +\infty) \times \mathbb{R}^d,
\end{align}
where $p>2$ and $d\in \mathbb{N}$.
The family $\{u^z\}_{z \in \R^d}$ of explicit \emph{Barenblatt solutions} \cite{Barenblatt1952,KaminVazquez1988} to \eqref{eq:pLaplace_eq} with initial condition $u^z(t,\cdot)\,dx \to \delta_z$ weakly as $t \downarrow 0$ is given by
\begin{equation}\label{eq:barenblatt_solution_plaplace}
    u^z(t,x):=t^{-k}\left(C-q \, t^{-\frac{kp}{d(p-1)}}
    |x-z|^{\frac p{p-1}}\right)^{\frac{p-1}{p-2}}_+, \qquad (t,x) \in (0, +\infty) \times \mathbb{R}^d,
\end{equation}
where  $k:=(p-2+\frac{p}{d})^{-1},$  $q:=\tfrac{p-2}{p}(\frac{k}{d} )^{\frac{1}{p-1}}$, $y_+ \coloneqq \max (0,y) $ for any real number $y$, and $C = C_{p,d}\in(0,\infty)$ is the unique constant such that $\lVert  u^{z}(t,\cdot) \rVert_{L^1(\mathbb{R}^d)}=1$ for all $t>0$ (it is a straightforward calculation that such $C$ exists). The support of $u^z(t,\cdot)$ is the closed ball centered at $z$ with radius given by
\begin{equation}\label{eq:support_plaplace}
    R(t) \coloneqq \left( \tfrac{C}{q}\right)^{\frac{p-1}{p}} t^{\frac{k}{d}}, \qquad t \in (0, +\infty). 
\end{equation}

\smallskip
\noindent  
\textbf{Self-similar form of $u^z$.}
    Note that, similarly to the PME studied in the previous section, by scaling analysis of the PDE, any mass-preserving, self-similar, radially symmetric solution to the $p$-Laplace equation centered at $z \in \R^d$ is of the form $t^{-k} g \big(t^{-\frac{k}{d}} |x-z|\big)$ for some radial profile $g : \R_+ \to \R_+$, where $k$ is given as above.
    In particular, in the case of the Barenblatt solution \eqref{eq:barenblatt_solution_plaplace}, we have
    \begin{equation}\label{eq:uz_self_similar_pLaplace}
        u^z(t,x) = t^{-k} g \big(t^{-\frac{k}{d}} |x-z|\big)
    \end{equation}
   with the radial profile
    \begin{equation}\label{eq:g_for_Barenblatt_pLaplace}
    g(\xi) \coloneqq \left(C-q \, \xi^{\frac p{p-1}}\right)^{\frac{p-1}{p-2}}_+, \quad \xi \in \R_+,
    \end{equation}
    whose derivative $g'$ can be expressed as
    \begin{equation}\label{eq:gprime_for_Barenblatt_pLaplace}
        g'(\xi) = - \left( \frac{k}{d} \, \xi \, g(\xi)\right)^{\frac{1}{p-1}},\quad \xi \in (0,R(1)).
    \end{equation}

\subsection{Pure-diffusion interpretation}\label{subsec:pLaplace_diffusion}
Here, we derive a pure-diffusion FPE 
\begin{equation}\label{eq:pLaplace_Ito_diffusion}
    \partial_t u = \Delta \big(a(x,u) u\big),
\end{equation}
solved by the Barenblatt solutions to \eqref{eq:pLaplace_eq}. To do so, we exploit the radial symmetry of the latter. The coefficient $a: \R^d \times \Pscr_* \to \R$ is given by the following \emph{nonlocal} functional of $u$:
\begin{align}
         \label{eq:a_pure_Ito_diffu_pLaplace_nl}
          a(x,u) &\coloneqq -\frac{1}{u(x)} \int_{|x-z|}^{+\infty} |U'(\rho)|^{p-2}U'(\rho) \, d\rho
    \end{align}
   on $\{u>0\}$, and $a(x,u) \coloneqq 0$ on $\{u=0\}$, defined on the set $\Pscr_*$ of absolutely continuous probability measures whose densities are radially symmetric around some point $z$ and whose radial profiles $U$ possess sufficient Sobolev regularity for the above integral to make sense, more precisely,
   \begin{align}
       \Pscr_* \coloneqq \Big\{u \in \Pscr_{{\textup{ac}}}: \,\,\, \exists z \in \R^d, \, U:\R_+\to \R_+ \text{ such that } u(x) = U(|x-z|), \\
       U \in W^{1,1}_{\textup{loc}}(\R_+), \, |U'|^{p-1} \in L^1(\R_+) \Big\}. \label{eq:P_star_pure_Ito_diffu_pLaplace}
   \end{align}
   Indeed, to see where \eqref{eq:a_pure_Ito_diffu_pLaplace_nl} comes from, one can verify that on the set $\Pscr_*$, 
    \begin{equation}\label{eq:check_ansatz}
        \nabla \big(a(x,u)u \big)
        =
        |\nabla u|^{p-2} \nabla u
    \end{equation}
    holds a.e. on the interior of $\{u>0\}$.
    When $a$ is nonnegative along a solution (as will be shown to be the case for the Barenblatt solution), the corresponding MV-SDE is
    \begin{equation}\label{p_lap_pure_diff_sde}
        \begin{dcases}
            dX_t = \sqrt{2a(X_t,u(t,\cdot))} \, dW_t, \\
            \mathcal{L}(X_t) = u(t,x)dx,\quad t>0.
        \end{dcases}
    \end{equation}
    Specifically, for $u=u^z$, using the self-similar form of the Barenblatt solution given in \eqref{eq:uz_self_similar_pLaplace}-\eqref{eq:g_for_Barenblatt_pLaplace}, we will show that the diffusion coefficient has the self-similar form
    \begin{equation}\label{eq:az_self_similar_pLaplace}
        a(x,u^z(t,\cdot)) = t^{-\alpha} h \big(t^{-\tfrac{k}{d}}|x-z|\big),
    \end{equation}
    where $ \alpha \coloneqq 1-\frac{2}{(p-2)d +p} \in (0,1)$, and the radial profile $h:\mathbb{R}_+\to \mathbb{R}_+$ is given by
    \begin{align}
        h(\xi)
        & \coloneqq \frac{1}{g(\xi)} \int_{\xi}^{R(1)} |g'(\rho)|^{p-1} \, d \rho 
        = \frac{k}{d} \frac{1}{g(\xi)} \int_{\xi}^{R(1)} \rho g (\rho)\, d \rho,\label{eq:h_for_az_pLaplace_another}
    \end{align}
    for $\xi\in [0,R(1))$, and $h (\xi) \coloneqq 0$ for $\xi\in [R(1),\infty)$. 
    The second equality above follows from \eqref{eq:gprime_for_Barenblatt_pLaplace}.
    In particular, $ a(x,u^z(t,\cdot)) \geq 0$ pointwise in $(t,x)$. 
    Although the diffusion coefficient appears more complicated here than in the PME case, its self-similar form is of the same type as the pure-diffusion coefficient for the PME \eqref{eq:az_porous_self-similar}–\eqref{eq:h_for_az_porous_self-similar}.

    Before proving the existence of a unique weak solution to the MV-SDE \eqref{p_lap_pure_diff_sde} and constructing the corresponding nonlinear Markov core, we present the following lemma at PDE level, where, in particular, all the above claims are proved.

    \begin{lemma}\label{lem:pLaplace-pure-diffusion_PDE}
        Let $p>2$, $z\in \R^d$ and $a: \R^d \times \Pscr_* \to \R$ and $\Pscr_*$ be as in \eqref{eq:a_pure_Ito_diffu_pLaplace_nl} and \eqref{eq:P_star_pure_Ito_diffu_pLaplace}, respectively. Then $u^z(t,\cdot) \in \Pscr_*$ for every $t>0$, $a(x,u^z(t,\cdot)) \geq 0$ pointwise in $(t,x)$, and $u^z$ is a distributional solution to the pure-diffusion FPE 
        \begin{equation}\label{eq:A-FPE}
            \partial_t u = \Delta \big(a(x,u) u \big).
        \end{equation}
        Moreover, the map $(t,x) \mapsto a (x,u^z(t,\cdot))$ has the self-similar form \eqref{eq:az_self_similar_pLaplace}-\eqref{eq:h_for_az_pLaplace_another}, belongs to $C((0,\infty)\times \mathbb{R}^d;\R_+)$, and satisfies
        \begin{equation}
            \int_{0}^{T} \int_{\mathbb{R}^d} |a (x,u^z(t,\cdot))| u^z (t,x) dx dt < +\infty,\quad \forall T>0.
            \label{p lap diffusion int}
        \end{equation}
    \end{lemma}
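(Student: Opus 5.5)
Throughout, we fix $z$ and write $r := |x-z|$. All assertions will be reduced to explicit one-dimensional computations with the radial profile $g$ from \eqref{eq:g_for_Barenblatt_pLaplace}, using the self-similar form \eqref{eq:uz_self_similar_pLaplace}.

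\textbf{Membership in $\Pscr_*$.} We first verify that $u^z(t,\cdot) \in \Pscr_*$. The radial profile is $U_t(\rho) := t^{-k} g(t^{-k/d}\rho)$. By \eqref{eq:gprime_for_Barenblatt_pLaplace}, $g'$ is bounded and continuous on $[0,R(1))$ and vanishes near $R(1)$, since $g\to 0$ there. Moreover, $g' = 0$ beyond $R(1)$, and $g$ is Lipschitz. Hence $U_t \in W^{1,1}_{\textup{loc}}(\R_+)$ and $|U_t'|^{p-1}$ is bounded with compact support, so in particular it lies in $L^1(\R_+)$.

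\textbf{Self-similar form and nonnegativity.} Next we compute $a(x,u^z(t,\cdot))$. Substituting $\rho = t^{k/d}\sigma$ gives
\[
U_t'(\rho) = t^{-k-k/d} g'(\sigma),
\]
so
\[
\int_r^\infty |U_t'|^{p-2}U_t'\,d\rho = -t^{-(k+k/d)(p-1)+k/d}\int_{\xi}^{R(1)}|g'|^{p-1}\,d\sigma, \qquad \xi = t^{-k/d}r .
\]
Dividing by $u^z = t^{-k}g(\xi)$ gives $t^{-\alpha}h(\xi)$ with
\[
\alpha = (k+k/d)(p-1) - k/d - k .
\]
We then check that this equals $1-\frac{2}{(p-2)d+p}$ by inserting $k = d/((p-2)d+p)$. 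The second expression for $h$ in \eqref{eq:h_for_az_pLaplace_another} follows from $|g'|^{p-1} = \frac kd \rho g$. This expression shows $h \geq 0$, and hence $a \geq 0$.

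\textbf{Continuity.} Continuity of $h$ on $[0,R(1))$ is clear. The delicate point is the behaviour as $\xi \uparrow R(1)$, where $g \to 0$. Writing $g = (C-q\xi^{p/(p-1)})^{(p-1)/(p-2)}$ and letting $\epsilon := R(1)-\xi$, we have $g \sim c\,\epsilon^{(p-1)/(p-2)}$. The integral $\int_\xi^{R(1)}\rho g\,d\rho$ is then of order $\epsilon^{(p-1)/(p-2)+1}$, so $h \sim c'\epsilon \to 0$. Thus $h$ is continuous on $\R_+$, and joint continuity of $a$ in $(t,x)$ on $(0,\infty)\times\R^d$ follows from the self-similar form.

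\textbf{Integrability.} For \eqref{p lap diffusion int}, note that
\[
a\,u^z = t^{-\alpha-k}\int_\xi^{R(1)}|g'|^{p-1}\,d\sigma .
\]
After integrating in $x$ with the change of variables $\xi = t^{-k/d}r$, the spatial factor is $t^{k}$ times a finite constant. This leaves $\int_0^T t^{-\alpha}\,dt < \infty$, since $\alpha < 1$.

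\textbf{Distributional equation.} This is the main point. For $t>0$, set $F(t,x) := a(x,u^z(t,\cdot))u^z(t,x)$, which is the radial function $-\int_r^\infty|U_t'|^{p-2}U_t'\,d\rho$. This function is $C^1$ in $x$ away from $x=z$, Lipschitz, and has gradient $|U_t'|^{p-2}U_t'\,\frac{x-z}{r} = |\nabla u^z|^{p-2}\nabla u^z$ everywhere, including outside the support, where both sides vanish. The point $x=z$ is negligible, and the gradient is bounded, so $F(t,\cdot) \in W^{1,\infty}_{\textup{loc}}$ with $\nabla F = |\nabla u^z|^{p-2}\nabla u^z$ weakly. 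Therefore, for $\varphi\in C^2_c$,
\[
\int F\Delta\varphi\,dx = -\int |\nabla u^z|^{p-2}\nabla u^z\cdot\nabla\varphi\,dx .
\]
Since $u^z$ is a weak solution of \eqref{eq:pLaplace_eq}, this yields the weak formulation of \eqref{eq:A-FPE} on $[s,t]$ for every $s>0$. Letting $s\downarrow 0$, and using $u^z(s,\cdot)\,dx \to \delta_z$ together with the $L^1$ bound above, gives \eqref{eq:FPE-sol} with initial datum $\delta_z$. Weak continuity of $t \mapsto u^z(t,x)\,dx$ is clear from the explicit formula.

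I expect the main obstacle to be the boundary-behaviour estimate for $h$, on which continuity rests. The remaining steps are routine scaling computations.
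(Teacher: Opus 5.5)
Your proposal is correct and follows the paper's proof essentially step by step: regularity of the radial profile gives membership in $\Pscr_*$, a change of variables gives the self-similar form, scaling gives integrability, and the identity $\nabla(a u^z)=|\nabla u^z|^{p-2}\nabla u^z$ plus integration by parts gives the FPE. The differences are cosmetic: you obtain $h\to 0$ at $R(1)$ from the asymptotics $g\sim c\,\epsilon^{(p-1)/(p-2)}$ rather than l'H\^{o}pital; you bound the spatial integral using boundedness of $\int_\xi^{R(1)}|g'|^{p-1}$ rather than integration by parts; and you handle the initial datum via an explicit $s\downarrow 0$ limit, which is slightly more careful than the paper's direct appeal to the $p$-Laplace equation.
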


    \begin{proof}
        We split the proof into five steps.
        
        \smallskip
        \noindent
        \textbf{Step 1} 
        ($(u^z(t,\cdot))_{t>0}\subset \Pscr_*$ and $a(x,u^z(t,\cdot)) \geq 0$ pointwise). $u^z$ is radially symmetric around $z$, with 
        $u^z(t,x) = U(t,|x-z|)$, where $U(t,r) = t^{-k} g(t^{-\frac k d} r)$ with $g$ given in \eqref{eq:g_for_Barenblatt_pLaplace}. 
        Since $g$ is locally absolutely continuous on $(0,R(1))$ and vanishes on $[R(1),\infty)$, it follows that, for each fixed $t>0$, the map $r\mapsto U(t,r)$ is locally absolutely continuous on $(0,\infty)$ and hence belongs to $W^{1,1}_{\mathrm{loc}}(\R_+)$.
        Using \eqref{eq:uz_self_similar_pLaplace} and \eqref{eq:gprime_for_Barenblatt_pLaplace}, we have for $ r <R(t)$
        \begin{align} 
        \partial_r U(t,r) 
        &=\partial_r\bigl(t^{-k}g(t^{-\frac{k}{d}}r)\bigr) = t^{-k-\frac{k}{d}}g'(t^{-\frac{k}{d}} r) \label{eq:partial_r_U_plaplace_1} \\
        & =-t^{-k-\frac{k}{d}}\Bigl(\frac{k}{d}\,t^{-\frac{k}{d}}r\,g(t^{-\frac{k}{d}}r)\Bigr)^{\frac{1}{p-1}} 
         =-\Bigl(\frac{k}{d} \frac{1}{t} r\,U(t,r)\Bigr)^{\frac{1}{p-1}}. \label{eq:partial_r_U_plaplace_2}
        \end{align}
        This, together with the fact that $U(t,\cdot)$ is compactly supported in $[0,R(t)]$ and bounded, implies $[r \mapsto |\partial_r U (t,r)|^{p-1}] \in L^1(\R_+)$. Thus, $u^z(t,\cdot) \in \Pscr_*$.
        Using \eqref{eq:partial_r_U_plaplace_2}, we have for $|x-z| < R(t)$
        \begin{align}
        a(x,u^z(t,\cdot))
        & = \frac{1}{u^z(t,x)} \int_{|x-z|}^{R(t)} |\partial_\rho U(t,\rho)|^{p-1}  \, d \rho
         =  \frac{1}{u^z(t,x)} \frac{k}{d} \frac1t  \int^{R(t)}_{|x-z|} \rho U(t,\rho) \, d \rho, \label{eq:az_plaplace_another}
        \end{align}
        and, by definition, $a(x,u^z(t,\cdot)) = 0 $ for $|x-z| \geq R(t)$. In particular, $a(x,u^z(t,\cdot)) \geq 0$.
        
        \smallskip
        \noindent
        \textbf{Step 2} 
        (Self-similar form \eqref{eq:az_self_similar_pLaplace}-\eqref{eq:h_for_az_pLaplace_another}). Let $x\in \R^d$ such that $|x-z| < R(t)$.
        Using the self-similar form of $u^z$ \eqref{eq:uz_self_similar_pLaplace},  \eqref{eq:partial_r_U_plaplace_1}, and the change of variables $\eta\coloneqq t^{-\frac{k}{d}}\rho$, we obtain
        \begin{align}
        a(x,u^z(t,\cdot))
        &= \frac{1}{t^{-k}g(t^{-\frac{k}{d}}|x-z|)}
        \int_{R(t)}^{|x-z|}
        \Bigl|t^{-k-\frac{k}{d}}g'(t^{-\frac{k}{d}}\rho)\Bigr|^{p-2}
        \Bigl(t^{-k-\frac{k}{d}}g'(t^{-\frac{k}{d}}\rho)\Bigr)\,d\rho \nonumber\\
        &= t^{k-(p-1)\left(k+\frac{k}{d}\right)}
        \frac{1}{g(t^{-\frac{k}{d}}|x-z|)}
        \int_{R(t)}^{|x-z|}
        |g'(t^{-\frac{k}{d}}\rho)|^{p-2}g'(t^{-\frac{k}{d}}\rho)\,d\rho \\
        &= t^{k-(p-1)\left(k+\frac{k}{d}\right)+\frac{k}{d}}
        \frac{1}{g(t^{-\frac{k}{d}}|x-z|)}
        \int_{R(1)}^{t^{-\frac{k}{d}}|x-z|}
        |g'(\eta)|^{p-2}g'(\eta)\,d\eta \nonumber\\
        &= t^{-\alpha}\,h\bigl(t^{-\frac{k}{d}}|x-z|\bigr),
        \end{align}
        where $\alpha\coloneqq (p-2)(k+\frac{k}{d})=1-\frac{2}{(p-2)d+p} \in (0,1)$ for all $p>2, d\in\mathbb{N}$, and the radial profile $h$ is given by the middle expression in \eqref{eq:h_for_az_pLaplace_another}. 
        Using \eqref{eq:gprime_for_Barenblatt_pLaplace}, $h$ alternatively written as the right-hand expression in \eqref{eq:h_for_az_pLaplace_another}. This defines $h(\xi)$ for $\xi\in [0,R(1))$. We define $h(\xi)\coloneqq 0$ for  $\xi\in [R(1),+\infty)$.
    
        \smallskip
        \noindent
        \textbf{Step 3} (Integrability condition \eqref{p lap diffusion int}).
        Let $T>0$. Using \eqref{eq:uz_self_similar_pLaplace}, the self-similar form \eqref{eq:az_self_similar_pLaplace} proven in the previous step, polar coordinates, and the change of variables $\xi \coloneqq t^{-\frac{k}{d}}r$, we have
        \begin{align}
        \int_{0}^{T} \int_{\mathbb{R}^d} |a(x,u^z(t,\cdot))|\, u^z(t,x)\, d x\, d t
        & = S_{d-1} \int_{0}^{T} \int_0^{+\infty} t^{-\alpha-k}\, h \big(t^{-\tfrac{k}{d}} r\big)  g \big(t^{-\tfrac{k}{d}} r\big) r^{d-1}\, d r d t \\
        & =S_{d-1} \left(\int_0^{T} t^{-\alpha-k+k}\, d t\right) \left(\int_0^{R(1)} h(\xi) g(\xi)\, \xi^{d-1}\, d \xi \right) \\
        & = \frac{S_{d-1}}{d} \left(\int_0^{T} t^{-\alpha}\, d t\right) \left( - \int_0^{R(1)} (h(\xi) g(\xi))' \, \xi^{d}\, d \xi \right) \\
        & = \frac{S_{d-1}}{d} \frac{k}{d} \left(\int_0^{T} t^{-\alpha}\, d t\right) \left(  \int_0^{R(1)} \xi^{d+1} g(\xi) \, d \xi \right)  < + \infty,
        \end{align}
        where $S_{d-1}$ is the surface area of the unit sphere in $\mathbb{R}^d$.
        In the third step, we used integration by parts, and in the fourth step, we used the right-hand side of \eqref{eq:h_for_az_pLaplace_another}.
        The spatial integral is clearly finite, and note that the time integral is also finite since $\alpha \in (0,1)$. 
    
        \smallskip
        \noindent
        \textbf{Step 4} 
        ($u^z$ solves the FPE \eqref{eq:A-FPE} in the sense of \Cref{def:FPE-sol}). By Step 1, $a(x,u^z(t,\cdot)) \geq 0$. Step 3 implies $[(t,x)\mapsto a(x,u^z(t,\cdot))] \in L^1_{\textup{loc}}([0,\infty)\times \R^d; u^z (t,x) d x dt)$.
        It remains to verify the weak formulation of the FPE. 
        By definition of $a$, for any fixed $s>0$, we have 
        \begin{equation}
        a(x,u^z(s,\cdot))u^z(s,x)
        =
        - \int_{|x-z|}^{R(s)} 
        |\partial_\rho U(s,\rho)|^{p-2}\,
        \partial_\rho U(s,\rho)\, d\rho.
        \end{equation}
        Since $[r \mapsto |\partial_r U(s,r)|^{p-1}]\in L^1(\R_+)$ (Step 1), the function defined by the radial integral above is absolutely continuous in the radial variable. 
        Consequently, the map $x \mapsto a(x,u^z(s,\cdot))u^z(s,x)$ belongs to 
        $W^{1,1}_{\mathrm{loc}}(\R^d)$. 
        Taking gradient at a point $x$ with $0<|x-z|<R(s)$ and writing $r \coloneqq |x-z|$, we obtain, for a.e.\ such $x$,
        \begin{equation}\label{eq:ansatz_proof}
        \nabla \big(a(x,u^z(s,\cdot))u^z(s,x)\big)
        =
        |\partial_r U(s,r)|^{p-2}\,
        \partial_r U(s,r)\,
        \frac{x-z}{r}
        =
        |\nabla u^z(s,x)|^{p-2}\nabla u^z(s,x).
        \end{equation}
        Now let $\varphi \in C^2_c(\R^d)$ and fix $t >0$.
        Using integration by parts and the above, we have 
        \begin{align}
            &\int_0^t \int_{\R^d}a(x,u^z(s,\cdot)) \Delta \varphi(x) \,u^z(s,x) dx ds 
            = -\int_0^t \int_{\R^d} \nabla \big(a(x,u^z(s,\cdot))u^z(s,x)\big)\cdot\nabla\varphi(x) dx ds \\ 
            &= -\int_0^t \int_{\R^d}|\nabla u^z(s,x)|^{p-2}\nabla u^z(s,x) \cdot \nabla \varphi(x) dx ds = \int_{\R^d}\varphi(x) u^z(t,x)dx - \varphi(z).
        \end{align}
        where the final equality follows from the fact that $u^z$ solves \eqref{eq:pLaplace_eq}.    
    
        \smallskip
        \noindent
        \textbf{Step 5} ($(t,x) \mapsto a(x,u^z(t,\cdot))$ is continuous). From the self-similar form \eqref{eq:az_self_similar_pLaplace}, it is enough to show that the function $h: \R_+ \to \R_+$ is continuous. $h$ is clearly continuous on $[0,R(1))$, which can be seen by the right-hand side of \eqref{eq:h_for_az_pLaplace_another} and the fact that $g$ is continuous on $[0,R(1))$. Also recall that $h = 0$ on $(R(1),\infty)$. Thus, it remains to show the continuity of $h$ at $R(1)$. Indeed, using \eqref{eq:h_for_az_pLaplace_another}, l'H\^{o}pital rule, and \eqref{eq:gprime_for_Barenblatt_pLaplace}, we have
        \begin{align}
          \lim_{\xi\nearrow R(1)} h(\xi)
          & = \frac{k}{d} \,\lim_{r\nearrow R(1)}  \frac{1}{g(\xi)}\int_{\xi}^{R(1)} \rho\,g(\rho)\,d\rho = \left( \frac{k}{d} \right)^{\frac{p-2}{p-1}} \lim_{\xi\nearrow R(1)} \left( \xi g(\xi) \right)^{\frac{p-2}{p-1}} = 0.
        \end{align}
        Therefore, we conclude that
        $(t,x) \mapsto a(x,u^z(t,\cdot))$ is jointly continuous on $(0,+\infty)\times \mathbb{R}^d$.
    \end{proof} 

    Our main result of this section is the existence of solutions to the MV-SDE \eqref{p_lap_pure_diff_sde} with Barenblatt one-dimensional time marginal densities, as well as the construction of the corresponding nonlinear Markov core, which in this case, notably, consists of \emph{martingales}.
    \begin{proposition}\label{proposition:pure_diffusion_plaplace}
        Let $p>2$. For every $z\in \R^d$, there exists a unique weak solution $X^{1,z}$ to the MV-SDE \eqref{p_lap_pure_diff_sde} with one-dimensional time marginals given by the Barenblatt solution \eqref{eq:barenblatt_solution_plaplace}, i.e., $\mathcal{L}(X^{1,z}_t) = u^z(t,x)dx$, $t>0$, and $\mathcal{L}(X^{1,z}_0) = \delta_z$. 
        \\
        Furthermore, the corresponding family of solution path laws $\{\mathbb{V}^{1,z}\}_{z \in \R^d}$, $\mathbb{V}^{1,z} := \mathcal{L}(X^{1,z})$, is a nonlinear Markov core for $\{u^z\}_{z\in \R^d}$, which is uniquely determined by  $\{u^z\}_{z\in \R^d}$ and \eqref{p_lap_pure_diff_sde}.
    \end{proposition}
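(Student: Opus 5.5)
We will deduce the proposition from \Cref{crl:nl_markov}, with $b\equiv 0$ and $a$ as in \eqref{eq:a_pure_Ito_diffu_pLaplace_nl}. \Cref{lem:pLaplace-pure-diffusion_PDE} already provides most of the hypotheses. It shows that each $u^z$ is a distributional solution of \eqref{eq:A-FPE} in the sense of \Cref{def:FPE-sol}, with $u^z(t,\cdot)\in\Pscr_*$ and $a\ge 0$ along the solution. It also gives the integrability condition \eqref{eq:integrability_our_paper}, which here is just \eqref{p lap diffusion int}.

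Injectivity of $(t,z)\mapsto u^z(t,x)dx$ is elementary. The support of $u^z(t,\cdot)$ is $\overline{B_{R(t)}(z)}$, and $R$ is strictly increasing. Hence the measure determines $z$ (the center of the support) and $t$ (its radius). The value $t=0$ is distinguished as the only Dirac measure.

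The martingale property is then immediate from the construction. By \eqref{p lap diffusion int}, $\E\int_0^T a(X_t,u^z(t,\cdot))\,dt<\infty$. So the stochastic integral $X^{1,z}_t-z=\int_0^t\sqrt{2a}\,dW$ is a square-integrable martingale.

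The main step is the restricted linearized uniqueness. Fix $s>0$ and $z$, and set $\tilde a(t,x):=a(x,u^z(s+t,\cdot))=(s+t)^{-\alpha}h\big((s+t)^{-k/d}|x-z|\big)$. We must show that $u^z(s+\cdot,\cdot)$ is the unique solution of $\partial_t v=\Delta(\tilde a v)$, $v(0)=u^z(s)$, in the class \eqref{eq:set2}.

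By Step 5 of the lemma, $\tilde a$ is continuous and bounded on $[0,T]\times\R^d$ because $s>0$. It is nonnegative and degenerates exactly outside the ball $B_{R(s+t)}(z)$. A Lipschitz argument as in the PME case will not work: near $\xi=R(1)$, \eqref{eq:h_for_az_pLaplace_another} gives $h(\xi)\sim c\,(R(1)-\xi)$ only up to the power behaviour of $g$. Its square root, which is what the SDE sees, is not Lipschitz at the boundary.

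The plan is instead to invoke the uniqueness result of \cite{Belaribi2012} for pure-diffusion linear FPEs $\partial_t v=\Delta(\tilde a v)$. That result handles bounded, continuous (possibly degenerate) nonnegative coefficients: it gives uniqueness among measure-valued, or at least bounded-density, solutions with $L^1$-type integrability, and our class \eqref{eq:set2} clearly lies inside it. The obstacle is matching its precise hypotheses. These may require $\tilde a$ continuous, bounded, and $\tilde a v$ in a suitable $L^2$ or $L^1$ space. All of these follow here, since $v\le Cu^z(s+t)\le C s^{-k}\sup g$ and $\tilde a$ is bounded.

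If the cited result needs strict positivity off a null set or a specific form of degeneracy, we would exploit the fact that every $v$ in \eqref{eq:set2} vanishes outside $\overline{B_{R(s+t)}(z)}$, where $\tilde a>0$ in the interior. Once uniqueness is established, \Cref{crl:nl_markov} yields existence, uniqueness, and the nonlinear Markov core property together.
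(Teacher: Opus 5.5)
Your proposal is correct and follows the paper's proof. You verify the hypotheses of \Cref{crl:nl_markov} through \Cref{lem:pLaplace-pure-diffusion_PDE}, and you obtain the restricted linearized uniqueness from \cite[Theorem 3.1]{Belaribi2012} using the same ingredients as the paper: every $v$ in the class lies in $(L^1\cap L^\infty)([0,T]\times\R^d)$ and vanishes outside a compact set, while $(t,x)\mapsto a(x,u^z(s+t,\cdot))$ is continuous and bounded for $s>0$. Your explicit check, via the support radius $R(t)$, that $(t,z)\mapsto u^z(t,x)dx$ is injective supplies a hypothesis of \Cref{crl:nl_markov} that the paper leaves implicit.
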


\begin{proof}
     \Cref{lem:pLaplace-pure-diffusion_PDE} allows us to apply the superposition principle (\Cref{thm:SP-pr}) for every $z\in \R^d$ to construct a weak solution $X^{1,z}$ to the MV-SDE \eqref{p_lap_pure_diff_sde} with $\mathcal{L}(X^{1,z}_t) = u^z(t,x)dx$, $t>0$, and initial condition $\mathcal{L}(X^{1,z}_0) = \delta_z$.
     \\
     For the remaining parts of the assertion, we apply \Cref{crl:nl_markov}. To this end, let $s>0$ and consider the linear FPE
    \begin{equation}
    \label{lin p lap diff}
        \partial_t v=\Delta (a^z(s+t, x) v),\quad v(s,x) = u^z(s,x),
    \end{equation}
    where $a^z(t,x) := a(x,u^z(t,\cdot))$.
    Let $v$ be such that $ [t\mapsto v(t,x)dx] \in C(\R_+;\Pscr)$ and $ 0\leq  v(t,x) \leq Cu^z(s+t,x)\,dxdt-\text{a.s. } \forall t \geq 0 \text{ for some }C\geq 1$ independent from $t$. Then $v \in (L^1 \cap L^\infty ) ([0,T]\times \R^d)$, and for all $T>0$, there is a compact set $K = K(T) \subseteq \R^d$ such that $v(t,x) = 0$ for all $(t,x) \in [0,T]\times K^c$. Thus, and since, by \Cref{lem:pLaplace-pure-diffusion_PDE}, $(t,x)\mapsto a^z(s+t,x)$ is continuous on $[0,T] \times \R^d$, we can apply \cite[Theorem 3.1]{Belaribi2012} to conclude that any such solution $v$ coincides with $u^z_{s+} \coloneqq (u^z({s+t},\cdot))_{t\geq 0} $ a.e. Thus, \Cref{crl:nl_markov} applies and yields all remaining assertions.
\end{proof}

\subsection{Pure-drift interpretation}\label{subsec:pLaplace_drift}
Here, we recast the $p$-Laplace quation \eqref{eq:pLaplace_eq} as 
\begin{equation}\label{eq:pLaplace_drift}
        \partial_t u   =  - \nabla \cdot \Big( \frac{-|\nabla u|^{p-2} \nabla u}{u}u \Big)
    \end{equation}
i.e. as the first-order nonlinear FPE with coefficients $(a,b): \R^d \times \Pscr_{*} \to \R_+ \times \R^d$,
\begin{equation}\label{B}
    a(x,u) \coloneqq 0, \qquad b(x,u) = -\frac{|\nabla u(x)|^{p-2} \nabla u(x)}{u(x)},
\end{equation}
and $b(x,u) := 0$ on $\{u=0\}$, defined on $\Pscr_* = \Pscr_{\textup{ac}} \cap W^{1,1}_{\textup{loc}}(\R^d)$. The associated MV-SDE is the distribution-dependent ODE
\begin{equation}\label{eq:DDSDE-pL}
    \begin{dcases}
        dX_t = -\frac{|\nabla u(t,X_t)|^{p-2} \nabla u(t,X_t)}{u(t,X_t)}dt, \\
        \mathcal{L}(X_t) = u(t,x)dx, \,\,\, \forall t>0.
    \end{dcases}
\end{equation}
Specifically, for $u = u^z$ given by the Barenblatt solution \eqref{eq:barenblatt_solution_plaplace}, a direct calculation (included in the proof below) gives
\begin{equation}\label{eqeqeq}
    b(x,u^z(t,\cdot)) = \frac{k}{d}\frac{x-z}{t} \mathds{1}_{B_{R(t)}(z)}(x),
\end{equation}
where $R(t)$ is given in \eqref{eq:support_plaplace}.
Then, \eqref{eq:DDSDE-pL} turns into
\begin{equation}\label{eq:pL-special}
    \begin{dcases}
    dX_t = \frac{k}{d}\frac{X_t-z}{t}\mathds{1}_{B_{R(t)}(z)}(X_t)dt, \\
    \mathcal{L}(X_t) = u^z(t,x)dx, \,\,\, \forall t>0.
    \end{dcases}
\end{equation}
An explicit solution to \eqref{eq:pL-special} with $\mathcal{L}(X_0) = \delta_z$ is given by $(X^{0,z}_t)_{t\geq 0}$,
\begin{equation}\label{eq:solutions_ODE_plap}
    X^{0,z}_t \coloneqq z+\eta \, t^{\frac{k}{d}}, 
\end{equation}
where $\eta$ is an $\R^d$-valued random variable  with $\mathcal{L}(\eta)=u^{0}(1,x) dx$.

\begin{proposition}\label{prop:pL-drift}
    Let $p>2$. For every $z\in \R^d$, $X^{0,z}$ from \eqref{eq:solutions_ODE_plap} is the unique weak solution to the MV-SDE \eqref{eq:DDSDE-pL} with one-dimensional time marginals given by the Barenblatt solution \eqref{eq:barenblatt_solution_plaplace}, i.e., $\mathcal{L}(X^{0,z}_t) = u^z(t,x)dx$, $t >0$, and $\mathcal{L}(X^{0,z}_0) = \delta_z$.
    \\
    Furthermore, the corresponding family of solution path laws $\{\mathbb{V}^{0,z}\}_{z\in \R^d}$, $\mathbb{V}^{0,z} := \mathcal{L}(X^{0,z})$, is a nonlinear Markov core for $\{u^z\}_{z\in \R^d}$, which is uniquely determined by $\{u^z\}_{z\in \R^d}$ and \eqref{eq:DDSDE-pL}.
\end{proposition}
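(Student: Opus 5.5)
The plan is to mirror the proof of \Cref{prop2} and apply \Cref{crl:nl_markov}. It has three parts: compute the drift explicitly, verify integrability and the FPE property, and then establish restricted linearized uniqueness.

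\textbf{Explicit drift.} First I derive \eqref{eqeqeq}. For $0<r:=|x-z|<R(t)$, write $u^z(t,x)=U(t,r)$. By \eqref{eq:partial_r_U_plaplace_2},
$$\partial_rU=-\big(\tfrac kd\tfrac1t rU\big)^{1/(p-1)},\qquad\text{so}\qquad |\partial_rU|^{p-2}\partial_rU=-\tfrac kd\tfrac rt\,U.$$
Hence $|\nabla u^z|^{p-2}\nabla u^z=-\frac kd\frac{x-z}{t}u^z$ on $B_{R(t)}(z)\setminus\{z\}$. Dividing by $u^z>0$ gives \eqref{eqeqeq}; outside the ball the drift is $0$ by convention.

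\textbf{Integrability and the FPE property.} The integrability condition \eqref{eq:integrability_our_paper} is computed as in \eqref{eq:PME_pure_drift_integrability}. Using \eqref{eq:uz_self_similar_pLaplace} and $\xi=t^{-k/d}r$,
$$\int_0^T\!\!\int_{\R^d}|b|u^z\,dx\,dt=\tfrac kd S_{d-1}\int_0^T t^{-(1-\frac kd)}\,dt\int_0^{R(1)}\xi^d g(\xi)\,d\xi<\infty,$$
and this is finite since $k/d\in(0,1)$. Next I show that $u^z$ solves \eqref{eq:pLaplace_drift} distributionally. One route is to use the fact that $u^z$ solves \eqref{eq:pLaplace_eq} weakly, with flux $|\nabla u^z|^{p-2}\nabla u^z\in L^1_{\textup{loc}}(\R_+;L^1)$ (which is exactly the integral just computed), and to rewrite $|\nabla u|^{p-2}\nabla u=\frac{|\nabla u|^{p-2}\nabla u}{u}u$, valid $dx$-a.e. because the flux vanishes on $\{u=0\}$. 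Alternatively, I can apply \Cref{lem:many-PDEs_new} starting from the pure-diffusion interpretation of \Cref{lem:pLaplace-pure-diffusion_PDE} with $f=-a(x,u)u$, checking $f\in L^1_{\textup{loc}}(\R_+;W^{1,1})$ via Step 3 of that lemma and the gradient identity \eqref{eq:ansatz_proof}. I also need $u^z(t,\cdot)\in W^{1,1}_{\textup{loc}}$, which holds since $U(t,\cdot)$ is absolutely continuous and compactly supported. Finally, $X^{0,z}_t=z+\eta t^{k/d}$ solves \eqref{eq:pL-special} with the right marginals by the self-similar form, but existence also follows from \Cref{thm:SP-pr}, and uniqueness will identify the two solutions. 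Injectivity of $(t,z)\mapsto u^z(t,x)dx$ is clear: the center of mass determines $z$ and the support radius $R(t)$ determines $t$.

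\textbf{Restricted linearized uniqueness.} For $s>0$, I need uniqueness of
$$\partial_tv=-\nabla\cdot\Big(\tfrac kd\tfrac{x-z}{s+t}\mathds 1_{B_{R(s+t)}(z)}v\Big),\qquad v(0)=u^z(s,\cdot),$$
in the class \eqref{eq:set2}. This is the only potentially delicate step, because of the discontinuous cutoff, and I handle it exactly as in \Cref{prop2}. Any $v\le Cu^z(s+t,\cdot)$ vanishes a.e. outside $B_{R(s+t)}(z)$, so $v$ also solves the continuity equation with the globally Lipschitz drift $\frac kd\frac{x-z}{s+t}$, whose Lipschitz constant is uniform in $t$ since $s>0$. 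Classical uniqueness of measure-valued solutions for Lipschitz drifts then gives $v=u^z_{s+}$. \Cref{crl:nl_markov} now yields the unique solution path laws and the nonlinear Markov core property; since $\mathcal L(X^{0,z})$ is a solution path law, it is the unique one.
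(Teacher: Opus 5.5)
Your proposal is correct and follows the paper's proof essentially step by step: it computes the explicit drift from the self-similar form and checks integrability with the substitution $\xi=t^{-k/d}r$. It then obtains restricted linearized uniqueness by observing that any $v\le Cu^z(s+t,\cdot)$ vanishes outside $B_{R(s+t)}(z)$ and therefore solves the continuity equation with the Lipschitz drift $\frac kd\frac{x-z}{s+t}$, before concluding with \Cref{crl:nl_markov}. Your additional justifications fill in points the paper only calls clear or leaves implicit: that $u^z$ solves \eqref{eq:pLaplace_drift} because the flux vanishes on $\{u^z=0\}$, and that $(t,z)\mapsto u^z(t,x)dx$ is injective via center of mass and support radius.
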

\begin{remark}\label{rem:similar-ODEs}
    Comparing \eqref{eq:pL-special} with the ODEs associated to the pure-drift formulations of the heat equation and PME, i.e. \eqref{eq:HE-ODE_simplification} and \eqref{eq:PME-special}, respectively, we note that these ODEs coincide, up to their respective growth factor $\frac 1 2$ and $\frac k d$, and the cut off $\mathds{1}_{B(z)_{R(t)}}$ in \eqref{eq:pL-special} and \eqref{eq:PME-special}.
    The scaling factors stem from the self-similar forms of $u^z$ (\eqref{eq:uz_self_similar_heat}, \eqref{eq:uz_self_similar_porous}, \eqref{eq:uz_self_similar_pLaplace}), and the cut off corresponds to finite propagation speed in $p$-Laplace and PME cases, in contrast to the infinite propagation speed of the heat equation. The ODE solutions are shown in \Cref{fig:simulations-grid} (first row).
\end{remark}

\begin{proof}[Proof of \Cref{prop:pL-drift}]
The proof proceeds analogously to the proof of \Cref{prop2} for PME, i.e., all assertions will follow from \Cref{crl:nl_markov} once its conditions are verified.

First, we derive the explicit formula of the drift \eqref{eqeqeq}. By taking the gradient of $u^z(t,x)$ in \eqref{eq:uz_self_similar_pLaplace} at a point $x$ with $0<|x-z|<R(t)$, writing $r \coloneqq |x-z|$, and using \eqref{eq:gprime_for_Barenblatt_pLaplace}, we obtain 
    \begin{align}
        |\nabla u^z|^{p-2} \nabla u^z (t,x)
        & = - t^{-(p-1)(k+\frac{k}{d})}   |g'(t^{-\frac{k}{d}} r)|^{p-1} \frac{x-z}{r} \\
        & = -  \frac{k}{d}  t^{-(p-1)(k+\frac{k}{d}) - \frac{k}{d}}   g(t^{-\frac{k}{d}} r) (x-z)  = -  \frac{k}{d} u^z(t,x) \frac{ x-z}{t}, 
    \end{align}
    which, after dividing by $u(t,x)^z >0$, yields \eqref{eqeqeq}.

    Next, we check the integrability condition. For any $T > 0$, using \eqref{eqeqeq}, \eqref{eq:uz_self_similar_pLaplace}, and the change of variables $\xi \coloneqq t^{-\frac{k}{d}}r$, we have
    \begin{align}
        \int_{0}^{T} \int_{\mathbb{R}^d} |b(x,u^z (t,\cdot))| u^z (t,x) dx dt & = \frac{k}{d} S_{d-1} \int_{0}^{T}t^{-1-k}  \int_{0}^{+\infty} r g \big(t^{-\frac{k}{d}} r\big) r^{d-1} dr d t \\
        & = \frac{k}{d} S_{d-1} \left(\int_{0}^{T}t^{-(1-\frac{k}{d}) } d t \right) \left( \int_{0}^{R(1)} \xi^d  g (\xi ) d\xi   \right) < + \infty, \label{eq:proof_integ_b_plap}
     \end{align}
     where $S_{d-1}$ denotes the surface area of the unit sphere in $\R^d$.
     The time integral is finite since $(1-\frac{k}{d}) \in (0,1)$.
     The spatial integral is also finite, since $g$, as a continuous function, is bounded on $[0,R(1)]$. It is clear that solutions to \eqref{eq:pLaplace_eq} (in particular $u^z$) are also solutions to \eqref{eq:pLaplace_drift}. 

    It remains to prove that, for each $s>0$ and $z\in \R^d$, the linear FPE
\begin{equation}\label{eqg}
    \partial_t v(t,x) = -\nabla \cdot \big(b(x,u^z(s+t,\cdot)) v(t,x)\big),\quad v(0,x) = u^z(s,x),
\end{equation}
has a unique distributional solution in the class
 \begin{equation}\label{eqh}
      \{v: [t\mapsto v(t,x)dx] \in C(\R_+;\Pscr), v(t,x) \leq Cu^z(s+t,x) \text{ a.s. for some }C\geq 1\}.
 \end{equation}
 For this we argue as in the proof of \Cref{prop2}: By \eqref{eqeqeq} and since $v(t,x) = 0$ a.s. on $B_{R(s+t)}(z)^c$ for all $t\geq 0$ for every solution $v$ to \eqref{eqg} in the class \eqref{eqh}, it follows that every solution from \eqref{eqh} also solves the first-order linear FPE with coefficient $(t,x) \mapsto \frac k d \frac{x-z}{t+s}$ and initial condition $u^z(s,x)$. But since this coefficient is Lipschitz continuous in $x$ with Lipschitz constant bounded from above independently of $t\geq 0$ (here we use $s>0$), it follows that this equation has a unique distributional solution. Thus, uniqueness for \eqref{eqg} in the class \eqref{eqh} follows. Consequently, \Cref{crl:nl_markov} applies and yields all assertions.    
\end{proof}

Clearly, the nonlinear Markov cores $\{\mathbb{V}^{1,z}\}_{z\in \R^d}$ and $\{\mathbb{V}^{0,z}\}_{z\in \R^d}$ constructed in \Cref{proposition:pure_diffusion_plaplace} and \Cref{prop:pL-drift}, respectively, consist of mutually singular path laws, i.e., the path measures $\mathbb{V}^{1,z}$ and $\mathbb{V}^{0,z}$ are mutually singular for each $z\in \R^d$. This is readily seen, since by construction, $\mathbb{V}^{0,z}$ is concentrated on 
$$G:= \{\gamma \in \Cscr \,|\, \gamma: t \mapsto z + c t^{\frac k d}, c \in \R^d\} \subseteq \Cscr \cap C^1((0,\infty);\R^d),$$ while clearly $\mathbb{V}^{1,z}(G) = 0$. Indeed $\mathbb{V}^{1,z}(G) >0$ would contradict the fact that $\mathbb{V}^{1,z}$-a.e. path is at most $\frac 1 2$-H\"{o}lder continuous.

\begin{remark}[Interpolation between pure-diffusion and pure-drift interpretations of $p$-Laplace]\label{rem:interpolation-pL}
   Our approach to interpolating between the pure-drift and pure-diffusion interpretations (\Cref{subsect:step1}) can be used to solve, for every interpolation parameter $\beta \in  [0, \infty)$, the MV-SDE
    \begin{equation}\label{eq:pL-beta-SDE}
        \begin{dcases}
            dX_t = (1-\beta) b(X_t,u(t,\cdot) ) dt  + \sqrt{\beta} \sqrt{2 a(X_t,u(t,\cdot))} \d W_t, \\
            \mathcal{L}(X_t) = u(t,x)dx,\,\,\, \forall t >0,
        \end{dcases}
    \end{equation}
    where $a$ and $b$ are given in \eqref{eq:a_pure_Ito_diffu_pLaplace_nl} and \eqref{B}, respectively, defined on the intersection of the corresponding domains. Of course, $u^z$ solves the corresponding interpolated FPE, and since
    $$\int_0^T \int_{\R^d}\big( |b(x,u^z(t,\cdot))| + |a(x,u^z(t,\cdot))| \big) u^z(t,x) dx dt < \infty, \quad \forall T>0,$$
    which follows from \eqref{p lap diffusion int} and \eqref{eq:proof_integ_b_plap}, \Cref{thm:SP-pr} applies. Moreover, one can check that in this case (in analogy with \Cref{prop_porous_beta} for the PME), all conditions of \Cref{crl:nl_markov} hold and, therefore, one constructs for every $\beta \geq 0$, a nonlinear Markov core $\{\mathbb{V}^{\beta,z}\}_{z\in \R^d}$ for $\{u^z\}_{z\in \R^d}$, consisting of the unique solution path laws to \eqref{eq:pL-beta-SDE} with one-dimensional time marginal densities $u^z(t,\cdot)$. We leave the details to the reader. 
\end{remark}

\subsection{Further interpretations}\label{eq:further_interpretation_plaplace}
\subsubsection{$p$-Brownian motion}
In \cite{BarbuRehmeierRockner2024}, Barbu, R\"{o}ckner and the last named author of the present paper constructed a nonlinear Markov process with one-dimensional time marginal densities given by the Barenblatt solutions to the $p$-Laplace equation \eqref{eq:pLaplace_eq} by recasting it as the nonlinear FPE
\begin{equation}
    \partial_t u = \Delta \big(a (x,u) u\big) -  \nabla \cdot \big(\nabla a(x,u) u\big),
\end{equation}
where $a: \R^d \times \Pscr_{*} \to \R_+$ is given by 
$$a(x, u) =  |\nabla u |^{p-2} (x)$$
defined on a suitable domain $\Pscr_*$.
Observe that this interpretation gives rise to a special gradient structure on the coefficients $(a,b)$, namely $b = \nabla a$.
The corresponding nonlinear Markov process, which is called \emph{p-Brownian motion} in the aforementioned paper, consists of the unique solution path laws to the associated MV-SDE
 \begin{equation}
        \begin{dcases}
            dX_t = \nabla \big(|\nabla u(t,X_t)|^{p-2} \big)dt + \sqrt{2}|\nabla u(t,X_t)|^{\frac{p-2}{2}} \, dW_t, \\
            \mathcal{L}(X_t) = u(t,x)dx,\quad t>0,
        \end{dcases}
    \end{equation}
and coincides with standard Brownian motion for $p=2$. While $p$-Brownian motion was, at the time of \cite{BarbuRehmeierRockner2024}, the first probabilistic Markovian counterpart of the $p$-Laplace Barenblatt solutions, the results of the present paper show that there exists a large class of such counterparts (each associated with an MV-SDE), and thus raise the question of what singles out $p$-Brownian motion as the ``natural'' one.  

\subsubsection{Pure-Stratonovich-diffusion interpretation}\label{subsubsec:pL-Stratono}
It is a natural question whether there exists a pure Stratonovich diffusion interpretation of the $p$-Laplace equation (similarly to \Cref{subsect:PME-Stratono} for the PME). As pointed out in Appendix \ref{subsec:theta_interpretations}, this reduces to finding coefficients $(a,b)$ satisfying the gradient relation $b = \tfrac{1}{2} \nabla a $. This amounts to recasting \eqref{eq:pLaplace_eq} as a nonlinear FPE
\begin{equation}\label{eqt}
    \partial_t u = \Delta \big( a(x,u) u)\big) - \frac 1 2 \nabla \cdot \big(\nabla a  (x,u)u\big).
\end{equation}
Similarly to \Cref{subsec:pLaplace_diffusion}, one can show that a choice for $a:\R^d \times \Pscr_*\to \R$ is given by
\begin{equation}
    a(x,u) := - \frac{2}{u(x)^{2}}  \int_{|x-z|}^{+\infty} |U'(\rho)|^{p-2}U'(\rho) U(\rho) \, d \rho ,
 \end{equation}
   on $\{u>0\}$, and $a(x,u) \coloneqq 0$ on $\{u=0\}$, defined on the domain
   \begin{align}
       \Pscr_* \coloneqq \Big\{u \in \Pscr_{{\textup{ac}}}: \,\,\, \exists z \in \R^d, \, U:\R_+\to \R_+ \text{ such that } u(x) = U(|x-z|), \\
       U \in W^{1,1}_{\textup{loc}}(\R_+), \, |U'|^{p-1} U^{\frac{1}{\theta}-1}\in L^1(\R_+), \,  a(\cdot,u) \in W^{1,1}_{\textup{loc}}(\R^d) \Big\}. \label{eq:P_star_generalized_diffusion_pLaplace}
   \end{align}
   The corresponding MV-SDE in this case is given by
 \begin{equation}
\begin{dcases}
    dX_t = \frac 1 2 \nabla a(X_t,u(t))dt + \sqrt{2a(X_t,u(t)},\quad t>0
    \\
    \mathcal{L}(X_t) = u(t,x)dx,\,\,\, t >0.
\end{dcases}
\end{equation}  
This MV-SDE is equivalent, at least formally, to the pure-Stratonovich diffusion MV-SDE
\begin{equation}
\begin{dcases}
    dX_t = \sqrt{2a(X_t,u(t))}\circ dW_t,\quad t>0
    \\
    \mathcal{L}(X_t) = u(t,x)dx,\,\,\, t >0.
\end{dcases}
\end{equation}
However, the rigorous establishment of this as a Stratonovich SDE is delicate and not addressed here. We comment on this difficulty in Appendix \ref{subsec:theta_interpretations}.

\subsubsection{Generalized pure-diffusion interpretation}\label{subsec:pLaplace_gen_diffusion}
Finally, we mention a family of pure-diffusion interpretations of \eqref{eq:pLaplace_eq} comprising $p$-Brownian motion as well as the pure Stratonovich interpretation.
This family is given by nonlinear FPEs whose coefficients $(a,b)$ satisfy a generalized gradient structure of the form $b = (1-\theta) \nabla a $, depending on a parameter $\theta \in [0,1]$,
    \begin{equation}\label{eq:pLaplace_gen_diffusion}
        \partial_t u =  \Delta \big(a_\theta (x,u) u\big) -  \nabla \cdot \big((1-\theta)\nabla a_\theta(x,u) u\big),
    \end{equation}
For $\theta = 0$, $a_0(x,u) := |\nabla u(x)|^{p-2}$, while for $\theta \in (0,1]$, similarly to \Cref{subsec:pLaplace_diffusion}, one can show that a choice for $a_\theta:\R^d \times \Pscr_*\to \R$ is given by
    \begin{align}
         \label{eq:a_generalized_diffusion_pLaplace}
          a_\theta(x,u) &\coloneqq - \frac{1}{\theta u(x)^{\frac{1}{\theta}}}  \int_{|x-z|}^{+\infty} |U'(\rho)|^{p-2}U'(\rho) U(\rho)^{\frac{1}{\theta}-1} \, d \rho ,
    \end{align}
   on $\{u>0\}$, and $a_\theta(x,u) := 0$ on $\{u=0\}$, defined on the domain
   \begin{align}
       \Pscr_* \coloneqq \Big\{u \in \Pscr_{{\textup{ac}}}: \,\,\, \exists z \in \R^d, \, U:\R_+\to \R_+ \text{ such that } u(x) = U(|x-z|), \\
       U \in W^{1,1}_{\textup{loc}}(\R_+), \, |U'|^{p-1} U^{\frac{1}{\theta}-1}\in L^1(\R_+), \,  a_\theta(\cdot,u) \in W^{1,1}_{\textup{loc}}(\R^d) \Big\}. \label{eq:P_star_generalized_diffusion_pLaplace}
   \end{align}
    One retrieves the pure-diffusion case from \Cref{subsec:pLaplace_diffusion} via $\theta =1$, and the pure-Stratonovich diffusion case from \Cref{subsubsec:pL-Stratono} via $\theta = \frac 1 2$, and the $p$-Brownian motion from \cite{BarbuRehmeierRockner2024} via $\theta = 0$. The corresponding MV-SDEs to \eqref{eq:pLaplace_gen_diffusion} are
    \begin{equation}\label{eq:MVSDE_gen_diffus_pLaplace}
        \begin{dcases}
            dX_t = (1-\theta)\nabla a_\theta(X_t,u(t,\cdot)) dt + \sqrt{2a_\theta(X_t,u(t,\cdot))} \, dW_t, \\
            \mathcal{L}(X_t) = u(t,x)dx,\quad t>0,
        \end{dcases}
    \end{equation}
    provided $a_\theta(x,u(t,\cdot)) \geq 0$ pointwise along the solution.
    Formally, this MV-SDE is equivalent to the pure-diffusion equation
    \begin{equation}\label{eq:MVSDE_gen_diffus_pLaplace_circ}
    \begin{dcases}
        dX_t = \sqrt{2a_\theta(X_t,u(t,\cdot))} \circ^\theta  dW_t,\\
            \mathcal{L}(X_t) = u(t,x)dx,\quad t>0,
        \end{dcases}
    \end{equation}   
    where $\circ^\theta$ denotes $\theta$-stochastic integration (see Appendix \ref{subsec:theta_Integration}). In particular, $\theta= 0$ and $\theta = 1$ correspond to non-anticipating (It\^{o}) and ``fully anticipating'' stochastic integrals, respectively. 
    The (formal) equivalence between \eqref{eq:MVSDE_gen_diffus_pLaplace} and \eqref{eq:MVSDE_gen_diffus_pLaplace_circ}, as well as the reason why a rigorous justification of their equivalence seems difficult, is discussed in Appendix \ref{subsec:theta_Integration}. 
    
    Potentially interesting to us is that, at least formally, $p$-Brownian motion as constructed in \cite{BarbuRehmeierRockner2024} should solve the pure-diffusion MV-SDE \eqref{eq:MVSDE_gen_diffus_pLaplace_circ} for $\theta = 0$, i.e., with ``fully anticipating'' stochastic integral.

 \begin{remark}[$\theta$-pure-diffusion interpretation of PME]
    In the case of the porous medium equation, all $\theta$-pure-diffusion interpretations lie on the $\beta$-interpolation spectrum between pure-drift and pure-diffusion cases. We recover them by choosing $\beta = \frac{m}{1+\theta (m-1)}$ and thus
    \begin{equation}
    a=\frac{m}{1+\theta (m-1)}u^{m-1}, \qquad b = (1-\theta) \nabla a.
    \end{equation} 
    Again, $\theta=1/2$ coincides with the Stratonovich formulation, $\theta=1$ with the It\^{o} formulation. Hence, the process that corresponds to $p$-BM and thus a purely anticipatory pure diffusion interpretation would be $\theta=0$, meaning the one coming from the interpretation 
    \begin{equation}
    a=mu^{m-1}, \qquad b=\nabla a.
    \end{equation} 
\end{remark}

\section{Identifying McKean--Vlasov SDEs with geometries}\label{sec:SPDE}
In the previous sections, we constructed different MV-SDEs and their associated nonlinear Markov processes whose marginals coincide with a prescribed family of probability densities solving a PDE. Considering the MV-SDE as the mean field limit of a particle system, this property can be viewed as a law of large numbers for the latter. Taking into account fluctuations on top of the law of large numbers, one naturally arrives at a corresponding central limit theorem capturing mesoscopic scales of the particle system: (generalized) Dean--Kawasaki equations.

An alternative way of deriving the Dean--Kawasaki type equation is via Otto calculus: We interpret a PDE as a gradient flow with respect to an underlying metric on the formal infinite-dimensional manifold of probability measures. Perturbing the corresponding gradient flow structure with a space-time white noise that respects the geometry, we also arrive at a (generalized) Dean--Kawasaki equation.

\subsection{General strategy}
We aim to identify MV-SDEs associated with different interpretations of FPEs with a geometric interpretation of the latter as follows: For a MV-SDE, we first construct an associated particle system and derive its associated generalized Dean--Kawasaki equation. Then we provide geometric interpretations of the associated FPE and derive a generalized Dean--Kawasaki equation again. If the noise term of these equations (asymptotically) coincides, the mesoscopic fluctuations captured in the MV-SDE coincide with the ones suggested by a geometric interpretation. 

We perform this program first for the heat equation as an illustrative example and then discuss the general strategy. In \Cref{sec:DK-2}, we apply these considerations to the porous medium equation. 

We stress that all derivations in the present section are entirely formal, by which we hope to initiate a new perspective on the nonlinear Markov processes constructed earlier.

\subsubsection{The heat equation as an illustrative example}
To illustrate our strategy, we consider the heat equation 
\begin{equation}
    \label{heat spde}
    \partial_t\rho=\Delta \rho,
\end{equation}
and recall the following classical derivations
\begin{itemize}
    \item From a particle system of i.i.d. Brownian motion to the Dean--Kawasaki equation;
    \item From a Wasserstein gradient flow interpretation of \eqref{heat spde} to the Dean--Kawasaki equation.
\end{itemize}
In particular, we stress at this point that independent Brownian particles are consistent with a geometric interpretation of \eqref{heat spde} as a gradient flow in the sense that they imply the same mesoscopic SPDE. 

\begin{figure}
\centering
\scalebox{1}{
\begin{tikzpicture}

\node[draw, rounded corners, minimum height=1.6cm, align=center, font=\footnotesize]
(geo) at (0,0) {A geometry \\ on $\Pscr (\R^d)$ \\ + an energy \\ functional};

\node[draw, rounded corners, minimum height=1.6cm, align=center, font=\footnotesize] 
(dk) at (4.6,0) {Generalized\\ Dean--Kawasaki\\ equation};

\node[draw, dashed, rounded corners, minimum height=1.6cm, align=center, font=\footnotesize] 
(ps) at (8.9,0) {Particle\\system};

\node[draw, rounded corners, minimum height=1.6cm, align=center, font=\footnotesize] 
(mv) at (13.5,0) {MV-SDE};

\draw[->, thick] (geo) -- 
node[above, align=center, font=\footnotesize]
{Perturbation of\\gradient flow\\that respects\\ the geometry} 
(dk);

\draw[->, thick] (ps) -- 
node[above, align=center, font=\footnotesize]
{Sec.\,\ref{McKean to DK} Step (II)\\ derivation in the \\sense of Dean \\ \,} 
(dk);

\draw[->, thick] ([yshift=0.3cm]mv.west) -- 
node[above, align=center, font=\footnotesize]
{Sec.\,\ref{McKean to DK} Step (I)\\ a freezing procedure \\ \, } 
([yshift=0.3cm]ps.east);

\draw[->, thick] ([yshift=-0.3cm]ps.east) -- 
node[below, align=center, font=\footnotesize]
{\, \\ mean-field limit} 
([yshift=-0.3cm]mv.west);

\end{tikzpicture}
}
\captionsetup{font=footnotesize}
\caption{Framework of Section 8.}
\label{fig:DK}
\end{figure}
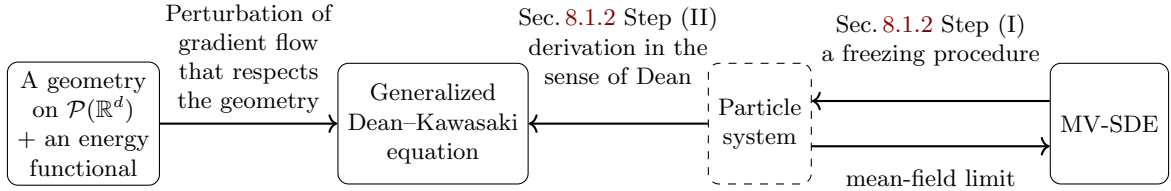

\subsubsection*{From particle system to Dean--Kawasaki equation}
A reference for this derivation is, for example, \cite[Section 1.1]{Cornalba2019}. See \cite{Dean1996} for the original paper by Dean. For independent Brownian motions $(W^k)_k$, consider independent particle motion modeled by
\[
dX^k_t=\sqrt{2}dW^k_t, \qquad X^k_0=x.
\]
We study the evolution of the empirical measure 
\[
\rho^N_t:=\frac{1}{N}\sum_{k=1}^N\delta_{X^k_t}.
\]
To this end, let $\phi\in C^\infty_c(\R^d)$ be a test function. Using It\^{o}'s formula and summation over particles, we obtain (using the short-hand notation $\langle \varphi, \mu\rangle :=\int_{\R^d} \varphi \,d\mu$)
\begin{equation}
    \langle \rho^N_t, \phi\rangle=\langle \rho^N_0, \phi\rangle+\int_0^t\langle \rho^N_s, \Delta \phi\rangle ds+\sqrt{2}M^N_t,
    \label{pde exact}
\end{equation}
where
\[
M^N_t=\frac{1}{N}\int_0^t\sum_{k=1}^N\nabla \phi(X^k_s)dW_s^k.
\]
Note that \eqref{pde exact} cannot be written in closed form as an equation for $\rho^N$ due to the noise term. To overcome this (that is, to treat \eqref{pde exact} as a closed SPDE for $\rho^N$), \cite{Dean1996} suggests to replace $M^N$ by another martingale preserving the quadratic variation. We observe 
\[
[ M^N]_t=\frac{1}{N^2}\sum_{k=1}^N\int_0^t |\nabla \phi(X^k_s)|^2ds=\frac{1}{N}\int_0^t \langle \rho^N_s, |\nabla \phi|^2\rangle ds.
\]
Formally, the process
\[
\tilde{M}^N_t \coloneqq \frac{1}{\sqrt{N}}\int_0^t \langle \sqrt{\rho^N}\xi_s, \nabla \phi\rangle ds
\]
has the same covariance structure as $M^N_t$,
where $\xi$ denotes vector valued space-time white noise. Hence, replacing $M^N$ in \eqref{pde exact} by $\tilde{M}^N$, we obtain the weak formulation of the Dean--Kawasaki equation 
\begin{equation}
    \label{Dean Kawasaki}
    \partial_t \rho=\Delta \rho +\sqrt{\frac{2}{N}}\nabla \cdot(\sqrt{\rho}\xi), \qquad \rho_0=\delta_x.
\end{equation}
It was observed in \cite{Konarovskyi2019} that \eqref{Dean Kawasaki} is either ill-posed or admits only trivial solutions. From this observation, several ``regularized versions of'' \eqref{Dean Kawasaki} have been considered in the literature, e.g \cite{Cornalba2019, Cornalba2021, Fehrman2024, Fehrman2025}.

\subsubsection*{From Wasserstein geometry to Dean--Kawasaki equation}
\label{Wasserstein geometry}
We recall how the Dean-Kawasaki equation can be derived from a geometry on Wasserstein space in the framework of Otto calculus \cite{Otto2001}. We refer to \cite[Chapter 4.2]{Figalli2021}, \cite[Chapter 18]{Ambrosio2024} for general introductions. We take as a formal manifold
\begin{equation}
\mathcal{M}\coloneqq \mathcal{P}_2 (\R^d) :=\Bigl\{ \mu \in \mathcal{P} \big| \int |x|^2\mu(dx)<+\infty \Bigr\},
\end{equation}
and formal tangent plane 
\begin{equation}\label{eq:TrhoM_1}
T_\rho\mathcal{M} \coloneqq \Bigl\{ s: \R^d \to \R \, \big| \, s+2\nabla \cdot(\rho v)=0 \ \text{for}\ \rho\in \mathcal{M}\  \text{and some velocity field}\ v\Bigr\}.
\end{equation}
For any $s\in T_\rho \mathcal{M}$, assume there is a unique (modulo constants) potential $\phi$ such that 
\begin{equation}
    \label{direction to potential}
    s=-2\nabla \cdot(\rho\nabla \phi).
\end{equation}
For $s_1, s_2\in T_\rho \mathcal{M}$, we can therefore define a scalar product via
\[
\langle s_1, s_2\rangle_\rho:=2\int \nabla \phi_1\cdot \nabla \phi_2 \rho dx,
\]
where $\phi_i$ is the potential retrieved from $s_i$ via \eqref{direction to potential}. We have therefore constructed a formal Riemannian manifold. It can be shown that the distance between two of its points $\rho_1, \rho_2$ computed by means of the induced geodesics coincides --- up to the constant $\sqrt2$ --- with the Wasserstein metric $\mathcal{W}_2(\rho_1, \rho_2)$ via the Benamou--Brenier formula \cite{BenamouBrenier2000}; see e.g. \cite[Chapter 17]{Ambrosio2024}.

Consider now the entropy functional
\[
E(\rho):=\frac{1}{2}\int \rho\log(\rho)dx.
\]
Using the formal Riemannian structure above, on can show (see e.g. \cite[Example 4.2.3]{Figalli2021})
\[
\mbox{grad}_{\mathcal{W}_2}E(\rho)=-\Delta \rho.
\]
Thus, the heat equation can be interpreted as the gradient flow
\[
\partial_t \rho =-\mbox{grad}_{\mathcal{W}_2}E(\rho).
\]
Based on this geometric interpretation of the heat equation, there is now a canonical way to introduce noise: If we want to move a probability measure along direction $s$, we do so by means of the continuity equation 
\[
s+2\nabla \cdot(\rho v)=0,
\]
and we choose the velocity field to be of the form $v=\nabla \phi$, which has minimal $L^2(\rho)$-norm among all such velocity fields (see \cite[Sections 8.3 and 8.4]{AGS2008GFs} for more details). Suppose now that at each point $\rho$ of the manifold, we perturb the optimal velocity field $v$ by a ``Brownian motion on the tangent space''. Since the tangent space at $\rho$ is a Hilbert space, we can think of ``Brownian motion on $T_\rho \mathcal{M}$'' as an isonormal Gaussian process \cite[Definition 1.1.1]{nualart2006malliavin}, i.e., a family $W=\{W(s): \ s\in T_\rho \mathcal{M}\}$ satisfying
\begin{equation}
    \label{isonormal}
    \mathbb E[W(s_1)W(s_2)]=\langle s_1, s_2\rangle_\rho.
\end{equation}
Suppose now that there exists $\eta\in T_\rho \mathcal{M}$ such that $W(s_i)=\langle \eta, s_i\rangle_\rho$  and denote by $\phi_\eta$ the associated potential. Then
\[
\mathbb E[W(s_1)W(s_2)]=4\mathbb E \left[\int_{\R^d}\nabla \phi_1 \cdot \nabla \phi_\eta \rho dx \int_{\R^d}\nabla \phi_2 \cdot \nabla \phi_\eta \rho dy \right],
\]
as well as 
\[
\langle s_1, s_2\rangle_\rho=2\int_{\R^d} \nabla \phi_1\cdot \nabla \phi_2 \rho dx.
\]
Hence, to recover \eqref{isonormal}, we need
$
\nabla \phi_\eta=\frac{1}{\sqrt{2\rho}}\xi,
$
where $\xi$ denotes vector valued space time white noise. Adding this additional stochastic perturbation to the velocity field, we obtain 
\[
v=\nabla \phi+\frac{1}{\sqrt{2\rho}}\xi.
\]
Recalling that the potential in the heat equation setting is given by $\phi=\frac{1}{2}\log(\rho)$ (e.g. as derived in \cite[Example 4.2.3]{Figalli2021}) and accounting for the scaling factor $N^{-1/2}$
yields
\[
\partial_t\rho=\Delta \rho +\sqrt{\frac{2}{N}}\nabla \cdot(\sqrt{\rho}\xi),
\]
which is again the Dean--Kawasaki equation.

\subsubsection{From MV-SDEs to generalized Dean--Kawasaki equations}
\label{McKean to DK}
In the previous section, we saw that the interpretation of the heat equation as a gradient flow on Wasserstein space is consistent with a particle system of i.i.d. Brownian motions in the sense that they yield the same mesoscopic SPDE. This point of view allows to identify a particle system with a geometry. Now we provide an arguably canonical possibility to retrieve a particle system from a McKean--Vlasov SDE. Using the arguments of Dean \cite{Dean1996}, this allows to derive an associated generalized Dean--Kawasaki equation.

\subsubsection*{Step (I). From MV-SDE to a particle system}
Consider a MV-SDE
\begin{equation}
    \label{McKean Vlasov}
    \begin{dcases}
        dX_t=b(t, X_t, \mu_t)dt+\sigma(t, X_t, \mu_t)dW_t \\
        \mathcal{L}(X_t) = \mu_t, \, t > 0, \quad X_0=x,
    \end{dcases}
\end{equation}
whose marginals coincide with a solution $\bar{\rho}$ to the corresponding nonlinear FPE. Moreover, assume that $\sigma$ can be written as a scalar function times $d \times d$ identity matrix. Let us construct a particle system with \eqref{McKean Vlasov} as its mean field limit. Towards this end, we fix a priori $\bar{\rho}$ in the measure-variables of $b$ and $\sigma$, which yields measure-independent coefficients
\begin{equation}
\label{frozen coeff}
    \bar{b}(t, x):=b(t, x, \bar{\rho}_t), \qquad \bar{\sigma}(t, x):=\sigma(t, x, \bar{\rho}_t)
\end{equation}
with associated (non-distribution dependent) SDE
\[
dX_t=\bar{b}(t, X_t)dt+\bar{\sigma}(t, X_t)dW_t.
\]
Let $(W^k)_k$ be i.i.d. Brownian motions and consider the particle system $(X^k)_k$ given by
\[
dX^k_t=\bar{b}(t, X^k_t)dt+\bar{\sigma}(t, X^k_t)dW^k_t.
\]
\begin{remark}
  Let us stress that this particle system is not the only one that recovers the given MV-SDE as a mean field limit. Considering it is therefore a choice we make, on which the further derivation of the associated generalized Dean--Kawasaki equation is contingent.
\end{remark}
\subsubsection*{Step (II). From particle system to generalized Dean--Kawasaki equation}
We aim to study the empirical measure 
\[
\rho^N_t=\frac{1}{N}\sum_{k=1}^N\delta_{X^k_t},
\]
associated to the above particle system $(X^k)_k$.  Using It\^o's formula on a test function $\phi\in C^\infty_c(\R^d)$, we obtain
\begin{equation}
    \begin{split}
\langle \rho^N_t, \phi\rangle=\langle \rho^N_0, \phi\rangle+\int_0^t \langle \rho^N_s, \bar{b}^T\nabla \phi\rangle ds+\frac{1}{2}\int_0^t\langle \rho^N_s, \mbox{Tr}(\bar{\sigma}^T \mbox{Hess}(\phi)\bar{\sigma})\rangle ds+M^N_t,
         \label{ito particle}
    \end{split}
\end{equation}
where
\begin{equation}
   M^N_t:=\frac{1}{N} \sum_{k=1}^N\int_0^t (\nabla \phi)^T(X^k_s) \bar{\sigma}(s, X^k_s)dW^k_s.
\end{equation}
Proceeding as in \cite{Dean1996}, we consider the following martingale with the same quadratic variation
\[
\tilde{M}^N_t=\frac{1}{\sqrt{N}}\int_0^t\langle \sqrt{\rho^N} \xi_s, \bar{\sigma}^T\nabla \phi\rangle ds.
\]
Again, formally replacing $M^N$ by $\tilde{M}^N$ and identifying the matrix $\bar{\sigma}$ with its corresponding scalar, we obtain the weak formulation of the generalized Dean--Kawasaki equation 
\begin{equation}
    \partial_t \rho=\frac{1}{2}\Delta (\bar\sigma^2\rho)-\nabla \cdot(\bar{b}\rho)dt+\frac{1}{\sqrt{N}}\nabla \cdot(\bar{\sigma}\sqrt{\rho} \xi).
    \label{generalized DK}
\end{equation}
Let us point out that the particular choice $(\sigma=\sqrt{2} \cdot  1_d, \, b = 0)$ yields the classical Dean--Kawasaki equation \eqref{Dean Kawasaki}. In particular, both the geometry and the MV-SDEs suggest the same mesoscopic fluctuations on top of the mean behavior as expressed by the noise term in \eqref{Dean Kawasaki}. 

\subsection{Application to porous medium equation}\label{sec:DK-2}
Here, we aim to apply the strategy of the previous section to the porous medium equation.
More precisely, we study different geometries associated with this equation and identify which MV-SDE from \Cref{sec:porous_medium} is consistent in the sense of (asymptotically) yielding the same noise in their respective generalized Dean--Kawasaki equations.

Let us stress again that all derivations in this section are entirely formal. We provide them to serve as a potential perspective on the nonlinear Markov processes constructed rigorously in Section \ref{sec:porous_medium}.

\subsubsection{Wasserstein geometry and additive noise MV-SDE}
Here, we consider the Wasserstein geometry. It is well-known that the porous medium equation can be interpreted as a gradient flow on the space of probability measures equipped with respect to this geometry. More precisely, consider the functional
\[
F(\rho):=\frac{1}{2}\frac{1}{(m-1)}\int \rho^mdx.
\]
As can be shown formally (see e.g. \cite[Example 4.2.3]{Figalli2021}), we have
\[
\mbox{grad}_{\mathcal{W}_2}F(\rho)=-\Delta \rho^m.
\]
Hence,
\[
\partial_t \rho=-\mbox{grad}_{\mathcal{W}_2}F(\rho).
\]
Similarly as before, note that the canonical noise consists in perturbing the velocity field by ``Brownian motion on the tangent plane.'' Since we remain in the same geometric setting, this means that the velocity field is again given by
\[
v=\nabla \phi+\frac{1}{\sqrt{2\rho}}\xi.
\]
This produces (upon adding the scaling factor $N^{-1/2}$) the generalized Dean--Kawasaki equation 
\begin{equation}
    \label{DK PME}
    \partial_t \rho =\Delta \rho^m+\sqrt{\frac{2}{N}}\nabla \cdot(\sqrt{\rho} \xi).
\end{equation}
Consider now the MV-SDE 
\begin{equation}
\begin{dcases}
    dX_t=\frac{\nabla (u(t, X_t)-u^m(t, X_t))}{u(t, X_t)} dt + \sqrt{2}dW_t \\
    \mathcal{L}(X_t)=u(t, x)dx, \,\, t>0,
\end{dcases}
\end{equation}
studied in \Cref{subsec:PME-Addit}.  Following \Cref{McKean to DK}, its associated generalized Dean--Kawasaki equation is
\begin{equation}
    \partial_t \rho= \Delta \rho-\nabla \cdot\left(\frac{\nabla(\bar\rho-\bar\rho^m)}{\bar\rho}\rho\right)+\sqrt{\frac{2}{N}}\nabla \cdot(\sqrt{\rho} \xi).
    \label{DK add noise}
\end{equation}
Note that the noise terms in \eqref{DK PME} and \eqref{DK add noise} coincide (essentially, since we have not changed the geometry from the heat equation setting, the diffusion term ($\sigma=\sqrt{2} \cdot 1_d$) needs to remain unchanged).
Concerning the drift term, observe that formally $\rho\to \bar \rho$  as $N\to\infty$. Taking this formal limit in \eqref{DK add noise}, we recover
\[
\partial_t \bar\rho=\Delta \bar\rho-\nabla \cdot\left(\frac{\nabla(\bar\rho-\bar\rho^m)}{\bar\rho}\bar\rho\right)=\Delta\bar\rho^m,
\]
i.e., the porous medium equation (again, simply a demonstration of the fact that the McKean--Vlasov solution has porous medium equation marginals). Thus, the additive noise interpretation of the porous medium equation is consistent with the Wasserstein geometry in the sense that their associated generalized Dean--Kawasaki equations have the same noise term, meaning that not only their mean behavior, but also their mesoscopic fluctuations coincide. 

\subsubsection{Thermodynamic geometry and interpolated MV-SDEs}
Instead of the Wasserstein geometry, here we consider the \emph{thermodynamic geometry} as studied in \cite{Dirr2016}. We again consider the manifold
\[
\mathcal{M} \coloneqq \mathcal{P}_2 (\R^d),
\]
but now endow it with a formal tangent plane different from \eqref{eq:TrhoM_1}:
\[
T_\rho\mathcal{M} \coloneqq \Bigl\{s: \R^d \to \R \, \big| \,  s+2\beta\nabla \cdot(\rho^mv)=0 \ \text{for}\ \rho \in \mathcal{M} \ \text{and some velocity field}\ v\Bigr\}.
\]
Suppose for any $\rho\in \mathcal{M}$ and $s\in T_\rho\mathcal{M}$, there is a unique (modulo constants) potential $\phi$ such that \begin{equation}
    \label{tangent to pot 2}
    s=-2\beta\nabla \cdot(\rho^m\nabla \phi).
\end{equation}
Again, \eqref{tangent to pot 2} can be used to define a scalar product
\begin{equation}\label{eq:thrm_scalar_product}
\langle s_1, s_2\rangle_\rho:=2\beta\int \nabla \phi_1 \cdot \nabla \phi_2 \rho^mdx,
\end{equation}
which implies a formal Riemannian manifold structure.
\begin{remark}
    Let us stress that this Riemannian structure is indeed purely formal. As shown in \cite{DNS2009}, the scalar product \eqref{eq:thrm_scalar_product} induces a metric only if $m\leq 1$. More generally, replacing the weight $\rho^m$ by a \emph{mobility function} $h(\rho)$, they show that it is required that $h$ be \emph{concave}. However, using this formal structure, one can still proceed with the derivation of an associated generalized Dean--Kawaski equation \cite{Dirr2016}. 
\end{remark}
Given the above formal scalar product, let us consider the functional
\[
E(\rho) \coloneq \frac{m}{2\beta}\int \rho \log(\rho)dx
\]
from before. To calculate the gradient of $E$ with respect to the scalar product defined above, we exploit the identity
\[
\langle \mbox{grad}_{\textup{thm}}\, E(\rho), s\rangle_\rho=D_sE(\rho),
\]
where $D_sE(\rho)$ denotes the directional derivative of $E$ in direction $s$ at point $ \rho$.
Using relation \eqref{tangent to pot 2}, we obtain
\[
D_sE(\rho)=\frac{m}{2\beta}\int s(1+\log(\rho))dx=m\int \rho^m \nabla \phi \cdot \nabla \log(\rho)dx,
\]
where $\phi$ is the potential associated with $s$ via \eqref{tangent to pot 2}. Denoting by $\psi$ the potential associated with $\mbox{grad}E(\rho)$, we again conclude $\psi=\frac{m}{2\beta}\log(\rho)$. Hence, we recover the gradient as 
\[
\mbox{grad}_{\textup{thm}}\,E(\rho)=-\nabla \cdot(m\rho^m \nabla \log(\rho))=-\nabla \cdot(m\rho^{m-1}\nabla \rho)=-\Delta \rho^m.
\]
Thus, we have shown that on this formal Riemannian manifold, the porous medium equation can be written as the gradient flow 
\[
\partial_t\rho =-\mbox{grad}_{\textup{thm}}\,E(\rho).
\]
Let us point out that this setting is consistent with the setting of the heat equation in the sense that for $\beta=m=1$, the geometry coincides with the one for the heat equation.

Similar to the Wasserstein geometry setting, we again think of a natural stochastic perturbation given by a ``Brownian motion on the tangent space,'' which is again an isonormal Gaussian process $W=\{W(s): \ s\in T_\rho \mathcal{M}\}$, thus satisfying
\[
\mathbb E[W(s_1)W(s_2)]=\langle s_1,  s_2\rangle_\rho.
\]
Similarly to before, we make the ansatz $W(s_i)=\langle \eta, s_i\rangle _\rho$ and, denoting by $\phi_\eta$ the associated potential, we obtain 
\[
\mathbb E[W(s_1)W(s_2)]=4\beta^2 \mathbb E\left[\int_{\R^d}\nabla \phi_1\cdot \nabla\phi_\eta \rho^mdx  \int_{\R^d}\nabla \phi_2\cdot \nabla \phi_\eta \rho^mdy \right],\]
as well as 
\[
\langle s_1, s_2\rangle_\rho=2\beta \int_{\R^d}\nabla \phi_1\cdot \nabla \phi_2 \rho^mdx.
\]
Again, to recover the identity for Gaussian isonormal processes, we need
\[
\nabla \phi_\eta=\frac{1}{\sqrt{2\beta \rho^m}}\xi,
\]
where again $\xi$ denotes vector-valued space-time white noise. Adding this stochastic perturbation to the continuity equation with nonlinear mobility, we obtain --- upon adding the scaling $N^{-1/2}$ --- the associated generalized Dean--Kawasaki equation 
\begin{equation}
    \partial_t \rho= \Delta \rho^m+\sqrt{\frac{2\beta}{N}}\nabla \cdot(\rho^{m/2}\xi).
    \label{genDK 2}
\end{equation}
Now consider the McKean--Vlasov SDE
\begin{equation}
    \begin{dcases}
    dX_t=-(1-\beta) \frac{\nabla u(t,X_t)^m}{u(t,X_t)}dt + \sqrt{2\beta} \, u(t,X_t)^{\frac{m-1}{2}}dW_t, \\ 
    \mathcal{L}(X_t) = u(t,x)dx, \,\, t >0,
    \end{dcases}
\end{equation}
studied in \Cref{subsec:PME-beta}. Again, following \Cref{McKean to DK}, we obtain the associated generalized Dean--Kawasaki equation 
\begin{equation}
     \partial_t \rho=\beta\Delta (\bar\rho^{m-1}\rho)-(1-\beta)\nabla \cdot(\frac{\nabla \bar\rho^m}{\bar\rho}\rho)dt+\sqrt{\frac{2\beta}{N}}\nabla \cdot(\bar\rho^{(m-1)/2}\sqrt{\rho} \xi).
     \label{DK beta}
\end{equation}
Using \eqref{genDK 2}, we make the formal ansatz
\[
\rho=\bar{\rho}+R_N,
\]
where $R_N=O(N^{-1/2})$. Plugging this ansatz into the noise term of \eqref{DK beta}, we obtain formally
\[
\sqrt{\frac{2\beta}{N}}\nabla \cdot(\bar\rho^{(m-1)/2} \sqrt{\rho}\xi)=\sqrt{\frac{2\beta}{N}}\nabla \cdot\left((\rho-R_N)^{(m-1)/2}\sqrt{\rho}\xi\right)=\sqrt{\frac{2\beta}{N}}\nabla \cdot(\rho^{m/2}\xi)+o(N^{-1/2}).
\]
This shows that, formally and asymptotically, the noise terms of \eqref{DK beta} and \eqref{genDK 2} coincide, meaning that the mesoscopic fluctuations induced by the geometry and the McKean--Vlasov SDE coincide. In this sense, we can identify the McKean--Vlasov SDEs studied in \Cref{subsec:PME-beta} with thermodynamic metrics as studied in \cite{Dirr2016}. 

\appendix
\refstepcounter{section}
\section*{Appendix}
\setcounter{subsection}{0}
\subsection{On pure Stratonovich-diffusion interpretation}\label{subsec:theta_interpretations}
In Sections \ref{sec:porous_medium} and \ref{sec:pLaplace}, we discussed a pure It\^{o} diffusion interpretation of the porous medium and the $p$-Laplace equation by deriving an interpretation (i.e. a diffusion coefficient $a$) for these equations as pure-diffusion FPEs such that the corresponding MV-SDE is of type
\begin{equation}
    \begin{dcases}
        dX_t=\sqrt{2a(X_t, u(t,\cdot))} \, dW_t, \\
        \mathcal{L}(X_t)=u(t,x)dx,\quad t>0.
    \end{dcases}
\end{equation}
An analogous pure Stratonovich setting can be formally identified by means of the following standard lemma (for a short formal proof, we refer to the more general Lemma \ref{correction lemma general}).
\begin{lemma}\label{ito strat corr}
    Let $a:(0,\infty)\times \R^d \to [0,\infty)$ satisfy $\sqrt{a}\in C^2_b((0,\infty)\times \R^d)$, and suppose that $X$ is a weak solution to
    \[
    dX_t=\frac{1}{2}\nabla a(t, X_t)dt+\sqrt{2a(t, X_t)}dW_t.
    \]
    Then $X$ is also a weak solution to
    \[
    dX_t= \sqrt{2a(t, X_t)}\circ dW_t.
    \]
\end{lemma}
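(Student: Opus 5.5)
The plan is to use the standard Itô–Stratonovich conversion formula in each coordinate, applied to the given weak solution $X$ on its filtered probability space with Brownian motion $W$. Set $\sigma(t,x) := \sqrt{2a(t,x)}$, a scalar multiple of the identity. By definition, the Stratonovich integral satisfies
\[
\int_0^t \sigma(s,X_s)\circ dW_s = \int_0^t \sigma(s,X_s)\,dW_s + \frac12\,[\sigma(\cdot,X_\cdot),W]_t
\]
componentwise, so it is well defined once $\sigma(\cdot,X_\cdot)$ is a continuous semimartingale. Hence the whole claim reduces to computing the covariation term and checking that it equals the drift $\int_0^t \frac12\nabla a(s,X_s)\,ds$.

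First, I will check that $s\mapsto\sigma(s,X_s)$ is a semimartingale. Since $\sqrt a\in C^2_b$, we have $\sigma\in C^{1,2}$, and $X$ is a continuous semimartingale whose drift and diffusion coefficients are bounded. One subtlety is that $\partial_t\sigma$ enters Itô's formula and only contributes a finite variation term, and that $(0,\infty)$ is open at $0$. I will therefore apply Itô's formula on $[\varepsilon,t]$ and let $\varepsilon\downarrow0$, using boundedness of $\sigma$ and $\nabla\sigma$.

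Itô's formula gives the martingale part of $\sigma^{i}(s,X_s)$ (the $i$-th diagonal entry, equal to $\sigma$) as $\int \partial_j\sigma(s,X_s)\,dX^j_s$, whose martingale part is $\int \partial_j\sigma\,\sigma\,dW^j_s$. Consequently,
\[
[\sigma(\cdot,X_\cdot),W^i]_t = \int_0^t \partial_i\sigma(s,X_s)\,\sigma(s,X_s)\,ds = \int_0^t \tfrac12\,\partial_i(\sigma^2)(s,X_s)\,ds = \int_0^t \partial_i a(s,X_s)\,ds,
\]
since $\sigma^2=2a$. Plugging this in, the correction term $\frac12[\sigma,W^i]_t$ equals $\int_0^t\frac12\partial_i a(s,X_s)\,ds$, which is exactly the Itô drift. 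This yields $X_t=X_0+\int_0^t\sigma(s,X_s)\circ dW_s$ on the same stochastic basis.

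The main (mild) obstacle is the regularity bookkeeping: the covariation formula requires $\sigma(\cdot,X)$ to be a genuine semimartingale, and this is exactly where $C^2_b$ of $\sqrt a$ (rather than of $a$) is used. Near points where $a=0$, $\sqrt a$ need not be smooth even when $a$ is. This is precisely the issue flagged for the Barenblatt coefficients, and the hypothesis is chosen to avoid it.
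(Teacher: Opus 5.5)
Your proof is correct and uses the same identity as the paper's purely formal argument via \Cref{correction lemma general}: the It\^o--Stratonovich correction $\frac12[\sqrt{2a}(\cdot,X_\cdot),W^i]_t=\frac12\int_0^t \sqrt{2a}\,\partial_i\sqrt{2a}(s,X_s)\,ds=\frac12\int_0^t\partial_i a(s,X_s)\,ds$ is exactly the It\^o drift. The paper only sketches this through a Taylor expansion of Riemann sums, discarding the second-order term without justification and proving the converse implication for general $\theta$; your route is rigorous for the stated direction, since you apply It\^o's formula on $[\varepsilon,t]$ to show that $\sqrt{2a}(\cdot,X_\cdot)$ is a bounded continuous semimartingale and then use the semimartingale definition of the Stratonovich integral.
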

Thus, formally, the pure Stratonovich diffusion case is obtained via the coefficient pair $(a,b)$,
\begin{equation}
    b=\frac{1}{2}\nabla a.
    \label{strat condition}
\end{equation}
\begin{remark}
    In \Cref{sec:porous_medium,sec:pLaplace}, we showed that such a choice of coefficients is possible for the porous medium as well as the $p$-Laplace equation, and write the corresponding MV-SDE.
    \\
    However, the regularity for $a$ required in Lemma \ref{ito strat corr} fails in these cases. For example, for the porous medium equation and its Barenblatt solutions $u^z$, the choice of $(a, b)$ with \eqref{strat condition} yields 
    \[
    \sqrt{a(x,u^z(t,\cdot))}=\sqrt{\frac{2m}{m+1}(u^z(t, x))^{m-1}}=\sqrt\frac{2}{m+1} t^{-k/2}\left( C-qt^{-2k/d}|x-z|^2\right)^{1/2}_+
    \]
    (see \Cref{subsect:PME-Stratono}).
    Even if we restrict ourselves to times bounded away from zero, this function is only $1/2$-H\"{o}lder continuous in space. Lemma \ref{ito strat corr} is therefore not applicable. In fact, for the same reason, it is even unclear whether the Stratonovich integral
    \begin{equation}
        \label{strat int}
        \int_0^t \sqrt{a(X_s, u^z(s,\cdot)}\circ dW_s
    \end{equation}
    is well-defined (due to the lack of regularity of $\sqrt{a}$, it is unclear to us if $t\mapsto \sqrt{a(X_t, u^z(t,\cdot))}$ is a semimartingale, since It\^{o}'s formula is not available).
    \\
    One approach to ensure that \eqref{strat int} is well-defined in our setting might be using Stochastic Sewing \cite{Le2020}. Indeed, using the elementary definition of the Stratonovich integral as the $L^2(\Omega)$ limit of Riemann sums, the Stochastic Sewing lemma gives concise conditions under which the existence of such limits is assured. Moreover, stochastic sewing arguments are typically sufficiently robust to deal with irregular coefficients. While this approach seems feasible, provided we integrate over times away from zero, let us point out that for small times, even more challenges appear: Since $\lim_{t\to 0}u^z(t, \cdot)= \delta_z$ weakly, we lose even any H\"{o}lder regularity for $t\to 0$.
    \\
    If one can prove that \eqref{strat int} is well defined, one may attempt to obtain the statement of Lemma \ref{ito strat corr} by means of a mollification argument, i.e., by mollifying $\sqrt{a}$, using Lemma \ref{ito strat corr} on the mollification level and passing to the respective limits.
    These tasks appear delicate and are left for future investigation. 
\end{remark}

\subsection{On further pure-diffusion interpretations}\label{subsec:theta_Integration}
Let us point out that the process constructed in \cite{BarbuRehmeierRockner2024}, called $p$-Brownian, is neither a pure It\^{o} diffusion, nor does it satisfy relation \eqref{strat condition} (i.e., it is not pure-Stratonovich-diffusion). However, we show that formally it does correspond to a certain pure diffusion setting. Towards this end, we recall the following notion of $\theta$-stochastic integration. 
    
    \begin{definition}[$\theta$-Integration]
       Let $\theta\in [0, 1]$, let $W$ be a Brownian motion and let $X$ be a semimartingale adapted to the filtration generated by $W$.  Define 
        \[
        \int_0^TX_s\circ^\theta dW_s:=\lim_{|\mathcal{P}|\to 0}\sum_{[u, v\in \mathcal{P}]}(\theta X_u+(1-\theta)X_v)(W_v-W_u),
        \]
        where the limit is taken in $L^2(\Omega)$ along any sequence of partitions of $[0,T]$ with mesh size converging to $0$ (see for instance \cite[Chapter 3.5]{Kloeden1992}).
    \end{definition}
    Note that in particular, $\theta=1$ and $\theta = \frac 1 2 $ correspond to It\^{o} and Stratonovich integration, respectively.
    
    \begin{lemma}
    \label{correction lemma general}
        Let $a:(0,\infty)\times \R^d \to [0,\infty)$ satisfy $\sqrt{a}\in C^2_b((0,\infty)\times \R^d)$ and $X$ be a weak solution to 
        \[
        dX_t=\sqrt{2a(t, X_t)}\circ^\theta  dW_t.
        \]
        Then $X$ is also a weak solution to         \[
         dX_t=(1-\theta)\nabla a(X_t)dt+\sqrt{2a(X_t)} dW_t
        \]
    \end{lemma}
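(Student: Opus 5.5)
The plan is to compare the $\theta$-Riemann sums with the corresponding Itô sums and to identify the difference as a covariation term. Throughout, write $\sigma(t,x) := \sqrt{2a(t,x)}$, which is $C^2_b$ by assumption. By definition,
$$X_t - X_0 = \int_0^t \sigma(s,X_s)\circ^\theta dW_s = \lim_{|\mathcal{P}|\to 0}\sum_{[u,v]\in\mathcal{P}}\big(\theta\,\sigma(u,X_u) + (1-\theta)\,\sigma(v,X_v)\big)(W_v-W_u).$$
Rewrite each summand as $\sigma(u,X_u)(W_v-W_u) + (1-\theta)\big(\sigma(v,X_v)-\sigma(u,X_u)\big)(W_v-W_u)$. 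Along any sequence of partitions, the first sum converges in $L^2(\Omega)$ to the Itô integral $\int_0^t \sigma(s,X_s)\,dW_s$. The second sum converges in probability to $(1-\theta)\,[\sigma(\cdot,X_\cdot),W]_t$, provided $s\mapsto \sigma(s,X_s)$ is a continuous semimartingale, which is the standard characterization of the quadratic covariation.

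The main step is computing this covariation. Before doing so, note a point of circularity: the $\theta$-integral is only defined for semimartingale integrands, so $X$ being a weak solution already presupposes that $X$ is a continuous semimartingale. By Itô's formula (using $\sigma\in C^2$), $\sigma(s,X_s)$ is then a semimartingale as well, and its martingale part is $\int \nabla_x\sigma(s,X_s)\cdot dM_s$, where $M$ is the local martingale part of $X$.

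From the decomposition in the first paragraph, $X$ has local martingale part $\int\sigma(s,X_s)\,dW_s$, and all remaining terms are of finite variation. Hence
$$d[\sigma(\cdot,X_\cdot),W]_s = \nabla_x\sigma(s,X_s)\,\sigma(s,X_s)\,ds,$$
with the matrix structure interpreted componentwise, as in the paper's scalar-diffusion convention. Using $\sigma\nabla\sigma = \tfrac12\nabla(\sigma^2) = \nabla a$, this gives
$$X_t = X_0 + \int_0^t (1-\theta)\,\nabla a(s,X_s)\,ds + \int_0^t \sqrt{2a(s,X_s)}\,dW_s,$$
which is the claim, with $a$ read as time-dependent on the right-hand side. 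Boundedness of $\nabla a = \sigma\nabla\sigma$ secures the integrability required in Definition \ref{def:MVSDE-sol}.

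The obstacle I expect is the convergence of the correction sum. The bracket is defined as a limit in probability, while the $\theta$-integral is defined as an $L^2$ limit. I would address this in one of two ways. Either bound the bracket terms in $L^2$ using the boundedness of $\sigma$ and $\nabla\sigma$ and localize via stopping times, or simply identify the limits along a subsequence converging almost surely; both routes fix the same limit. The remaining issue, making the semimartingale property of $X$ explicit rather than implicit, is handled by the observation above.
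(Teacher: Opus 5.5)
Your proposal is correct and rests on the same mechanism as the paper's (explicitly formal) proof: the $\theta$-sums are split into It\^{o} sums plus a $(1-\theta)$-weighted covariation correction, and that correction is identified as $\sigma\nabla\sigma\,dt=\nabla a\,dt$. The paper gets the correction by Taylor-expanding $\sqrt{2a}$ around $X_u$ and simply assuming the second-order term vanishes, whereas your exact algebraic split combined with It\^{o}'s formula for $[\sigma(\cdot,X_\cdot),W]$ makes this step rigorous. One small correction: the definition of the $\theta$-integral presupposes that $\sigma(\cdot,X_\cdot)$, not $X$, is a semimartingale, and the semimartingale property of $X$ then follows from your decomposition.
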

    \begin{proof}
    We only provide a formal proof. Performing a Taylor expansion around $X_u$, we formally have 
    \begin{equation}
        \begin{split}
            &\int_0^T\sqrt{2a(X_s)}\circ^\theta dW_s\\&=\lim_{|\mathcal{P}|\to 0}\sum_{[u, v\in \mathcal{P}]}\sqrt{2a(\theta X_u+(1-\theta)X_v)}(W_v-W_u)\\
            &=\lim_{|\mathcal{P}|\to 0}\sum_{[u, v\in \mathcal{P}]}\left[\sqrt{2a(X_u)}+(1-\theta)\nabla \sqrt{2a}(X_u)\cdot (X_v-X_u)\right](W_v-W_u)\\
            &+\lim_{|\mathcal{P}|\to 0}\sum_{[u, v\in \mathcal{P}]}(1-\theta)^2(X_v-X_u)^T\mbox
            {Hess}(\sqrt{2a})(X_u)(X_v-X_u) (W_v-W_u)\\
            &=\int_0^T \sigma(X_s)dW_s+(1-\theta)\int_0^T \nabla \sqrt{2a(X_s)}d[X, W]_s,
        \end{split}
    \end{equation}
    where we assumed the second-order term to vanish in the limit. Thus, $X$ solves
    \[
    dX_t=(1-\theta) \nabla\sqrt{2a}(X_t) d[X, W]_t +\sqrt{2a(X_t)}dW_t,
    \]
    from which we infer
    \[
    [X, W]_t=\sqrt{2a(X_t)}
    \]
    and conclude the claim.
\end{proof}
To conclude, recall that $p$-Brownian motion from \cite{BarbuRehmeierRockner2024} solves a time-homogeneous MV-SDE whose coefficients satisfy the gradient relation $b=\nabla a $. 
From \Cref{correction lemma general}, we see that this relation is satisfied for $\theta=0$. Therefore, one can formally interpret $p$-Brownian motion as a pure diffusion interpretation with a $\theta$-stochastic integral with $\theta=0$. This choice corresponds to a purely anticipating stochastic integral.

\subsection{Numerical simulation: supplementary plots and details}\label{Apndx:num_sim}

\begin{figure}
    \centering
    \setlength{\tabcolsep}{4pt}
    \renewcommand{\arraystretch}{1.15}
    \begin{tabular}{>{\centering\arraybackslash}m{0.067\textwidth}
                    >{\centering\arraybackslash}m{0.288\textwidth}
                    >{\centering\arraybackslash}m{0.288\textwidth}
                    >{\centering\arraybackslash}m{0.288\textwidth}}         
        \rule{0pt}{5ex} 
        
        & {\small\textbf{$p$-Laplace equation}}
        & {\small\textbf{Porous medium equation}}
        & {\small\textbf{Heat equation}} \\
        {\footnotesize$\beta = 0.0$}
        & \includegraphics[width=\linewidth]{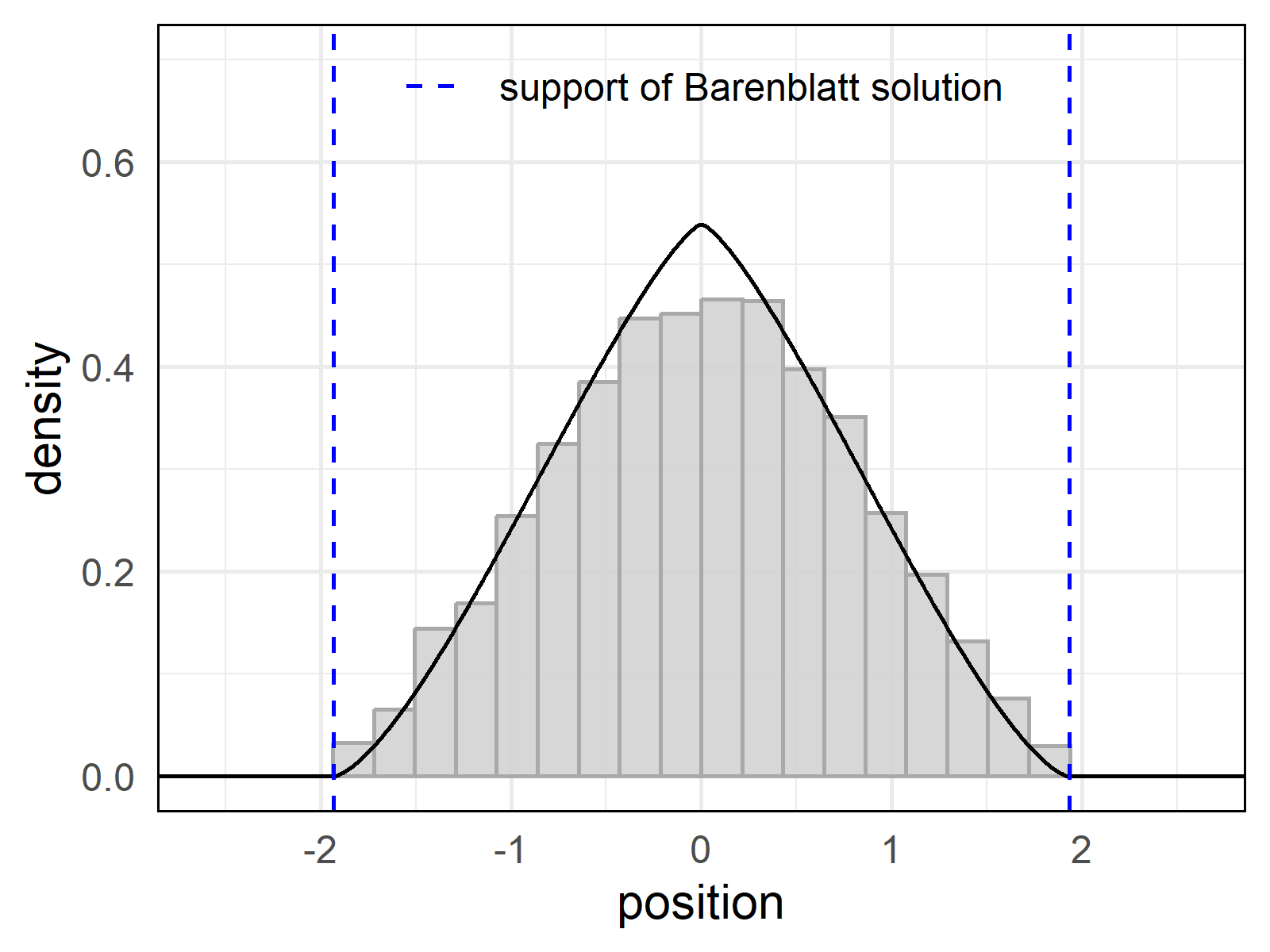}
        & \includegraphics[width=\linewidth]{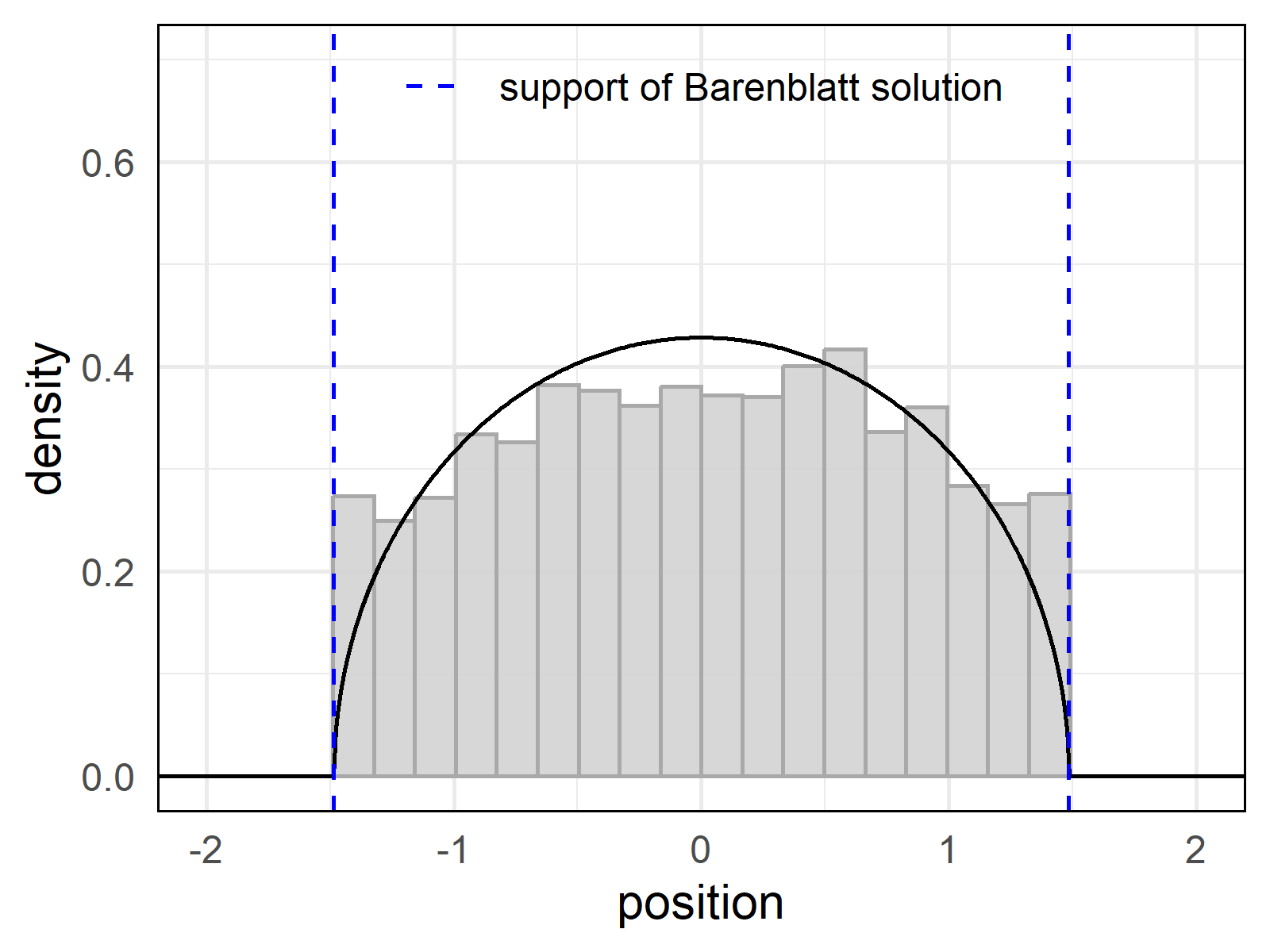}
        & \includegraphics[width=\linewidth]{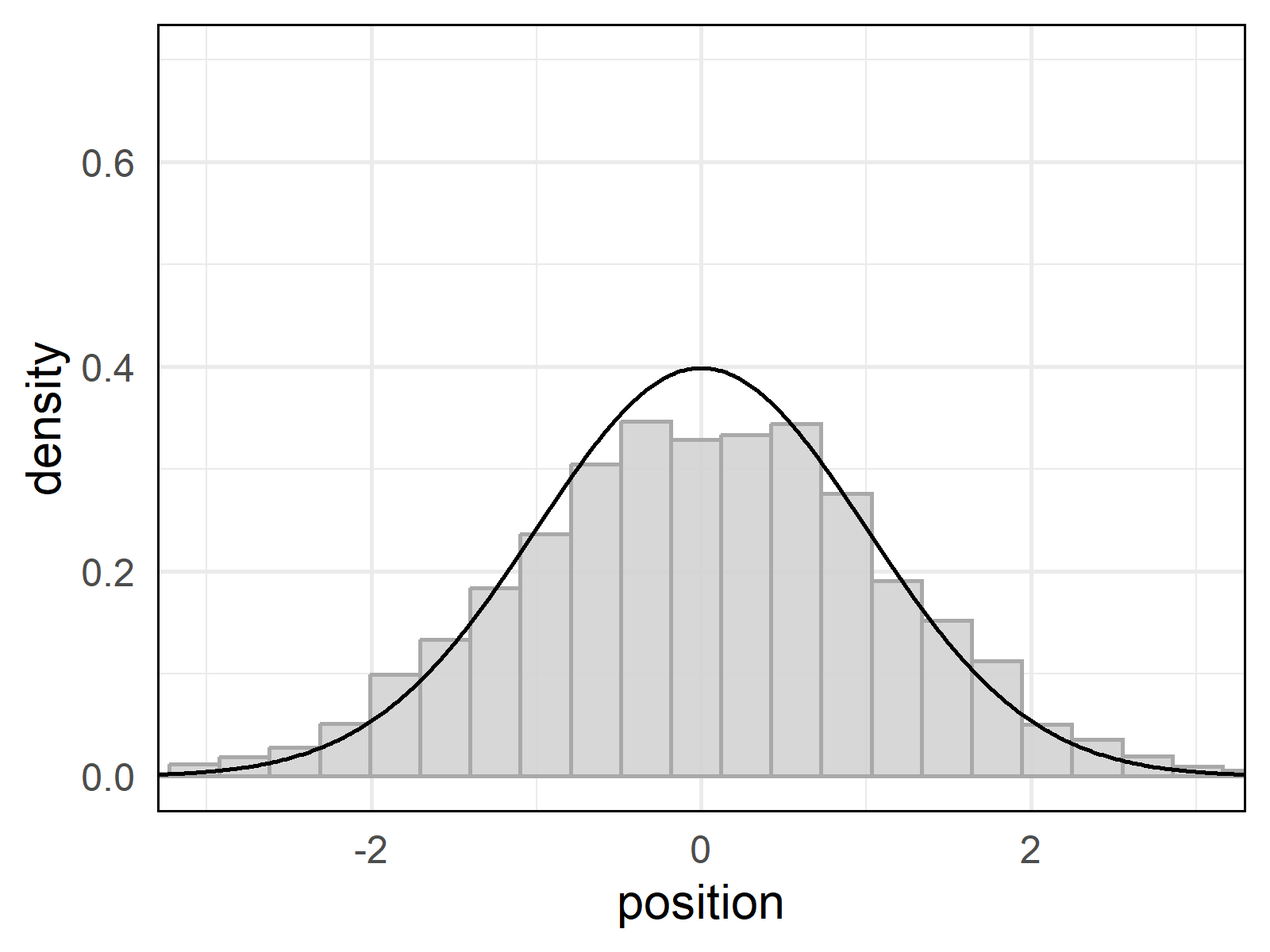} \\

        {\footnotesize$\beta = 0.1$}
        & \includegraphics[width=\linewidth]{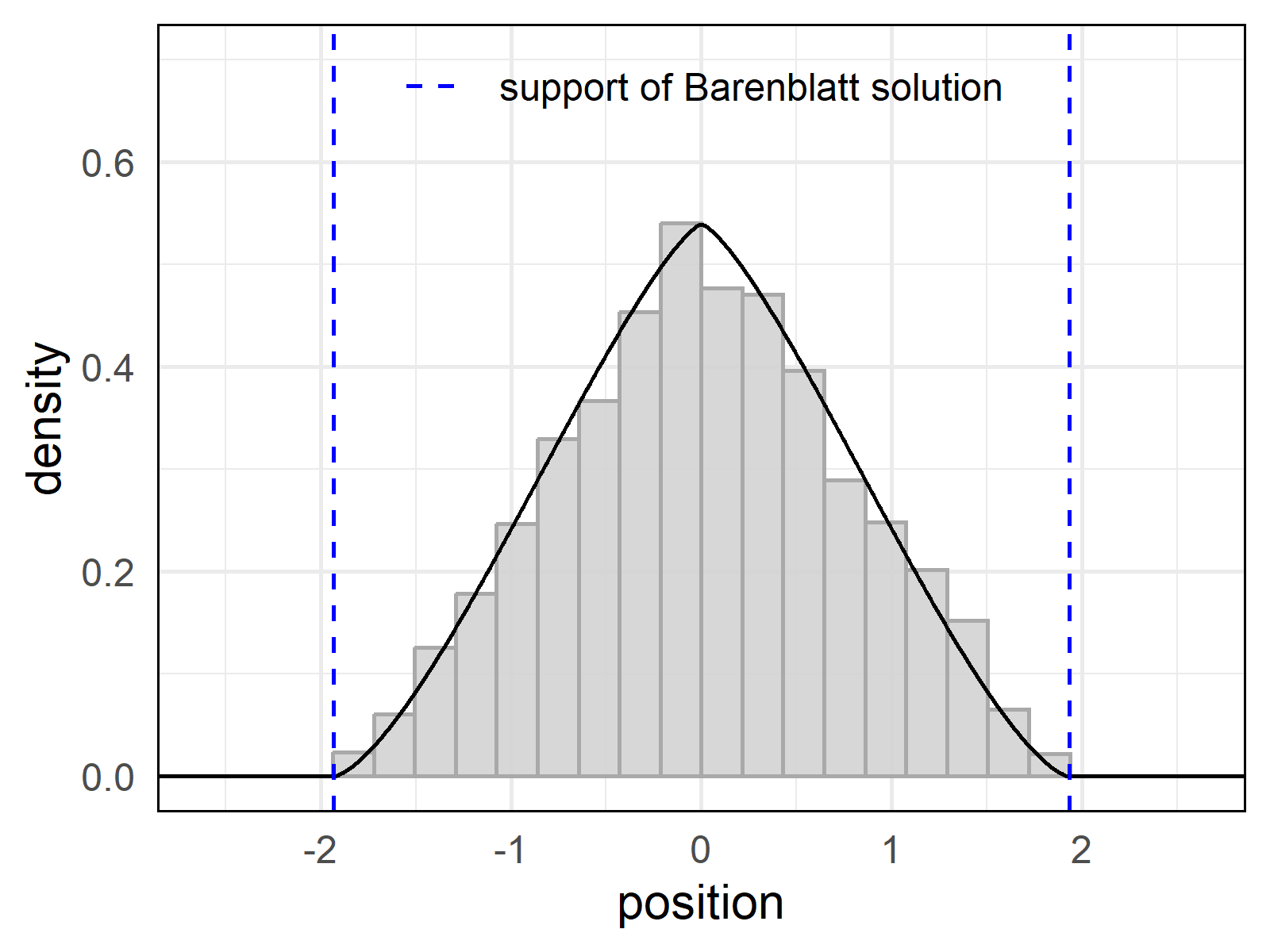}
        & \includegraphics[width=\linewidth]{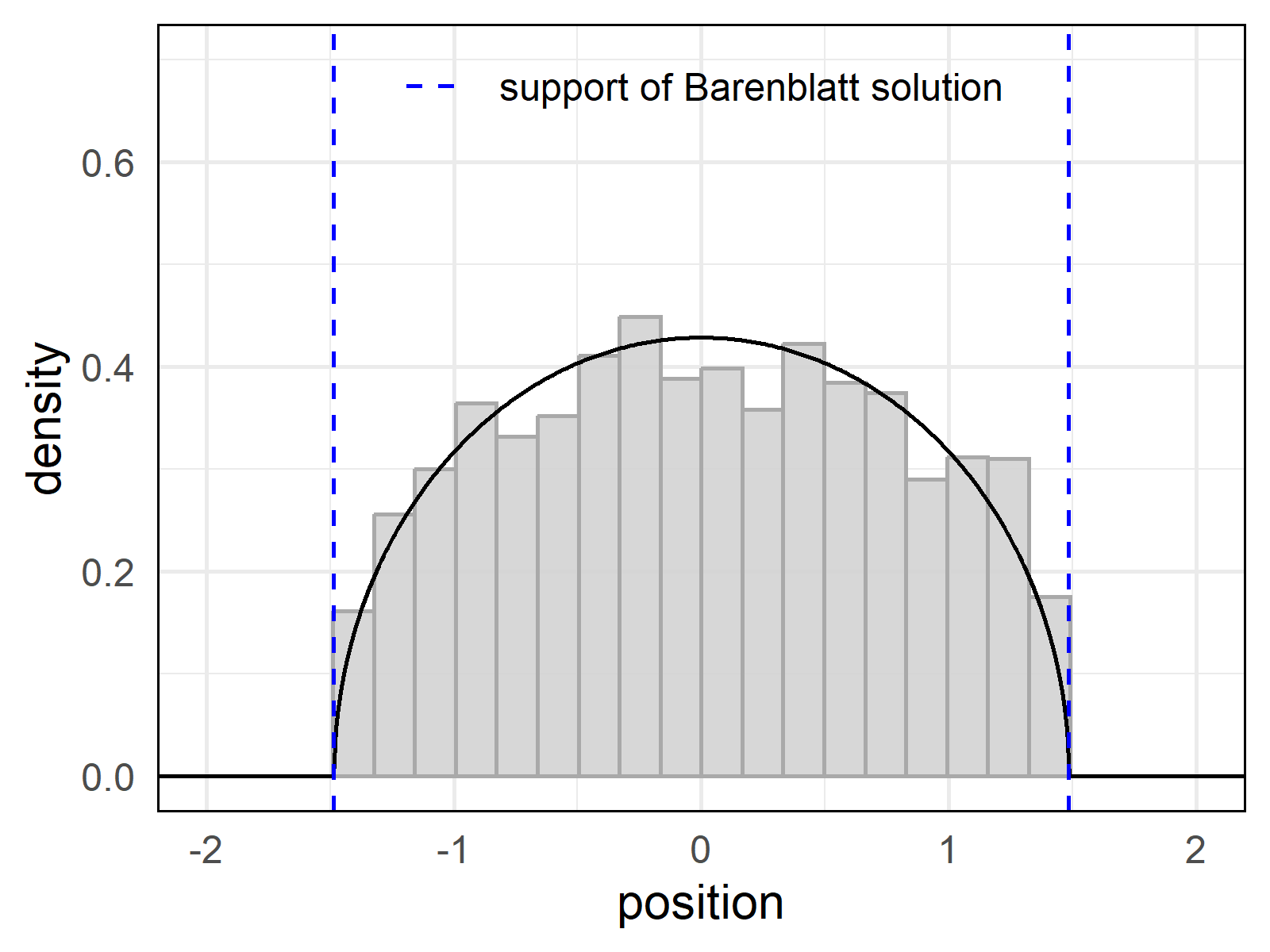}
        & \includegraphics[width=\linewidth]{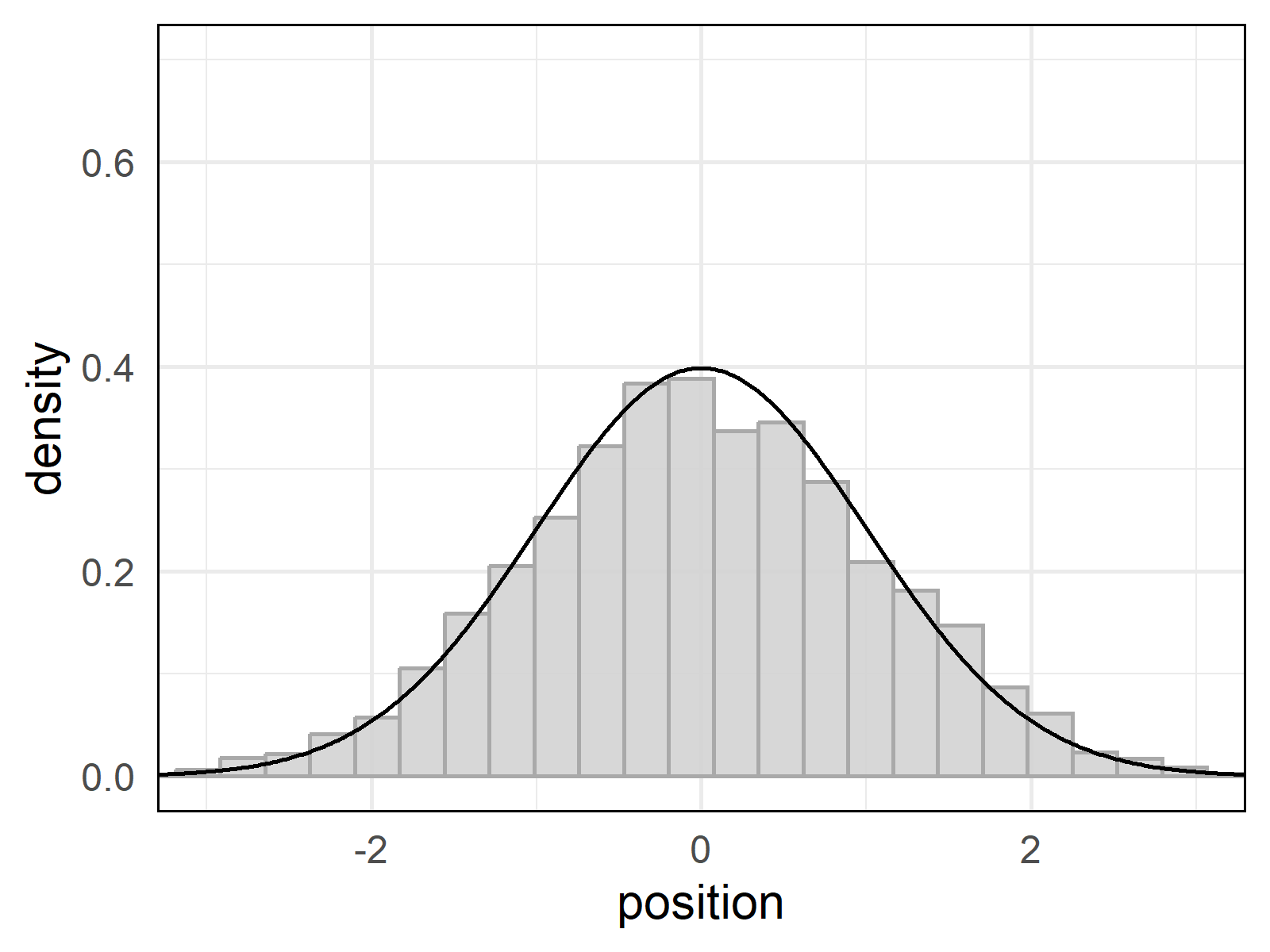} \\

        {\footnotesize$\beta = 1.0$}
        & \includegraphics[width=\linewidth]{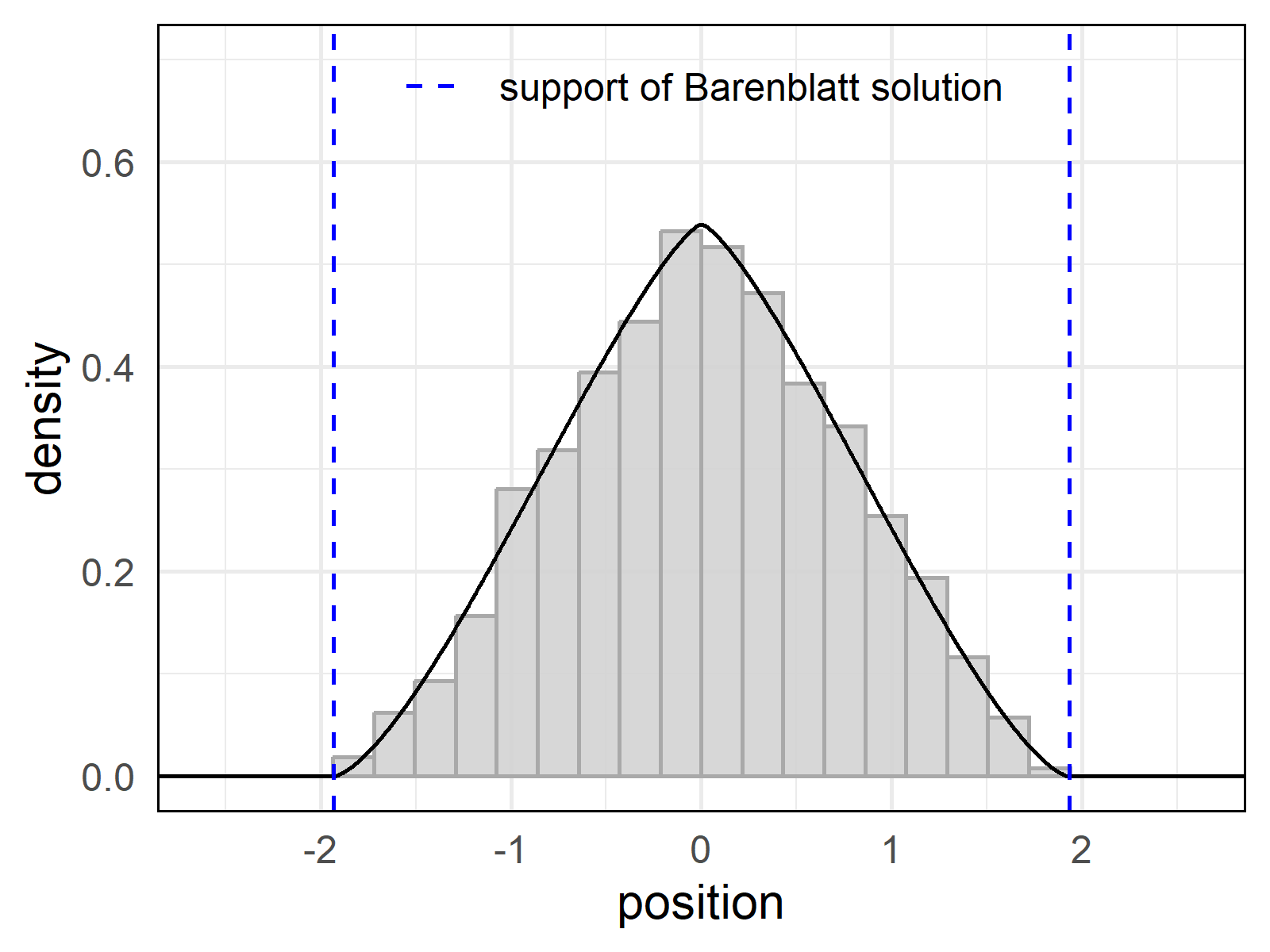}
        & \includegraphics[width=\linewidth]{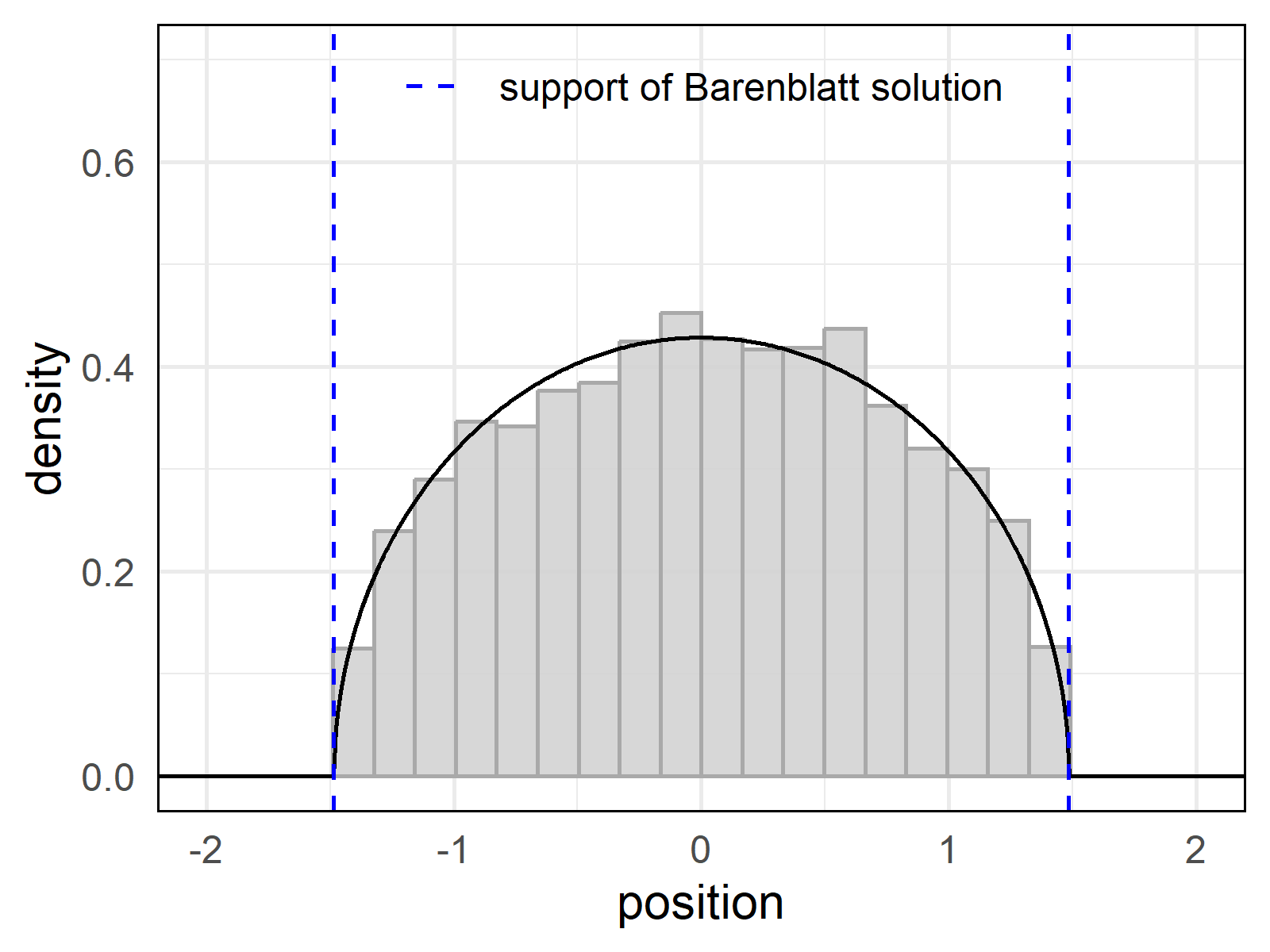}
        & \includegraphics[width=\linewidth]{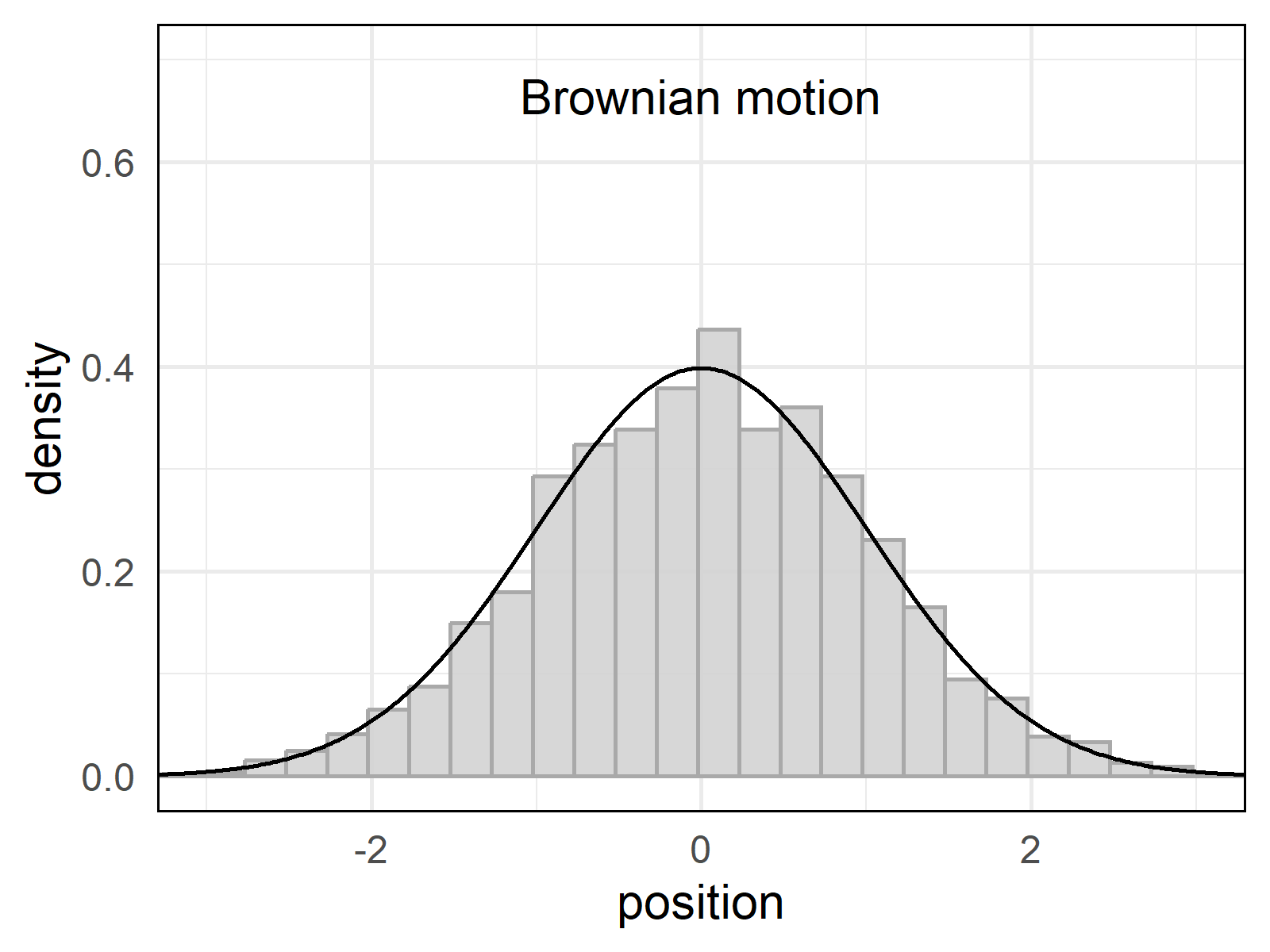} \\

        {\footnotesize$\beta = 1.5$}
        & \includegraphics[width=\linewidth]{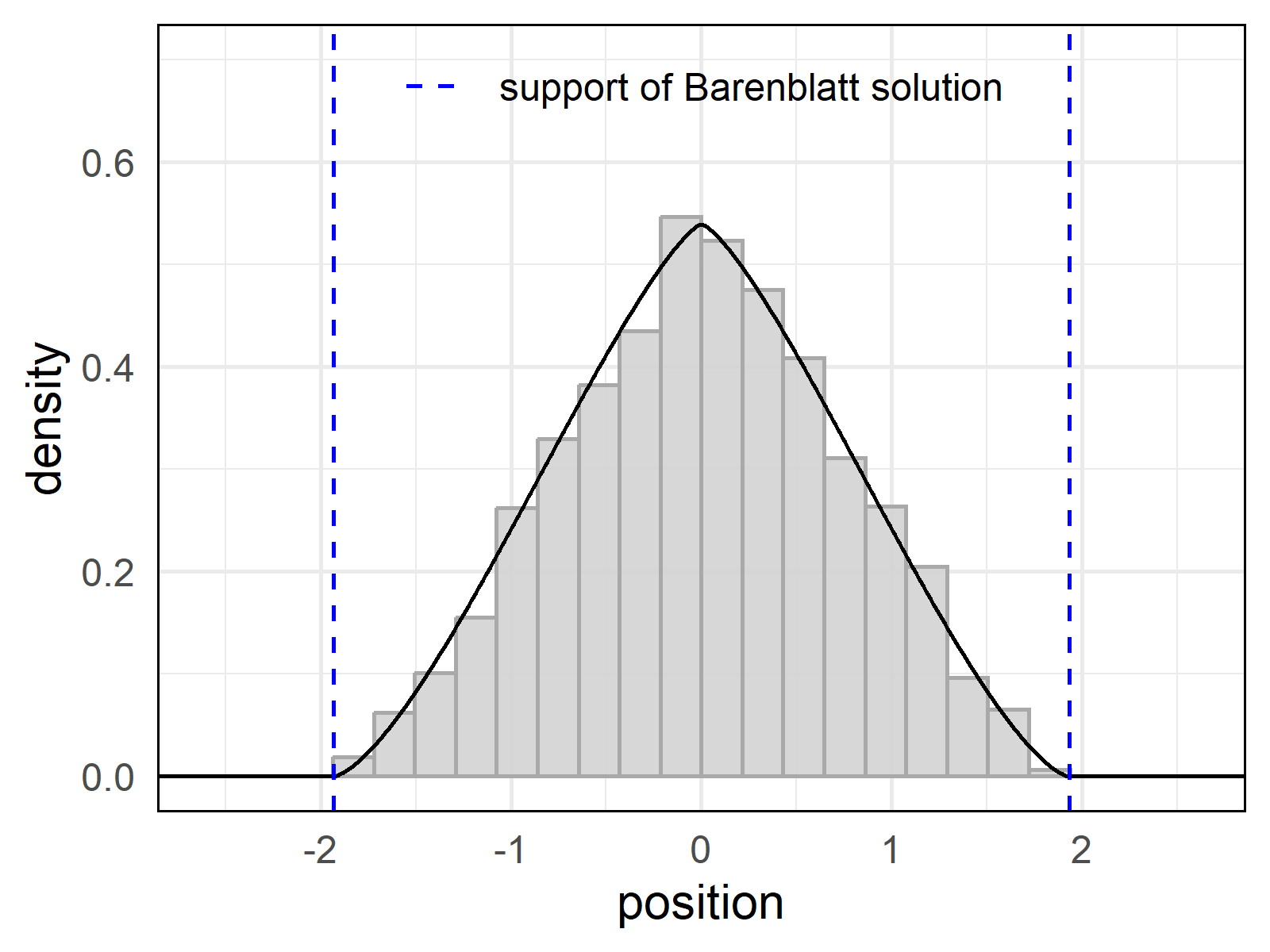}
        & \includegraphics[width=\linewidth]{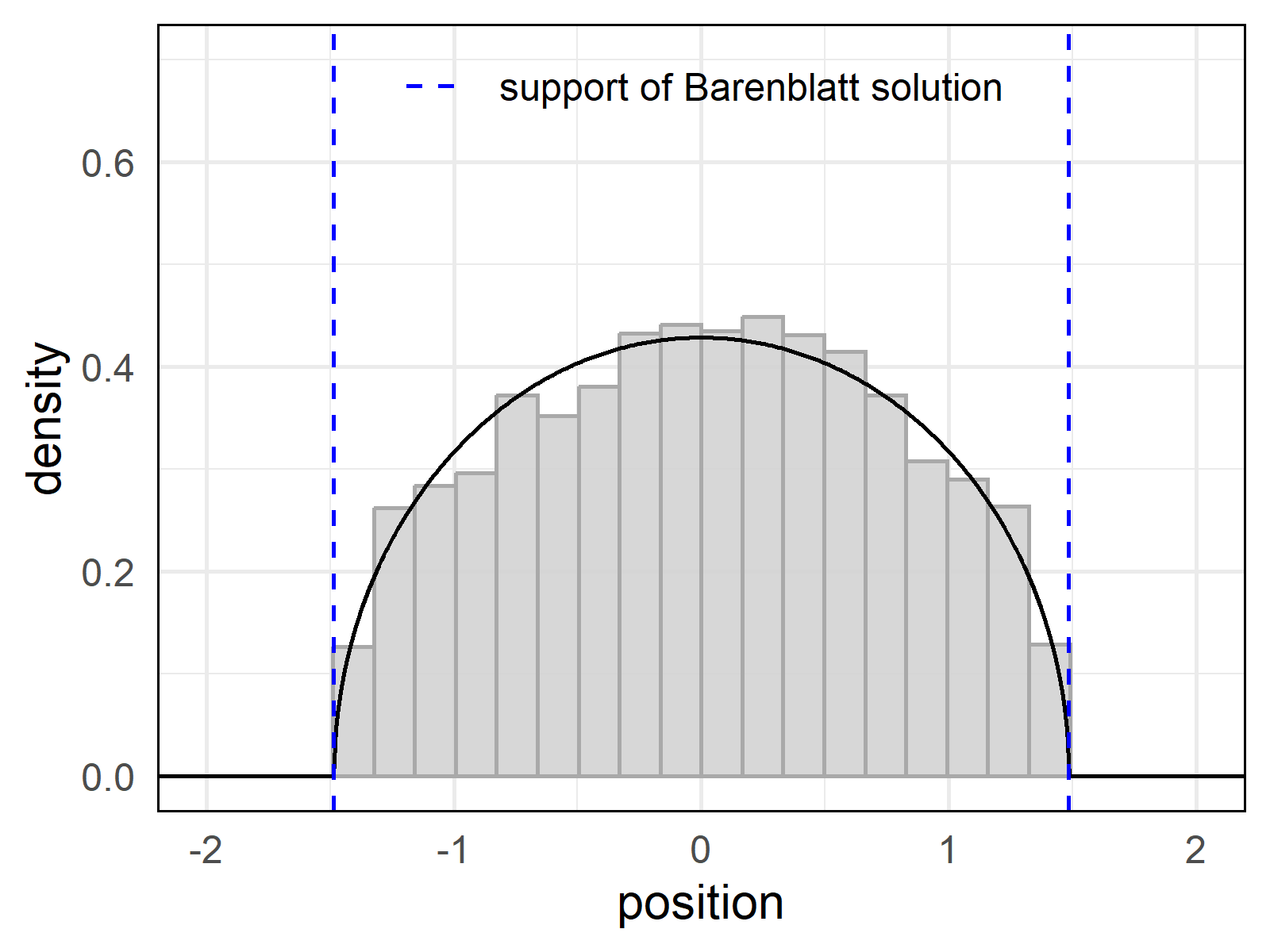}
        & \includegraphics[width=\linewidth]{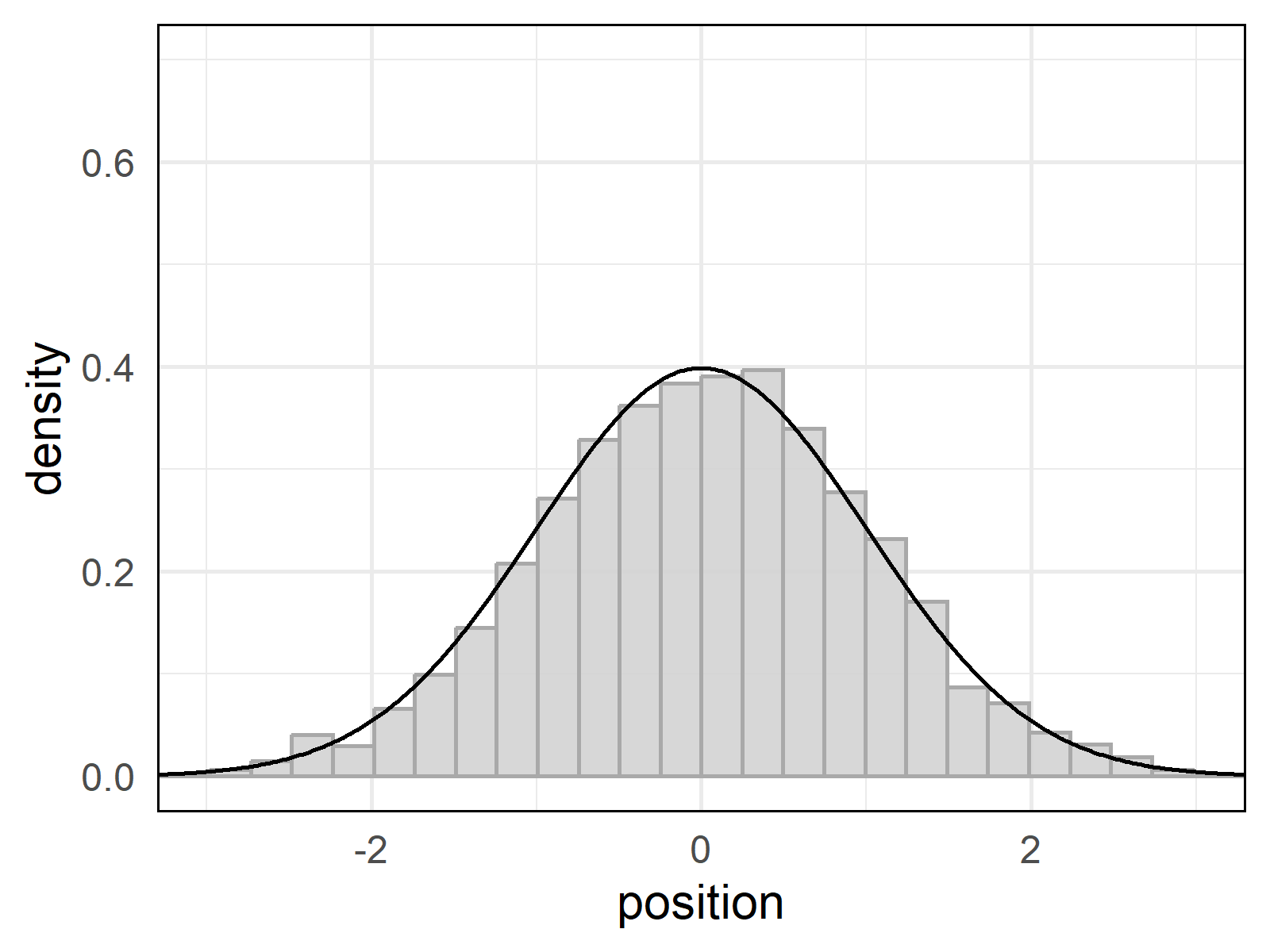} \\
    \end{tabular}
    \captionsetup{font=footnotesize}
    \caption{Empirical distributions vs.\ theoretical densities in the setting of \Cref{fig:simulations-grid}. Here, we run the simulation up to time $T=1$ with the same time step $\Delta t = 10^{-4}$ and compare the empirical distribution of the positions of $N=3000$ samples at time $T$ with the corresponding theoretical density $u^z(T,\cdot)$. The solid black curves represent the theoretical densities. The dashed blue lines indicate the support of the Barenblatt solutions in the porous medium and $p$-Laplace cases.}
    \label{fig:histogram-grid}
\end{figure}

\begin{figure}
    \centering
    \setlength{\tabcolsep}{4pt}
    \renewcommand{\arraystretch}{1.15}
    \begin{tabular}{>{\centering\arraybackslash}m{0.067\textwidth}
                    >{\centering\arraybackslash}m{0.288\textwidth}
                    >{\centering\arraybackslash}m{0.288\textwidth}
                    >{\centering\arraybackslash}m{0.288\textwidth}}
        \rule{0pt}{5ex} 
        
        & {\small\textbf{$p$-Laplace equation}}
        & {\small\textbf{Porous medium equation}}
        & {\small\textbf{Heat equation}} \\
        {\footnotesize$\beta = 0.0$}
        & \includegraphics[width=\linewidth]{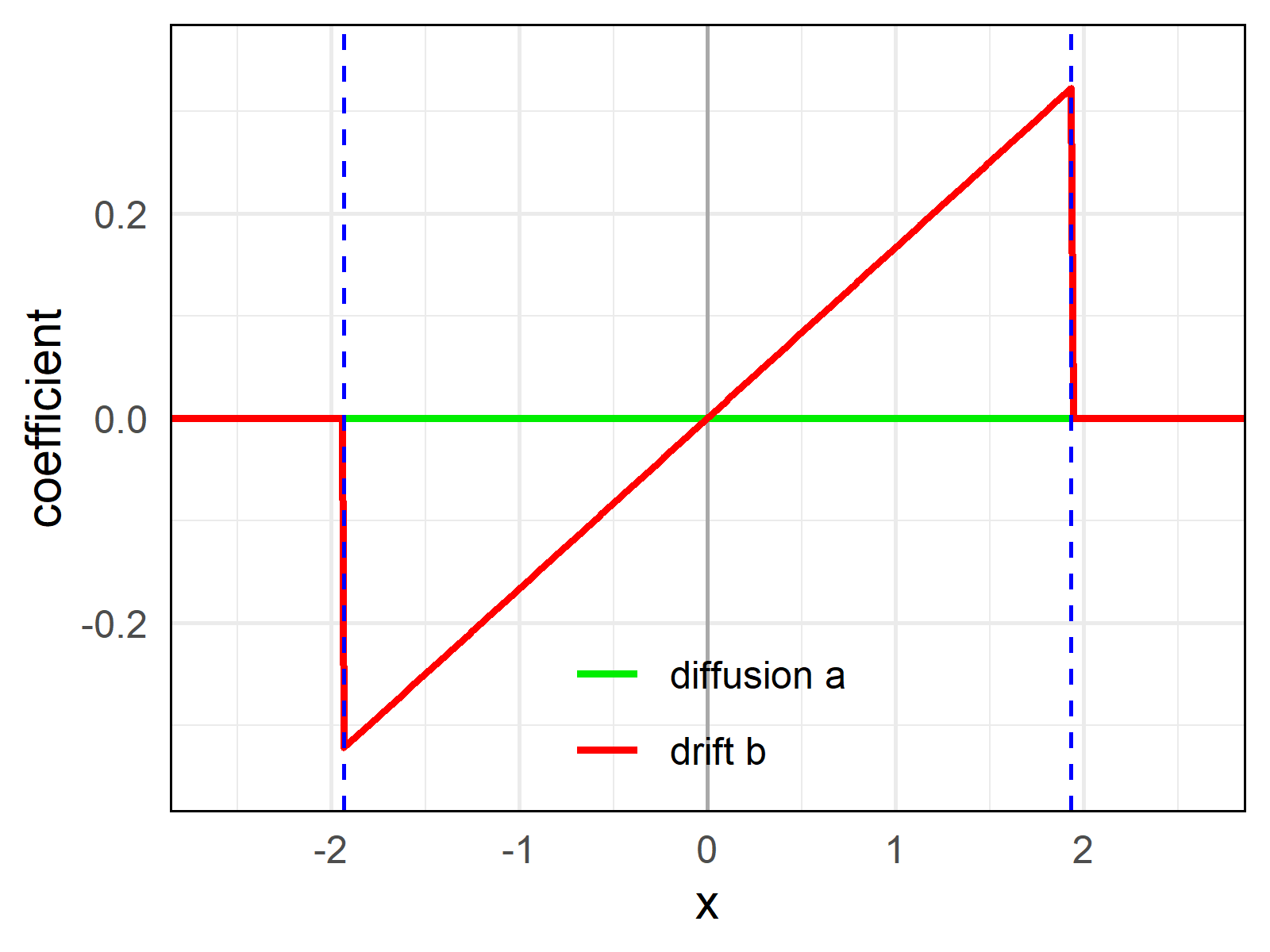}
        & \includegraphics[width=\linewidth]{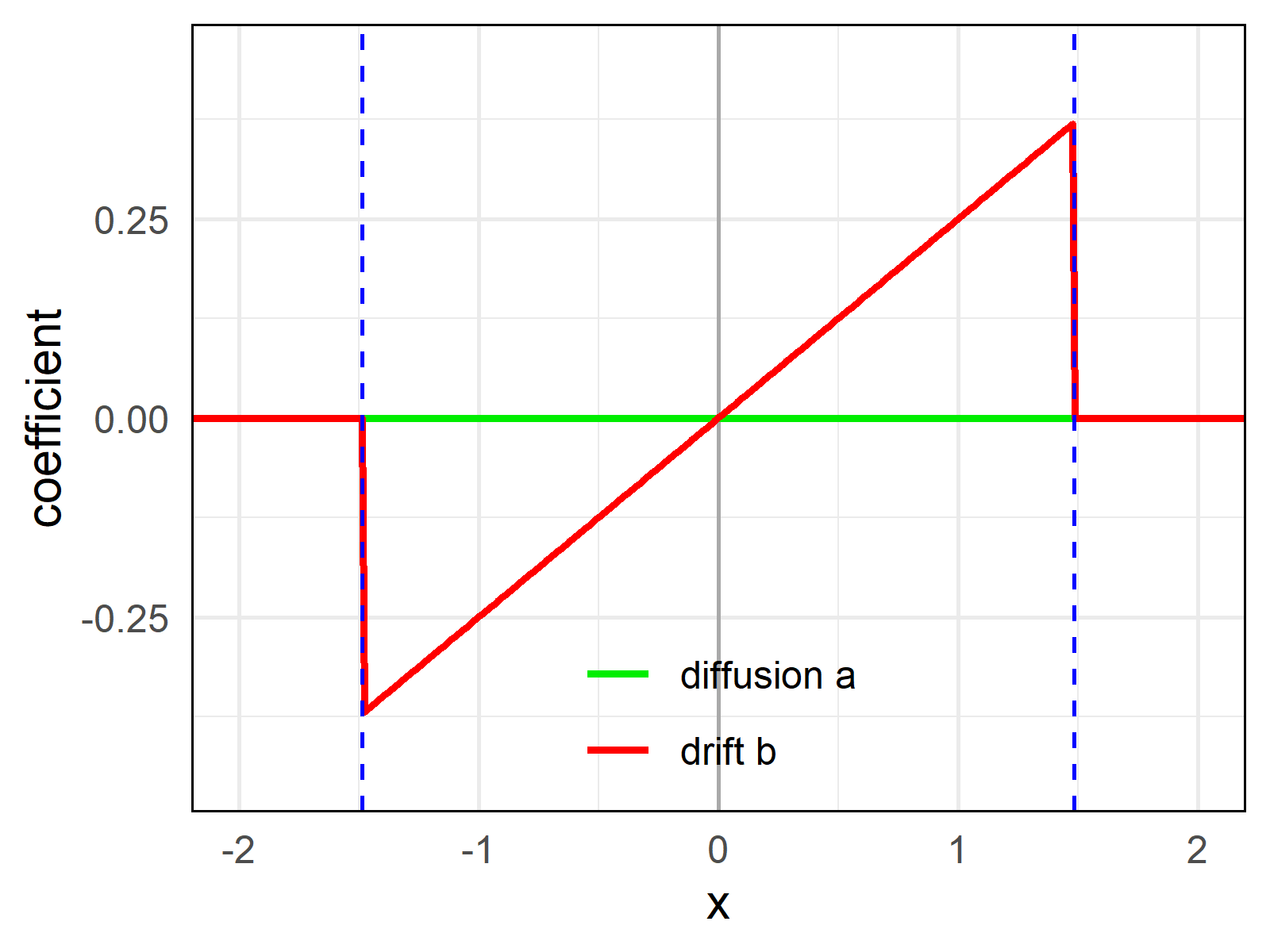}
        & \includegraphics[width=\linewidth]{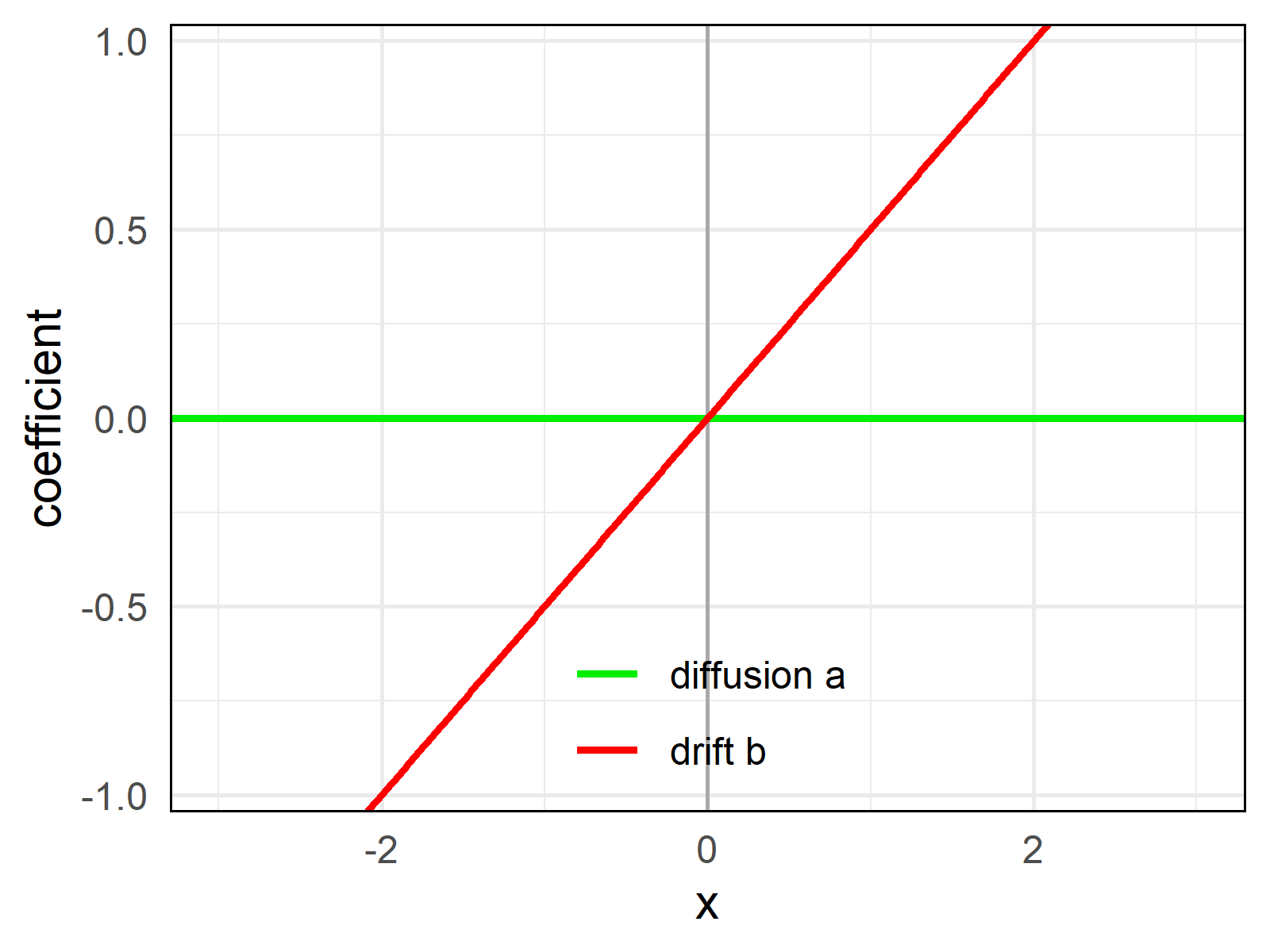} \\

        {\footnotesize$\beta = 0.1$}
        & \includegraphics[width=\linewidth]{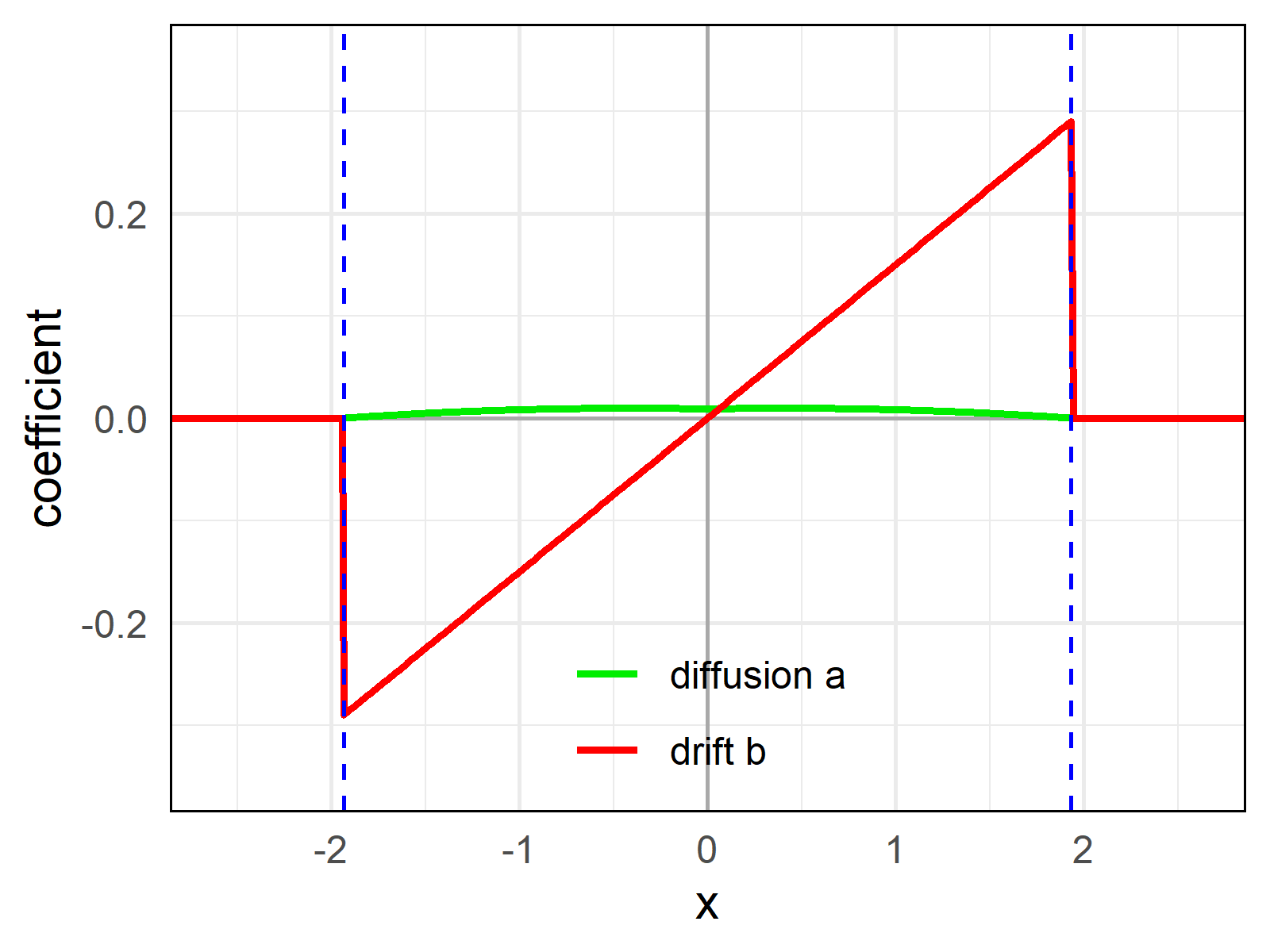}
        & \includegraphics[width=\linewidth]{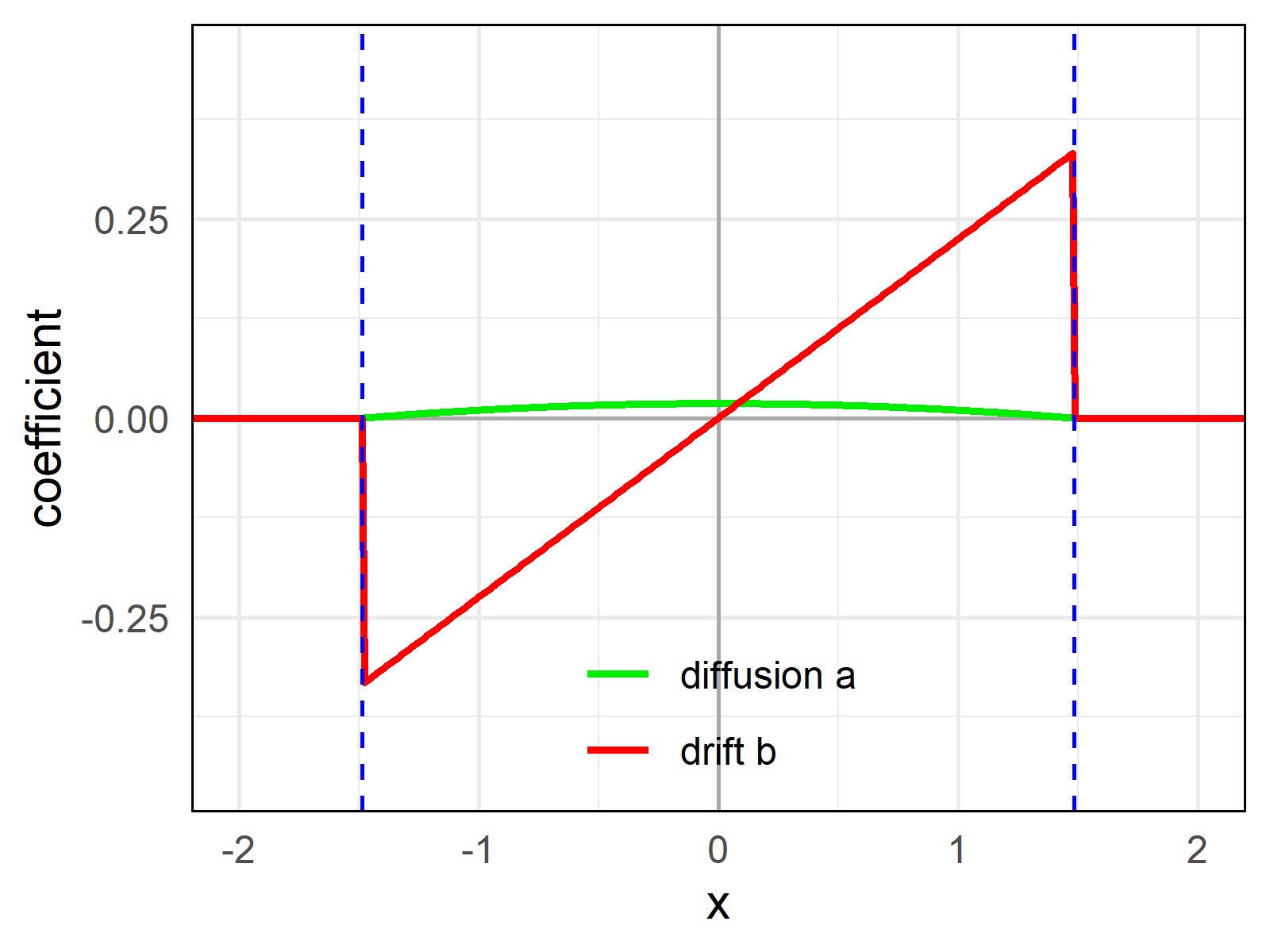}
        & \includegraphics[width=\linewidth]{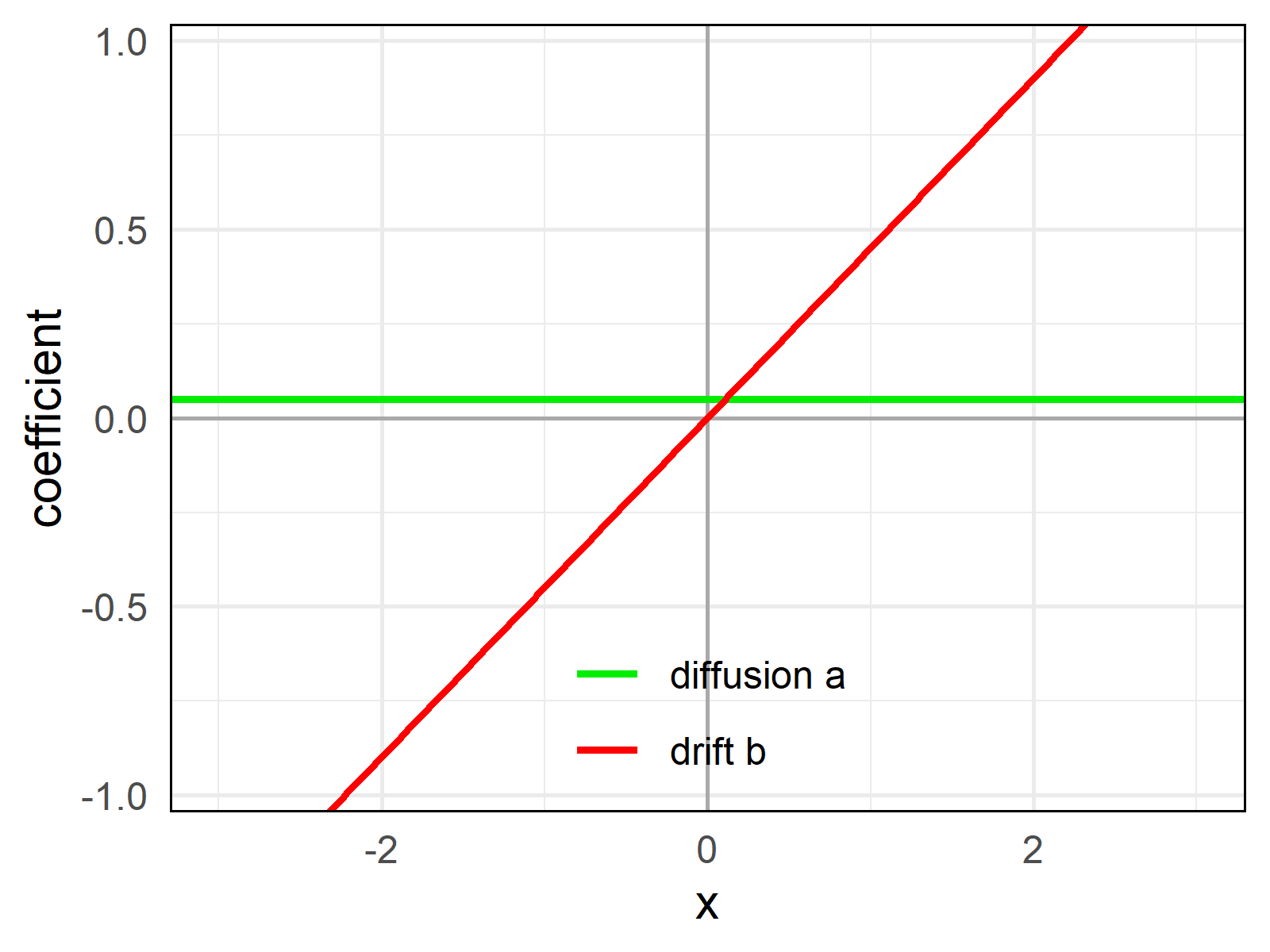} \\

        {\footnotesize$\beta = 1.0$}
        & \includegraphics[width=\linewidth]{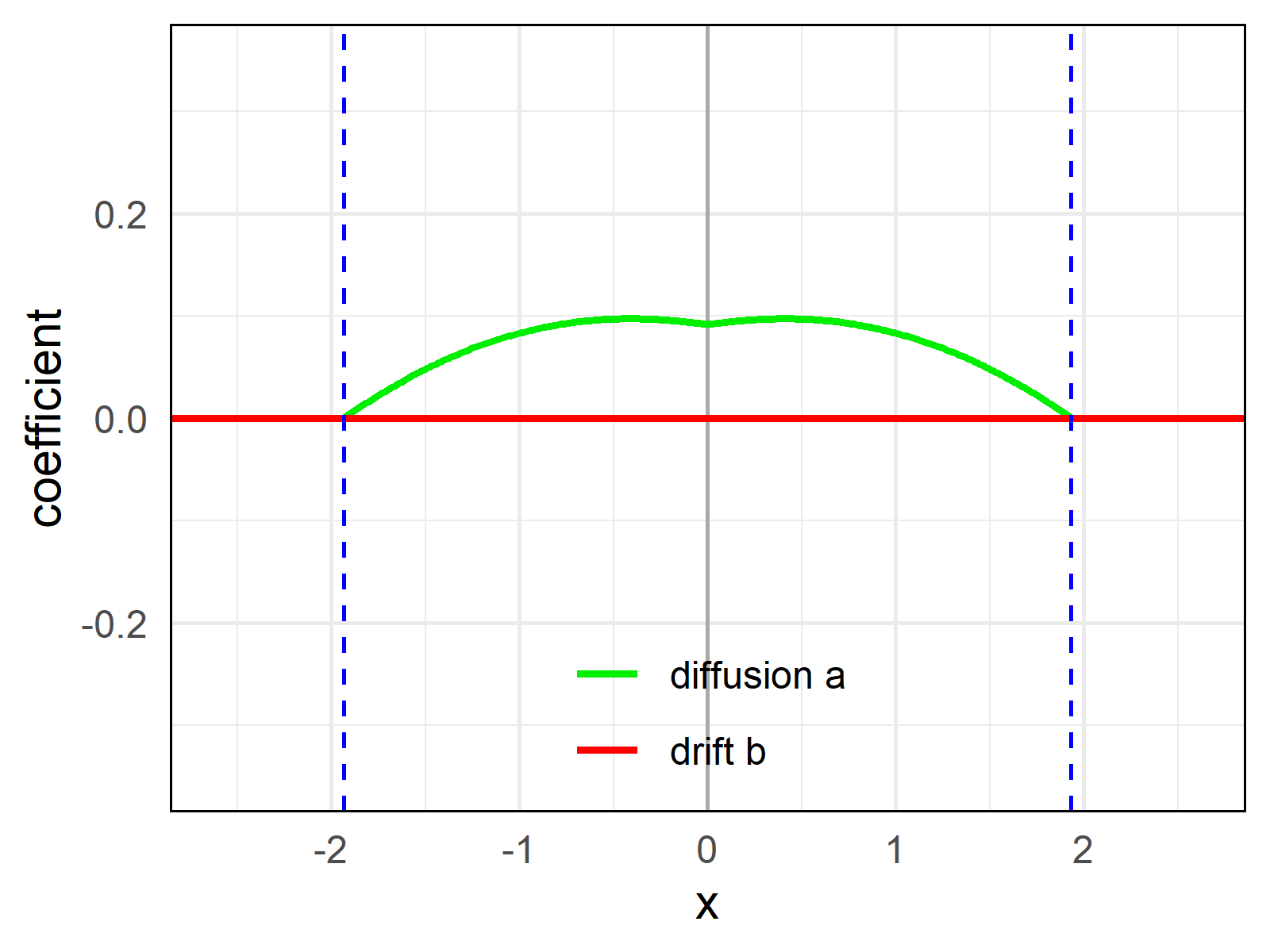}
        & \includegraphics[width=\linewidth]{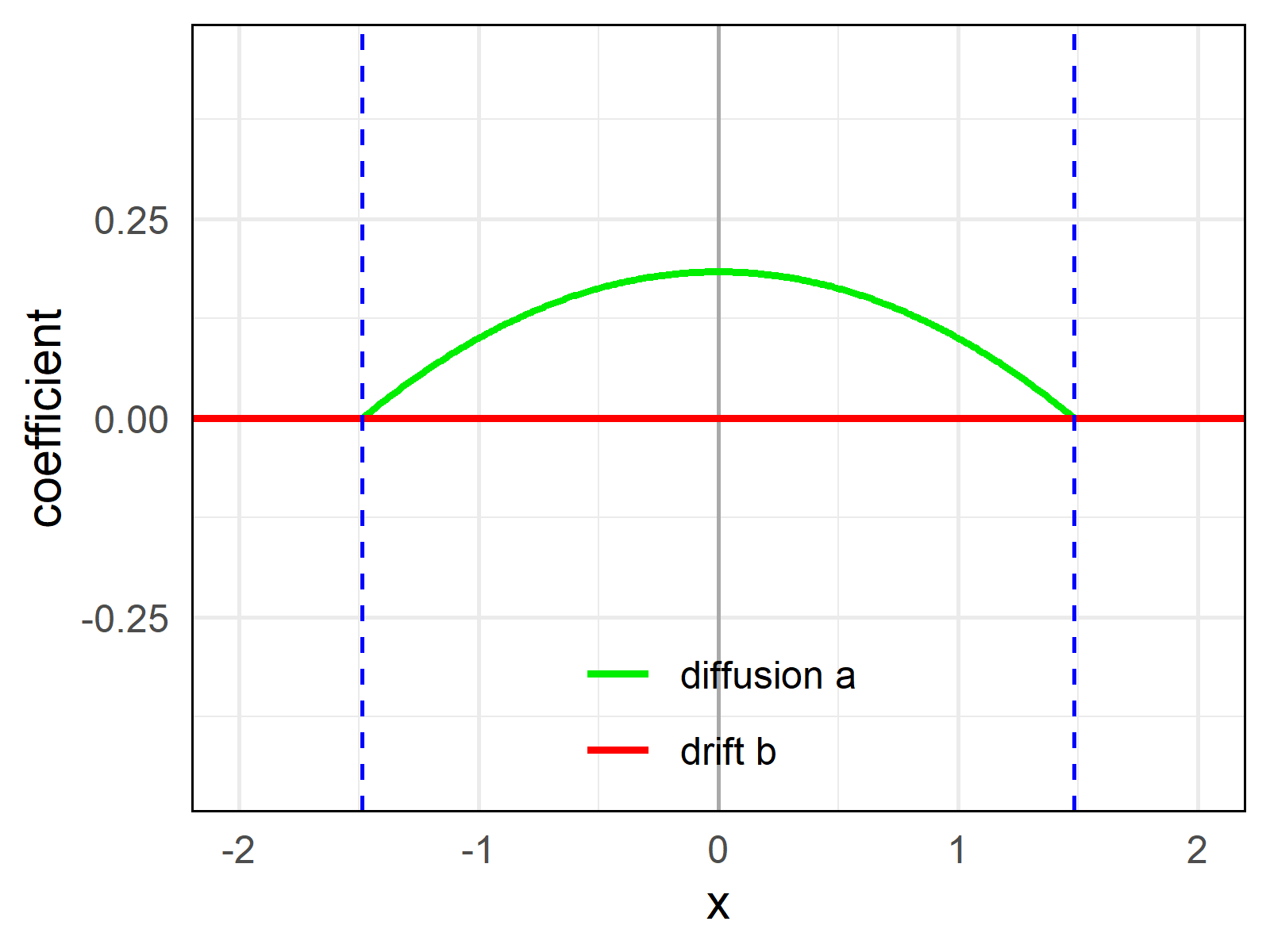}
        & \includegraphics[width=\linewidth]{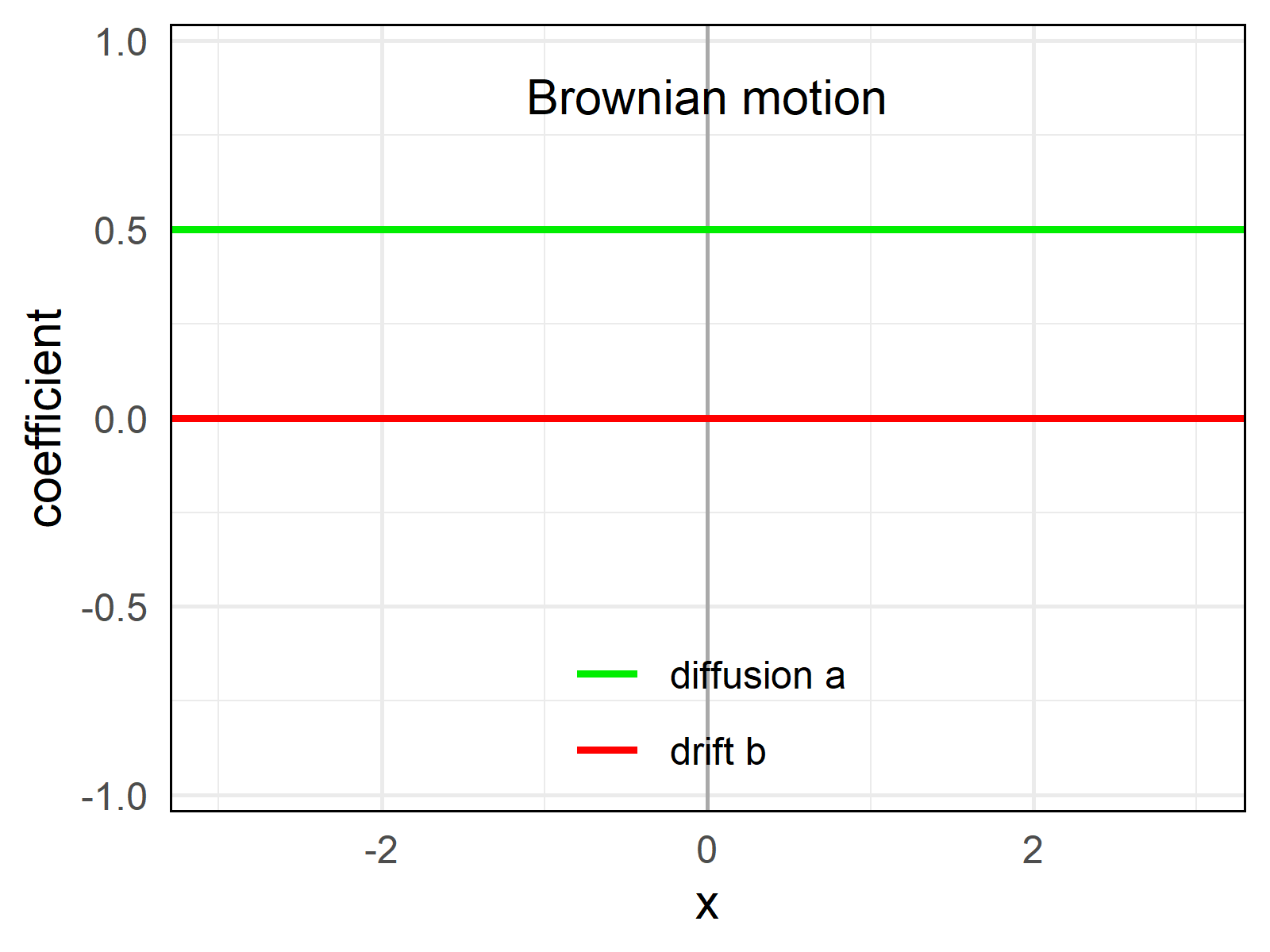} \\

        {\footnotesize$\beta = 1.5$}
        & \includegraphics[width=\linewidth]{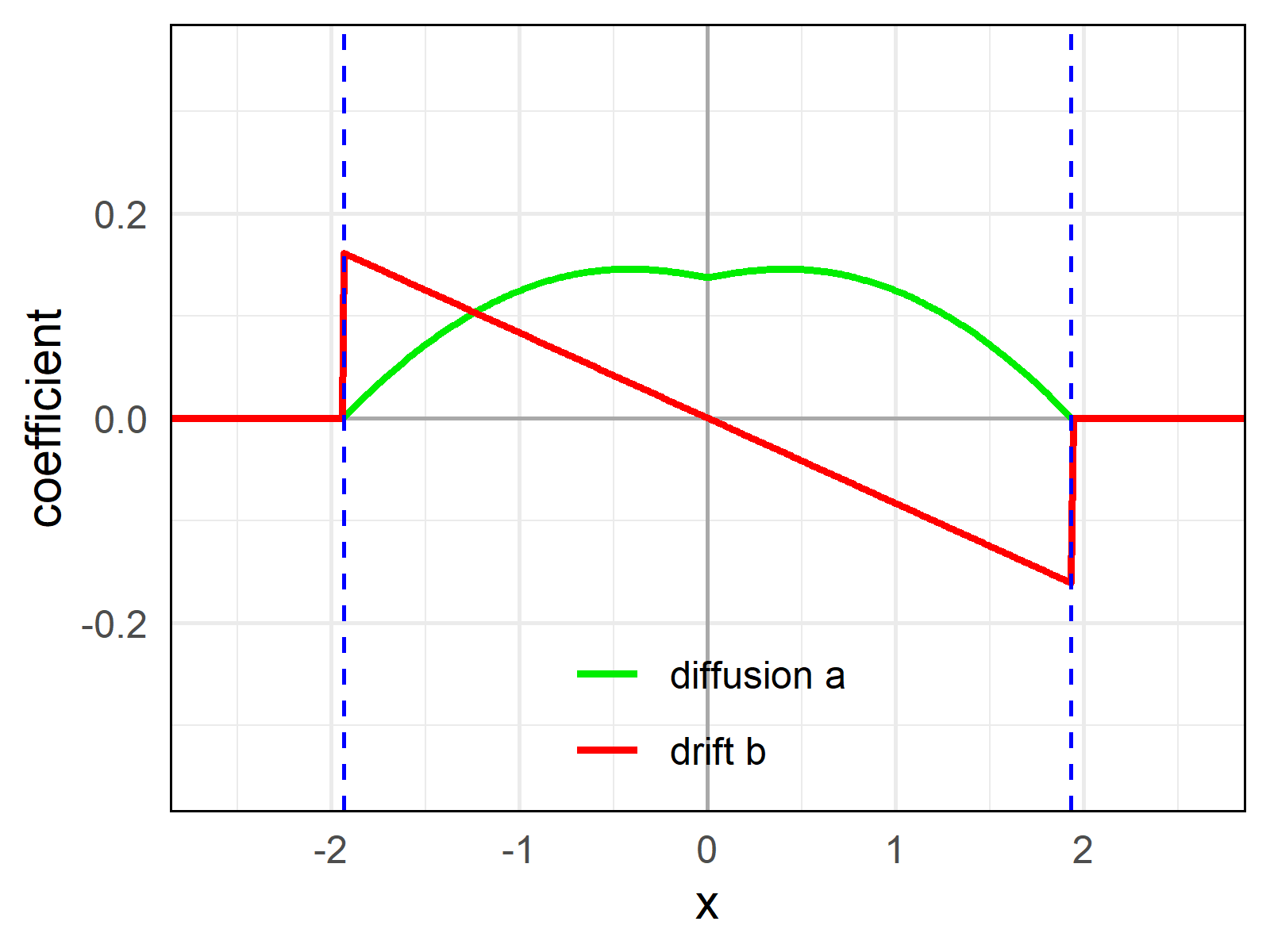}
        & \includegraphics[width=\linewidth]{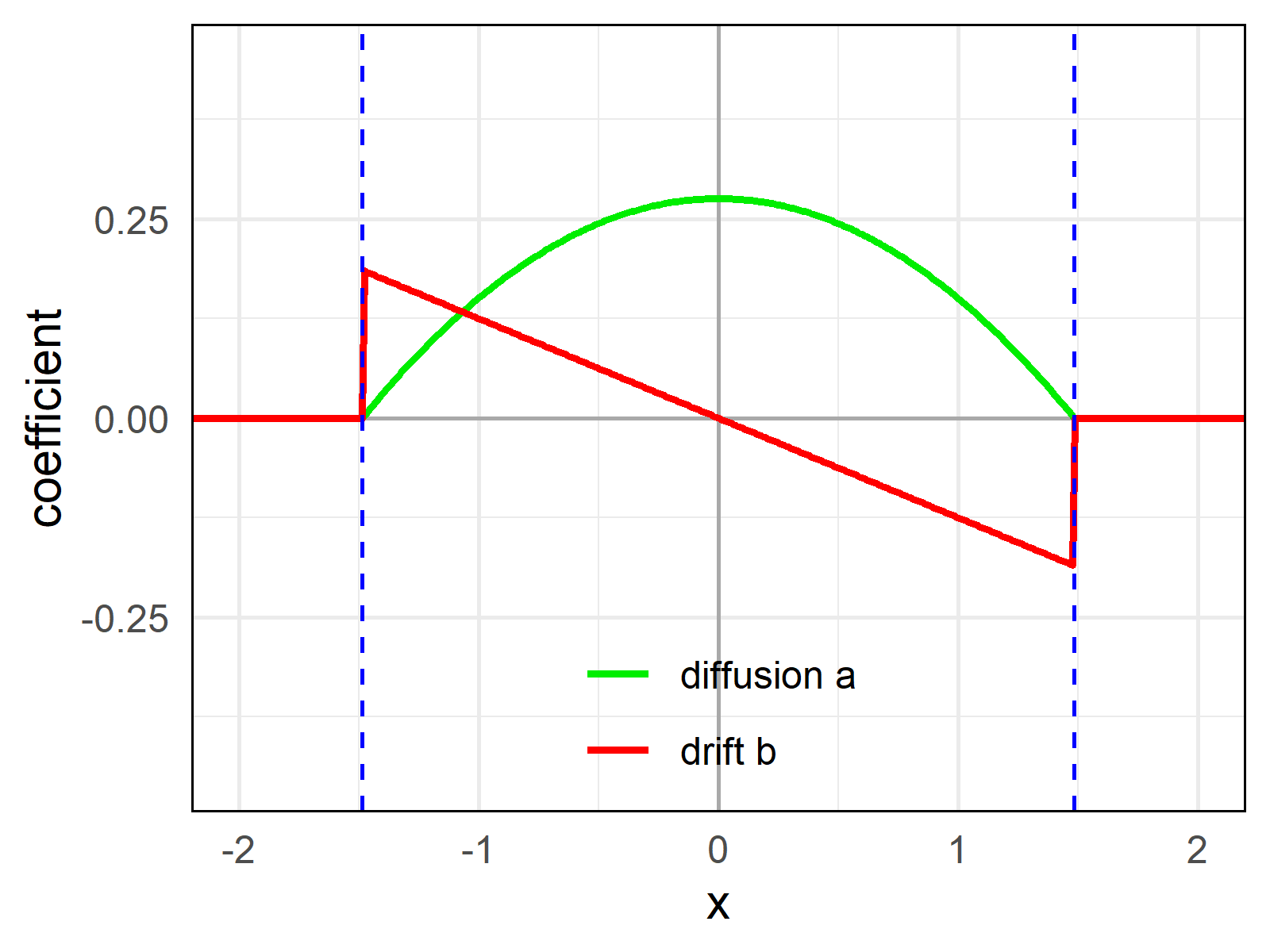}
        & \includegraphics[width=\linewidth]{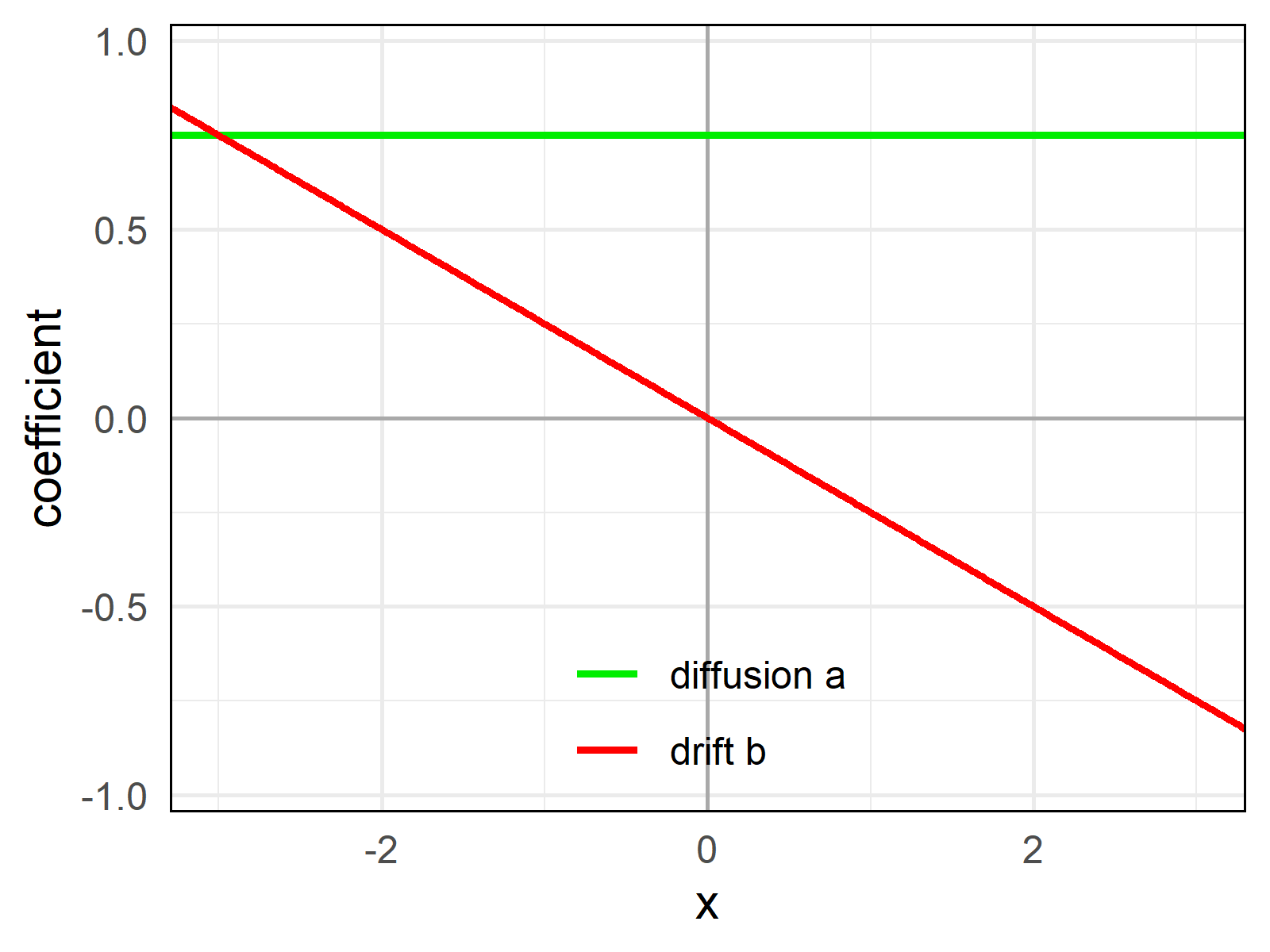} \\
    \end{tabular}
    \captionsetup{font=footnotesize}
    \caption{Diffusion coefficient $a^\beta(x,u^z(t,\cdot))$ (green) and drift coefficient $b^\beta(x,u^z(t,\cdot))$ (red) corresponding to the MV-SDEs simulated in \Cref{fig:simulations-grid}, plotted at time $t=1$ as functions of $x$. The dashed blue lines indicate the support of the Barenblatt solutions in the porous medium and $p$-Laplace cases.}
    \label{fig:coefficients-grid}
\end{figure}

\begin{figure}
    \centering
    \setlength{\tabcolsep}{4pt}
    \renewcommand{\arraystretch}{1.15}
    \begin{tabular}{>{\centering\arraybackslash}m{0.067\textwidth}
                    >{\centering\arraybackslash}m{0.288\textwidth}
                    >{\centering\arraybackslash}m{0.288\textwidth}
                    >{\centering\arraybackslash}m{0.288\textwidth}}          
        \rule{0pt}{5ex} 
        & {\small\textbf{$p$-Laplace equation}}
        & {\small\textbf{Porous medium equation}}
        & {\small\textbf{Heat equation}} \\
        {\footnotesize$\beta = 0.0$}
        & \includegraphics[width=\linewidth]{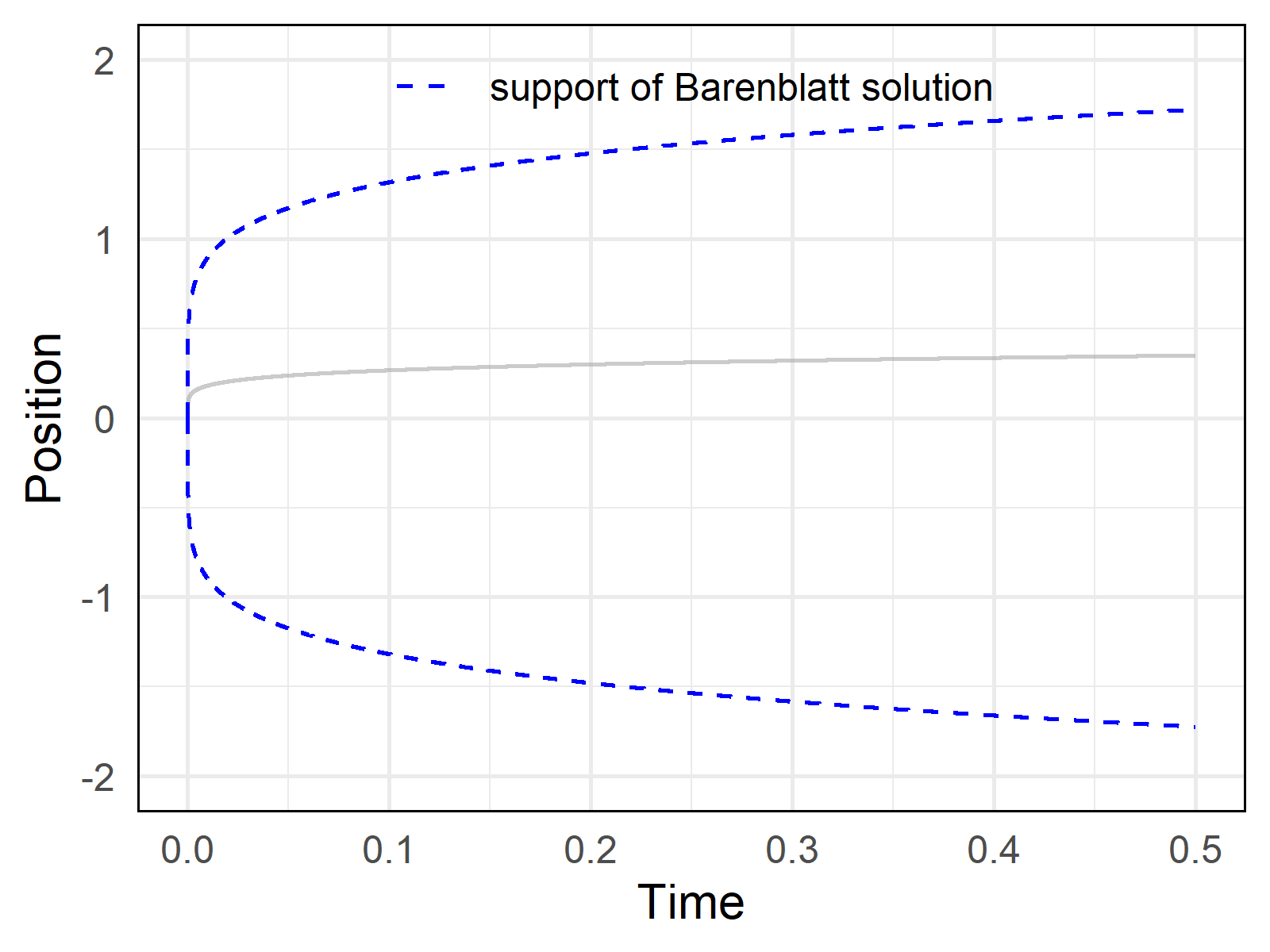}
        & \includegraphics[width=\linewidth]{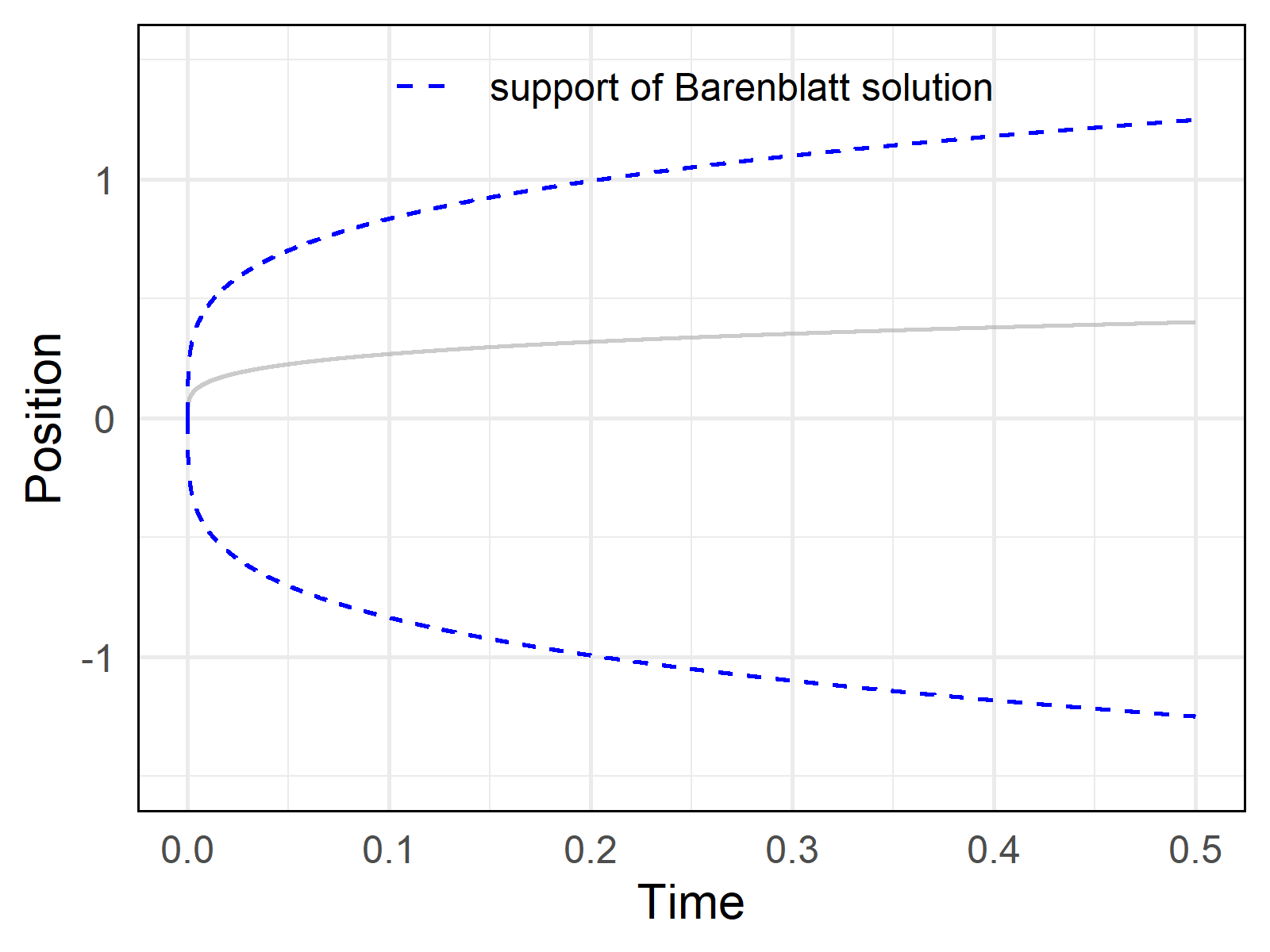}
        & \includegraphics[width=\linewidth]{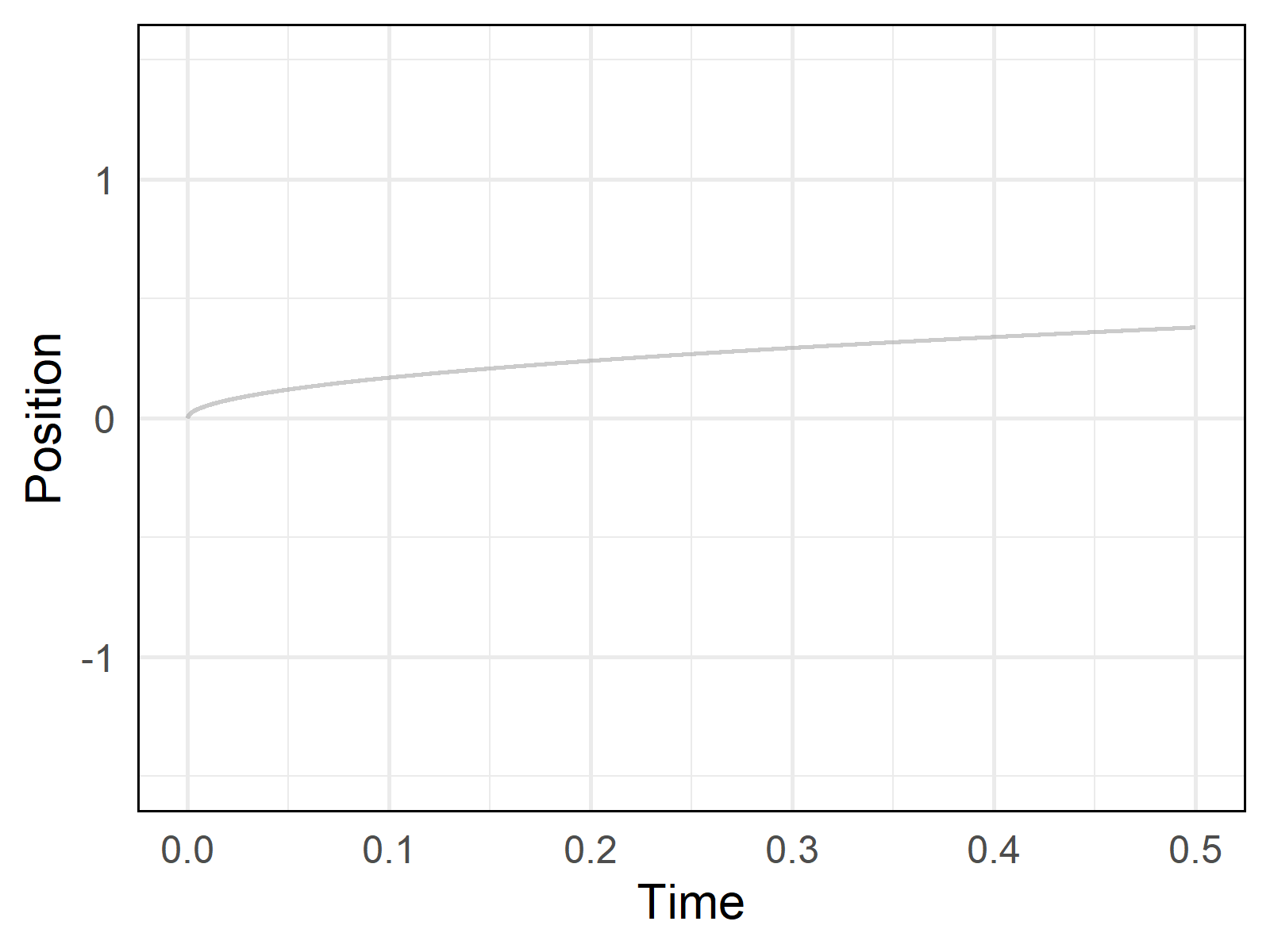} \\

        {\footnotesize$\beta = 0.1$}
        & \includegraphics[width=\linewidth]{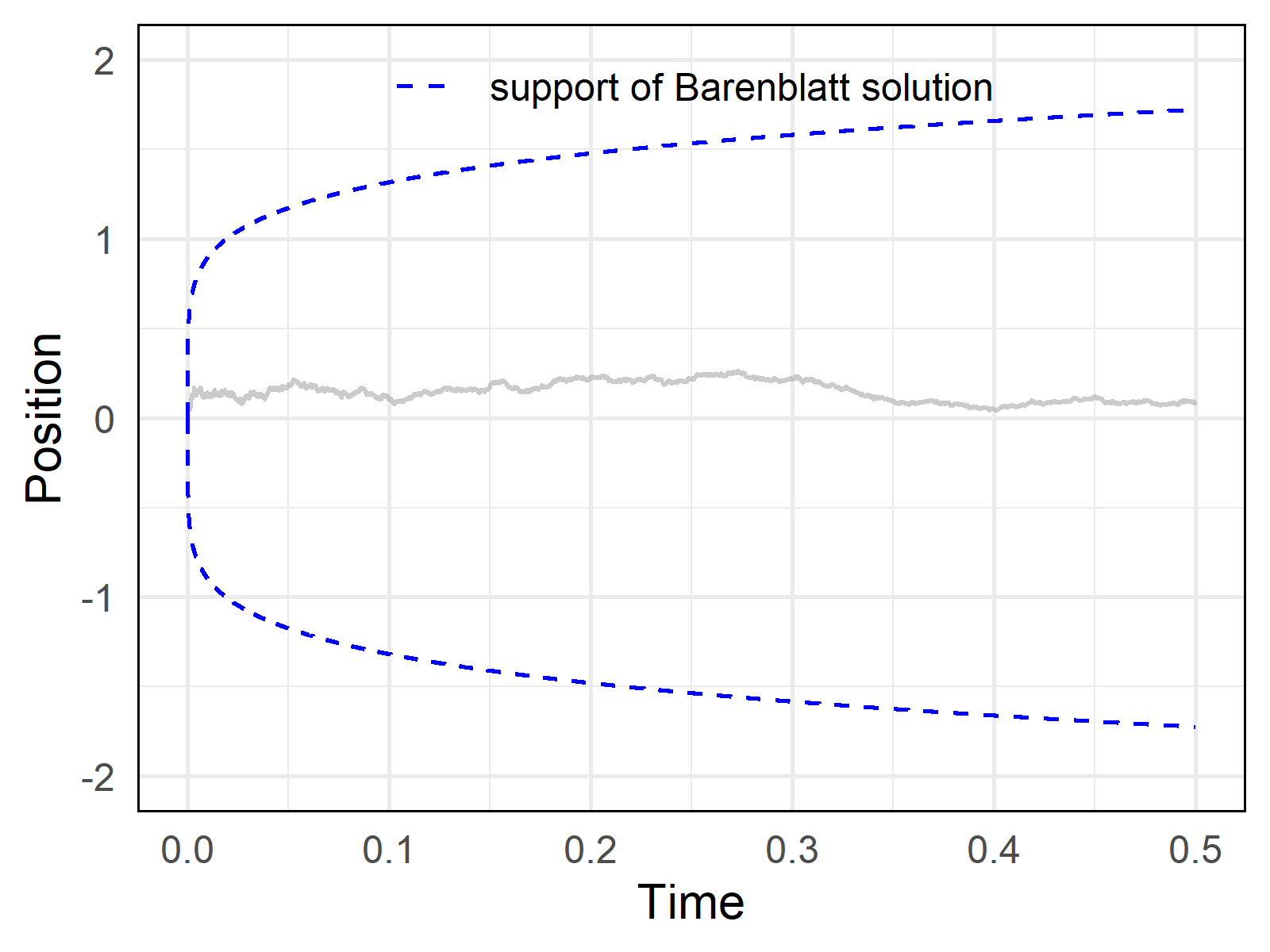}
        & \includegraphics[width=\linewidth]{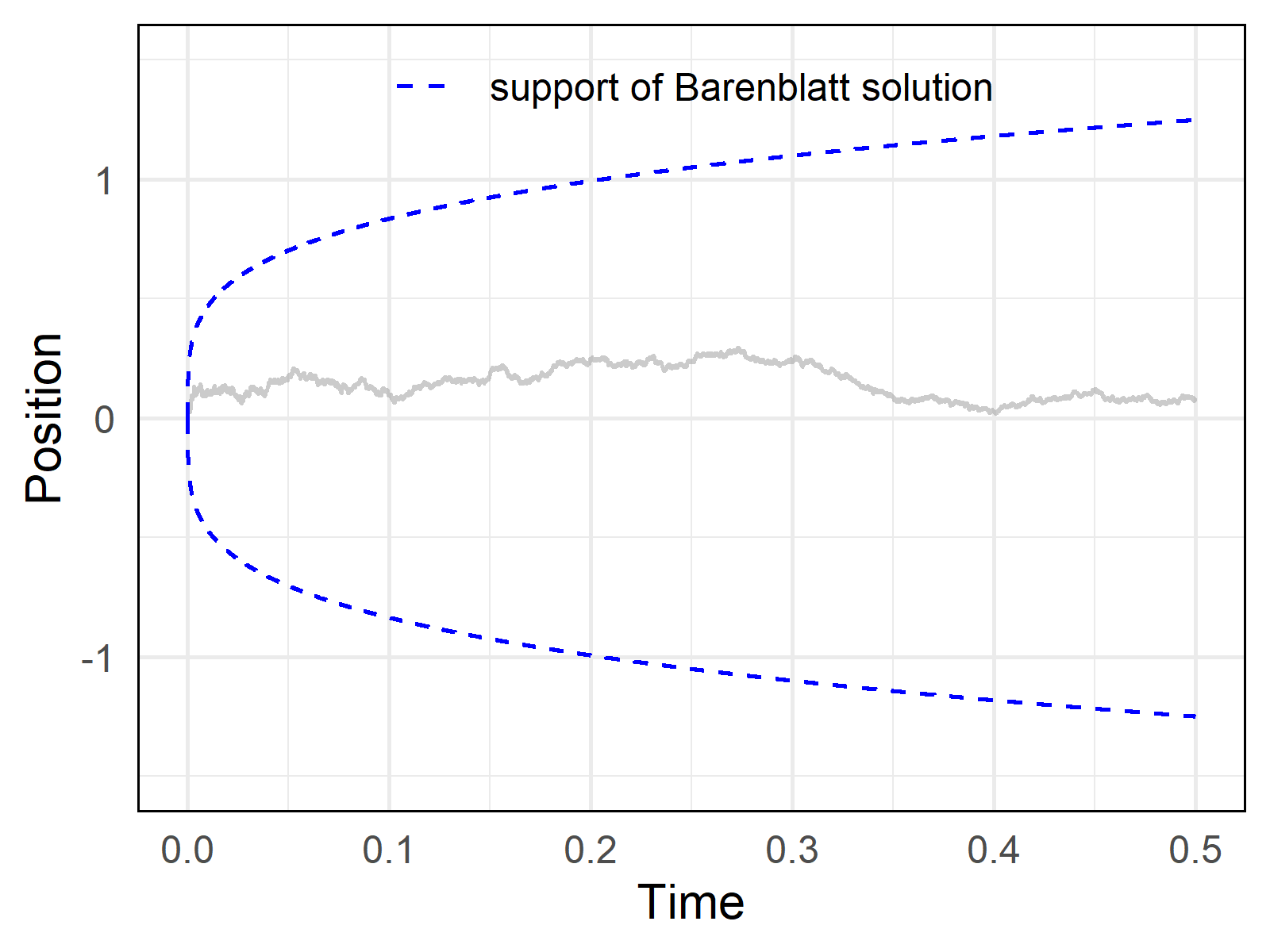}
        & \includegraphics[width=\linewidth]{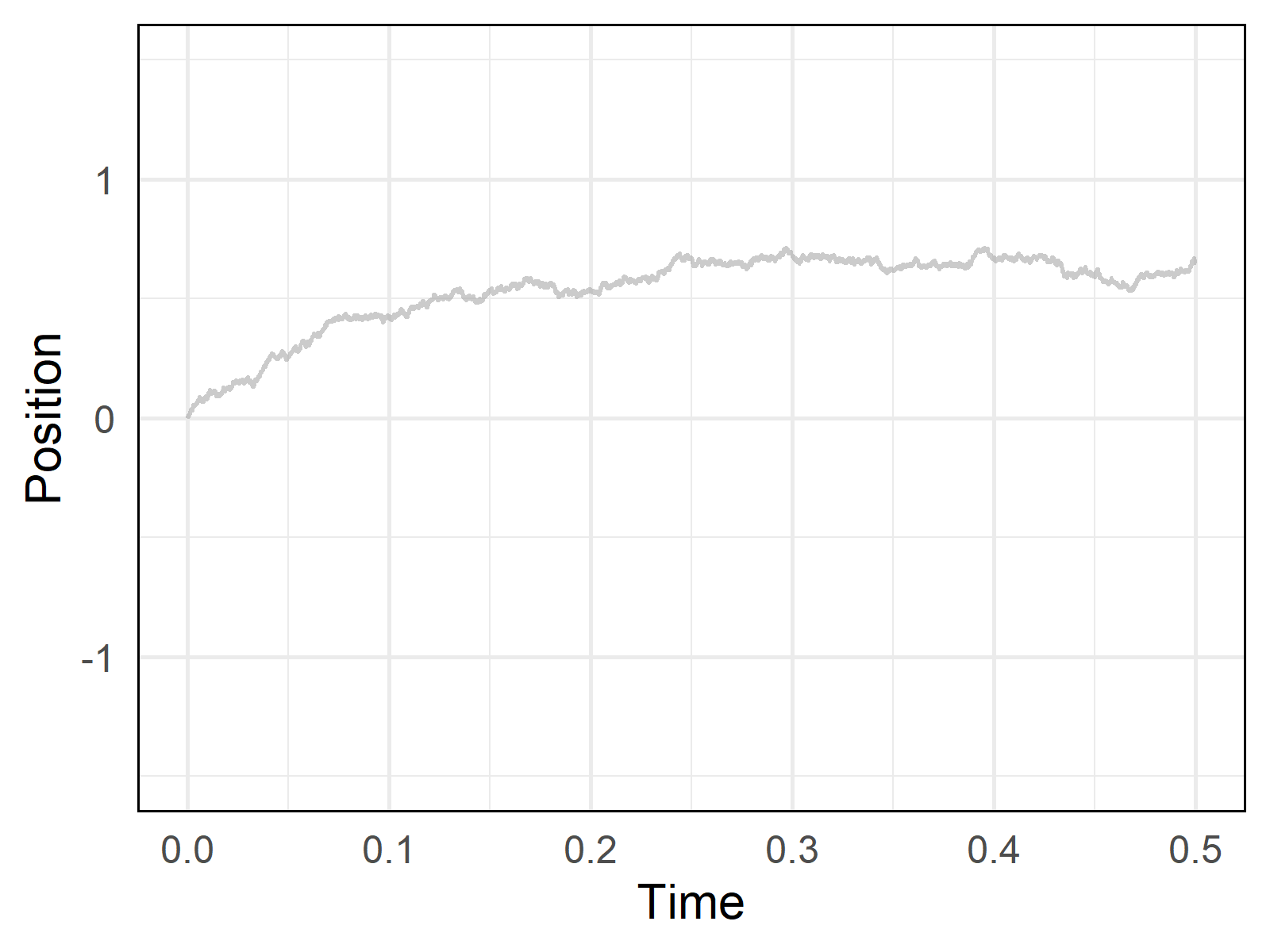} \\

        {\footnotesize$\beta = 1.0$}
        & \includegraphics[width=\linewidth]{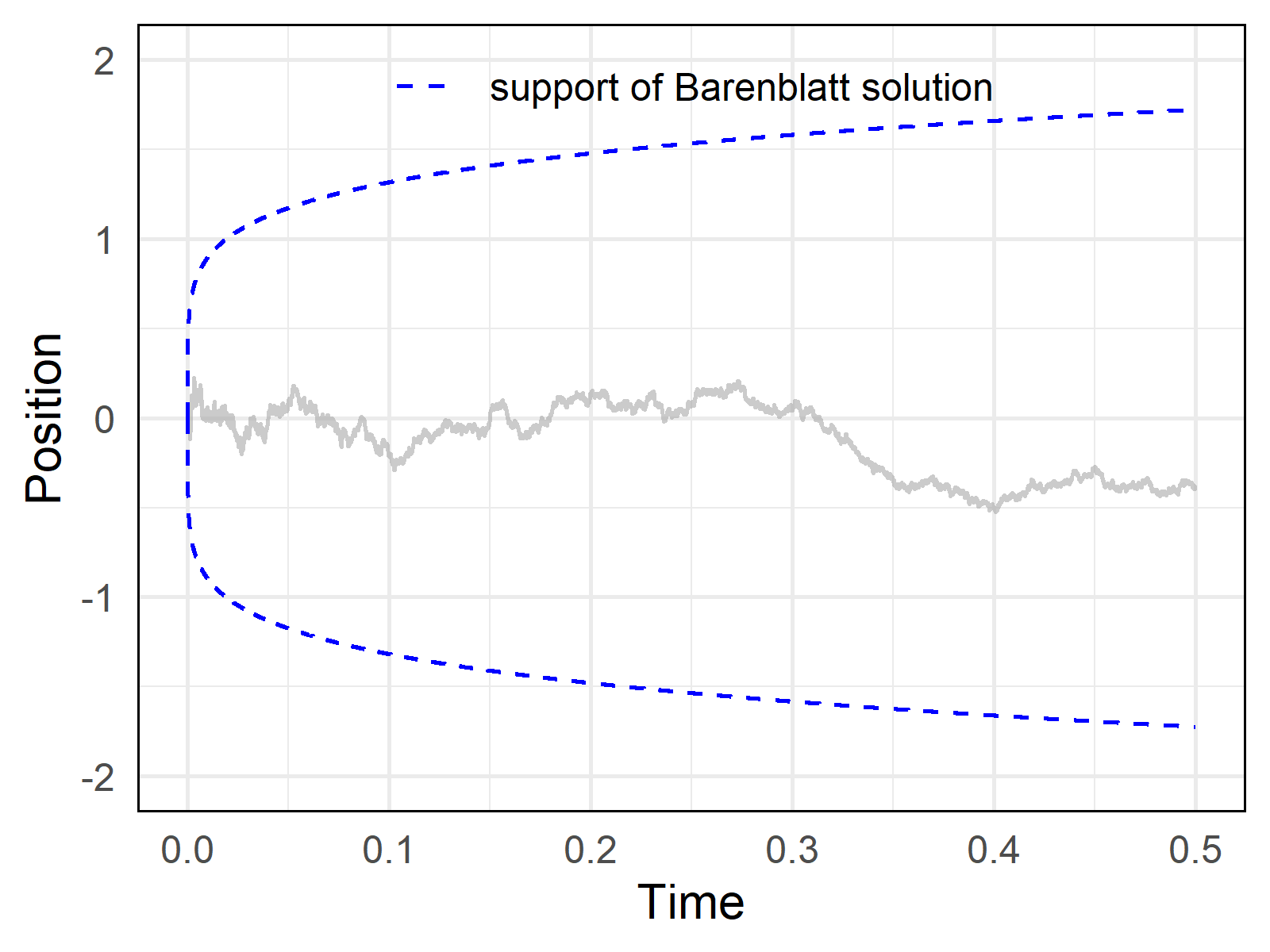}
        & \includegraphics[width=\linewidth]{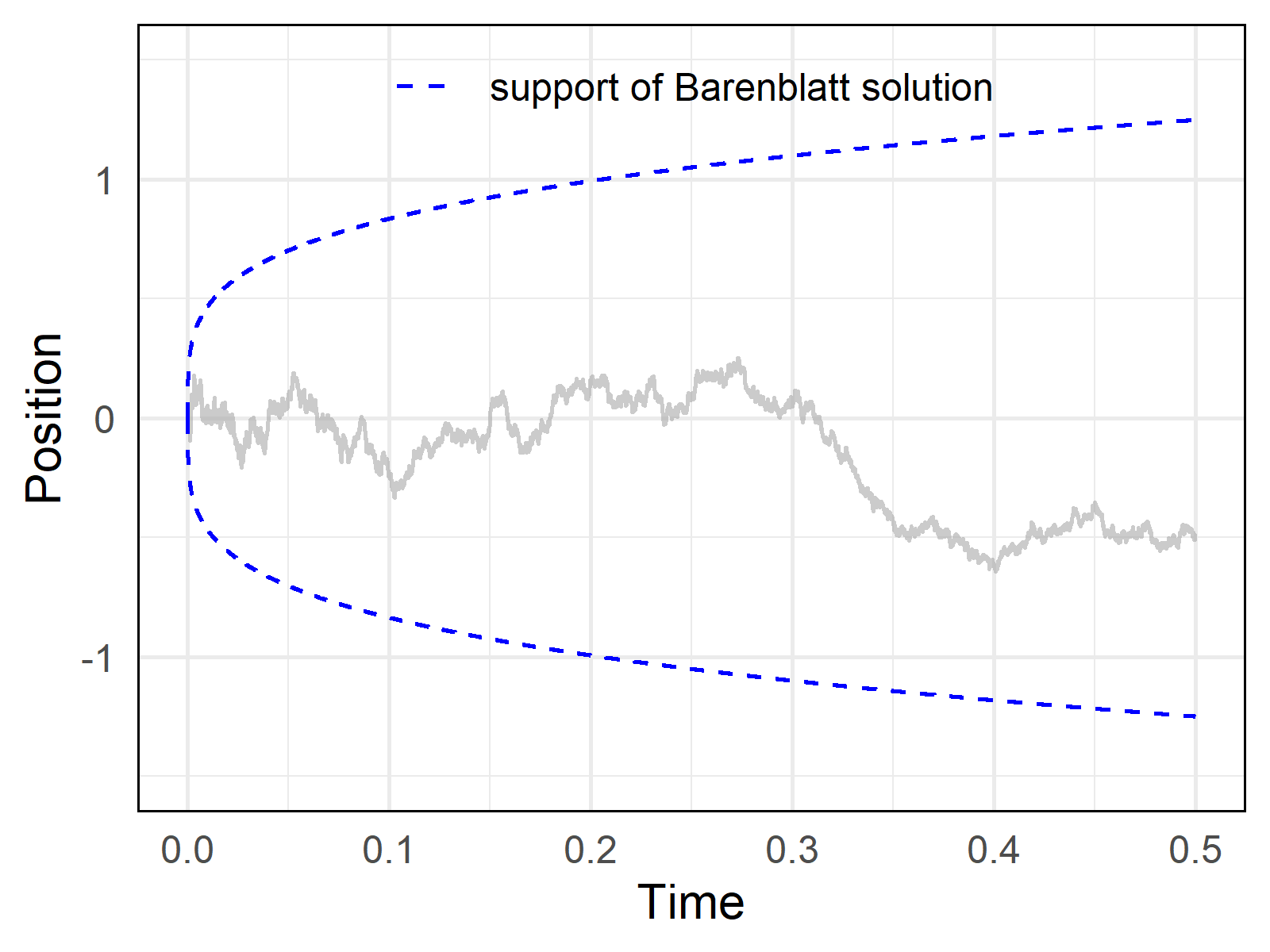}
        & \includegraphics[width=\linewidth]{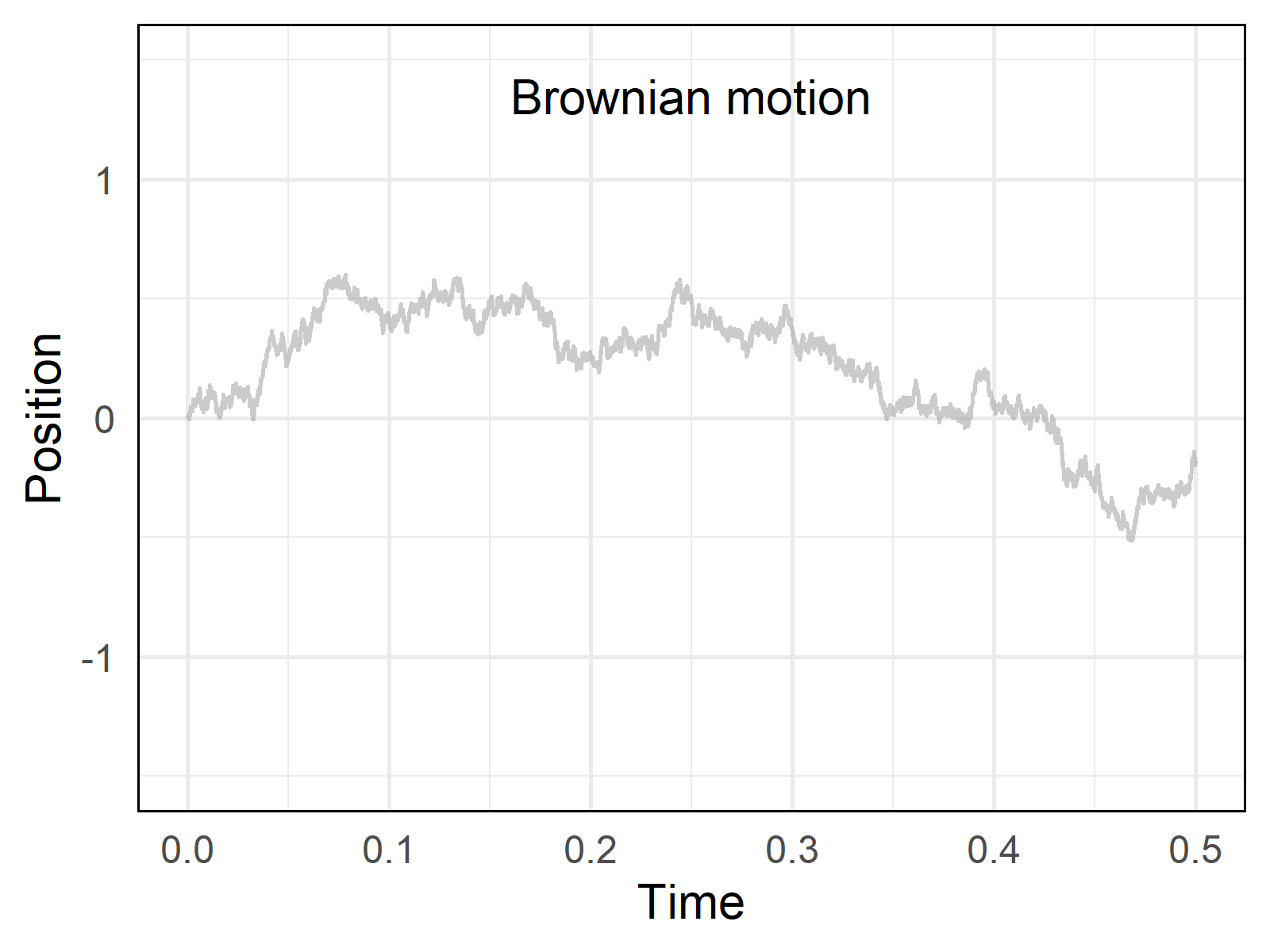} \\

        {\footnotesize$\beta = 1.5$}
        & \includegraphics[width=\linewidth]{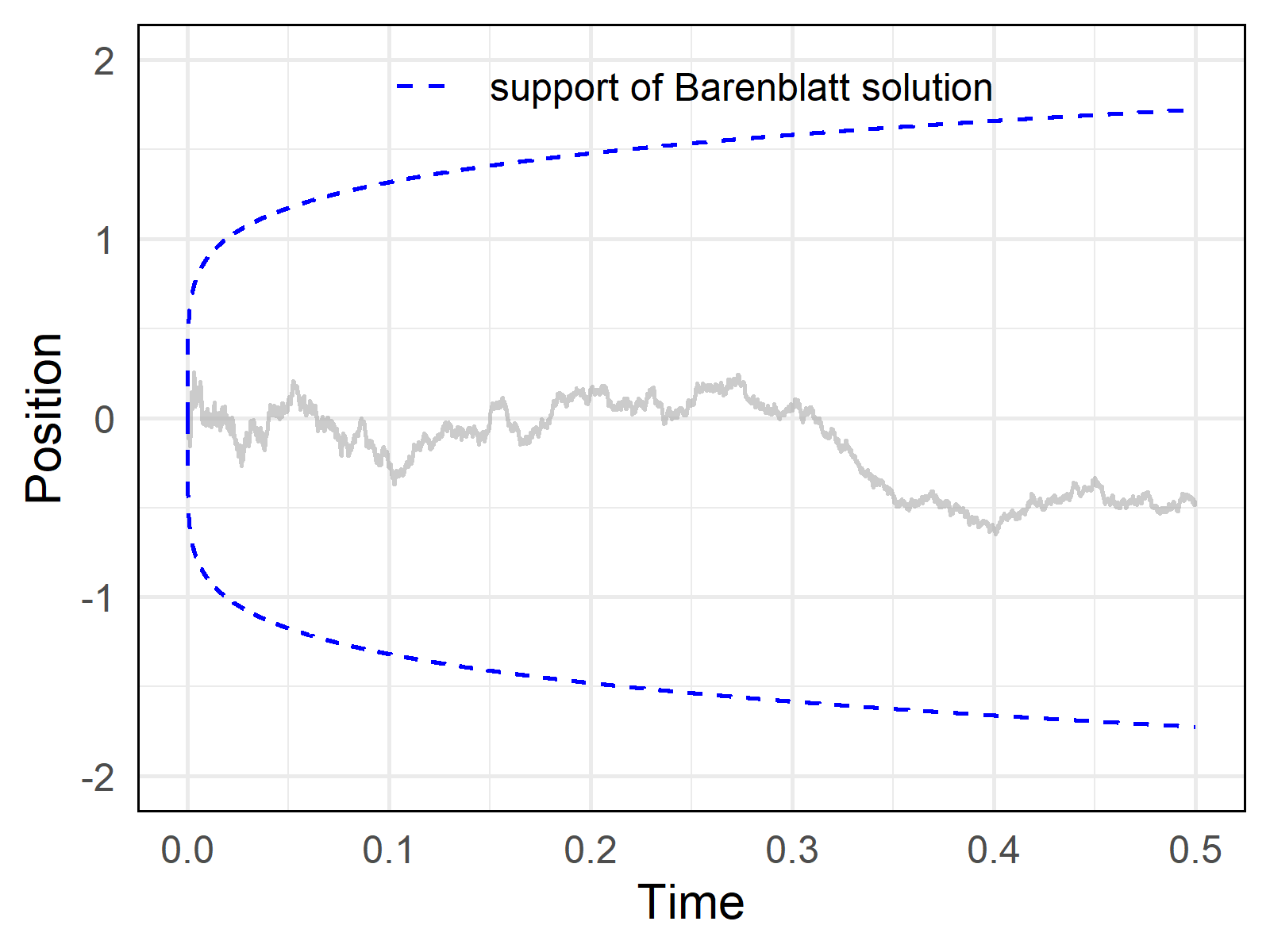}
        & \includegraphics[width=\linewidth]{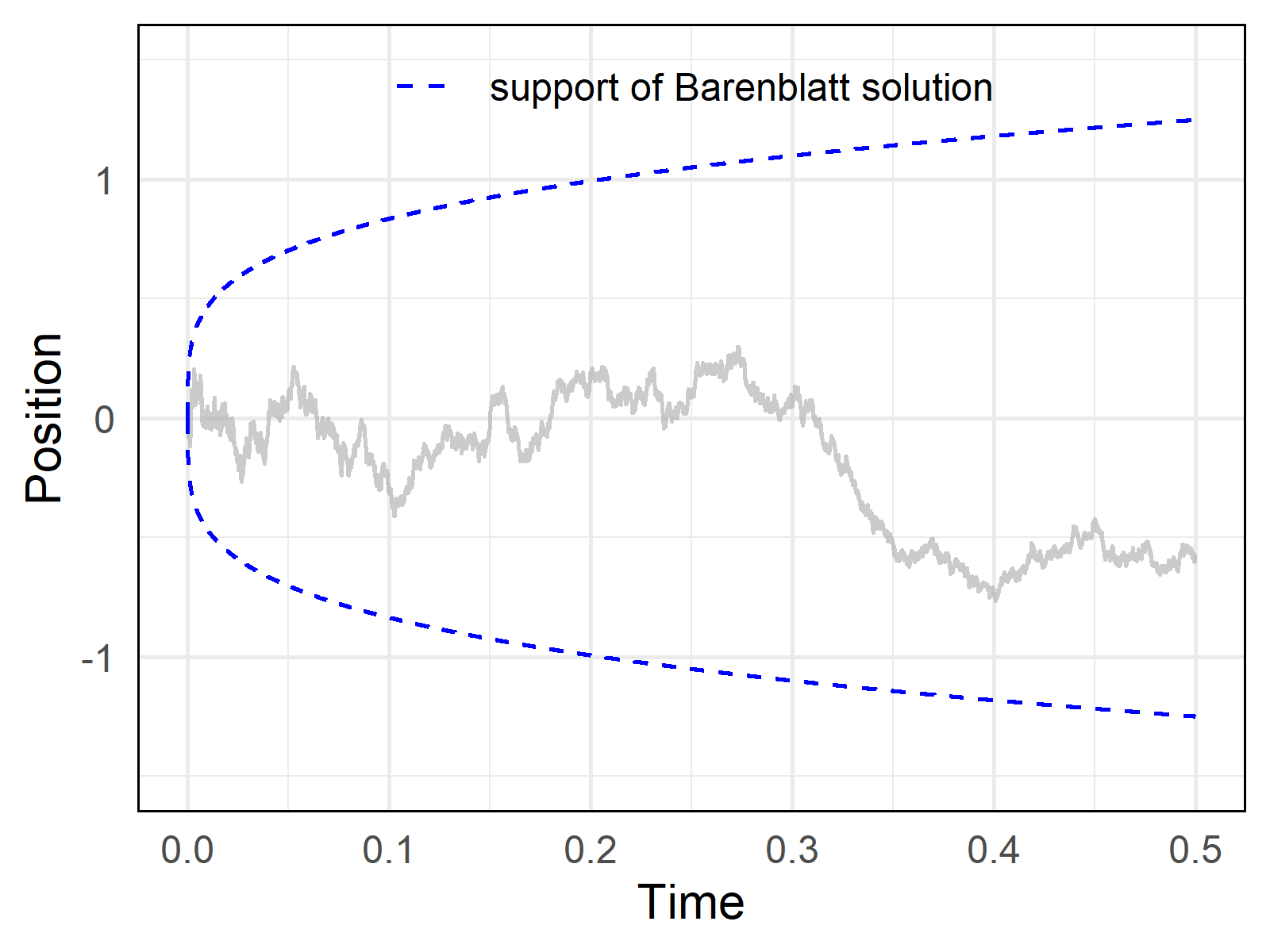}
        & \includegraphics[width=\linewidth]{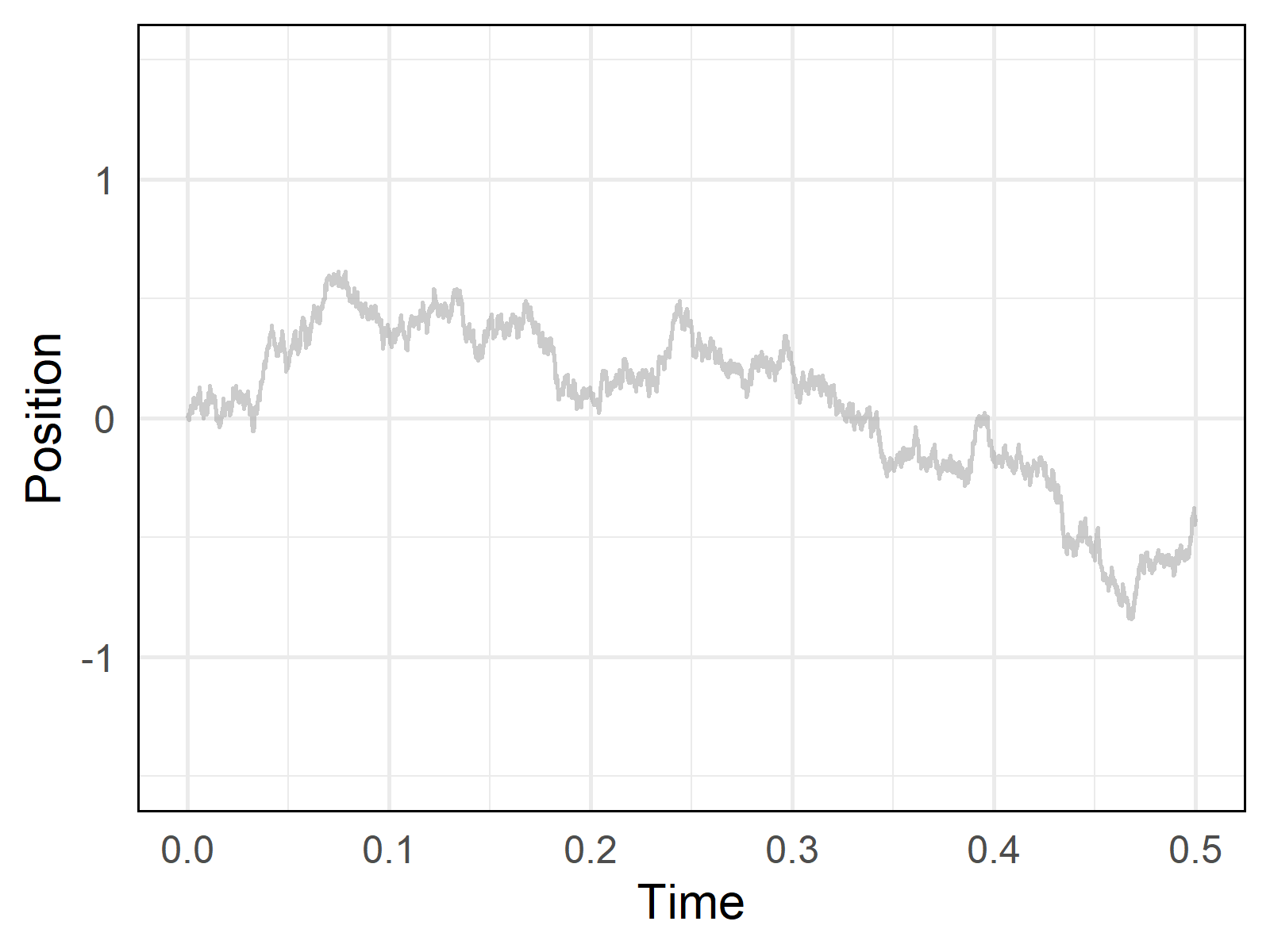} \\
    \end{tabular}
    \captionsetup{font=footnotesize}
    \caption{Single sample path simulation ($N=1$) in the same setting as \Cref{fig:simulations-grid}. The figure illustrates how the \emph{same realization} of Brownian increments in the scheme \eqref{eq:numerical_scheme} leads to different paths under the various nonlinear Fokker–Planck interpretations of a given PDE, interpolating between the pure-drift ($\beta=0$) and pure-diffusion ($\beta=1$) cases.}
    \label{fig:single-path-grid}
\end{figure}

Here we explain the numerical procedure used for the sample path simulations in \textbf{\Cref{fig:simulations-grid}}.
In \Cref{subsec:heat_interpolation,subsec:PME-beta} and \Cref{rem:interpolation-pL}, for the heat equation, porous medium equation, and the $p$-Laplace equation, respectively, we showed the existence of weak solutions to $\beta$-interpolated MV-SDEs with one-dimensional time marginals given by the PDE solution $u^z$. These equations are parameterized by $\beta \in (0,\infty)$ and interpolate between the pure-drift ($\beta=0$) and pure-diffusion ($\beta=1$) cases:
\begin{equation}
    \begin{dcases}
        dX_t = b^{\beta}(X_t,u^z(t,\cdot)) \, dt 
        + \sqrt{2}\,\big(a^{\beta}(X_t,u^z(t,\cdot))\big)^{1/2} \, dW_t, \\
        \mathcal{L}(X_t) = u^z(t,x)\,dx,\quad t>0, \\
        \mathcal{L}(X_0) = \delta_z,
    \end{dcases}
\end{equation}
where $(W_t)_{t\geq 0}$ is a standard $d$-dimensional Brownian motion. The interpolated coefficients are denoted here by $(a^{\beta},b^{\beta})$, which are given in \eqref{eq:HE_beta_coeff} for the heat equation, in \eqref{eq:PME_beta_coeff} for the porous medium equation, and in \Cref{rem:interpolation-pL} for the $p$-Laplace equation. The family $\{u^z\}_{z \in \R^d }$ is the heat kernel \eqref{eq:fundamental_solution_heat} for the heat equation, the Barenblatt solution \eqref{eq:barenblatt_solution_porous} for the porous medium equation, and the Barenblatt solution \eqref{eq:barenblatt_solution_plaplace} for the $p$-Laplace equation.
Plugging $u^z(t,\cdot)$ into these coefficients, we get coefficients depending on $(t,x)$, here denoted by $ a^{\beta,z}(t,x) \coloneqq a^{\beta}(x,u^z(t,\cdot)) $ and $ b^{\beta,z}(t,x) \coloneqq b^{\beta}(x,u^z(t,\cdot)) $ for $(t,x) \in (0,\infty) \times \R^d$.
The corresponding explicit formulas for $(a^{\beta,z},b^{\beta,z})$ are already computed in the aforementioned sections. 
Our goal is then to simulate the linear SDE with the time-marginal constraint:
\begin{equation}
    \begin{dcases}
        dX_t = b^{\beta,z}(t,X_t)\,dt 
        + \sqrt{2} \big(a^{\beta,z}(t,X_t) \big)^{1/2} \, dW_t, \\
        \mathcal{L}(X_t) =   u^z(t,x)\,dx,\quad t>0, \\
        X_0 = z.
    \end{dcases}
\end{equation}
We consider the initial point $z=0$ and dimension $d=1$. We fix a time horizon $T>0$ and discretize the interval $[0,T]$ with time step $\Delta t$ into a grid $(t_i)_{i=0}^M$, where $t_i \coloneqq i\,\Delta t$ and $M \coloneqq T/\Delta t$. We implement the \emph{Euler--Maruyama scheme}.
Since the coefficients $(a^{\beta,z},b^{\beta,z})$ are singular at $t=0$ (except for $\beta=1$ of the heat equation, which corresponds to standard Brownian motion), to avoid evaluating them at $t=0$, we sample the position at the first time step $\Delta t$ from the distribution $u^z(\Delta t,x)\,dx$ using the \emph{inverse CDF method}. The resulting approximation $(\tilde X_i)_{i=0}^M$ is given by
\begin{equation}\label{eq:numerical_scheme}
    \begin{dcases}
        \tilde X_{i+1} = \tilde X_i 
        + b^{\beta,z}(t_i,\tilde X_i)\,\Delta t
        + \sqrt{2} \big( a^{\beta,z}(t_i,\tilde X_i) \big)^{1/2}\,\Delta W_i, \qquad i \in \{ 1,\cdots M-1\}, \\
        \mathcal{L}(\tilde X_1) =  u^z(\Delta t,x)\,dx, \\
        \tilde X_0 = z,
        \end{dcases}
\end{equation}
with i.i.d. increments $\Delta W_i $ with distribution $ \mathcal{N}(0,\Delta t)$. We simulate $N$ independent sample paths according to this scheme. The implementation is done in the statistical programming language \texttt{R}.

Complementing \textbf{\Cref{fig:simulations-grid}} in the same setting, we provide the following supplementary plots:
\begin{itemize}
    \item \textbf{\Cref{fig:histogram-grid}.} Empirical distributions vs.\ theoretical densities. This serves as a consistency check for our numerical scheme.
    
    \item \textbf{\Cref{fig:coefficients-grid}.} Drift and diffusion coefficients. These plots display the coefficients $a^\beta(x,u^z(t,\cdot))$ and $b^\beta(x,u^z(t,\cdot))$ as functions of $x$ at the fixed time $t=1$.
    
    \item \textbf{\Cref{fig:single-path-grid}.} Single sample path simulations. Here, we fix a single realization of the Brownian increments in the scheme \eqref{eq:numerical_scheme} and illustrate the resulting pathwise behavior.
\end{itemize}

\clearpage

\bibliographystyle{alpha}
\bibliography{main}

@article{Abedi2025paths,
      title={{Fractional Sobolev paths on Wasserstein spaces and their energy-minimizing particle representations}}, 
      author={E. Abedi},
      year={2025},
      eprint={2502.12068},
      archivePrefix={arXiv},
      primaryClass={math.MG},
      journal={arXiv preprint 2502.12068}
}

@article{Abedi2025processes,
      title={{Fractional Sobolev processes on Wasserstein spaces and their energy-minimizing particle representations with applications}}, 
      author={E. Abedi},
      year={2025},
      eprint={2503.10859},
      archivePrefix={arXiv},
      primaryClass={math.PR},
      journal={arXiv preprint 2503.10859}
}

@book{Ambrosio2024,
  title = {Lectures on Optimal Transport},
  ISBN = {9783031768347},
  ISSN = {2532-3318},
  url = {http://dx.doi.org/10.1007/978-3-031-76834-7},
  DOI = {10.1007/978-3-031-76834-7},
  journal = {UNITEXT},
  publisher = {Springer Nature Switzerland},
  author = {Ambrosio,  L. and Brué,  E. and Semola,  D.},
  year = {2024}
}

@book{AGS2008GFs,
  author       = {L. Ambrosio and N. Gigli and G. Savar{\'e}},
  title        = {Gradient Flows in Metric Spaces and in the Space of Probability Measures},
  series       = {Lectures in Mathematics. ETH Z{\"u}rich},
  edition      = {2nd},
  year         = {2008},
  publisher    = {Birkh{\"a}user Basel},
  address      = {Basel},
  isbn         = {978-3-7643-8721-1},
  eisbn        = {978-3-7643-8722-8},
  pages        = {IX, 334},
  doi          = {10.1007/978-3-7643-8722-8}
}

@article{Barenblatt1952,
  author    = {G. I. Barenblatt},
  title     = {On self-similar motions of compressible fluids in porous media},
  journal   = {Prikl. Mat. Mekh.},
  volume    = {16},
  pages     = {679--698},
  year      = {1952},
  note      = {(in Russian)}
}

@article{Barenblatt1952porous,
  author    = {G. I. Barenblatt},
  title     = {On some unsteady motions of a liquid or a gas in a porous medium},
  journal   = {Prikl. Mat. Mekh.},
  volume    = {16},
  pages     = {67--78},
  year      = {1952},
  note      = {(in Russian)}
}

@article{BenamouBrenier2000,
  author = {Benamou, J.-D. and Brenier, Y.},
  title = {{A computational fluid mechanics solution to the Monge-Kantorovich mass transfer problem}},
  journal = {Numerische Mathematik},
  year = {2000},
  volume = {84},
  number = {3},
  pages = {375--393},
  doi = {10.1007/s002110050002},
  issn = {0945-3245}
}

@book{BlumenthalGetoor1968,
  author    = {R. M. Blumenthal and R. K. Getoor},
  title     = {Markov Processes and Potential Theory},
  series    = {Pure and Applied Mathematics},
  volume    = {29},
  publisher = {Academic Press},
  address   = {New York and London},
  year      = {1968}
}

@article{R.BGR25,
	title={The {L}eibenson process},
	author={V. Barbu and S. Grube and M. Rehmeier and M. R{\"o}ckner},
	year={2025},
	journal={arXiv preprint 2508.12979},
}

@book{BliedtnerHansen1986,
  author    = {J. Bliedtner and W. Hansen},
  title     = {Potential Theory: An Analytic and Probabilistic Approach to Balayage},
  series    = {Universitext},
  publisher = {Springer-Verlag},
  address   = {Berlin},
  year      = {1986}
}

@book {BogachevKrylovRoecknerShaposhnikov2015,
    AUTHOR = {Bogachev, V. I. and Krylov, N. V. and R\"ockner, M. and Shaposhnikov, S. V.},
     TITLE = {Fokker-{P}lanck-{K}olmogorov equations},
    SERIES = {Mathematical Surveys and Monographs},
    VOLUME = {207},
 PUBLISHER = {American Mathematical Society, Providence, RI},
      YEAR = {2015},
     PAGES = {xii+479},
      ISBN = {978-1-4704-2558-6},
   MRCLASS = {35-02 (60J35 60J60)},
  MRNUMBER = {3443169},
MRREVIEWER = {Zhen-Qing\ Chen},
       DOI = {10.1090/surv/207},
}

@article{BogachevRoecknerShaposhnikov2021,
    author = {Bogachev, V. I. and R{\"o}ckner, M. and Shaposhnikov, S. V.},
    title = {{On the Ambrosio–Figalli–Trevisan Superposition Principle for Probability Solutions to Fokker–Planck–Kolmogorov Equations}},
    volume = {33},
    journal = {Journal of Dynamics and Differential Equations},
    pages = {715 -- 739},
    year = {2021},
    doi = {10.1007/s10884-020-09828-5},
    URL = {https://doi.org/10.1007/s10884-020-09828-5}
}

@article{Belaribi2012,
  title = {Uniqueness for {F}okker--{P}lanck equations with measurable coefficients and applications to the fast diffusion equation},
  volume = {17},
  ISSN = {1083-6489},
  url = {http://dx.doi.org/10.1214/EJP.v17-2349},
  DOI = {10.1214/ejp.v17-2349},
  number = {none},
  journal = {Electronic Journal of Probability},
  publisher = {Institute of Mathematical Statistics},
  author = {Belaribi,  N. and Russo,  F.},
  year = {2012},
  month = jan 
}

@article{NLFPK-DDSDE5,
	Author = {Barbu, V. and R{\"o}ckner, M.},
	Da = {2021/04/01/},
	Doi = {https://doi.org/10.1016/j.jfa.2021.108926},
	Isbn = {0022-1236},
	Journal = {Journal of Functional Analysis},
	Number = {7},
	Pages = {108926},
	Title = {Solutions for nonlinear {F}okker--{P}lanck equations with measures as initial data and {M}c{K}ean-{V}lasov equations},
	Ty = {JOUR},
	Url = {https://www.sciencedirect.com/science/article/pii/S0022123621000082},
	Volume = {280},
	Year = {2021}}

@article{BarbuRoeckner2020nonlinearsuperpositionprinciple,
author = {V. Barbu and M. R{\"o}ckner},
title = {{From nonlinear Fokker–Planck equations to solutions of distribution dependent SDE}},
volume = {48},
journal = {The Annals of Probability},
number = {4},
publisher = {Institute of Mathematical Statistics},
pages = {1902 -- 1920},
year = {2020},
doi = {10.1214/19-AOP1410},
}

@article{BarbuRehmeierRockner2024,
  author        = {V. Barbu and M. Rehmeier and M. R{\"o}ckner},
  title         = {$p$-{B}rownian Motion and the $p$-{L}aplacian},
  journal       = {The Annals of Probability},
  year          = {2024},
  note          = {to appear},
  eprint        = {2409.18744},
  archivePrefix = {arXiv},
}

@article{BarbuRocknerZhang2025,
  author  = {V. Barbu and M. R{\"o}ckner and D. Zhang},
  title   = {Uniqueness of Distributional Solutions to the 2D Vorticity Navier--Stokes Equation and Its Associated Nonlinear Markov Process},
  journal = {Journal of the European Mathematical Society},
  year    = {2025}
}

@article{Cornalba2021,
  title = {Well-posedness for a regularised inertial Dean–Kawasaki model for slender particles in several space dimensions},
  volume = {284},
  ISSN = {0022-0396},
  url = {http://dx.doi.org/10.1016/j.jde.2021.02.048},
  DOI = {10.1016/j.jde.2021.02.048},
  journal = {Journal of Differential Equations},
  publisher = {Elsevier BV},
  author = {Cornalba,  F. and Shardlow,  T. and Zimmer,  J.},
  year = {2021},
  month = may,
  pages = {253–283}
}

@article{Cornalba2019,
  title = {A Regularized Dean--Kawasaki Model: Derivation and Analysis},
  volume = {51},
  ISSN = {1095-7154},
  url = {http://dx.doi.org/10.1137/18M1172697},
  DOI = {10.1137/18m1172697},
  number = {2},
  journal = {SIAM Journal on Mathematical Analysis},
  publisher = {Society for Industrial & Applied Mathematics (SIAM)},
  author = {Cornalba,  F. and Shardlow,  T. and Zimmer,  J.},
  year = {2019},
  month = jan,
  pages = {1137–1187}
}

@article{Dean1996,
  title = {Langevin equation for the density of a system of interacting Langevin processes},
  volume = {29},
  ISSN = {1361-6447},
  url = {http://dx.doi.org/10.1088/0305-4470/29/24/001},
  DOI = {10.1088/0305-4470/29/24/001},
  number = {24},
  journal = {Journal of Physics A: Mathematical and General},
  publisher = {IOP Publishing},
  author = {Dean,  D. S.},
  year = {1996},
  month = dec,
  pages = {L613–L617}
}

@article {DNS2009,
    AUTHOR = {Dolbeault, J. and Nazaret, B. and Savar\'{e}, G.},
     TITLE = {A new class of transport distances between measures},
   JOURNAL = {Calc. Var. Partial Differential Equations},
  FJOURNAL = {Calculus of Variations and Partial Differential Equations},
    VOLUME = {34},
      YEAR = {2009},
    NUMBER = {2},
     PAGES = {193--231},
      ISSN = {0944-2669,1432-0835},
   MRCLASS = {49J40 (49Q20)},
  MRNUMBER = {2448650},
MRREVIEWER = {Luca\ Granieri},
       DOI = {10.1007/s00526-008-0182-5},
}

@book{Doob2001,
  author    = {J. L. Doob},
  title     = {Classical Potential Theory and Its Probabilistic Counterpart},
  series    = {Classics in Mathematics},
  publisher = {Springer-Verlag},
  address   = {Berlin},
  year      = {2001},
  note      = {Reprint of the 1984 edition}
}

@article{Dirr2016,
  title = {Entropic and gradient flow formulations for nonlinear diffusion},
  volume = {57},
  ISSN = {1089-7658},
  url = {http://dx.doi.org/10.1063/1.4960748},
  DOI = {10.1063/1.4960748},
  number = {8},
  journal = {Journal of Mathematical Physics},
  publisher = {AIP Publishing},
  author = {Dirr,  N. and Stamatakis,  M. and Zimmer,  J.},
  year = {2016},
  month = aug 
}

@book{Dynkin1965,
  author    = {E. B. Dynkin},
  title     = {Markov Processes},
  volume    = {1 and 2},
  series    = {Die Grundlehren der Mathematischen Wissenschaften},
  publisher = {Academic Press and Springer-Verlag},
  address   = {New York and Berlin-G{\"o}ttingen-Heidelberg},
  year      = {1965},
  note      = {Translated by J. Fabius, V. Greenberg, A. Maitra, G. Majone}
}

@book{Figalli2021,
  title = {An Invitation to Optimal Transport,  Wasserstein Distances,  and Gradient Flows},
  ISBN = {9783985475100},
  ISSN = {2943-4955},
  url = {http://dx.doi.org/10.4171/ETB/22},
  DOI = {10.4171/etb/22},
  journal = {EMS Textbooks in Mathematics},
  publisher = {EMS Press},
  author = {Figalli, A. and Glaudo,  F.},
  year = {2021},
  month = aug 
}

@article{Fehrman2024,
  title = {Well-Posedness of the Dean–Kawasaki and the Nonlinear Dawson–Watanabe Equation with Correlated Noise},
  volume = {248},
  ISSN = {1432-0673},
  url = {http://dx.doi.org/10.1007/s00205-024-01963-3},
  DOI = {10.1007/s00205-024-01963-3},
  number = {2},
  journal = {Archive for Rational Mechanics and Analysis},
  publisher = {Springer Science and Business Media LLC},
  author = {Fehrman, B. and Gess,  B.},
  year = {2024},
  month = mar 
}

@article{Fehrman2025,
  title = {Conservative stochastic PDEs on the whole space},
  ISSN = {2194-041X},
  url = {http://dx.doi.org/10.1007/s40072-025-00369-w},
  DOI = {10.1007/s40072-025-00369-w},
  journal = {Stochastics and Partial Differential Equations: Analysis and Computations},
  publisher = {Springer Science and Business Media LLC},
  author = {Fehrman, B. and Gess,  B.},
  year = {2025},
  month = jul 
}

@book{FukushimaOshimaTakeda2011,
  author    = {M. Fukushima and Y. Oshima and M. Takeda},
  title     = {Dirichlet Forms and Symmetric Markov Processes},
  series    = {De Gruyter Studies in Mathematics},
  volume    = {19},
  publisher = {Walter de Gruyter},
  address   = {Berlin},
  year      = {2011},
  note      = {Extended edition}
}

@book{Freidlin1996,
  author    = {M. Freidlin},
  title     = {Markov Processes and Differential Equations: Asymptotic Problems},
  series    = {Lectures in Mathematics ETH Z{\"u}rich},
  publisher = {Birkh{\"a}user},
  address   = {Basel},
  year      = {1996}
}

@book{K21,
	author = {Kallenberg, O. },
	isbn = {9783030618735},
	publisher = {Springer},
	title = {Foundations of Modern Probability},
	title1 = {Probability theory and stochastic modelling},
	ty = {BOOK},
	url = {https://books.google.de/books?id=mAVfzgEACAAJ},
	year = {2021}
}

@article{Konarovskyi2019,
  title = {Dean-Kawasaki dynamics: ill-posedness vs. triviality},
  volume = {24},
  ISSN = {1083-589X},
  url = {http://dx.doi.org/10.1214/19-ECP208},
  DOI = {10.1214/19-ecp208},
  number = {none},
  journal = {Electronic Communications in Probability},
  publisher = {Institute of Mathematical Statistics},
  author = {Konarovskyi,  V. and Lehmann,  T. and von Renesse,  M.-K.},
  year = {2019},
  month = jan 
}

@book{Kloeden1992,
  title = {Numerical Solution of Stochastic Differential Equations},
  ISBN = {9783662126165},
  url = {http://dx.doi.org/10.1007/978-3-662-12616-5},
  DOI = {10.1007/978-3-662-12616-5},
  publisher = {Springer Berlin Heidelberg},
  author = {Kloeden,  P. E. and Platen,  E.},
  year = {1992}
}

@article{KaminVazquez1988,
author = {S. Kamin and J. L. V\'azquez},
journal = {Revista Matemática Iberoamericana},
number = {2},
pages = {339-354},
title = {Fundamental solutions and asymptotic behaviour for the p-Laplacian equation.},
volume = {4},
year = {1988},
}

@book{Liggett2010,
  author    = {T. M. Liggett},
  title     = {Continuous Time Markov Processes: An Introduction},
  series    = {Graduate Studies in Mathematics},
  volume    = {113},
  publisher = {American Mathematical Society},
  address   = {Providence, RI},
  year      = {2010}
}

@article {Lisini2007,
    AUTHOR = {S. Lisini},
     TITLE = {Characterization of absolutely continuous curves in
              {W}asserstein spaces},
   JOURNAL = {Calc. Var. Partial Differential Equations},
  FJOURNAL = {Calculus of Variations and Partial Differential Equations},
    VOLUME = {28},
      YEAR = {2007},
    NUMBER = {1},
     PAGES = {85--120},
      ISSN = {0944-2669},
   MRCLASS = {49J10 (28A33 35Q35 49Q20)},
  MRNUMBER = {2267755},
MRREVIEWER = {Paul Raynaud de Fitte},
       DOI = {10.1007/s00526-006-0032-2},
       URL = {https://doi.org/10.1007/s00526-006-0032-2},
}

@article{Le2020,
  title = {A stochastic sewing lemma and applications},
  volume = {25},
  ISSN = {1083-6489},
  url = {http://dx.doi.org/10.1214/20-EJP442},
  DOI = {10.1214/20-ejp442},
  number = {none},
  journal = {Electronic Journal of Probability},
  publisher = {Institute of Mathematical Statistics},
  author = {Lê,  K.},
  year = {2020},
  month = jan 
}

@article{McKean1966,
 ISSN = {00278424},
 author = {H. P. McKean},
 journal = {Proceedings of the National Academy of Sciences of the United States of America},
 number = {6},
 pages = {1907--1911},
 publisher = {National Academy of Sciences},
 title = {{A Class of Markov Processes Associated with Nonlinear Parabolic Equations}},
 volume = {56},
 year = {1966}
}

@Article{MaoutsaReichManfred2020,
    AUTHOR = {D. Maoutsa and S. Reich and M. Opper},
    TITLE = {{Interacting Particle Solutions of Fokker–Planck Equations Through Gradient–Log–Density Estimation}},
    JOURNAL = {Entropy},
    VOLUME = {22},
    YEAR = {2020},
    NUMBER = {8},
    ARTICLE-NUMBER = {802},
    URL = {https://www.mdpi.com/1099-4300/22/8/802},
    PubMedID = {33286573},
    ISSN = {1099-4300},
    DOI = {10.3390/e22080802}
}

@book{nualart2006malliavin,
  title     = {The Malliavin Calculus and Related Topics},
  author    = {Nualart, D.},
  edition   = {2nd},
  series    = {Probability and Its Applications},
  year      = {2006},
  publisher = {Springer-Verlag},
  address   = {Berlin Heidelberg},
  isbn      = {978-3-540-28328-7},
  doi       = {10.1007/3-540-28329-3}
}

@article{Otto2001,
  title = {THE GEOMETRY OF DISSIPATIVE EVOLUTION EQUATIONS: THE POROUS MEDIUM EQUATION},
  volume = {26},
  ISSN = {1532-4133},
  url = {http://dx.doi.org/10.1081/PDE-100002243},
  DOI = {10.1081/pde-100002243},
  number = {1–2},
  journal = {Communications in Partial Differential Equations},
  publisher = {Informa UK Limited},
  author = {Otto,  F.},
  year = {2001},
  month = jan,
  pages = {101–174}
}

@article{RehmeierRoeckner2025NonlinearMarkov,
  author    = {M. Rehmeier and M. R{\"o}ckner},
  title     = {{On Nonlinear Markov Processes in the Sense of McKean}},
  journal   = {Journal of Theoretical Probability},
  volume    = {38},
  number    = {3},
  pages     = {60},
  year      = {2025},
  doi       = {10.1007/s10959-025-01428-7},
  issn      = {1572-9230}
}

@article{RehmeierRomito2025,
  author        = {M. Rehmeier and M. Romito},
  title         = {2{D} Vorticity {E}uler Equations: Superposition Solutions and Nonlinear {M}arkov Processes},
  journal       = {Stochastics and Partial Differential Equations: Analysis and Computations},
  year          = {2025},
  doi           = {10.1007/s40072-025-00387-8},
  eprint        = {2407.16609},
  archivePrefix = {arXiv},
}

@book{RogersWilliams2000,
  author    = {L. C. G. Rogers and D. Williams},
  title     = {Diffusions, Markov Processes, and Martingales, Vol. 2: It{\^o} Calculus},
  series    = {Cambridge Mathematical Library},
  publisher = {Cambridge University Press},
  address   = {Cambridge},
  year      = {2000},
  note      = {Reprint of the second (1994) edition}
}

@book{Sharpe1988,
  author    = {M. Sharpe},
  title     = {General Theory of Markov Processes},
  series    = {Pure and Applied Mathematics},
  volume    = {133},
  publisher = {Academic Press},
  address   = {Boston, MA},
  year      = {1988}
}

@book{Stroock87,
    place={Cambridge},
    series={London Mathematical Society Student Texts},
    title={Lectures on Stochastic Analysis: Diffusion Theory}, DOI={10.1017/CBO9780511623752},
    publisher={Cambridge University Press},
    author={Stroock, D. W.},
    year={1987},
    collection={London Mathematical Society Student Texts}
}

@Book{StroockVaradh2007,
	title     = {Multidimensional Diffusion Processes},
	publisher = {Springer Berlin Heidelberg},
	year      = {2007},
	author    = {Stroock, D. W. and Varadhan, S. R. S.},
	series    = {Classics in Mathematics},
	isbn      = {9783540289999},
	url       = {https://books.google.de/books?id=vKC1BwAAQBAJ},
}

@book{Stroock2014,
  author    = {D. W. Stroock},
  title     = {An Introduction to Markov Processes},
  series    = {Graduate Texts in Mathematics},
  volume    = {230},
  publisher = {Springer},
  address   = {Heidelberg},
  year      = {2014},
  edition   = {2}
}

@inbook{Sznitman1991,
  title = {Topics in propagation of chaos},
  ISBN = {9783540463191},
  ISSN = {1617-9692},
  url = {http://dx.doi.org/10.1007/BFb0085169},
  DOI = {10.1007/bfb0085169},
  booktitle = {Ecole d’Eté de Probabilités de Saint-Flour XIX — 1989},
  publisher = {Springer Berlin Heidelberg},
  author = {Sznitman,  A.-S.},
  year = {1991},
  pages = {165–251}
}

@INPROCEEDINGS{Taghvaei2016,
    author={A. Taghvaei and P. G. Mehta},
    booktitle={2016 American Control Conference (ACC)},
    title={An optimal transport formulation of the linear feedback particle filter},
    year={2016},
    volume={},
    number={},
    pages={3614-3619},
    doi={10.1109/ACC.2016.7525474}
}

@article{Trevisan16,
	author = {D. Trevisan},
	title = {{Well-posedness of multidimensional diffusion processes with weakly differentiable coefficients}},
	volume = {21},
	journal = {Electronic Journal of Probability},
	number = {none},
	publisher = {Institute of Mathematical Statistics and Bernoulli Society},
	pages = {1 -- 41},
	year = {2016},
	doi = {10.1214/16-EJP4453},
	URL = {https://doi.org/10.1214/16-EJP4453}
}

@incollection{ZeldovichKompaneets1950,
  author    = {Y. B. Zel'dovich and A. S. Kompaneets},
  title     = {Towards a theory of heat conduction with thermal conductivity depending on the temperature},
  booktitle = {Collection of Papers Dedicated to the 70th Birthday of Academician A. F. Ioffe},
  publisher = {Izd. Akad. Nauk SSSR},
  address   = {Moscow},
  pages     = {61--71},
  year      = {1950}
}

\end{document}